    \let\tempone\itemize
    \let\temptwo\enditemize
    \renewenvironment{itemize}{\tempone\addtolength{\itemsep}{0.5\baselineskip}}{\temptwo}
    \let\tempenum\enumerate
    \let\tempenumtwo\endenumerate
    \renewenvironment{enumerate}{\tempenum\addtolength{\itemsep}{0.5\baselineskip}}{\tempenumtwo}
            \let\origsection\section
            \renewcommand\section{\@ifstar{\starsection}{\nostarsection}}
            \newcommand\nostarsection[1]
\sectionprelude\origsection{#1}\sectionpostlude}
            \newcommand\starsection[1]
            \newcommand\sectionprelude{%
              \vspace{0.75em} 
            }
            \newcommand\sectionpostlude{%
              \vspace{1em}   
            }
            \let\origsubsection\subsection
            \renewcommand\subsection{\@ifstar{\starsubsection}{\nostarsubsection}}
            \newcommand\nostarsubsection[1]
\sectionprelude\origsubsection{#1}\subsectionpostlude}
            \newcommand\starsubsection[1]
            \newcommand\subsectionprelude{%
              \vspace{0.25em} 
            }
            \newcommand\subsectionpostlude{%
              \vspace{0.0em}   
            }
\newcounter{pos} 
\tikzset{									
	initcounter/.code={\setcounter{pos}{0}},
	style between/.style n args={3}{
		postaction={
			initcounter,
			decorate,
			decoration={
				show path construction,
				curveto code={
					\addtocounter{pos}{1}
					\pgfmathtruncatemacro{\min}{#1 - 1}
					\ifthenelse{\thepos < #2 \AND \thepos > \min}{
						\draw[#3]
						(\tikzinputsegmentfirst)
						..
						controls (\tikzinputsegmentsupporta) and (\tikzinputsegmentsupportb)
						..
						(\tikzinputsegmentlast);
					}{}
				}
			}
		},
	},
}
\newcommand{\seq}{\mathbbm{m}}
\renewcommand{\P}{P^\bullet}
\newcommand{\colim@}[2]{%
  \vtop{\m@th\ialign{##\cr
    \hfil$#1\operator@font colim$\hfil\cr
    \noalign{\nointerlineskip\kern1.5\ex@}#2\cr
    \noalign{\nointerlineskip\kern-\ex@}\cr}}%
}
\newcommand{\colim}{%
  \mathop{\mathpalette\colim@{\rightarrowfill@\textstyle}}\nmlimits@
}
\DeclareMathOperator{\Img}{Im}
\DeclareMathOperator{\module}{mod}
\DeclareMathOperator{\Hom}{Hom}
\DeclareMathOperator{\rk}{rk}
\DeclareMathOperator{\End}{End}
\DeclareMathOperator{\Ext}{Ext}
\DeclareMathOperator{\Pic}{Pic}
\DeclareMathOperator{\Aut}{Aut}
\DeclareMathOperator{\Coh}{Coh}
\newcommand{\Sing}{\mathcal{D}_{\operatorname{sg}}(C_n)}
\DeclareMathOperator{\Int}{Int}
\DeclareMathOperator{\Perf}{Perf}
\DeclareMathOperator{\PGL}{PGL}
\DeclareMathOperator{\PSL}{PSL}
\DeclareMathOperator{\marked}{\mathcal{M}}
\DeclareMathOperator{\MCG}{\mathcal{M}\mathcal{C}\mathcal{G}}
\DeclareMathOperator{\PMCG}{\mathcal{P}\mathcal{M}\mathcal{C}\mathcal{G}}
\DeclareMathOperator{\characteristic}{char}
\DeclareMathOperator{\SExt}{\mathcal{E}\!\mathit{xt}}
\newcommand{\CVb}[1]{\operatorname{\Omega}_{\text{Vect}}^{#1} }
\DeclareMathOperator{\Mat}{Mat}
\DeclareMathOperator{\Dinv}{\mathcal{D}_{\text{loop}}}
\DeclareMathOperator{\Dpart}{\mathcal{D}_{\partial}}
\DeclareMathOperator{\supp}{supp}
\DeclareMathOperator{\Vect}{Vect}
\DeclareMathOperator*{\homom}{\mathcal{H}\!\mathit{om}}
        \newcommand\restr[2]{{
		\left.\kern-\nulldelimiterspace 
		#1 
		\vphantom{\big|} 
		\right|_{#2} 
        }}
\tikzset{
	set arrow inside/.code={\pgfqkeys{/tikz/arrow inside}{#1}},
	set arrow inside={end/.initial=>, opt/.initial=},
	/pgf/decoration/Mark/.style={
		mark/.expanded=at position #1 with
		{
			\noexpand\arrow[\pgfkeysvalueof{/tikz/arrow inside/opt}]{\pgfkeysvalueof{/tikz/arrow inside/end}}
		}
	},
	arrow inside/.style 2 args={
		set arrow inside={#1},
		postaction={
			decorate,decoration={
				markings,Mark/.list={#2}
			}
		}
	},
}
\tikzset{commutative diagrams/.cd,arrow style=tikz,diagrams={>=latex'}}\tikzset{join/.code=\tikzset{after node path={%
			\ifx\tikzchainprevious\pgfutil@empty\else(\tikzchainprevious)%
			edge[every join]#1(\tikzchaincurrent)\fi}}}
\tikzset{>=stealth',every on chain/.append style={join},
	every join/.style={->}}
\tikzset{every loop/.style={min distance=25mm,in=50,out=100,looseness=5}}
\newtheorem{prf}{Proof}[section]
\theoremstyle{remark}
  \newtheoremstyle{ownTheoremStyle}
  {1em}
  {1em}
  {\itshape}
  {}
  {\bfseries}
  {.}
  { }
  {}
  \newtheoremstyle{ownDefinitionStyle}
  {1em}
  {1em}
  {}
  {}
  {\bfseries}
  {.}
  { }
  {}
\theoremstyle{ownTheoremStyle}
\newtheorem{thm}[prf]{Theorem}
\newtheorem{Introthm}{Theorem}
\newtheorem{lem}[prf]{Lemma}
\newtheorem{prp}[prf]{Proposition}
\newtheorem{cor}[prf]{Corollary}
\theoremstyle{ownDefinitionStyle}
\newtheorem{exa}[prf]{Example}
\newtheorem*{convention}{Convention}
\newtheorem{definition}[prf]{Definition}
\newtheorem{rem}[prf]{Remark}
\newtheorem{notation}[prf]{Notation}
\newcommand{\quotient}[2]{{\left.\raisebox{.2em}{$#1$}\middle/\raisebox{-.2em}{$#2$}\right.}}
\numberwithin{equation}{section}
\newcommand{\cX}{\mathcal{X}}
\def\centerarc[#1](#2)(#3:#4:#5) 
\def\triangle[#1][#2][#3][#4][#5][#6][#7]%
\title[\resizebox{5.6in}{!}{Spherical objects, transitivity and auto-equivalences of Kodaira cycles}]{\MakeUppercase{Spherical objects, transitivity and auto-equivalences of Kodaira cycles via gentle algebras}}
\author{Sebastian Opper}
\address{Sebastian Opper, Charles University, Faculty of Mathematics and Physics, Ke Karlovu 3, 121 16 Praha 2, Czech Republic}
\email{opper@karlin.mff.cuni.cz}
\begin{document}
	
	\begin{abstract}
	This paper studies the class of spherical objects over any Kodaira $n$-cycle of projective lines and provides a parametrization of their isomorphism classes in terms of closed curves on the $n$-punctured torus without self-intersections. Employing recent results on gentle algebras, we derive a topological model for the bounded derived category of any Kodaira cycle. The groups of triangle auto-equivalences of these categories are computed and are shown to act transitively on isomorphism classes of spherical objects. This answers a question by Polishchuk \cite{Polishchuk} and extends earlier results by Burban-Kreussler \cite{BurbanKreusslerDegenerations} and Lekili-Polishchuk \cite{LekiliPolishchuk2016}. The description of auto-equivalences is further used to establish faithfulness of a mapping class group action defined by Sibilla \cite{SibillaMappingClassGroupAction}. The final part describes the closed curves which correspond to vector bundles and simple vector bundles. This leads to an alternative proof of a result by Bodnarchuk-Drozd-Greuel \cite{BodnarchukDrozdGreuel} which states that simple vector bundles on cycles of projective lines are uniquely determined by their multi-degree, rank and determinant. As a by-product we obtain a closed formula for the cyclic sequence of any simple vector bundle on $C_n$ as introduced by Burban-Drozd-Greuel \cite{BurbanDrozdGreuel}.
	\end{abstract}

	\maketitle

	\setcounter{tocdepth}{1}
	\tableofcontents

	\section*{Introduction}
	\noindent Our main objective in this paper is to study the class of \textit{spherical objects} in the derived category of the singular algebraic curve $C_n$ for any $n \geq 1$ which is known as a ``Kodaira cycle of $n$ projective lines'', or, a ``cycle of projective lines'' for short.  A cycle $C_n$ is a rational curve with $n$ irreducible components and may be thought of as a union of $n$ projective lines glued in a cycle as illustrated in Figure \ref{FigureE3}. Spherical objects  are perfect complexes over $C_n$ and form a class which is stable under auto-equivalences of the derived category $\mathcal{D}^b(C_n)$. They were introduced by Seidel and Thomas \cite{SeidelThomas} who showed that every spherical object induces an auto-equivalence of the bounded derived category which referred to as a \textit{spherical twist}. \medskip

	\begin{figure}
    \begin{tikzpicture}[scale=0.8]
        
    
      \begin{scope}[yshift=1.12cm,xshift=-1cm, scale=0.8*1.75, hobby]
        
        	\draw[] plot  [line width=2,  tension=0.75] coordinates { (1,1) (0,0) (-1, -0.35) (-1.5,0) (-1,0.35) (0,0) (1,-1) };
     
        \end{scope}

    
         \begin{scope}[yshift=0.115cm, xshift=2cm, scale=0.9*1.75, hobby]
        
        	\draw[] plot  [line width=2,  tension=0.9] coordinates { ({0.75-0.5*cos(60)},{-0.5*sin(60)}) ({0.75-0.5*cos(60)+1.5*cos(60)-0.375*cos(300)+0.05},{0.75*sin(60)}) ({0.75-0.5*cos(60)},{2*sin(60)})  };
        	
            	\draw[] plot  [line width=2,  tension=0.9] coordinates { ({0.75+1.5*cos(60)-0.5*cos(300)},{1.5*sin(60)-0.5*sin(300)}) ({0.75-0.5*cos(60)+0.5*cos(60)-0.125*cos(300)-0.05},{0.75*sin(60)}) ({0.75+1.5*cos(60)-0.5*cos(300)},{1.5*sin(60)+2*sin(300)}) };
     
        \end{scope}

        \begin{scope}[xshift=5.5cm, scale=1.75, hobby]
    		\draw ({0.75-0.5*cos(0)},{-0.5*sin(0)})--({0.75+2*cos(0)},{2*sin(0)});
    		\draw ({0.75-0.5*cos(60)},{-0.5*sin(60)})--({0.75+2*cos(60)},{2*sin(60)});
    		\draw ({0.75+1.5*cos(60)-0.5*cos(300)},{1.5*sin(60)-0.5*sin(300)})--({0.75+1.5*cos(60)+2*cos(300)},{1.5*sin(60)+2*sin(300)});
    	\end{scope}
	\end{tikzpicture}

		\caption{The curves $C_1$, $C_2$ and $C_3$ (from left to right).} \label{FigureE3}
	\end{figure}
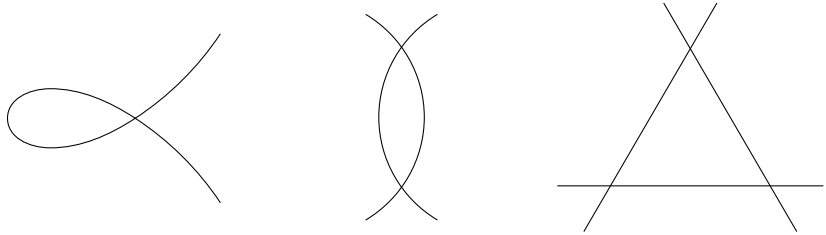

	\noindent In work of Polishchuk \cite{Polishchuk}, spherical objects on  cycles of projective lines (and more general curves of arithmetic genus one) were used to construct solutions of the classical and the associative Yang-Baxter equation. His construction leads to the question whether spherical objects can be classified in a meaningful way. Polishchuk asked further whether the  group of auto-equivalences of $\mathcal{D}^b(C_n)$ acts transitively on the set of isomorphism classes of spherical objects. An affirmative answer to his question is given in Theorem \ref{IntroTheoremTransitivitySphericalOjects}.\medskip
	
	\noindent The problems of classification and transitivity were studied by Burban-Kreussler \cite{BurbanKreusslerDegenerations} and Lekili-Polishchuk \cite{LekiliPolishchuk2016}. Burban and Kreussler established transitivity and provided a classification of spherical objects over $C_1$ showing that any spherical object is isomorphic to a shift of a skyscraper sheaf of a smooth point or a simple vector bundle. The latter were classified by means of their rank, multi-degree and determinant as shown by Bodnarchuk, Drozd and Greuel \cite{BodnarchukDrozdGreuel}. For $n > 1$ however, the class of spherical objects is more complicated as found by Burban and Burban \cite{BurbanBurban} who constructed spherical complexes in $\mathcal{D}^b(C_n)$ which are not quasi-isomorphic to the shift of a sheaf. 
	
	Lekili and Polishchuk \cite{LekiliPolishchuk2016} proved for arbitrary $n$ that the group of auto-equivalences of $\mathcal{D}^b(C_n)$ acts transitively on simple vector bundles. Their proof employed some of their earlier work \cite{LekiliPolishchukMirrorSymmetry} which showed the equivalence of the derived category of a cycle $C_n$ and the wrapped Fukaya category of an $n$-punctured torus after Haiden, Katzarkov and Kontsevich \cite{HaidenKatzarkovKontsevich}.\medskip
	
	\noindent We approach the problem of classification of spherical objects and transitivity from a similar angle. As shown by Burban and Drozd \cite{BurbanDrozdTilting}, there exists an embedding $\Perf(C_n) \hookrightarrow \mathcal{D}^b(\Coh \mathbb{X}_n)$ of the category of perfect complexes over $C_n$ into the derived category of a certain non-commutative curve $\mathbb{X}_n$. They further proved that the latter has a tilting object and is equivalent to the derived category of a finite dimensional algebra $\Lambda_n$ which belongs to the well-understood class of \textit{gentle algebras}.  Although it is difficult to compute the resulting embedding $\Perf(C_n) \hookrightarrow \mathcal{D}^b(\Lambda_n)$ directly, they determined the images of the Picard group and skyscraper sheaves of smooth points. \medskip
	
\noindent	The results of this paper also rely on \cite{OpperPlamondonSchroll} and \cite{OpperDerivedEquivalences} by Plamondon, Schroll and the author which study derived categories of gentle algebras in terms of the geometry of a surface (see also \cite{HaidenKatzarkovKontsevich}, \cite{LekiliPolishchukGentle} and \cite{AmiotPlamondonSchroll} for related work). A result in \cite{OpperDerivedEquivalences} describes the group of auto-equivalences of such categories as an extension of the mapping class group of the associated surface. As a particular instance of the connection between the derived category of a gentle algebra and its surface model, isomorphism classes of objects in $\mathcal{D}^b(\Lambda_n)$ can be fully understood in terms of curves on a torus with $n$ boundary components and marked points on its boundary.  The surface model of $\mathcal{D}^b(\Lambda_n)$  allows us to rephrase many problems about the categories $\Perf(C_n)$ and $\mathcal{D}^b(C_n)$ as considerably simpler problems about curves on a torus.\medskip

\noindent Throughout this introduction we assume that $\Bbbk$ is an algebraically closed field and that $n \geq 1$ is any natural number.\medskip
	
\noindent The first result of this paper provides a geometric interpretation of spherical objects over $C_n$ as closed curves on a punctured torus.

	\begin{Introthm}[Theorem \ref{TheoremClassificationSphericalObjects}]\label{IntroTheoremClassificationSphericalObjects}
	
		There exists a bijection between isomorphism classes of spherical objects on $C_n$, up to shift, and pairs $([\gamma], \lambda)$, where $\lambda \in \Bbbk \!\setminus\! \{0\}$ and $[\gamma]$ is the homotopy class of an unoriented, non-separating simple loop $\gamma$ on the $n$-punctured torus, i.e.\ $\gamma$ is embedded into the $n$-punctured torus in such a way that its complement is connected.	
	\end{Introthm}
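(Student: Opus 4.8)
The plan is to transport the whole question into the surface model for $\mathcal{D}^b(\Lambda_n)$ and to read off ``spherical'' as an intersection-theoretic condition on curves. First I would fix the Burban--Drozd embedding $\Perf(C_n) \hookrightarrow \mathcal{D}^b(\Coh \mathbb{X}_n) \simeq \mathcal{D}^b(\Lambda_n)$ from \cite{BurbanDrozdTilting} and invoke the dictionary of \cite{OpperPlamondonSchroll}: indecomposable objects correspond to homotopy classes of graded curves on the associated surface (a torus with $n$ boundary components) together with an indecomposable local system, the shift $[1]$ corresponds to a grading shift, and graded dimensions of $\Hom$-spaces are computed from oriented intersections of the representing curves. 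Since $C_n$ is Gorenstein of arithmetic genus one, the Serre functor on $\Perf(C_n)$ is $[1]$, so the relevant model is Calabi--Yau of dimension one and a spherical object is precisely an object $E$ with $\Hom^{\bullet}(E,E) = \Bbbk \oplus \Bbbk[-1]$, i.e.\ one-dimensional in degrees $0$ and $1$ and zero otherwise.

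Because $\Hom(E,E)=\Bbbk$, a spherical object is a brick, hence indecomposable, and is therefore represented by a single pair $(\gamma,M)$. The core computation is the self-$\Hom$ formula for such a pair. For a closed curve $\gamma$ without self-intersections the model gives $\Hom^{\bullet}(E,E) \cong H^{\bullet}(S^1) \otimes \End(M)$, the degree-one summand being the class Serre-dual to the identity, while each transverse self-intersection of $\gamma$ and each non-scalar endomorphism of $M$ would contribute further classes. I would argue that $\Hom(E,E)=\Bbbk$ forces $\gamma$ to be simple and $\End(M)=\Bbbk$, and that over the algebraically closed field $\Bbbk$ the latter forces $M$ to have rank one, so that $M$ is a one-dimensional local system with monodromy $\lambda \in \Bbbk\setminus\{0\}$. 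The same endomorphism count is what rules out arcs (string objects): an arc terminating on the boundary cannot simultaneously be perfect and produce the required one-dimensional degree-one self-extension, so a spherical object must arise from a closed loop.

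The remaining and, I expect, hardest step is to identify which loops occur, that is, to match perfectness with the separating/non-separating dichotomy. Here I would use the description of the image of $\Perf(C_n)$ inside the surface model: loops encircling a nonempty proper subset of the punctures lie in the singularity category $\Sing$ and fail to be perfect, whereas loops that are homologically essential in the torus direction are perfect. Passing from the torus with $n$ boundary components to the $n$-punctured torus by collapsing each boundary component to a puncture, one checks that an essential separating simple closed curve on a genus-one surface must cut off a planar piece carrying some punctures, so that the above alternative becomes exactly the separating/non-separating one: a simple closed curve yields a perfect complex if and only if its complement is connected. Combined with the previous paragraph, $E$ is spherical if and only if it is attached to a non-separating simple loop $\gamma$ with a scalar $\lambda \in \Bbbk\setminus\{0\}$, and conversely the identity $\Hom^{\bullet}(E,E)=\Bbbk\oplus\Bbbk[-1]$ shows every such pair produces a spherical object.

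Finally I would assemble the bijection. The dictionary of \cite{OpperPlamondonSchroll} is already a bijection between isomorphism classes of indecomposables and pairs (graded curve, indecomposable local system); passing to the quotient by $[1]$ removes the grading shift, leaving the homotopy class of the underlying unoriented curve together with the monodromy scalar as the data $([\gamma],\lambda)$. The one point requiring care is orientation: reversing the orientation of $\gamma$ reparametrizes the monodromy as $\lambda\mapsto\lambda^{-1}$, so I would fix a convention (using the grading, which canonically orients each non-separating class) under which $\lambda$ is well defined, and verify that for this convention distinct pairs $([\gamma],\lambda)$ give pairwise non-isomorphic objects. This produces the asserted bijection.
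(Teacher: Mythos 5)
Your overall strategy (transport to the surface model of $\mathcal{D}^b(\Lambda_n)$, characterize sphericality by intersection counts, conclude that spherical objects are simple loops with one-dimensional local systems) is the same as the paper's, and that half of your argument is essentially correct: the paper likewise uses the $\tau$-invariance characterization of $\Img\mathbb{F}$ to rule out arcs and the intersection/morphism dictionary to force $\gamma$ simple and $\dim\mathcal{V}=1$. However, there are two genuine gaps in the remaining half.

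First, your mechanism for excluding separating simple loops is wrong. You claim that loops encircling a proper nonempty subset of the punctures ``lie in the singularity category and fail to be perfect.'' In this model that dichotomy does not apply to loops at all: every gradable loop represents a $\tau$-invariant object and hence a perfect complex, while the singularity category is generated by \emph{arcs} ending at punctures. The actual obstruction is gradability: a separating simple loop bounds a genus-zero subsurface, and a Poincar\'e--Hopf computation with the boundary winding numbers (all equal to $-2$) forces $\omega(\gamma)=2\neq 0$, so such a loop admits no grading and represents \emph{no} object of $\mathcal{D}^b(\Lambda_n)$ whatsoever (Lemma \ref{LemmaHopf}). Your argument as stated would not compile into a proof because the premise it rests on is false.

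Second, you do not establish surjectivity: you assert that ``every such pair produces a spherical object,'' but this presupposes that every non-separating simple loop is gradable, i.e.\ has vanishing winding number, which is exactly the nontrivial point. The paper proves this by showing that $\PMCG(\mathbb{T}_n)$ acts transitively on non-separating simple loops (Proposition \ref{PropositionTransitivityNonSeparatingLoops}), that the Humphries generators are Dehn twists about loops already known to represent spherical objects ($\gamma_{\Pic}$ and $\gamma^i_{\Bbbk(x)}$), and that these Dehn twists are realized by spherical twist auto-equivalences; hence every non-separating simple loop is of the form $\gamma_{T^{-1}(\Bbbk(z))}$ for some auto-equivalence $T$ and in particular is gradable. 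Some argument of this kind (or a direct winding-number computation for non-separating simple loops) is needed to close your bijection. Your final remark about orientation reversal and $\lambda\mapsto\lambda^{-1}$ is a legitimate care point, but the model already guarantees that the isomorphism class of $\P_{(\gamma,g)}(\mathcal{V})$ is invariant under change of orientation, so fixing $\lambda$ after a choice of base point suffices.
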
 
	
\noindent Theorem \ref{IntroTheoremClassificationSphericalObjects} is a special case of a more general correspondence between indecomposable objects of $\mathcal{D}^b(C_n)$ and curves on the $n$-punctured torus. Using the Verdier localization approach from \cite{LekiliPolishchukAuslanderOrders} we extend the topological description of $\mathcal{D}^b(C_n)$ to morphisms (Theorem \ref{TheoremGeometricDescriptionMorphismPuncturedCase}), compositions of morphisms (Remark \ref{RemarkCompositions}) and certain mapping cones (Remark \ref{RemarkMappingConesVerdierQuotient}). In fact, the topological model of $\mathcal{D}^b(C_n)$ presented here can be generalized to similar Verdier quotients of derived categories of other gentle algebras which might be of independent interest, see Remark \ref{RemarkGeneralizations}.\medskip
	
\noindent  The second result describes the loops of simple vector bundles and provides a geometric interpretation of their ranks.  To simplify notation in the next theorem, we identify the torus with the usual quotient $\mathbb{R}^2/\mathbb{Z}^2\cong S^1 \times S^1$ and refer to the first coordinate of the product as the \textit{latitudinal coordinate}.
	
		\begin{Introthm}[Theorem \ref{TheoremImagesOfSimpleVectorBundles}]\label{IntroTheoremSimpleVectorBundles}
		Under the bijection in Theorem \ref{IntroTheoremClassificationSphericalObjects}, a non-separating simple  loop $\gamma$ on a punctured torus corresponds to a simple vector bundle if and only if $\gamma$ is homotopic to a smooth loop such that the latitudinal coordinate of its derivative is nowhere vanishing. The rank of the vector bundle agrees with the number of full turns of $\gamma$ in latitudinal direction.
	\end{Introthm}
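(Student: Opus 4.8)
The plan is to extract the rank as a purely geometric quantity and then to characterise, inside the surface model, exactly those loops whose associated object lies in the heart $\Coh(C_n)$ and is locally free. I would begin by fixing two reference objects whose images under $\Perf(C_n)\hookrightarrow\mathcal D^b(\Lambda_n)$ are computed by Burban--Drozd: the structure sheaf $\mathcal{O}_{C_n}$ and the skyscraper sheaf $\mathcal{O}_x$ of a smooth point. Both are spherical, and since $C_n$ is Gorenstein with trivial dualizing sheaf, Serre duality gives $\Ext^1(E,E)\cong\Hom(E,E)^\ast$, so every simple vector bundle $E$ is spherical as well; hence all of these objects are covered by the bijection of Theorem~\ref{IntroTheoremClassificationSphericalObjects}. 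The first thing to record is that $\mathcal{O}_x$ corresponds to a loop $\mu$ of \emph{vanishing} latitudinal winding, while $\mathcal{O}_{C_n}$ corresponds to a loop of latitudinal winding one.

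The second step identifies the rank with the latitudinal winding. Under the identification of $K_0(\mathcal D^b(C_n))$ with the first homology of the surface, the class of an object equals the homology class of its loop, and $\rk$ is an additive functional, hence given by the intersection pairing against a single fixed class; the two reference computations pin this class down as that of $\mu$. Combining the description of graded morphism spaces by intersection numbers from \cite{OpperPlamondonSchroll} with the elementary facts $\Hom(\mathcal{O}_x,E)=0$ and $\dim\Ext^1(\mathcal{O}_x,E)=\rk(E)$ for a vector bundle $E$, I obtain that for the loop $\gamma$ of any object the rank equals the geometric intersection number of $\gamma$ with $\mu$. When $\gamma$ is homotopic to a smooth loop with nowhere-vanishing latitudinal derivative, this intersection number is precisely its number of full latitudinal turns, which yields the asserted rank formula.

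The core of the argument is the equivalence between the homotopical condition and local freeness. Here I would give a geometric description of the standard t-structure: a non-separating simple loop corresponds, up to shift, to an object of $\Coh(C_n)$ exactly when it admits a representative in standard position with respect to the grading coming from $\Lambda_n$, and the resulting sheaf is torsion-free --- hence a vector bundle, as $C_n$ is a curve --- exactly when this representative meets every longitudinal circle transversally, i.e.\ when its latitudinal derivative never vanishes. A backtracking of the latitudinal coordinate, or a detour encircling a puncture, is precisely what creates either a torsion contribution or a genuinely complex, non-sheaf spherical object of the type found by Burban--Burban \cite{BurbanBurban}; conversely a monotone latitudinal representative forces concentration in a single cohomological degree together with torsion-freeness. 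Simplicity is then automatic, since a spherical torsion-free sheaf $E$ has $\Hom(E,E)=\Bbbk$.

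I expect this last step to be the principal obstacle. Nonzero rank alone does \emph{not} force an object to be a sheaf: a puncture-encircling loop can carry nonzero latitudinal winding yet still correspond to a shifted complex, so the delicate point is to translate ``lies in the heart and is torsion-free'' faithfully into the tautness condition on the curve, rather than into the weaker statement that the latitudinal winding is nonzero. I would resolve this by pairing the geometric model of the heart with the explicit intersection pattern of $\gamma$ against the $n$ puncture-fibres, which simultaneously records the multidegree and recovers the cyclic sequence of Burban--Drozd--Greuel \cite{BurbanDrozdGreuel,BodnarchukDrozdGreuel}, and then checking monotonicity directly on the normal forms obtained from the known images of line bundles under successive twists.
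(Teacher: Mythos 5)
Your framing is right and your rank computation is a legitimate, slightly different route: the paper extracts the rank combinatorially from the walk of $\gamma$ in a ribbon graph (Lemma \ref{LemmaGeometricInterpretationRankMultiDegree}), whereas you get it from $\hom^{\bullet}(\Bbbk(x),E)=\rk E$ together with the intersection-number description of morphism spaces (Corollary \ref{CorollaryMorphisms}); for a loop with nowhere-vanishing latitudinal derivative the minimal intersection number with a vertical circle is indeed the number of latitudinal turns, so that part goes through.

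The central biconditional, however, is where you have a genuine gap, and you have correctly located it but not closed it. For the direction ``monotone loop $\Rightarrow$ vector bundle'' you assert that a monotone representative ``forces concentration in a single cohomological degree together with torsion-freeness'', but no mechanism is offered: a geometric description of the standard $t$-structure on $\mathcal{D}^b(C_n)$ is not available in this setting and would itself be a substantial construction, and your fallback of ``normal forms obtained from the known images of line bundles under successive twists'' does not work as stated, because the generating twists $T_{\mathcal{O}_{C_n}}$ and $T_{\Bbbk(x)}$ do not preserve the heart or the class of vector bundles, so the orbit normal forms tell you nothing about which loops are sheaves. The paper's actual mechanism is an induction on the rank: for a monotone loop of rank $r\geq 2$ one exhibits an explicit degree-zero morphism from the two-term complex of a line bundle whose mapping cone is again a monotone loop of rank $r-1$ (Lemma \ref{LemmaMorphismsVectorBundles}), so the object is an iterated extension of images of line bundles, and closure of vector bundles under extension finishes the argument (Proposition \ref{PropositionExtensions}). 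Nothing in your proposal substitutes for this step.

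For the converse direction your detection device is also too weak. You propose to read off failure from the intersection pattern with the ``puncture-fibres'', i.e.\ the loops of smooth-point skyscrapers, but a latitudinal backtracking can occur entirely inside the strip between two consecutive such loops (winding around the intervening puncture), so the intersection pattern with every smooth-point loop can remain homogeneous. The paper instead invokes Ballard's criterion, which requires $\Hom^{\bullet}(X,\Bbbk(z))$ to be concentrated in a single degree for \emph{all} closed points $z$ including the singular ones, and detects the backtracking as two intersections of different degrees with the \emph{arc} representing a singular-point skyscraper (Corollary \ref{CorollaryCurvesOfSingularPoints} together with Lemma \ref{LemmaDegreeOfIntersections}); these objects are not perfect and do not appear among your reference loops at all. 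Without either the extension induction or the singular-point test, the equivalence you need remains unproved.
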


\noindent In fact, by dropping the assumption on simplicity on vector bundles and curves, we describe all homotopy classes of curves which represent the image of a vector bundle over $C_n$ and provide an easy geometric interpretation of rank and multi-degree on the surface, c.f.\ Theorem \ref{TheoremImagesOfSimpleVectorBundles}. The representation of vector bundles as curves allows us to give an alternative proof of the result by Bodnarchuk, Drozd and Greuel \cite{BodnarchukDrozdGreuel}  which states that the isomorphism class of any simple vector bundle over $C_n$ is uniquely determined by its rank, its multi-degree and its determinant (Proposition \ref{PropositionNewProof}).\medskip
	
	\noindent Our next result gives a positive answer to the question of Polishchuk:

	\begin{Introthm}[Theorem \ref{TheoremTransitivitySphericalOjects}]\label{IntroTheoremTransitivitySphericalOjects}The group of auto-equivalences of $\mathcal{D}^b(C_n)$ acts transitively on the set of isomorphism classes of spherical objects in $\Perf(C_n)$.
	\end{Introthm}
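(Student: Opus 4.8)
The plan is to transport the whole question, via the bijection of Theorem~\ref{IntroTheoremClassificationSphericalObjects}, to a purely topological transitivity statement on the $n$-punctured torus $S$, and to treat the discrete data (the loop) and the continuous data (the scalar) separately. By Theorem~\ref{IntroTheoremClassificationSphericalObjects}, spherical objects up to shift correspond to pairs $([\gamma],\lambda)$ with $[\gamma]$ an unoriented non-separating simple loop on $S$ and $\lambda\in\Bbbk^\ast$. Since shifts are themselves auto-equivalences, it is enough to show that the group $\Aut(\mathcal{D}^b(C_n))$ acts transitively on such pairs; I would fix once and for all a standard non-separating simple loop $\gamma_0$ together with a standard scalar $\lambda_0$ and show that every pair can be carried to $([\gamma_0],\lambda_0)$.

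In the first step I would move the loop. Every spherical object $E$ induces a spherical twist $T_E\in\Aut(\mathcal{D}^b(C_n))$, and under the surface model of \cite{OpperDerivedEquivalences} this twist acts on homotopy classes of curves as the Dehn twist along the loop of $E$. Because, by Theorem~\ref{IntroTheoremClassificationSphericalObjects}, \emph{every} non-separating simple loop is the loop of some spherical object, the image of $\Aut(\mathcal{D}^b(C_n))$ in $\MCG(S)$ contains the Dehn twists along all non-separating simple loops. Together with the classical change-of-coordinates principle, the subgroup these generate acts transitively on isotopy classes of non-separating simple loops; wherever a permutation of the punctures is required, it is supplied by the corresponding automorphism of $C_n$ permuting the components of the cycle. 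This yields an auto-equivalence sending $([\gamma],\lambda)$ to $([\gamma_0],\lambda')$ for some $\lambda'\in\Bbbk^\ast$.

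In the second step I would correct the scalar, the loop now being fixed to $\gamma_0$. Here I would use the auto-equivalences lying in the kernel of $\Aut(\mathcal{D}^b(C_n))\to\MCG(S)$, concretely tensoring by line bundles in $\Pic^0(C_n)\cong\mathbb{G}_m$: such a twist fixes every homotopy class of curve, yet rescales the rank-one local system of a spherical object supported on $\gamma_0$ by $\mu^{r}$, where $\mu\in\Bbbk^\ast$ is the gluing parameter of the degree-zero line bundle and $r$ is the winding number of $\gamma_0$ detected by $\Pic^0(C_n)$, namely the rank computed by Theorem~\ref{IntroTheoremSimpleVectorBundles}. Taking $\gamma_0$ to be the standard latitudinal loop (a line bundle, so $r=1$), and using that $\Bbbk$ is algebraically closed so that $\mu\mapsto\mu^{r}$ is surjective on $\Bbbk^\ast$, the orbit of $\lambda'$ under these twists is all of $\Bbbk^\ast$. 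Composing the two steps carries $([\gamma],\lambda)$ to $([\gamma_0],\lambda_0)$ and proves transitivity.

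I expect the main obstacle to be the first step: verifying that the mapping classes \emph{actually realised} by auto-equivalences, rather than the full group $\MCG(S)$, already suffice for transitivity on non-separating simple loops. The geometric identification of spherical twists with Dehn twists makes all such Dehn twists available, but one must still control the subgroup they generate together with the puncture-permuting automorphisms of $C_n$, and confirm that the change-of-coordinates principle applies within this subgroup---in particular that no isotopy class of non-separating loop is isolated by the winding/line-field constraint attached to the grading of $\mathcal{D}^b(C_n)$. By contrast, the scalar step is comparatively routine once a standard loop with nonzero $\Pic^0(C_n)$-winding has been fixed.
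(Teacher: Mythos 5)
Your overall strategy coincides with the paper's: transport the problem via Theorem \ref{TheoremClassificationSphericalObjects} to non-separating simple loops on the torus, realise Dehn twists by spherical twists (Proposition \ref{PropositionSphericalTwistsBecomeDehnTwists} together with Theorem \ref{TheoremEquationForPsi}), move the loop to a standard one, and then adjust the remaining scalar by auto-equivalences acting trivially on curves. The scalar step is sound: the paper handles it by letting $\Aut(C_n)$ act transitively on the smooth points of a fixed component, which is equivalent to your observation that $\Pic^{\mathbb{0}}(C_n)$ acts simply transitively on the parameters $\lambda$ of the objects supported on a fixed rank-one loop.

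The gap is exactly the one you flag in your final paragraph, and it is not cosmetic. The change-of-coordinates principle gives transitivity of the \emph{full} mapping class group on non-separating simple loops; it says nothing about the subgroup generated by the Dehn twists you have actually realised. Moreover, invoking ``every non-separating simple loop carries a spherical object'' in order to obtain all such Dehn twists is circular relative to the paper: that statement is precisely the part of Theorem \ref{TheoremClassificationSphericalObjects} which the paper \emph{deduces from} the transitivity argument. The paper closes the step with two concrete inputs: (i) Proposition \ref{PropositionTransitivityNonSeparatingLoops}, that the pure mapping class group $\PMCG(\mathbb{T}_n)$ already acts transitively on non-separating simple loops, so no puncture permutation is needed (fortunately, since $\Aut(C_n)$ only supplies dihedral permutations of the components); and (ii) Theorem \ref{TheoremHumphriesGenerators}, that $\PMCG(\mathbb{T}_n)$ is generated by the Dehn twists about the explicit loops $\gamma_{\Pic}$ and $\gamma_{\Bbbk(x)}^0,\dots,\gamma_{\Bbbk(x)}^{n-1}$ --- which visibly carry spherical objects, namely degree-zero line bundles and skyscrapers of smooth points --- together with boundary twists, which act trivially on homotopy classes of loops. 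Alternatively one could avoid Humphries' theorem by using the classical identity $D_a D_b(a)\simeq b$ for simple loops with geometric intersection number one, combined with connectivity of the graph of non-separating simple loops whose edges are such pairs; but one of these arguments must be supplied for the proof to be complete. Your worry about the winding-number constraint is likewise resolved only a posteriori: transitivity of $\PMCG(\mathbb{T}_n)$ together with $\omega(\gamma_{\Bbbk(x)}^0)=0$ is what shows every non-separating simple loop is gradable.
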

	
	\noindent The proof of Theorem \ref{IntroTheoremTransitivitySphericalOjects} exploits the relationship between spherical twists in the derived category of a gentle algebra and Dehn twists of its surface.\medskip

\noindent The following Theorem describes the group of auto-equivalences of any cycle.
\begin{Introthm}[Corollary \ref{CorollaryAutGroup}]\label{IntroTheoremAutoGroup}
If $n \geq 2$, then the group of auto-equivalences of $\mathcal{D}^b(C_n)$ is an extension of the mapping class group  of the $n$-punctured torus and the group

\[
 \left(\Bbbk^{\times}\right)^{n} \times \mathbb{Z}  \times \Pic^{\mathbb{0}}(C_n),
\]

\noindent where $\Pic^{\mathbb{0}}(C_n)$ denotes the group of line bundles with vanishing multi-degree. Moreover, $\Aut(\mathcal{D}^b(C_1))$ is an extension of $\PSL_2(\mathbb{Z})$ and 
$\left(\mathbb{Z}_2 \ltimes  \left(\Bbbk^{\times}\right)^{n}\right) \times \mathbb{Z}  \times \Pic^{\mathbb{0}}(C_1)$.
\end{Introthm}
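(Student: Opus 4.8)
The plan is to present $\Aut(\mathcal{D}^b(C_n))$ as the extension determined by a single homomorphism into the mapping class group of the $n$-punctured torus, and then to compute its kernel. Write $T_n$ for the $n$-punctured torus. Using the surface model of $\mathcal{D}^b(C_n)$ developed in the previous sections---obtained by transporting the gentle-algebra model of $\mathcal{D}^b(\Lambda_n)$ across the Verdier localization, so that the $n$ boundary components become the $n$ punctures---every triangle auto-equivalence $F$ permutes isomorphism classes of indecomposables and hence permutes the homotopy classes of their associated curves on $T_n$. First I would check, using the geometric dictionary behind Theorem~\ref{IntroTheoremClassificationSphericalObjects} and its extension to all indecomposables, that this permutation is realized by a mapping class; this yields a homomorphism
\[
\Phi\colon \Aut(\mathcal{D}^b(C_n)) \longrightarrow \MCG(T_n),
\]
and the theorem reduces to identifying the image and the kernel of $\Phi$, both of which I would import from the description of auto-equivalence groups of derived categories of gentle algebras, adapted to the punctured quotient.

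Next I would establish surjectivity. The Dehn twists generating $\MCG(T_n)$ are realized by the spherical twists along the spherical objects of Theorem~\ref{IntroTheoremClassificationSphericalObjects}, via the identification of spherical twists with Dehn twists that already underlies the transitivity statement of Theorem~\ref{IntroTheoremTransitivitySphericalOjects}; the general surjectivity of the auto-equivalence-to-mapping-class homomorphism for gentle algebras then transfers to the quotient. The one genuine correction occurs for $n=1$: the elliptic involution acts as $-\mathrm{id}$ on $H_1(T_1)$ and therefore trivially on \emph{unoriented} homotopy classes of simple loops, so its image records nothing and $\Phi$ lands in $\PSL_2(\mathbb{Z}) = \mathrm{SL}_2(\mathbb{Z})/\{\pm \mathrm{id}\}$, whereas for $n \geq 2$ the corresponding involution permutes the punctures nontrivially and so is already visible inside $\MCG(T_n)$.

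For the kernel I would argue that an auto-equivalence inducing the trivial mapping class fixes every curve up to homotopy, so by the surface model it can only rescale the one-dimensional local systems $\lambda$ attached to the indecomposables and compose with the shift. This identifies $\ker \Phi$ with $H^1(T_n;\Bbbk^\times) \times \mathbb{Z}$, the shift generating the factor $\mathbb{Z}$. Since $b_1(T_n) = n+1$, we have $H^1(T_n;\Bbbk^\times) \cong (\Bbbk^\times)^{n+1}$, and I would split this along a basis of $H_1(T_n)$: the class of a non-separating loop accounts for tensoring by line bundles of vanishing multi-degree, giving the factor $\Pic^{\mathbb{0}}(C_n) \cong \Bbbk^\times$, while the remaining $n$ classes give the scaling auto-equivalences $(\Bbbk^\times)^{n}$. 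For $n \geq 2$ this produces the asserted kernel $(\Bbbk^\times)^{n} \times \mathbb{Z} \times \Pic^{\mathbb{0}}(C_n)$ and hence the extension.

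The hard part will be the bookkeeping for $n=1$. There the elliptic involution must be produced as an honest auto-equivalence lying over $-\mathrm{id}$, and one must compute its conjugation action on the kernel: reversing the orientation of the loop around the puncture, it inverts the corresponding scaling parameter and therefore yields the semidirect factor $\mathbb{Z}_2 \ltimes (\Bbbk^\times)^{n}$, while I would check that it commutes with the shift and with the Picard action so that $\mathbb{Z}$ and $\Pic^{\mathbb{0}}(C_1)$ remain direct factors. Confirming that this involution, together with the scalings and the shift, exhausts the entire preimage of the identity in $\PSL_2(\mathbb{Z})$---and that no analogous extra symmetry survives for $n \geq 2$---is the delicate point where the precise shape of the extension is pinned down.
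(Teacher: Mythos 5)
Your overall architecture matches the paper's: a homomorphism $\Aut(\mathcal{D}^b(C_n))\to\MCG(\mathscr{T}^n)$ built from the action on curves, surjectivity via the gentle-algebra picture, and a kernel computation producing $(\Bbbk^{\times})^{n}\times\mathbb{Z}\times\Pic^{\mathbb{0}}(C_n)$ with the extra $\mathbb{Z}_2$ for $n=1$. However, three of the steps you defer are precisely where the work lies, and as stated two of them contain genuine gaps.

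First, the claim that the permutation of homotopy classes of curves induced by an auto-equivalence ``is realized by a mapping class'' cannot simply be imported from the gentle-algebra case: after localization the surface changes (boundary components become punctures), and the paper has to (i) characterize combinatorially, inside $\mathcal{D}^b(C_n)$, when two indecomposables are represented by disjoint simple arcs --- this is done via the ideal of \emph{interior morphisms}, i.e.\ the kernel of the map to the singularity category (Lemmas \ref{LemmaInteriorMorphismStable} and \ref{LemmaInteriorMorphismsSpanningSet}) --- and then (ii) invoke the Irmak--McCarthy theorem that automorphisms of the arc complex of a punctured surface come from mapping classes (with the exceptional answer $\PGL_2(\mathbb{Z})$ for $n=1$, which is exactly why $\PSL_2(\mathbb{Z})$ rather than $\MCG(\mathscr{T}^1)$ appears in the statement). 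One must also verify orientation-preservation, which the paper does by exhibiting a non-vanishing triple composition around a puncture that an orientation-reversing map would have to reverse. None of this is automatic.

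Second, your surjectivity argument asserts that Dehn twists generate $\MCG(T_n)$; for $n\geq 2$ they only generate the \emph{pure} mapping class group, since Dehn twists fix every puncture. You need the half-twists permuting punctures as well; the paper gets the full image by showing that the radial-extension map $\MCG(\mathbb{T}_n)\twoheadrightarrow\MCG(\mathscr{T}^n)$ intertwines $\Psi$ with $\Upupsilon$ and using surjectivity of $\Psi$. Third, and most seriously, the kernel computation rests on the unproved assertion that an auto-equivalence fixing every homotopy class of curve ``can only rescale the local systems and shift.'' That is exactly the rigidity statement that must be established: a priori the kernel could contain exotic equivalences. The paper reduces this to Sibilla's theorem (Proposition \ref{PropositionSibillaRigidityOfCycles}) that an auto-equivalence fixing $\mathcal{O}_{C_n}$ and one smooth skyscraper per component is induced by an automorphism of $C_n$, after first characterizing the line bundles of multi-degree zero and the smooth skyscrapers by their intersection pattern with a fixed triangulation; the residual $\mathbb{Z}_2$ for $n=1$ (and its absence for $n\geq 2$) then comes from classifying the automorphisms of $C_n$ that fix these data, not from a cohomological count $H^1(T_n;\Bbbk^{\times})\cong(\Bbbk^{\times})^{n+1}$, which gives the right group only because the final answers happen to agree as abstract groups.
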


	\noindent Theorem \ref{IntroTheoremAutoGroup} extends earlier results by Burban und Kreussler \cite{BurbanKreusslerGenusOne} for the case $n=1$, see Remark \ref{RemarkBK}. Its proof relies on the topological model for $\mathcal{D}^b(C_n)$ and on the relationship between automorphisms of the arc complex of a punctured surface and its extended mapping class group as found by Irmak and McCarthy \cite{IrmakMcCarthy}. Similar techniques were used in \cite{OpperDerivedEquivalences}. We expect Theorem \ref{IntroTheoremAutoGroup} to generalize to certain Verdier quotients of arbitrary gentle algebras. For further details, the reader is referred to Remark \ref{RemarkGeneralizationsEquivalences}.
	
	 As an application of Theorem \ref{IntroTheoremAutoGroup}, we establish faithfulness of an action of the pure mapping class group of the punctured torus on $\mathcal{D}^b(C_n)$, see Theorem \ref{PropositionActionSplits}. The group action was constructed by Sibilla \cite{SibillaMappingClassGroupAction} who also conjectured its faithfulness.\medskip

	\noindent The final result refines the classification of simple vector bundles obtained by Burban, Drozd and Greuel \cite{BurbanDrozdGreuel}. They show that a simple vector bundle is equivalent to the datum of a non-zero scalar and a cyclic integer sequence which satisfies certain strong constraints. The vector bundle can be reconstructed explicitly from this data. However, while the entries in the cyclic sequence can be easily determined from the constraints in \cite{BurbanDrozdGreuel}, it seems that the order in which they appear was unknown apart from the case $n=1$, see \cite{BurbanStableBundles}. The final theorem provides a closed formula for this sequence. For simplicity it is phrased for non-negative multi-degrees.

	\begin{Introthm}[Corollary \ref{CorollarySimpleVCB}]\label{IntroThmPullback}Let $\mathcal{L}$ be a simple vector bundle on $C_n$ of rank $r$ and multi-degree $(d_1, \dots, d_n)$ with $d_i \geq 0$. Then, the cyclic sequence of $\mathcal{L}$ is given by the sequence of cardinalities $r\mathbb{Z} \cap (d,d']$, where $d$ and $d'$ are any two consecutive entries in the sequence

		\[
		0, d_1, d_1+d_2, \, \dots \, , \sum_{i=1}^n{d_i}, \sum_{i=1}^n{d_i}+d_1, \, \dots, \, r \cdot \sum_{i=1}^n{d_i}.
		\]
		
	\end{Introthm}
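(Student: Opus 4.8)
The plan is to deduce the formula from the geometric description of rank and multi-degree in Theorem~\ref{TheoremImagesOfSimpleVectorBundles} (Theorem~\ref{IntroTheoremSimpleVectorBundles}) by translating the cyclic sequence of \cite{BurbanDrozdGreuel} into an intersection count on the torus, and then carrying out a balanced-word computation. First I would invoke Theorem~\ref{TheoremImagesOfSimpleVectorBundles} to represent $\mathcal{L}$ by a smooth, non-separating simple loop $\gamma$ on the $n$-punctured torus whose first (latitudinal) coordinate has nowhere-vanishing derivative and which performs exactly $r$ full latitudinal turns. Identifying the torus with $S^1\times S^1$ and arranging the $n$ punctures to have distinct latitudinal coordinates, the punctures cut the latitudinal circle into consecutive arcs $I_1,\dots,I_n$, one for each component $\mathbb{P}^1_i$ of $C_n$. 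Since $\gamma$ is latitudinally monotone it traverses these in the cyclic order $I_1,\dots,I_n,I_1,\dots$, passing through each arc exactly $r$ times. The number of times the second (longitudinal) coordinate of $\gamma$ passes an integer value while the first coordinate lies in $I_i$, summed over all $r$ passages, equals the longitudinal winding contributed by the $i$-th component, which by the degree part of Theorem~\ref{TheoremImagesOfSimpleVectorBundles} is $d_i$. The total longitudinal winding is therefore $d:=\sum_i d_i$, while a fixed vertical circle $\{\ast\}\times S^1$ meets $\gamma$ in $r$ points, recording the $r$ sheets lying over each latitude. Here I would use the hypothesis $d_i\ge 0$ to ensure the longitudinal coordinate climbs monotonically, which keeps all of these counts non-negative and additive.

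The second ingredient is to match the cyclic sequence of \cite{BurbanDrozdGreuel} with this geometric data. The plan is to show that the cyclic sequence is the sequence of \emph{local climbs} of $\gamma$: reading the $rn$ passages in their cyclic order, the entry attached to a passage is the number of integer longitudinal levels crossed by $\gamma$ during that passage. This yields a cyclic sequence of length $rn$ whose entries are non-negative integers summing to $d$ and whose entries over the $r$ passages through a fixed $I_i$ sum to $d_i$. This is compatible with the restriction $\mathcal{L}|_{\mathbb{P}^1_i}\cong\bigoplus_{j=1}^r\mathcal{O}(a_{ij})$ with $\sum_j a_{ij}=d_i$, and the fact that all $rn$ entries form a single cyclic sequence reflects that a \emph{simple} bundle is glued around the cycle by one indecomposable (Jordan) block. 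I expect the identification of this geometric sequence with the representation-theoretic sequence of \cite{BurbanDrozdGreuel} to be the main obstacle: it requires threading the explicit Burban--Drozd description through the Verdier-localisation surface model underlying Theorem~\ref{TheoremImagesOfSimpleVectorBundles}, keeping careful track of the base point (which fixes the cyclic phase), the orientation of the cycle, and the role of the scalar $\lambda$, which does not enter the sequence but pins down the remaining datum. Once this dictionary is in place, the multiset of entries is already classical; the genuinely new content, as noted in the introduction, is the cyclic \emph{order}, which is what the explicit formula records.

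With the dictionary established, the computation is a balanced-word argument. The key point is that embeddedness of $\gamma$ forces the $r$ latitudinally monotone strands over each arc to remain disjoint and ordered; equivalently, the $d$ longitudinal-level crossings must be distributed among the $rn$ passages as evenly as is compatible with the per-component totals $d_i$. I would make this precise by putting $\gamma$ into the standard straight position, in which the running count of longitudinal-level crossings after the $k$-th passage equals $\lfloor s_k/r\rfloor$, where $s_0=0,\ s_1=d_1,\ \dots,\ s_{rn}=r\sum_i d_i$ is exactly the partial-sum sequence in the statement (its increments cycle through $d_1,\dots,d_n$ a total of $r$ times). The entry of the $k$-th passage is then the discrete derivative $\lfloor s_k/r\rfloor-\lfloor s_{k-1}/r\rfloor$, which counts the integers $m$ with $s_{k-1}<rm\le s_k$, that is, $|r\mathbb{Z}\cap(s_{k-1},s_k]|$. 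Since consecutive entries of the stated sequence are precisely consecutive values $d,d'$ of $0,d_1,\dots,r\sum_i d_i$, this is the claimed cardinality.

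Finally I would record the consistency checks that also serve to confirm compatibility with the existing classifications. Summing over all passages gives $\sum_k |r\mathbb{Z}\cap(s_{k-1},s_k]|=\lfloor r\textstyle\sum_i d_i/r\rfloor-\lfloor 0\rfloor=\sum_i d_i$, the total degree, and summing over the $r$ passages through $I_i$ gives $d_i$. Moreover each such entry lies in $\{\lfloor d_i/r\rfloor,\lceil d_i/r\rceil\}$, since it counts multiples of $r$ in an interval of length $d_i$; this recovers the expected balancedness of the restriction of a simple bundle to each component and is consistent with \cite{BodnarchukDrozdGreuel} and with Theorem~\ref{TheoremImagesOfSimpleVectorBundles}. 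The only subtlety I would flag is that the cyclic sequence is defined only up to rotation, so the choice $s_0=0$ merely fixes a representative; verifying that this representative agrees with the Burban--Drozd--Greuel normalisation is part of the base-point bookkeeping in the dictionary step above.
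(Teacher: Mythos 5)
Your proposal takes a genuinely different, geometric route, but it has a gap at its central step. The paper proves Corollary \ref{CorollarySimpleVCB} with no geometry at all: Proposition \ref{LemmaSequencesHaveIntersectionsProperties} checks by a short interval-counting argument that the explicit sequence $\seq(r,\mathbb{d})$ satisfies the Burban--Drozd--Greuel simplicity criteria of Theorem \ref{TheoremNecessarySufficientConditions}, so that $\seq(r,\mathbb{d})$ is the cyclic sequence of \emph{some} simple vector bundle of rank $r$ and multi-degree $\mathbb{d}$; the uniqueness statement of Theorem \ref{TheoremBodnarchukDrozdGreuel}, together with the fact that the cyclic sequence does not see the determinant, then forces it to be the cyclic sequence of \emph{every} such bundle. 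Your plan instead tries to read the cyclic sequence off the representing loop.

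The gap is the step you yourself flag as ``the main obstacle'': the identification of the Burban--Drozd--Greuel cyclic sequence (degrees of the line bundle summands of $\pi^{\ast}\mathcal{E}$ in the cyclic order given by the gluing matrices $M_i$) with the local-climb sequence of the loop. You do not close it, and it is not mere bookkeeping: Theorem \ref{TheoremImagesOfSimpleVectorBundles} identifies only the rank and multi-degree of the loop with those of the bundle, and nothing in the surface model computes the splitting type of $\pi^{\ast}\mathcal{E}$ or the matrices $M_i$ from the walk $W_{\gamma}$. Establishing that dictionary would require precisely the explicit computation of $\mathbb{F}$ on vector bundles that the paper is designed to avoid. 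A second gap sits in your ``standard straight position'' step: the claim that embeddedness forces the running crossing count after the $k$-th passage to equal $\lfloor s_k/r\rfloor$ is the content of Proposition \ref{PropositionNewProof}, proved in the paper by induction on $n$ (contracting punctures) down to the classical classification of simple closed curves on the once-punctured torus; exhibiting one straight representative does not by itself exclude other simple loops in $\CVb{n}$ with the same rank and multi-degree but a different climb sequence. Your arithmetic ($\lfloor s_k/r\rfloor-\lfloor s_{k-1}/r\rfloor=\lvert r\mathbb{Z}\cap(s_{k-1},s_k]\rvert$ for $d_i\geq 0$) and your consistency checks are correct, but without these two ingredients the argument does not go through; the combinatorial verification against the criteria of Theorem \ref{TheoremNecessarySufficientConditions} plus uniqueness is what makes the paper's proof short.
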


\noindent The results of this paper allow us to give an independent proof of the classification of simple vector bundles on $C_n$ which can be found in Proposition \ref{PropositionNewProof}.

\subsection*{Organization of the paper} 

\noindent Section \ref{SectionCategoricalResolutionsCycles} contains a discussion of cycles of projective lines $C_n$, their tilted algebras $\Lambda_n$ and the relevant results from \cite{BurbanDrozdTilting} on the embedding $\Perf(C_n) \hookrightarrow \mathcal{D}^b(\Lambda_n)$.

 In the subsequent section we recall a few basic facts about spherical objects and their spherical twists.
 
  The third section  provides a detailed account of the surface model for the categorical resolution $\mathcal{D}^b(\Lambda_n)$ of $C_n$. In particular, we explain  the relationship between curves and their intersections on the surface model and their connection to indecomposable objects and morphisms between them. 
 
 After this preparation, Section \ref{SectionSphericalObjectsTori}  contains a first discussion of  spherical objects in $\mathcal{D}^b(\Lambda_n)$ but not yet the proof of Theorem \ref{IntroTheoremClassificationSphericalObjects} as this requires further results on the mapping class group of the surface model of $\mathcal{D}^b(\Lambda_n)$ and its connection to auto-equivalences of $\mathcal{D}^b(\Lambda_n)$.  The necessary background material for these is presented in Section \ref{SectionMappingClassGroup} after which Theorem \ref{IntroTheoremClassificationSphericalObjects} (classification of spherical objects) and Theorem \ref{IntroTheoremTransitivitySphericalOjects} (transitivity) are proved in Section \ref{SectionAllTheProofs}.

In Section \ref{SectionSurfaceModelCycle}, the surface model for $\mathcal{D}^b(\Lambda_n)$ is used to derive a similar model for a Verdier quotient of $\mathcal{D}^b(\Lambda_n)$ by the ``boundary objects'' which is equivalent to $\mathcal{D}^b(C_n)$. The description of the auto-equivalence group of $\mathcal{D}^b(C_n)$ as an extension of a mapping class group (Theorem \ref{IntroTheoremAutoGroup}) essentially follows from there by exploiting the relationship between the automorphism group of the \textit{arc complex} and the mapping class group of a surface.

In the final section we determine the curves on the surface model of $\mathcal{D}^b(C_n)$ which represent vector bundles. The pursued approach avoids further direct computations of the embedding $\Perf(C_n) \hookrightarrow \mathcal{D}^b(\Lambda_n)$ and rather exploits general properties of the class of vector bundles in combination with the surface model of $\mathcal{D}^b(\Lambda_n)$. The latter is re-phrased in a suitable combinatorial way akin to the cyclic sequences in \cite{BurbanDrozdGreuel} which describe (simple) vector bundles.

	\subsection*{Acknowledgements}The present work evolved from parts of my Ph.D.\ thesis.
	I like to thank Igor Burban, Wassilij Gnedin and Alexandra Zvonareva for many helpful discussions on the subject and for feedback on earlier drafts of this paper. I also like to thank the anonymous referee for their valuable feedback. While working on this project, I was supported by the DFG grant BU 1866/4-1, the Collaborative Research Centre on ``Symplectic Structures in Geometry, Algebra and Dynamics'' (CRC/TRR 191), by the Czech Science Foundation as part of the project ``Symmetries, dualities and
	approximations in derived algebraic geometry and representation theory'' (20-13778S) and the Charles University Research Center program (UNCE/SCI/022). I am further supported by the Primus grant PRIMUS/23/SCI/006 and was partially supported by the Cooperatio program of Charles University. \smallskip

	\addtocontents{toc}{\protect\setcounter{tocdepth}{0}}
	\section*{Conventions and general notation}
	\noindent We fix an algebraically closed field $\Bbbk$ and denote by $\Bbbk^{\times}$ its group of units. For a ringed space $(X, \mathcal{O}_X)$, we denote by $\Coh X$ its category of coherent $\mathcal{O}_X$-modules, by $\Perf(X)$ its category of perfect complexes, and by $\mathcal{D}^b(X)$ its bounded derived category of coherent sheaves. For a finite-dimensional $\Bbbk$-algebra $A$, we denote by $\mathcal{D}^b(A)$ the bounded derived category of finite-dimensional left $A$-modules. For objects $X , Y $ of a triangulated category $\mathcal{T}$ we write $\Hom^{\bullet}(X,Y)\coloneqq \bigoplus_{i \in \mathbb{Z}}{\Hom(X, Y[i])}$. Given a natural transformation $\eta: \mathcal{F} \rightarrow \mathcal{G}$ between functors $\mathcal{F}, \mathcal{G}: \mathcal{C} \rightarrow \mathcal{D}$ and an object $X \in \mathcal{C}$, we denote by $\eta_X$ the induced map $\mathcal{F}(X) \rightarrow \mathcal{G}(X)$.
	
	For an integer valued variable $x$ and integers $p, q \in \mathbb{Z}$, we often rewrite the condition $p \leq x \leq q$ as $x \in [p,q]$ and similar for all other types of intervals. For an arrow $\alpha$ in a quiver, we write $s(\alpha)$ (resp.\ $t(\alpha)$) for the source (resp.\ the target) of $\alpha$. Compositions of arrows are to be understood from right to left. We write $\mathbb{Z}_m$ for the cyclic group $\mathbb{Z}/{m\mathbb{Z}}$ and $\mathfrak{S}_m$ for the symmetric group of $m$ elements. Finally, for every $m \in \mathbb{N} \setminus \{0\}$,  $\mathbb{Z}^{\mathbb{Z}_m}$ denotes the set of functions $\mathbb{Z}_m \rightarrow \mathbb{Z}$, or, equivalently, the set of cyclic integer sequences of length $m$ .
	
	\addtocontents{toc}{\protect\setcounter{tocdepth}{1}}

	\section{Cycles of projective lines and their categorical resolutions}\label{SectionCategoricalResolutionsCycles}
	
	\noindent We recall the definition of a Kodaira cycle of projective lines and the relevant results about their categorical resolutions from \cite{BurbanDrozdTilting}. In what follows, let $n \geq 1$.
	\begin{definition}\label{DefinitionCycleOfProjectiveLines}An \textbf{$n$-cycle of projective lines} is a reduced rational projective curve $C_n$ of arithmetic genus $1$, that is, a union of $n$ copies of $\mathbb{P}^1$ glued together transversally in a configuration of type $\tilde{A}_{n-1}$.
	\end{definition}
	\noindent By definition, $C_n$ has $n$ irreducible components, henceforth denoted by $\mathbb{P}^1_i$ ($i \in \mathbb{Z}_n$), such that for all $i \in \mathbb{Z}_n$, $\mathbb{P}^1_i$ and $\mathbb{P}^1_{i+1}$ intersect in a nodal singularity. If $n > 1$, $\mathbb{P}^1_i  \cong \mathbb{P}^1$ for each $i \in \mathbb{Z}_n$. The curve $C_1$ is isomorphic to the {Weierstra\ss} nodal cubic and is depicted in Figure \ref{FigureE3} together with $C_2$ and $C_3$.\medskip

	\noindent Burban and Drozd constructed a fully faithful and exact functor \[\begin{tikzcd}\Perf(C_n) \arrow[hookrightarrow]{r} & \mathcal{D}^b(\Coh \mathbb{X}_n),\end{tikzcd}\]
	where $\mathbb{X}_n$ is a certain non-commutative curve. They proved that $\mathcal{D}^b(\Coh \mathbb{X}_n)$ contains a tilting complex $\mathcal{H}$. The opposite of its endomorphism algebra is isomorphic to the algebra $\Lambda_n$ which is by definition the quotient of  the path algebra of the quiver $Q(n)$ shown in Figure \ref{FigureQuiverLambdaN}  by the ideal generated by  the set of paths 
	\begin{equation}\label{EquationSetOfRelations}
	R:=\{b_ia_{i}, d_ic_i  \, | \, i\in [0,n)\}.
\end{equation} In other words, the composition of arrows in Figure \ref{FigureQuiverLambdaN} with different color vanishes.
	
	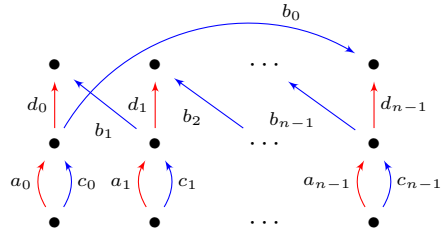
\begin{figure}[H]
		\centering
		\[\begin{tikzcd} \bullet  & \bullet  & \cdots & \bullet\\
		\bullet \arrow[red]{u}[black]{d_0} \arrow[bend angle=45, bend left, near end, blue]{urrr}[black]{b_0} & \bullet \arrow[red]{u}[black]{d_1} \arrow[near start, blue]{ul}[black]{b_1} & \cdots \arrow[blue]{ul}[black]{b_2} & \bullet \arrow[blue]{ul}[black]{b_{n-1}} \arrow[swap, red]{u}[black]{d_{n-1}} \\
		\bullet \arrow[bend left, red]{u}[black]{a_0} \arrow[ bend right, swap, blue]{u}[black]{c_0} & \bullet \arrow[bend left, red]{u}[black]{a_1} \arrow[bend right, swap, blue]{u}[black]{c_1} & \cdots  & \bullet \arrow[bend left, red]{u}[black]{a_{n-1}} \arrow[bend right, swap, blue]{u}[black]{c_{n-1}} \end{tikzcd}\]
		\caption{The underlying quiver $Q(n)$ of the algebra $\Lambda_n$. Arrows of different colors compose to zero in $\Lambda_n$, i.e.\ $b_i a_i=0=d_ic_i$ for all $i \in \mathbb{Z}_n$.}
		\label{FigureQuiverLambdaN}
	\end{figure}
	
	\noindent  For example, $\Lambda_1$ is the quotient of the path algebra of the quiver on the left in Figure \ref{FigureDoubleKronecker} by the ideal $(ba, dc)$ while the quiver of $\Lambda_3$ is shown on the right hand side .

	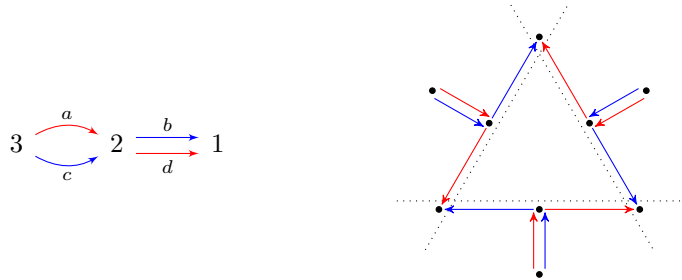
\begin{figure}[H]
		\centering
		
\begin{displaymath}\arraycolsep=3em\begin{array}{cc}
	{\begin{tikzcd}[ampersand replacement=\&] 3 \arrow[bend left, red]{r}[black]{a} \arrow[swap, bend right, blue]{r}[black]{c} \& 2  \arrow[yshift=0.7ex, blue]{r}[black]{b} \arrow[swap, yshift=-0.7ex, red]{r}[black]{d} \& 1 \end{tikzcd}} & {\begin{tikzpicture}[baseline=0.65cm, scale=1.5]


\draw[dotted] ({0.75-0.5*cos(0)},{-0.5*sin(0)})--({0.75+2*cos(0)},{2*sin(0)});
\draw[dotted] ({0.75-0.5*cos(60)},{-0.5*sin(60)})--({0.75+2*cos(60)},{2*sin(60)});

\draw[dotted] ({0.75+1.5*cos(60)-0.5*cos(300)},{1.5*sin(60)-0.5*sin(300)})--({0.75+1.5*cos(60)+2*cos(300)},{1.5*sin(60)+2*sin(300)});


\filldraw ({1.5+sqrt(3)*0.25*cos(150)+0.65*cos(150)},{sqrt(3)*0.25+sqrt(3)*0.25*sin(150)+0.65*sin(150)}) circle (.7pt);

\draw [<-, red] ({1.5+sqrt(3)*0.25*cos(150)+0.1*cos(150)+0.05*cos(60)},{sqrt(3)*0.25+sqrt(3)*0.25*sin(150)+0.1*sin(150)+0.05*sin(60)})--({1.5+sqrt(3)*0.25*cos(150)+0.5*cos(150)+0.1*cos(150)+0.05*cos(60)},{sqrt(3)*0.25+sqrt(3)*0.25*sin(150)+0.1*sin(150)+0.5*sin(150)+0.05*sin(60)});

\draw [<-, blue] ({1.5+sqrt(3)*0.25*cos(150)+0.1*cos(150)-0.05*cos(60)},{sqrt(3)*0.25+sqrt(3)*0.25*sin(150)+0.1*sin(150)-0.05*sin(60)})--({1.5+sqrt(3)*0.25*cos(150)+0.5*cos(150)+0.1*cos(150)-0.05*cos(60)},{sqrt(3)*0.25+sqrt(3)*0.25*sin(150)+0.1*sin(150)+0.5*sin(150)-0.05*sin(60)});

\filldraw ({1.5+sqrt(3)*0.25*cos(150)+0.075*cos(150)},{sqrt(3)*0.25+sqrt(3)*0.25*sin(150)+0.075*sin(150)}) circle (.7pt);

\draw [->, red] ({1.5+sqrt(3)*0.25*cos(150)+0.075*cos(150)+0.05*cos(240)},{sqrt(3)*0.25+sqrt(3)*0.25*sin(150)+0.075*sin(150)+0.05*sin(240)})--({1.5+sqrt(3)*0.25*cos(150)+0.075*cos(150)+sqrt(3)*0.48*cos(240)},{sqrt(3)*0.25+sqrt(3)*0.25*sin(150)+0.075*sin(150)+sqrt(3)*0.48*sin(240)});

\filldraw ({1.5+sqrt(3)*0.587*cos(210)},{sqrt(3)*0.25+sqrt(3)*0.587*sin(210)}) circle (.7pt);

\draw [->, blue] ({1.5+sqrt(3)*0.25*cos(150)+0.075*cos(150)-0.05*cos(240)},{sqrt(3)*0.25+sqrt(3)*0.25*sin(150)+0.075*sin(150)-0.05*sin(240)})--({1.5+sqrt(3)*0.25*cos(150)+0.075*cos(150)-sqrt(3)*0.48*cos(240)},{sqrt(3)*0.25+sqrt(3)*0.25*sin(150)+0.075*sin(150)-sqrt(3)*0.48*sin(240)});


\filldraw ({1.5+sqrt(3)*0.25*cos(150+120)+0.65*cos(150+120)},{sqrt(3)*0.25+sqrt(3)*0.25*sin(150+120)+0.65*sin(150+120)}) circle (.7pt);

\draw [<-, red] ({1.5+sqrt(3)*0.25*cos(150+120)+0.1*cos(150+120)+0.05*cos(60+120)},{sqrt(3)*0.25+sqrt(3)*0.25*sin(150+120)+0.1*sin(150+120)+0.05*sin(60+120)})--({1.5+sqrt(3)*0.25*cos(150+120)+0.5*cos(150+120)+0.1*cos(150+120)+0.05*cos(60+120)},{sqrt(3)*0.25+sqrt(3)*0.25*sin(150+120)+0.1*sin(150+120)+0.5*sin(150+120)+0.05*sin(60+120)});

\draw [<-, blue] ({1.5+sqrt(3)*0.25*cos(150+120)+0.1*cos(150+120)-0.05*cos(60+120)},{sqrt(3)*0.25+sqrt(3)*0.25*sin(150+120)+0.1*sin(150+120)-0.05*sin(60+120)})--({1.5+sqrt(3)*0.25*cos(150+120)+0.5*cos(150+120)+0.1*cos(150+120)-0.05*cos(60+120)},{sqrt(3)*0.25+sqrt(3)*0.25*sin(150+120)+0.1*sin(150+120)+0.5*sin(150+120)-0.05*sin(60+120)});

\filldraw ({1.5+sqrt(3)*0.25*cos(150+120)+0.075*cos(150+120)},{sqrt(3)*0.25+sqrt(3)*0.25*sin(150+120)+0.075*sin(150+120)}) circle (.7pt);

\draw [->, red] ({1.5+sqrt(3)*0.25*cos(150+120)+0.075*cos(150+120)+0.05*cos(240+120)},{sqrt(3)*0.25+sqrt(3)*0.25*sin(150+120)+0.075*sin(150+120)+0.05*sin(240+120)})--({1.5+sqrt(3)*0.25*cos(150+120)+0.075*cos(150+120)+sqrt(3)*0.48*cos(240+120)},{sqrt(3)*0.25+sqrt(3)*0.25*sin(150+120)+0.075*sin(150+120)+sqrt(3)*0.48*sin(240+120)});

\filldraw ({1.5+sqrt(3)*0.587*cos(210+120)},{sqrt(3)*0.25+sqrt(3)*0.587*sin(210+120)}) circle (.7pt);

\draw [->, blue] ({1.5+sqrt(3)*0.25*cos(150+120)+0.075*cos(150+120)-0.05*cos(240+120)},{sqrt(3)*0.25+sqrt(3)*0.25*sin(150+120)+0.075*sin(150+120)-0.05*sin(240+120)})--({1.5+sqrt(3)*0.25*cos(150+120)+0.075*cos(150+120)-sqrt(3)*0.48*cos(240+120)},{sqrt(3)*0.25+sqrt(3)*0.25*sin(150+120)+0.075*sin(150+120)-sqrt(3)*0.48*sin(240+120)});


\filldraw ({1.5+sqrt(3)*0.25*cos(150+240)+0.65*cos(150+240)},{sqrt(3)*0.25+sqrt(3)*0.25*sin(150+240)+0.65*sin(150+240)}) circle (.7pt);

\draw [<-, red] ({1.5+sqrt(3)*0.25*cos(150+240)+0.1*cos(150+240)+0.05*cos(60+240)},{sqrt(3)*0.25+sqrt(3)*0.25*sin(150+240)+0.1*sin(150+240)+0.05*sin(60+240)})--({1.5+sqrt(3)*0.25*cos(150+240)+0.5*cos(150+240)+0.1*cos(150+240)+0.05*cos(60+240)},{sqrt(3)*0.25+sqrt(3)*0.25*sin(150+240)+0.1*sin(150+240)+0.5*sin(150+240)+0.05*sin(60+240)});

\draw [<-, blue] ({1.5+sqrt(3)*0.25*cos(150+240)+0.1*cos(150+240)-0.05*cos(60+240)},{sqrt(3)*0.25+sqrt(3)*0.25*sin(150+240)+0.1*sin(150+240)-0.05*sin(60+240)})--({1.5+sqrt(3)*0.25*cos(150+240)+0.5*cos(150+240)+0.1*cos(150+240)-0.05*cos(60+240)},{sqrt(3)*0.25+sqrt(3)*0.25*sin(150+240)+0.1*sin(150+240)+0.5*sin(150+240)-0.05*sin(60+240)});

\filldraw ({1.5+sqrt(3)*0.25*cos(150+240)+0.075*cos(150+240)},{sqrt(3)*0.25+sqrt(3)*0.25*sin(150+240)+0.075*sin(150+240)}) circle (.7pt);

\draw [->, red] ({1.5+sqrt(3)*0.25*cos(150+240)+0.075*cos(150+240)+0.05*cos(240+240)},{sqrt(3)*0.25+sqrt(3)*0.25*sin(150+240)+0.075*sin(150+240)+0.05*sin(240+240)})--({1.5+sqrt(3)*0.25*cos(150+240)+0.075*cos(150+240)+sqrt(3)*0.48*cos(240+240)},{sqrt(3)*0.25+sqrt(3)*0.25*sin(150+240)+0.075*sin(150+240)+sqrt(3)*0.48*sin(240+240)});

\filldraw ({1.5+sqrt(3)*0.587*cos(210+240)},{sqrt(3)*0.25+sqrt(3)*0.587*sin(210+240)}) circle (.7pt);

\draw [->, blue] ({1.5+sqrt(3)*0.25*cos(150+240)+0.075*cos(150+240)-0.05*cos(240+240)},{sqrt(3)*0.25+sqrt(3)*0.25*sin(150+240)+0.075*sin(150+240)-0.05*sin(240+240)})--({1.5+sqrt(3)*0.25*cos(150+240)+0.075*cos(150+240)-sqrt(3)*0.48*cos(240+240)},{sqrt(3)*0.25+sqrt(3)*0.25*sin(150+240)+0.075*sin(150+240)-sqrt(3)*0.48*sin(240+240)});

\end{tikzpicture}}
	\end{array}
	\end{displaymath}		
		\caption{The quivers $Q(1)$ (left) and $Q(3)$ (right).}
		\label{FigureDoubleKronecker}
	\end{figure}

\noindent For every $n \geq 1$. the algebra $\Lambda_n$ is gentle in the sense of \cite{AssemSkowronski} and has global dimension $2$. In particular, there is an Auslander-Reiten translation $\tau: \mathcal{D}^b(\Lambda_n) \rightarrow \mathcal{D}^b(\Lambda_n)$ and $\tau \cong \nu[-1]$, where $\nu$ denotes the left derived  Nakayama functor.\medskip
	
	\noindent It follows from the preceding results that there exists an embedding of triangulated categories 
	\[\begin{tikzcd}\mathbb{F}: \Perf(C_n) \arrow[hook]{r}& \mathcal{D}^b(\Lambda_n).\end{tikzcd}\]
	
	\noindent Burban and Drozd provided a characterization of its essential image $\Img \mathbb{F}$. 
	\begin{thm}[Corollary 3.5 and Corollary 6.3, \cite{BurbanDrozdTilting}]\label{TheoremBurbanDrozdImage}
		The category $\Img \mathbb{F}$ is the full subcategory of $\tau$-invariant objects, i.e.\ all objects $X \in \mathcal{D}^b(\Lambda_n)$ such that $X \cong \tau X$.
	\end{thm}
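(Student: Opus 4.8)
The plan is to recast the statement as a comparison of Serre functors. Since $\Lambda_n$ has finite global dimension, $\nu$ is the Serre functor of $\mathcal{D}^b(\Lambda_n)$, so the relation $X\cong\tau X$ is really a condition on the way the Serre functor acts on $X$. On the geometric side, $C_n$ is a reduced Gorenstein curve of arithmetic genus one, so its dualizing sheaf is trivial, $\omega_{C_n}\cong\mathcal{O}_{C_n}$; Serre duality on the one-dimensional $C_n$ then reads $\Hom(E,F)\cong\Hom(F,E[1])^{*}$ for $E\in\Perf(C_n)$, so that every perfect complex is a Calabi--Yau object whose Serre functor is the shift $[1]$. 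The theorem asserts that this Calabi--Yau property singles out the perfect complexes among all objects, and I would establish the two inclusions separately, transporting the whole problem to $\mathcal{D}^b(\Coh\mathbb{X}_n)$ and only returning to $\Lambda_n$ at the very end.

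For the transport of structure I use that a tilting equivalence preserves the Serre functor up to isomorphism: under $\mathcal{D}^b(\Coh\mathbb{X}_n)\simeq\mathcal{D}^b(\Lambda_n)$ the functor $\nu$ corresponds to the geometric Serre functor $(-)\otimes\omega_{\mathbb{X}_n}[1]$ of the noncommutative curve, where $\omega_{\mathbb{X}_n}$ is its dualizing sheaf. For a Gorenstein projective curve the Auslander--Reiten translation is the twist $(-)\otimes\omega$ (the shift in the Serre functor cancelling against the homological degree), so $\tau$ corresponds to $(-)\otimes\omega_{\mathbb{X}_n}$ and the condition $X\cong\tau X$ becomes $\omega_{\mathbb{X}_n}$-periodicity, $X\otimes\omega_{\mathbb{X}_n}\cong X$. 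The statement thus reduces to: the essential image of $\Perf(C_n)\hookrightarrow\mathcal{D}^b(\Coh\mathbb{X}_n)$ is exactly the subcategory of $\omega_{\mathbb{X}_n}$-periodic objects. This is the form I would attack, matching Corollary 3.5 of \cite{BurbanDrozdTilting}; the passage back to $\Lambda_n$, which is Corollary 6.3, is then the formal transport just described.

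The inclusion $\Img\mathbb{F}\subseteq\{X\mid X\cong\tau X\}$ is the easier half. Here $\mathbb{F}$ is induced by tensoring perfect complexes on $C_n$ with the sheaf of orders $\mathcal{A}$ defining $\mathbb{X}_n$, and $\omega_{\mathbb{X}_n}$ is the dualizing bimodule induced from $\omega_{C_n}$. Since $\omega_{C_n}\cong\mathcal{O}_{C_n}$, tensoring the image of a perfect complex by $\omega_{\mathbb{X}_n}$ returns the same object: this is a local statement at the nodes, where one passes to the stalk of $\mathcal{A}$ and checks $\omega_{\mathbb{X}_n}\otimes_{\mathcal{A}}(\mathcal{A}\otimes_{\mathcal{O}}E)\cong\mathcal{A}\otimes_{\mathcal{O}}E$ for $E$ locally free over $\mathcal{O}_{C_n}$. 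Granting this, $\mathbb{F}(E)$ is $\omega_{\mathbb{X}_n}$-periodic, equivalently $\tau$-invariant.

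The reverse inclusion $\{X\mid X\cong\tau X\}\subseteq\Img\mathbb{F}$ is the main obstacle, as it requires controlling \emph{all} objects fixed by $\tau$, including potential spurious ones outside $\Perf(C_n)$. My plan is to argue on indecomposables. The indecomposable objects of $\mathcal{D}^b(\Coh\mathbb{X}_n)$ that do not come from $C_n$ are concentrated at the resolved nodes, and there $\omega_{\mathbb{X}_n}$ acts on the relevant simple and torsion modules by a nontrivial permutation reflecting the non-maximality of the order $\mathcal{A}$; such objects are therefore never $\omega_{\mathbb{X}_n}$-periodic. Equivalently, on the algebra side one analyses the $\tau$-orbits on the string and band objects of the gentle algebra $\Lambda_n$ and shows that the only $\tau$-fixed indecomposables are those lying in $\Img\mathbb{F}$. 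The crux is exactly this fixed-point analysis and the verification that the fixed locus matches the geometric image; once the $\tau$-fixed indecomposables are pinned down, closure of $\Img\mathbb{F}$ under shifts, extensions and direct summands upgrades the conclusion from indecomposables to arbitrary objects.
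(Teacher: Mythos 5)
You should first be aware that the paper offers no proof of Theorem \ref{TheoremBurbanDrozdImage}: it is imported wholesale from Burban--Drozd (Corollaries 3.5 and 6.3 of \cite{BurbanDrozdTilting}), and everything nearby in the text -- notably Proposition \ref{PropositionClassificationCalabiYauObjects} and the corollary identifying $\Img \mathbb{F}$ with the loop objects -- is \emph{deduced from} this theorem, not used to prove it. So your sketch is a reconstruction of the cited argument and must stand on its own. Its architecture (translate $\tau$-invariance into $\omega_{\mathbb{X}_n}$-periodicity on $\mathcal{D}^b(\Coh \mathbb{X}_n)$ and prove two inclusions) is reasonable, but already the ``easy'' inclusion $\Img\mathbb{F}\subseteq\{X \cong \tau X\}$ has a hidden trap: the fact that every $E\in\Perf(C_n)$ is $1$-Calabi--Yau \emph{relative to} $\mathcal{D}^b(C_n)$ does not transport through the fully faithful $\mathbb{F}$, since full faithfulness only controls $\Hom(\mathbb{F}E,W)$ for $W$ in the image, and Yoneda cannot then identify $\nu(\mathbb{F}E)[-1]$ with $\mathbb{F}E$ unless one already knows the former lies in $\Img\mathbb{F}$. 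Your fallback of a direct bimodule computation is the right fix, but note that the embedding is induced by $\mathcal{F}\otimes_{\mathcal{O}}-$ with $\mathcal{F}$ the column $(\mathcal{O},\mathcal{I})^{T}$ of $\mathscr{A}$, not by $\mathscr{A}\otimes_{\mathcal{O}}-$, so the statement to verify is $\omega_{\mathbb{X}_n}\otimes_{\mathscr{A}}\mathcal{F}\cong\mathcal{F}$ at the nodes; this is a genuine computation, not a formality.

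The serious gap is the reverse inclusion. Your claim that the indecomposables outside $\Img\mathbb{F}$ are ``concentrated at the resolved nodes'' is false: over the gentle algebra $\Lambda_n$ the complement of $\Img\mathbb{F}$ consists of \emph{all} string complexes, a family parametrized by arcs on $\mathbb{T}_n$, almost none of which are supported near the exceptional torsion modules. The correct fixed-point statement is that the $\tau$-invariant indecomposables are exactly the band complexes (Bobinski's theorem \cite{Bobinski}, combined with the fact that the only fractionally Calabi--Yau string complexes are boundary arcs, which satisfy $\tau^2 X\cong X[2]$ but never $\tau X\cong X$). Even granting that, you still owe the essential step: showing that \emph{every} band complex lies in the essential image of $\mathbb{F}$. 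Your proposal names this ``the crux'' and then stops, so the proof is incomplete precisely where it matters. One workable route uses the quotient functor of Proposition \ref{PropositionVerdierQuotientIsDb}: the kernel of the localization is generated by boundary objects, $\mathbb{F}$ followed by the quotient is the identity on $\Perf(C_n)$, and the orthogonality of $\Dinv$ and $\Dpart$ lets one recover a $\tau$-invariant object from its image in $\mathcal{D}^b(C_n)$ and check that this image is perfect. Some argument of this kind has to be supplied before the theorem is proved.
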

	
	\noindent They further computed the image of the Jacobian $\Pic^0(C_1) \cong \Bbbk^{\times}$ (Proposition 7.4, \cite{BurbanDrozdTilting}), i.e.\ the line bundles of degree $0$, as well as the images of the skyscraper sheaves $\Bbbk(x)$ of smooth points $x \in C_1$ (Proposition 7.2, \cite{BurbanDrozdTilting}).
	By twisting their tilting object with a line bundle of degree $1$, the functor $\mathbb{F}$ identifies isomorphism classes of line bundles of degree $0$ with the isomorphism classes of the following family $(\mathcal{O}(\lambda))_{\lambda \in \Bbbk^{\times}}$ of two-term complexes\footnote{This observation was communicated to us by Igor Burban.}.

	\begin{displaymath} \label{EquationComplexStructureSheaf}\mathcal{O}(\lambda)= \begin{tikzcd}[ampersand replacement=\&]\cdots \arrow{r} \& 0 \arrow{r} \& P_1 \arrow{r}{b+\lambda d} \& P_2 \arrow{r} \& 0 \arrow{r} \& \cdots, \end{tikzcd}\end{displaymath}
	\noindent where $P_i$ denotes the indecomposable projective $\Lambda_1$-module associated to the vertex $i$ in $Q(1)$. The set $\left\{\Bbbk(x) \, | \, x \in C_1 \text{ is smooth} \right\}$ is identified with the set of complexes $\{\Bbbk(\lambda) \, | \, \lambda \in \Bbbk^{\times} \}$, where
	
	\begin{displaymath}\label{EquationComplexSkyscraperSheaf} \Bbbk(\lambda) = \begin{tikzcd}[ampersand replacement=\&]\cdots \arrow{r} \& 0 \arrow{r} \& P_2 \arrow{r}{a+\lambda c} \& P_3 \arrow{r} \& 0 \arrow{r} \& \cdots. \end{tikzcd}\end{displaymath}
	
	\noindent Both $\mathcal{O}(\lambda)$ and $\Bbbk(\lambda)$ are concentrated in degrees $-1$ and $0$ and are quasi-isomorphic to $\Lambda_1$-modules. We notice that, up to shift,  they are completely encoded in the quivers in Figure \ref{FigureQuivers}.
	
	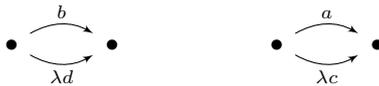
\begin{figure}[H]
		
		\begin{tikzcd}[ampersand replacement=\&]
		\bullet \arrow[bend left]{r}{b} \arrow[bend right, swap]{r}{\lambda d} \& \bullet \& \&  \bullet \arrow[bend left]{r}{a} \arrow[bend right, swap]{r}{\lambda c} \& \bullet 
		\end{tikzcd}
		
		\caption{The quiver description of the complexes $\mathcal{O}(\lambda)$ (left) and $\Bbbk(\lambda)$ (right).} \label{FigureQuivers}
	\end{figure}
	
	\noindent Every vertex of the quivers represents an indecomposable projective $\Lambda_1$-module. Arrows connect projective modules in consecutive cohomological degrees and their labels describe the differentials of the complexes.\medskip
	
	\noindent  The complexes $\mathcal{O}(\lambda)$ and $\Bbbk(\lambda)$ have natural generalizations to objects in $\mathcal{D}^b(\Lambda_n)$. \noindent Their description in terms of quivers is found in Figure \ref{FigurePicardGroupComplex}:

	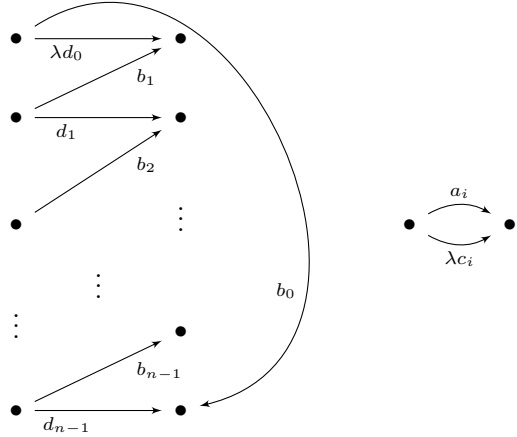
\begin{figure}
		\begin{displaymath}
		\begin{tikzcd}[scale=0.8, ampersand replacement=\&]
		\bullet \arrow[bend angle=100, bend left, swap, min distance=4cm, near end]{ddddrr}{b_0} \arrow[near start, swap]{rr}{d_{n-1}} \& \& \bullet \\
		\bullet \arrow[near end, swap]{urr}{ b_{n-1}} \arrow[near start, swap]{rr}{d_{n-2}} \& \& \bullet \\
		\bullet \arrow[near end, swap]{urr}{ b_{n-2}}   \&  \& \vdots \& \& \& 	\bullet \arrow[bend left]{r}{ a_i} \arrow[bend right, swap]{r}{\lambda c_i} \& \bullet  \\
		\vdots \arrow[phantom]{urr}{\vdots} \&  \& \bullet \\
		\bullet \arrow[phantom, bend angle=100, bend left, swap, min distance=4cm, near end]{uuuurr}{\phantom{b_0}} \arrow[near end, swap]{urr}{b_1} \arrow[near start, swap]{rr}{\lambda d_0} \& \& \bullet
		\end{tikzcd}
		\end{displaymath}
		
		\caption{The images under $\mathbb{F}$ of $\Pic^{\mathbb{0}}(C_n)$ (left) and $\Bbbk(x)$ of a smooth point $x \in \mathbb{P}^1_i$ (right).}  \label{FigurePicardGroupComplex}
	\end{figure}
	\noindent Every vertex in Figure \ref{FigurePicardGroupComplex}  is the placeholder for an indecomposable projective $\Lambda_n$-module and all vertices on the same vertical line represent the direct summands in a fixed cohomological degree which is either $-1$ or $0$ and which increases along arrows. The arrows describe the action of the differential on the complex in that an arrow $\bullet \xrightarrow{\alpha} \bullet$ contributes a direct summand $P_{t(\alpha)}$ in degree $-1$ and $P_{s(\alpha)}$ in degree $0$ and the induced map $\alpha: P_{t(\alpha)} \rightarrow P_{s(\alpha)}$ as a component to the differential. In other words, the complexes are given by ``folding up'' the quivers in Figure \ref{FigurePicardGroupComplex} in vertical direction. Explicitly, the complex of the quiver on the left hand side of Figure \ref{FigurePicardGroupComplex} is given by
	\begin{displaymath}
	\begin{tikzcd}[ampersand replacement=\&]
\cdots \arrow{r} \& 0 \arrow{r} \&	\bigoplus_{i=0}^{n-1}P_{t(d_i)}  \arrow{rrr}{\left(\begin{smallmatrix}\lambda d_0 \\ b_1 & d_1  \\ & b_2  & \ddots  \\ & & \ddots & d_{n-1} \\  & & &  b_0 \end{smallmatrix}\right)} \&\&\& \bigoplus_{i=0}^{n-1}P_{s(d_i)} \arrow{r} \& 0 \arrow{r} \& \cdots.
	\end{tikzcd}
	\end{displaymath}
	
\noindent By abuse of notation, we denote the resulting complexes by $\mathcal{O}(\lambda)$ (left in Figure \ref{FigurePicardGroupComplex}) and $\Bbbk(i, \lambda)$ (right in Figure \ref{FigurePicardGroupComplex}) respectively. Again, these complexes are quasi-isomorphic to $\Lambda_n$-modules. The complex of $\Bbbk(i, \lambda)$ is obtained from the complex of $\Bbbk(\lambda)$ by replacing $a$ and $\lambda c$ with $a_i$ and $\lambda c_i$.\medskip

\noindent It was communicated to us by Igor Burban that the computations in \cite{BurbanDrozdTilting} can be generalized to arbitrary $n \in \mathbb{N}$. We denote by $\Pic^{\mathbb{0}}(C_n)$ the set of all line bundles on $C_n$ of multi-degree\footnote{Given a vector bundle $\mathcal{E}$ over $C_n$ and a normalization map $\pi$, the entry $d_i$ in the multi-degree $(d_i)_{i \in \mathbb{Z}_n}$ of $\mathcal{E}$ is the degree of the restricted pullback bundle $\pi^{\ast}(\mathcal{E})|_{\pi^{-1}(\mathbb{P}_i^{1})}$.} $\mathbb{0}=(0, \dots, 0) \in \mathbb{Z}^n$. As in the case $n=1$, $\Pic^{\mathbb{0}}(C_n) \cong  \Bbbk$.
	
	\begin{thm}[Burban, Burban-Drozd \cite{BurbanDrozdTilting}]\label{TheoremImages}
		\ \begin{enumerate}
			\setlength\itemsep{1ex}

			\item The essential image of $\Pic^{\mathbb{0}}(C_n)$ under $\mathbb{F}$ consists of the isomorphism classes of the complexes $\mathcal{O}(\lambda)$, where $\lambda \in \Bbbk^{\times}$.
			
			\item  The essential image of the skyscraper sheaves of smooth points $x \in \mathbb{P}^1_i$ under $\mathbb{F}$ consists of the isomorphism classes of the complexes $\Bbbk(i, \lambda)$, where $\lambda \in \Bbbk^{\times}$.
		\end{enumerate}
		
	\end{thm}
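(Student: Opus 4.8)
The plan is to reduce everything to the explicit image of a single line bundle and a single skyscraper sheaf, together with a transitivity argument, and to use the characterization of $\Img\mathbb{F}$ from Theorem \ref{TheoremBurbanDrozdImage} as a completeness check. Recall that $\mathbb{F}$ factors as the Burban--Drozd embedding $\Perf(C_n)\hookrightarrow\mathcal{D}^b(\Coh\mathbb{X}_n)$ followed by the tilting equivalence $\mathbb{R}\Hom(\mathcal{H},-)\colon\mathcal{D}^b(\Coh\mathbb{X}_n)\xrightarrow{\sim}\mathcal{D}^b(\Lambda_n)$, with $\mathcal{H}$ normalized as in the discussion above. Since $\mathcal{O}_{C_n}$ is itself a line bundle of multi-degree $\mathbb{0}$ and each $\Bbbk(x)$ is a smooth-point skyscraper, it suffices to (a) compute the two images $\mathbb{F}(\mathcal{O}_{C_n})$ and $\mathbb{F}(\Bbbk(x_0))$ for a fixed smooth point $x_0\in\mathbb{P}^1_i$, and (b) show that varying the line bundle within $\Pic^{\mathbb{0}}(C_n)$ and the point within the smooth locus of $\mathbb{P}^1_i$ sweeps out precisely the families $\{\mathcal{O}(\lambda)\}$ and $\{\Bbbk(i,\lambda)\}$.

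For step (a) I would carry out the computation of \cite{BurbanDrozdTilting} for $n=1$ in the general setting, pushing $\mathcal{O}_{C_n}$ and $\Bbbk(x_0)$ through the two functors defining $\mathbb{F}$ while keeping track of all $n$ components of $\mathbb{X}_n$ simultaneously. The cyclic symmetry of the quiver $Q(n)$ and of the relation set $R$ in \eqref{EquationSetOfRelations} means that the local contribution at each component is a copy of the $n=1$ calculation, so the outcome is the two-term projective complexes displayed in Figure \ref{FigurePicardGroupComplex}; this yields $\mathbb{F}(\mathcal{O}_{C_n})\cong\mathcal{O}(\lambda_0)$ and $\mathbb{F}(\Bbbk(x_0))\cong\Bbbk(i,\lambda_0)$ for specific parameters $\lambda_0$.

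For step (b), tensoring by a line bundle of multi-degree $\mathbb{0}$ is an auto-equivalence of $\Perf(C_n)$, and transporting the resulting $\Pic^{\mathbb{0}}(C_n)\cong\Bbbk^{\times}$-action across $\mathbb{F}$ realizes it as the multiplicative rescaling $\lambda\mapsto\mu\lambda$ of the coefficient in the differential $b_i+\lambda d_i$; as this rescaling is transitive on $\Bbbk^{\times}$, the $\mathbb{F}$-image of $\Pic^{\mathbb{0}}(C_n)$ is exactly $\{\mathcal{O}(\lambda)\mid\lambda\in\Bbbk^{\times}\}$, which is part (1). The analogous argument, using that the smooth locus of $\mathbb{P}^1_i$ is isomorphic to $\Bbbk^{\times}$ and that the motion of $x$ rescales the coefficient in $a_i+\lambda c_i$, gives the family $\{\Bbbk(i,\lambda)\}$ for part (2). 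That distinct values of $\lambda$ give non-isomorphic complexes (so the families are parametrized without redundancy) follows from a short $\Hom$-computation in $\mathcal{D}^b(\Lambda_n)$, and the support of the complexes separates the different components $\mathbb{P}^1_i$. As an independent consistency check one may verify directly that each $\mathcal{O}(\lambda)$ and $\Bbbk(i,\lambda)$ is $\tau$-invariant, using $\tau\cong\nu[1]$ and the fact that the Nakayama functor of a gentle algebra sends $P_j$ to $I_j$; by Theorem \ref{TheoremBurbanDrozdImage} this reconfirms that the whole families lie in $\Img\mathbb{F}$.

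The main obstacle is step (a): unwinding the Burban--Drozd functor through the non-commutative curve $\mathbb{X}_n$ and the tilting object $\mathcal{H}$ for general $n$, and, in step (b), verifying that the $\Pic^{\mathbb{0}}(C_n)$-action and the variation of the point $x$ correspond exactly to multiplicative rescaling of the differentials rather than to some more complicated reparametrization of $\lambda$. The $\tau$-invariance computation, by contrast, is mechanical and essentially uniform in $n$. Since the statement is attributed to Burban and Burban--Drozd and the $n=1$ computations are asserted to generalize, I expect the bookkeeping across the $n$ components to be the only genuinely new ingredient.
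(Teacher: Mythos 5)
Your overall strategy (compute one object per family, then sweep out the family by a group action) is genuinely different from the paper's, which instead performs the computation for the \emph{entire} family at once: for a skyscraper sheaf of a smooth point $x=(\lambda:\mu)\in\mathbb{P}^1_i$ it writes $\Bbbk(x)$ as the cokernel of $\mu z_0^i(-1)-\lambda z_\infty^i(-1):\widetilde{\mathcal{O}}_i(-1)\to\widetilde{\mathcal{O}}_i$, identifies $z_0^i$ and $z_\infty^i$ with the arrows $a_i$ and $c_i$ under the tilting equivalence, and reads off $\Bbbk(i,\lambda^{-1}\mu)$ directly, so that the parameter sweep is free; for line bundles it computes $\Hom^\bullet(\mathscr{H},\mathbb{J}(\mathcal{L}))$ for an arbitrary line bundle of multi-degree $(1,\dots,1)$ and invokes the classification of $\tau$-invariant indecomposables with the resulting dimension vector. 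Your step (a) is consistent with this and is fine as a sketch.

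The genuine gap is in step (b), which you flag as the ``main obstacle'' but do not resolve, and which is not a formality. To transport the $\Pic^{\mathbb{0}}(C_n)$-action across $\mathbb{F}$ you must first identify the auto-equivalence $\mathbb{F}\circ(-\otimes\mathcal{L})\circ\mathbb{F}^{-1}$ on the image; the only handle the paper provides is Remark \ref{RemarkTwistFunctorSmoothPointTensorProduct} together with Corollary \ref{CorollaryTwistFunctorsUnderEmbeddings}, which expresses tensoring by $\mathcal{L}(x)\otimes\mathcal{L}(y)^{\vee}$ as $T_{\mathbb{F}(\Bbbk(x))}\circ T_{\mathbb{F}(\Bbbk(y))}^{-1}$ --- so this route presupposes part (2) and then still requires a nontrivial mapping-cone computation of the spherical twist of $\mathcal{O}(\lambda_0)$ by $\Bbbk(i,\mu)$ inside $\mathcal{D}^b(\Lambda_n)$, which you do not sketch. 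Moreover, even granting that the transported action preserves the family $\{\mathcal{O}(\lambda)\}$, what you obtain a priori is only an \emph{injective} self-map of the parameter set $\Bbbk^{\times}$ (full faithfulness of $\mathbb{F}$ gives injectivity of the orbit map, nothing more); an injective group endomorphism of $\Bbbk^{\times}$ need not be multiplication by $\mu$, and surjectivity of the orbit onto the whole family --- which is exactly what the theorem asserts --- does not follow without identifying the action explicitly. The same issue affects your treatment of part (2): the claim that moving $x$ within the smooth locus ``rescales the coefficient in $a_i+\lambda c_i$'' is precisely the content of the paper's direct computation, so justifying it honestly collapses your argument back into that computation. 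Your closing $\tau$-invariance check only shows that each $\mathcal{O}(\lambda)$ and $\Bbbk(i,\lambda)$ lies in $\Img\mathbb{F}$ (Theorem \ref{TheoremBurbanDrozdImage}); it does not show that the preimage is a degree-$\mathbb{0}$ line bundle, respectively a smooth skyscraper, so it does not close the surjectivity gap either.
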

	
	\noindent The proof of Theorem \ref{TheoremImages} was explained to us by Igor Burban and can be found in the Appendix.
	
 	\ \smallskip

	\section{Serre functors, spherical objects and spherical twists}\label{SectionSphericalObjectSphericalTwists}
	\noindent In this section, we recall the definition of a \emph{spherical object} in a triangulated category and its associated \textit{spherical twist}. Spherical objects and spherical twists were first introduced in \cite{SeidelThomas}.\medskip
	
	\noindent Throughout this section we fix a $\Bbbk$-linear triangulated category $\mathcal{T}$.
	
	\begin{definition}Let $X \in \mathcal{T}$ be such that $\Hom_{\mathcal{T}}^{\bullet}(X,Y)$ is finite dimensional for all $Y \in \mathcal{T}$. Then, a \textbf{Serre dual} of $X$ is an object $\mathcal{S}(X) \in \mathcal{T}$  such that there exists a $\Bbbk$-linear isomorphism of functors
		$\Hom_{\mathcal{T}}(X,-) \cong \Hom_{\mathcal{T}}(-, \mathcal{S}(X))^*$, where $(-)^*$ denotes the duality over the ground field $\Bbbk$. 
	\end{definition}
	\noindent By Yoneda's lemma, the Serre dual of an object is unique up to unique isomorphism, if it exists, and it follows that the mapping $X \mapsto \mathcal{S}(X)$ is functorial on the full subcategory $\mathcal{C}$ spanned by the objects it is defined on. If the functor $\mathcal{S}: \mathcal{C} \rightarrow \mathcal{T}$ restricts to an endo-functor of $\mathcal{C}$ and is  essentially surjective, then any such functor is called a \textbf{Serre functor} of $\mathcal{C}$ relative to $\mathcal{T}$. For purely formal reasons, $\mathcal{C}$ is triangulated and $\mathcal{S}$ is a $\Bbbk$-linear triangle equivalence which commutes with every $\Bbbk$-linear triangle equivalence of $\mathcal{T}$ or $\mathcal{C}$ up to natural isomorphism, see \cite[\S I.1]{ReitenVandenBergh}.
	\begin{exa}\label{ExampleSerreFunctorsCycles}
		The categories $\Perf(C_n)$ and $\mathcal{D}^b(\Lambda_n)$ have Serre functors. In the former case, it is well-known that the left derived tensor product $- \otimes^{\mathbb{L}} \omega[1]$ is a Serre functor of $\Perf(C_n)$ relative to $\mathcal{D}^b(C_n)$, where $\omega$ denotes the canonical sheaf. In fact, $\omega$ is trivial, showing that the Serre functor is isomorphic to the shift functor $[1]$. Since $\Lambda_n$ has finite global dimension, the left derived functor of the Nakayama functor $\nu=\left(\Hom_{\Lambda_n}(-, {_{\Lambda_n}}\Lambda_n)\right)^{\ast}: \Lambda_n \module \longrightarrow \Lambda_n \module$ is a Serre functor of $\mathcal{D}^b(\Lambda_n)=\mathcal{T}=\mathcal{C}$.
	\end{exa}
	
	\begin{definition}Let $d \in \mathbb{N} \setminus \{0\}$. An object $X \in \mathcal{T}$ is \textbf{$d$-Calabi-Yau} if $X[d]$ is a Serre dual of $X$. A $d$-Calabi-Yau object $X \in \mathcal{T}$ is \textbf{$d$-spherical} if there exists an isomorphism of graded rings $\Hom^{\bullet}(X, X)\cong \Bbbk[z]/(z^2)$, where $\deg z=d$.
	\end{definition}
	\noindent It follows from Example \ref{ExampleSerreFunctorsCycles} that every object $X \in \Perf(C_n)$ is $1$-Calabi-Yau. In particular, all of its spherical objects are $1$-spherical.

	\begin{exa}\label{ExampleSphericalObjects}\ \smallskip
		\begin{itemize}
			\setlength\itemsep{1ex}
			\item[1)] Every simple vector bundle $\mathcal{L} \in \Perf(C_n)$ is $1$-spherical. As a sheaf it has no negative self-extensions. Moreover, $\End(\mathcal{L},\mathcal{L})\cong \Bbbk$ and by {Serre} duality,
			\[\Hom(\mathcal{L},\mathcal{L}[i])\cong \Hom(\mathcal{L},\mathcal{L}[1-i])^* \cong \begin{cases}\Bbbk, & \text{if }i=0, 1; \\ 0 & \text{if }i \geq 2.\end{cases}\]
			In particular, all line bundles on $C_n$ are spherical. 
			\item[2)] Let $x \in C_n$ be smooth. By a similar argument as before, the associated skyscraper sheaf is a $1$-spherical object in $\Perf(C_n)$.
		\end{itemize}
	\end{exa}
	\noindent The following is well-known.
	
	\begin{lem}
		Let $X \in \mathcal{T}$ be spherical. Then, $X$ is indecomposable. 
	\end{lem}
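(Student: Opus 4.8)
The plan is to deduce from the defining graded-ring isomorphism that $\End(X) \cong \Bbbk$ as a $\Bbbk$-algebra, and then to invoke the elementary fact that an object whose endomorphism ring has no idempotents other than $0$ and $1$ cannot decompose nontrivially.

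First I would extract the degree-zero component of the isomorphism $\Hom^{\bullet}(X,X) \cong \Bbbk[z]/(z^2)$ of graded rings. Since $\deg z = d$ and $d \geq 1$, the component of $\Bbbk[z]/(z^2)$ in degree $0$ is precisely $\Bbbk \cdot 1$, while the class of $z$ sits in the strictly positive degree $d$. As the isomorphism is grading-preserving, restricting to degree $0$ yields a $\Bbbk$-algebra isomorphism $\End(X) = \Hom^{0}(X,X) \cong \Bbbk$. Along the way I also record that $X \neq 0$: the ring $\Hom^{\bullet}(X,X)$ is two-dimensional, hence nonzero, so $X$ is not the zero object.

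Next I would recall the standard criterion: if $X \cong A \oplus B$ is a nontrivial direct-sum decomposition with $A, B \neq 0$, then the composite $e \colon X \twoheadrightarrow A \hookrightarrow X$ is an idempotent endomorphism with $e \neq 0$ (since $A \neq 0$) and $e \neq \mathrm{id}_X$ (since $B \neq 0$). Thus a nontrivial decomposition produces a nontrivial idempotent in $\End(X)$. But $\End(X) \cong \Bbbk$ is a field, whose only idempotents are $0$ and $1$; hence no such decomposition exists and $X$ is indecomposable.

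There is essentially no obstacle in this argument. The only point demanding a little care is the grading bookkeeping: one must use $d \geq 1$ to guarantee that the degree-zero part of $\Hom^{\bullet}(X,X)$ is exactly $\End(X)$ and is identified with $\Bbbk$ alone, rather than being conflated with the spherical class $z$. Note also that the direction of the idempotent criterion we use — nontrivial decomposition yields nontrivial idempotent — holds in any additive category and requires no idempotent-splitting hypothesis on $\mathcal{T}$, so the conclusion is valid in the generality of the statement.
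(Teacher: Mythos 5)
Your argument is correct and is essentially the paper's own proof spelled out in detail: the paper simply notes that $\Hom(X,X)$ is local (indeed $\End(X)\cong\Bbbk$ since $\deg z = d\geq 1$) and concludes indecomposability, which is exactly your "no nontrivial idempotents" argument.
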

	\begin{proof}
		By assumption, $\Hom(X, X)$ is local and hence $X$ is indecomposable.	
	\end{proof}
	
	\subsubsection{Spherical twists}\label{SectionSphericalTwists} \ \medskip
	
	\noindent Suppose that $\mathcal{T}$ admits a DG-enhancement. The assumption is satisfied for all triangulated categories which we consider in this paper as shown by Lunts and Schn\"{u}rer \cite{LuntsSchnuerer}. Under this condition, it was shown by Seidel and Thomas \cite{SeidelThomas} (see also \cite{HocheneggerKalckPloog}) that every spherical object $X \in \mathcal{T}$ gives rise to an auto-equivalence $T_X$ of $\mathcal{T}$, called a \textbf{spherical twist}. By definition, for every $Y \in \mathcal{T}$,
	the object $T_X(Y)$ sits in a distinguished triangle of the form

	\begin{equation}\label{EvaluationTriangle}
	\begin{tikzcd}
	{\Hom^{\bullet}(X,Y) \otimes_k X} \arrow{rr}{\textbf{ev}}  & & Y \arrow{rr}{} & & T_X(Y) \arrow{rr}{} & & \left( \Hom^{\bullet}(X,Y) \otimes_k X\right) [1],
	\end{tikzcd}\end{equation}
	where $\textbf{ev}$ denotes the evaluation map, i.e.\ the counit of the adjunction between the Hom-functor and the tensor product. Moreover, the morphisms $Y \rightarrow T_X(Y)$ define a natural transformation $\operatorname{Id}_{\mathcal{T}} \rightarrow  T_X$. 
	
	\begin{rem}\label{RemarkTwistFunctorSmoothPointTensorProduct} Let $x \in C_n$ be closed and smooth. In this case, the twist functor of $k(x)$ admits a more familiar description. As shown in (3.11) on page 68 of \cite{SeidelThomas}, there exists an isomorphism of functors
	$$ 
	        T_{\Bbbk(x)}({-}) \cong {-} \otimes^{\mathbb{L}} \mathcal{L}(x),
	$$
	 where $\mathcal{L}(x)$ denotes the line bundle associated with the divisor $x$. 
	\end{rem}
	
	\noindent Given a homogeneous basis $f_1, \dots, f_m$ of $\bigoplus_{i \in \mathbb{Z}}\Hom(X[i],Y)[-i]$, the triangle in \eqref{EvaluationTriangle} is isomorphic to a distinguished triangle
	
	\begin{displaymath}
	\begin{tikzcd}
	{\bigoplus_{i=1}^m{X}[n_i]} \arrow{rr}{\bigoplus_{i=1}^m{f_i}}  & & Y \arrow{rr}{} & & T_X(Y) \arrow{rr}{} & & \bigoplus_{i=1}^m{X}[n_i+1].
	\end{tikzcd}
	\end{displaymath}
	
	\noindent This shows that twist functors are compatible with embeddings in the following sense.
	\begin{cor}\label{CorollaryTwistFunctorsUnderEmbeddings} Let $\mathcal{T}'$ be a $\Bbbk$-linear triangulated category which admits a DG-enhancement and let $\mathbb{F}: \mathcal{T} \rightarrow \mathcal{T}'$ be a $\Bbbk$-linear, exact and fully faithful functor. Let $X \in \mathcal{T}$ be spherical and assume that $\mathbb{F}(X)$ is spherical. Then, for all $Y \in \mathcal{T}$,
		\[ \mathbb{F}\circ T_X (Y) \cong T_{\mathbb{F}(X)} \circ \mathbb{F}(Y).\] 
		In particular, if $\mathbb{F}$ is an equivalence and $\mathbb{F}^{-1}$ a quasi-inverse of $\mathbb{F}$, then $\mathbb{F} \circ T_X \circ \mathbb{F}^{-1}(Y) \cong T_{\mathbb{F}(X)}(Y)$.
	\end{cor}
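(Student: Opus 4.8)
The plan is to apply $\mathbb{F}$ to the basis reformulation of the evaluation triangle \eqref{EvaluationTriangle} and to recognise the result as the defining triangle of $T_{\mathbb{F}(X)}(\mathbb{F}(Y))$. Fix a homogeneous basis $f_1, \dots, f_m$ of $\bigoplus_{i \in \mathbb{Z}} \Hom(X[i],Y)[-i]$, so that, as recalled just above the statement, $T_X(Y)$ is a cone of $\bigoplus_{i=1}^m f_i \colon \bigoplus_{i=1}^m X[n_i] \to Y$. Since $\mathbb{F}$ is exact it commutes with the shift functor and sends distinguished triangles to distinguished triangles; being additive, it also preserves the finite direct sums occurring here. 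Applying $\mathbb{F}$ therefore yields a distinguished triangle
\[
\bigoplus_{i=1}^m \mathbb{F}(X)[n_i] \xrightarrow{\ \bigoplus_i \mathbb{F}(f_i)\ } \mathbb{F}(Y) \longrightarrow \mathbb{F}(T_X(Y)) \longrightarrow \bigoplus_{i=1}^m \mathbb{F}(X)[n_i+1],
\]
exhibiting $\mathbb{F}(T_X(Y))$ as a cone of $\bigoplus_i \mathbb{F}(f_i)$.

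The key step is to verify that $\mathbb{F}(f_1), \dots, \mathbb{F}(f_m)$ again form a homogeneous basis, now of $\bigoplus_{i \in \mathbb{Z}} \Hom(\mathbb{F}(X)[i], \mathbb{F}(Y))[-i]$. This is immediate from full faithfulness: for each $i$ the functor induces an isomorphism $\Hom(X[i],Y) \xrightarrow{\sim} \Hom(\mathbb{F}(X)[i], \mathbb{F}(Y))$ (using $\mathbb{F}(X[i]) = \mathbb{F}(X)[i]$), so a basis is carried to a basis degree by degree; in particular $\Hom^{\bullet}(\mathbb{F}(X),\mathbb{F}(Y))$ is finite dimensional, and since $\mathbb{F}(X)$ is spherical by hypothesis the twist $T_{\mathbb{F}(X)}$ is defined. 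Consequently the defining triangle of $T_{\mathbb{F}(X)}(\mathbb{F}(Y))$ is, up to isomorphism, exactly the triangle displayed above, so that $T_{\mathbb{F}(X)}(\mathbb{F}(Y))$ is likewise a cone of $\bigoplus_i \mathbb{F}(f_i)$.

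The only genuine point to address is that a cone in a triangulated category is determined merely up to (non-canonical) isomorphism. Both $\mathbb{F}(T_X(Y))$ and $T_{\mathbb{F}(X)}(\mathbb{F}(Y))$ are cones of the single morphism $\bigoplus_i \mathbb{F}(f_i)$, hence abstractly isomorphic, which yields the asserted $\mathbb{F} \circ T_X (Y) \cong T_{\mathbb{F}(X)} \circ \mathbb{F}(Y)$; as no naturality in $Y$ is claimed, this suffices. I expect this to be the main (and essentially only) obstacle, and it is mild. Finally, if $\mathbb{F}$ is an equivalence with quasi-inverse $\mathbb{F}^{-1}$, replacing $Y$ by $\mathbb{F}^{-1}(Y)$ and using $\mathbb{F}(\mathbb{F}^{-1}(Y)) \cong Y$ gives $\mathbb{F} \circ T_X \circ \mathbb{F}^{-1}(Y) \cong T_{\mathbb{F}(X)}(Y)$.
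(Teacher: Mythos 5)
Your argument is correct and is essentially the paper's own (implicit) proof: the statement is presented there as an immediate consequence of rewriting the evaluation triangle via a homogeneous basis, applying the exact functor $\mathbb{F}$, and noting that full faithfulness carries that basis to a homogeneous basis of $\Hom^{\bullet}(\mathbb{F}(X),\mathbb{F}(Y))$. Your explicit remark that the conclusion only requires uniqueness of cones up to non-canonical isomorphism is a correct and welcome clarification, but it does not change the route.
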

	\noindent The previous corollary allows us to analyze the twist functor of any  spherical object $X \in \Perf(C_n)$ by means of the twist along $\mathbb{F}(X) \in \mathcal{D}^b(\Lambda_n)$.\medskip
	
	\noindent The statement of the following lemma is well-known.
	\begin{lem}\label{LemmaImageSphericalObjectsUnderItsTwist}
	   If $X \in \mathcal{T}$ is $d$-spherical, then $T_X(X) \cong X[1-d]$. If $\Hom^{\bullet}(X, Y)=0$, then $T_X(Y) \cong Y$.
	
	\end{lem}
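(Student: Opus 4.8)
The plan is to read both assertions directly off the evaluation triangle \eqref{EvaluationTriangle}, using the explicit shape of $\Hom^{\bullet}(X,X)$ provided by sphericity. First I would specialize \eqref{EvaluationTriangle} to $Y=X$. Since $X$ is $d$-spherical, $\Hom^{\bullet}(X,X)\cong\Bbbk[z]/(z^2)$ with $\deg z=d$ is one-dimensional in degree $0$ (spanned by $\mathrm{id}_X$), one-dimensional in degree $d$ (spanned by $z$), and zero otherwise. Feeding the homogeneous basis $\{\mathrm{id}_X,z\}$ into the reformulation of \eqref{EvaluationTriangle} recorded just before Corollary \ref{CorollaryTwistFunctorsUnderEmbeddings}, the evaluation triangle takes the form
\[
X \oplus X[-d] \xrightarrow{(\mathrm{id}_X,\, z[-d])} X \longrightarrow T_X(X) \longrightarrow X[1]\oplus X[1-d].
\]
Because $\textbf{ev}$ is the counit of the Hom–tensor adjunction, its restriction to the summand indexed by $\mathrm{id}_X$ is $\mathrm{id}_X$ itself, so $\textbf{ev}$ is a split epimorphism.

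The key step is then to identify $\Cone(\textbf{ev})=T_X(X)$ with the shifted complementary summand $X[1-d]$. I would do this by exhibiting the triangle above as isomorphic to the split triangle $X[-d]\to X\oplus X[-d]\xrightarrow{(\mathrm{id}_X,0)}X\xrightarrow{0}X[1-d]$. Concretely, the unipotent upper-triangular matrix $\left(\begin{smallmatrix}\mathrm{id}_X & z[-d]\\ 0 & \mathrm{id}_{X[-d]}\end{smallmatrix}\right)$ is an automorphism of $X\oplus X[-d]$ (its inverse replaces $z[-d]$ by $-z[-d]$), and precomposing the canonical projection $(\mathrm{id}_X,0)$ with it returns exactly $(\mathrm{id}_X,z[-d])=\textbf{ev}$. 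Transporting the split triangle along this automorphism therefore identifies $\Cone(\textbf{ev})$ with $X[-d][1]=X[1-d]$, as desired. The only subtlety requiring care is the rigidity of this splitting: the off-diagonal entries of any automorphism of $X\oplus X[-d]$ lie in $\Hom(X,X[-d])$ and $\Hom(X,X[d])$, and $\Hom(X,X[-d])=0$ while $\Hom(X,X[d])=\Bbbk z$, so no spurious morphisms interfere with the bookkeeping.

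The second claim is immediate: if $\Hom^{\bullet}(X,Y)=0$, then the source of $\textbf{ev}$ in \eqref{EvaluationTriangle} is the zero object, so the triangle reads $0\to Y\to T_X(Y)\to 0$, whence the natural morphism $Y\to T_X(Y)$ is an isomorphism and $T_X(Y)\cong Y$. I expect the split-triangle identification in the first part to be the only substantive point; everything else is a direct unwinding of the definition of $T_X$ together with the computation of $\Hom^{\bullet}(X,X)$.
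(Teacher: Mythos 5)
Your proposal is correct and follows essentially the same route as the paper: both identify $T_X(X)$ with the cone of the evaluation map $X[-d]\oplus X\to X$ and then conjugate by the unipotent automorphism of the source with off-diagonal entry $z[-d]$ (the paper writes it as a morphism of distinguished triangles and invokes the five lemma, you phrase it as transporting the split triangle, which is the same argument). The treatment of the case $\Hom^{\bullet}(X,Y)=0$ is identical.
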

	\begin{proof}
If $f: X \rightarrow X[d]$ is non-zero, then $T_X(X)$ is isomorphic to the mapping cone of $\operatorname{Id}_X[-d] \oplus f: X[-d] \oplus X \rightarrow X$. There exists a map of distinguished triangles 
	$$\begin{tikzcd}[ampersand replacement=\&, column sep=5em]
            X[-d] \oplus X \arrow{r}{\left(\begin{smallmatrix} f & \operatorname{Id}_X \end{smallmatrix}\right)}  \arrow{dd}[rotate=-90, yshift=+.5em, xshift=-.6em]{\simeq}[swap, xshift=-.3em]{\left(\begin{smallmatrix} \operatorname{Id}_{X[-d]} & 0 \\ f & \operatorname{Id}_X \end{smallmatrix}\right)} \& X \arrow{r} \arrow[equal]{dd} \& T_X(X) \arrow{dd} \arrow{r} \& \left(X[-d] \oplus X\right)[1] \arrow{dd} \\ \\
           X[-d] \oplus X \arrow{r}{\left(\begin{smallmatrix} 0 & \operatorname{Id}_X \end{smallmatrix}\right)} \& X \arrow{r} \& X[1-d] \arrow{r} \& \left(X[-d] \oplus X\right)[1].
         \end{tikzcd}$$
            
      \noindent It follows from the five lemma that all vertical arrows are isomorphisms. This proves the first assertion. If $\Hom^{\bullet}(X,Y)=0$, then $T_X(Y)$ is a mapping cone of the zero map $0 \rightarrow Y$ and hence isomorphic to $Y$.\end{proof}

	\section{The surface model of the categorical resolution}\label{SectionSurfaceModelLambda}
	
	\noindent We recall the relevant results from \cite{OpperPlamondonSchroll} about the surface model of a gentle algebra which we state in a simplified manner for the algebras $\Lambda_n$.\medskip
	
	\noindent The surface model of the algebra $\Lambda_n$ consists of a triple $(\mathbb{T}_n, \marked, \omega)$, where
	\begin{itemize}
		\setlength\itemsep{0.5em}
		
		\item $\mathbb{T}_n$ is a torus with $n$ boundary components and $\marked \subset \partial \mathbb{T}_n$ is a set of $2n$ marked points, two on each component, and
		
		\item $\omega$ is a function which attaches to any oriented loop $\gamma \subseteq \mathbb{T}_n$ an integer $\omega(\gamma)$ which we call the \textit{winding number of $\gamma$}.
		
	\end{itemize}
	
	\noindent  The triple $(\mathbb{T}_n, \marked, \omega)$ captures all essential information about the structure of the triangulated category $\mathcal{D}^b(\Lambda_n)$. Haiden, Katzarkov and Kontsevich showed that the derived category of any (graded) gentle algebra is equivalent to a partially Fukaya category of a compact, oriented surface with marked points which is equipped with a winding number function as above. The topological description of $\mathcal{D}^b(\Lambda_n)$ is an incarnation of this relationship.\medskip
	
	\noindent As we will recall over the course of this section, the geometric model serves as a dictionary to translate algebraic terms such as objects and morphisms to topological notions such as curves and their intersections.\medskip

	\noindent The function $\omega$ is derived from the datum of a \textit{line field} $\eta$ on $\mathbb{T}_n$. For a description of this connection the reader can consult \cite{OpperDerivedEquivalences} or \cite{AmiotPlamondonSchroll}. However, we do not require any familiarity with the concept of a line field throughout the paper and only use this fact in the proof of Lemma \ref{LemmaHopf} and Remark \ref{RemarkDoubleLoopOfAnArcHasVanishingWindingNumber}, where we provide the relevant information.\medskip
	
	\subsection{Tori, marked points and a lamination}\label{SectionToriMarkedPoints}
	
	\noindent For all integer valued points $(i, j) \in \mathbb{Z}^2 \subseteq \mathbb{R}^2$ let  $B_i^j \subseteq \mathbb{R}^2$ denote the open disc with radius $\frac{1}{4}$ and center $(i+\frac{1}{2}, j+\frac{1}{2})$. Denote by $\mathbb{T}_n$ the torus with $n$ removed open discs, i.e.\
	$\mathbb{T}_n$ is the quotient of $\mathbb{R}^2 \setminus B$, where
	
	\[B=\bigsqcup_{(i,j) \in \mathbb{Z}^2}{B_i^j},\]
	
	\noindent with respect to the equivalence relation generated by $(r,s)\sim(r,s+1)$ and $(r,s)\sim (r+n,s)$ for all $r, s \in \mathbb{R}$. In particular, $[0,n) \times [0,1)$ is a fundamental domain of the corresponding quotient map $\rho$. For convenience, we often refer to a point $x \in \mathbb{T}_n$ by a representative in $\mathbb{R}^2$.\medskip

	\noindent The set of marked points $\marked$ is given by
	
	\begin{displaymath}
	\marked=\left\{\left({i+\frac{1}{2},\frac{3}{4}}\right), \left(i+\frac{1}{2}, \frac{1}{4}\right) \, \Big| \, 0 \leq i < n \right\} \subseteq \partial \mathbb{T}_n.
	\end{displaymath}

	\noindent Frequently, we also consider the torus $\mathscr{T}^n$ with $n$ punctures and no boundary which we naturally regard as the quotient of $\mathbb{R}^2$ by the same relations $(r, s+1) \sim (r,s) \sim (r+n, s)$. The images of the centers of the discs $B_i^j$ form a set of $n$ distinct points in the quotient which we regard as interior marked points (``punctures'') and we consider $\mathbb{T}_n$ as being embedded into $\mathscr{T}^n$ in the natural way.\medskip

	\noindent  By design, the set $\marked \subset \partial\mathbb{T}_n$ is in bijection with the set of maximal admissible paths of the pair $(Q(n), R)$ which we defined in Figure \ref{FigureQuiverLambdaN} and \eqref{EquationSetOfRelations}. More precisely, for each $i \in [0,n)$, the bijection identifies the path $d_i a_i$ with the the point $(i+ \frac{1}{2},\frac{1}{4})$ and the path $b_{i} c_i$ with the point $(i-\frac{1}{2}, \frac{3}{4})$.\medskip
	
	\noindent As a result of its construction, $\mathbb{T}_n$ is further equipped with a collection of \textbf{laminates} which are embedded and pairwise disjoint paths $L_{x}$, one for each vertex $x$ of $Q=Q(n)$. The laminates are depicted in Figure \ref{FigureLambdaLamination}. We denote by $L \subseteq \mathbb{T}_n$ the union of all laminates. The complement $\mathbb{T}_n \setminus L$ is a disjoint union of $2n$ open $6$-gons as shown in Figure \ref{FigurePolygons}. The boundary of each polygon contains a unique marked point.
	\begin{figure}[H]
		\begin{displaymath}
		\begin{tikzpicture}[scale=3.25, hobby]
		\draw[dashed] (0,0)--(4,0);
		\draw[dashed] (0,0)--(0,1);
		\draw[dashed] (0,1)--(4,1);
		\draw[dashed] (4,1)--(4,0);

		\foreach \u in {0,1,3}
		{
			\filldraw (\u+0.5,0.625) circle (0.5pt);
			\filldraw (\u+0.5,0.375) circle (0.5pt);
			\draw (\u+0.5,0.5) circle (0.125);
			
			\draw[red, thick] plot  [line width=2,  tension=1] coordinates {   ({\u+0.5+0.125*cos(0)},{0.5+0.125*sin(0)}) ({\u+0.5+0.2*cos(60)},{0.5+0.2*sin(60)}) ({\u+0.5+0.3*cos(90)},{0.5+0.3*sin(90)}) ( (\u+0.5,1) };

			\draw[red, thick] plot  [line width=2,  tension=1] coordinates {   ({\u+0.5+0.125*cos(180)},{0.5+0.125*sin(180)}) ({\u+0.5+0.2*cos(240)},{0.5+0.2*sin(240)}) ({\u+0.5+0.3*cos(270)},{0.5+0.3*sin(270)}) ( (\u+0.5,0) };
			
			\draw[blue, thick] plot  [line width=2,  tension=1] coordinates {   ({\u+0.5+0.125*cos(-30)},{0.5+0.125*sin(-30)}) (\u+0.85,0.625) (\u+1,1) };
			
			\draw[blue, thick] plot  [line width=2,  tension=1] coordinates {   ({\u-1+1+0.5+0.125*cos(150)},{0.5+0.125*sin(150)}) (\u-1+1+0.15,0.325)  (\u-1+1,0) };

			\draw[green, thick] plot  [line width=2,  tension=1] coordinates {   ({\u+0.5+0.125*cos(120)},{0.5+0.125*sin(120)}) ({\u+0.5+0.35*cos(150)},{0.5+0.35*sin(150)}) ({\u+0.5+0.5*cos(180)},{0.5+0.5*sin(180)}) };
			\draw[green, thick] plot  [line width=2,  tension=1] coordinates {   ({\u+0.5+0.125*cos(300)},{0.5+0.125*sin(300)}) ({\u+0.5+0.35*cos(330)},{0.5+0.35*sin(330)}) ({\u+0.5+0.5*cos(360)},{0.5+0.5*sin(360)}) };
			
		}
		\foreach \u in {2}
		{
			\draw[green, thick] plot  [line width=2,  tension=1] coordinates {   ({\u+0.5+0.125*cos(120)},{0.5+0.125*sin(120)}) ({\u+0.5+0.35*cos(150)},{0.5+0.35*sin(150)}) ({\u+0.5+0.5*cos(180)},{0.5+0.5*sin(180)}) };
			\draw[green, thick] plot  [line width=2,  tension=1] coordinates {   ({\u+0.5+0.125*cos(300)},{0.5+0.125*sin(300)}) ({\u+0.5+0.35*cos(330)},{0.5+0.35*sin(330)}) ({\u+0.5+0.5*cos(360)},{0.5+0.5*sin(360)}) };
			
			\draw[blue, thick] plot  [line width=2,  tension=1] coordinates {   ({\u-1+1+0.5+0.125*cos(150)},{0.5+0.125*sin(150)}) (\u-1+1+0.15,0.325) ( (\u-1+1,0) };
			\draw[blue, thick] plot  [line width=2,  tension=1] coordinates {   ({\u+0.5+0.125*cos(-30)},{0.5+0.125*sin(-30)}) (\u+0.85,0.625) ( (\u+1,1) };
			
		}

		\filldraw[white] (2+0.5,0.5) circle (0.275);
\draw[dotted, thick] (2+0.5-0.125,0.5)--(2+0.5+0.125,0.5);

		\end{tikzpicture}
		\end{displaymath}
		\caption{The laminates on $\mathbb{T}_n$. Parallel dashed lines are identified.} \label{FigureLambdaLamination}
	\end{figure}
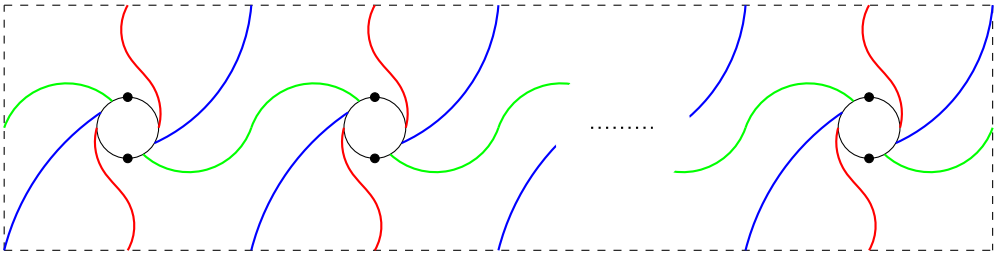
	\noindent If $\Delta$ denotes a $6$-gon in the complement with marked point $\bullet \in \partial \Delta$ which corresponds to a maximal admissible path $\beta \alpha$, then $L_{s(\alpha)}, L_{t(\alpha)}, L_{t(\beta)}$ are the laminates on its boundary in the clockwise orientation of $\Delta$.
	
	\begin{figure}[H]
		\begin{displaymath}
		\begin{array}{cc}
		
		{
			\begin{tikzpicture}[scale=3, hobby]
			
			\draw[dashed] (0,0)--(2,0);
			\draw[dashed] (0,0)--(0,1);
			\draw[dashed] (0,1)--(2,1);
			\draw[dashed] (2,1)--(2,0);

			\foreach \u in {0, 1}
			{
				
				\draw (\u+0.5,0.5) circle (0.125);
			}
			
			\filldraw (0+0.5,0.375) circle (0.5pt);
			
			\foreach \u in {0}
			{
				\draw[green, thick] plot  [line width=2,  tension=1] coordinates {   ({\u+0.5+0.125*cos(300)},{0.5+0.125*sin(300)}) ({\u+0.5+0.35*cos(330)},{0.5+0.35*sin(330)}) ({\u+0.5+0.5*cos(360)},{0.5+0.5*sin(360)}) };
				\draw[red, thick] plot  [line width=2,  tension=1] coordinates {   ({\u+0.5+0.125*cos(0)},{0.5+0.125*sin(0)}) ({\u+0.5+0.2*cos(60)},{0.5+0.2*sin(60)}) ({\u+0.5+0.3*cos(90)},{0.5+0.3*sin(90)}) ( (\u+0.5,1) };
				\draw[red, thick] plot  [line width=2,  tension=1] coordinates {   ({\u+0.5+0.125*cos(180)},{0.5+0.125*sin(180)}) ({\u+0.5+0.2*cos(240)},{0.5+0.2*sin(240)}) ({\u+0.5+0.3*cos(270)},{0.5+0.3*sin(270)}) ( (\u+0.5,0) };

				\draw[blue, thick] plot  [line width=2,  tension=1] coordinates {   ({\u+0.5+0.125*cos(-30)},{0.5+0.125*sin(-30)}) (\u+0.85,0.625) ( (\u+1,1) };
			}

			\foreach \u in {1}
			{
				
				\draw[green, thick] plot  [line width=2,  tension=1] coordinates {   ({\u+0.5+0.125*cos(120)},{0.5+0.125*sin(120)}) ({\u+0.5+0.35*cos(150)},{0.5+0.35*sin(150)}) ({\u+0.5+0.5*cos(180)},{0.5+0.5*sin(180)}) };
				
				\draw[blue, thick] plot  [line width=2,  tension=1] coordinates {   ({\u-1+1+0.5+0.125*cos(150)},{0.5+0.125*sin(150)}) (\u-1+1+0.15,0.325) ( (\u-1+1,0) };
			}
			
			\draw[black] ({0+0.15+0.45*cos(70)},{0.1}) node{$L_{t(d_{i})}$};
			\draw[black] ({0+0.5+0.35*cos(330)},{0.5-0.1+0.35*sin(330)}) node{$L_{s(a_i)}$};
			\draw[black] (1.2,0.1) node{$L_{t(a_{i})}$};

			
			\draw[->] ({1.5+0.25*cos(140)}, {0.5+0.25*sin(140)}) arc (140:187:0.25) node[pos=0.75, left]{$a_i$};
			\draw[->] ({0.5+0.145+0.75*cos(75)}, {0.5-0.58+0.75*sin(75)}) arc (75:91:0.75) node[pos=0.5, above]{$d_i$};
			
			\end{tikzpicture}
		}
		& 
		{
			\begin{tikzpicture}[scale=3, hobby]
			
			\draw[dashed] (0,0)--(2,0);
			\draw[dashed] (0,0)--(0,1);
			\draw[dashed] (0,1)--(2,1);
			\draw[dashed] (2,1)--(2,0);

			\foreach \u in {0, 1}
			{

				\draw (\u+0.5,0.5) circle (0.125);
			}
			\filldraw (1+0.5,0.625) circle (0.5pt);

			\foreach \u in {0}
			{
				\draw[green, thick] plot  [line width=2,  tension=1] coordinates {   ({\u+0.5+0.125*cos(300)},{0.5+0.125*sin(300)}) ({\u+0.5+0.35*cos(330)},{0.5+0.35*sin(330)}) ({\u+0.5+0.5*cos(360)},{0.5+0.5*sin(360)}) };

				\draw[blue, thick] plot  [line width=2,  tension=1] coordinates {   ({\u+0.5+0.125*cos(-30)},{0.5+0.125*sin(-30)}) (\u+0.85,0.625) ( (\u+1,1) };
			}

			\foreach \u in {1}
			{
				\draw[red, thick] plot  [line width=2,  tension=1] coordinates {   ({\u+0.5+0.125*cos(0)},{0.5+0.125*sin(0)}) ({\u+0.5+0.2*cos(60)},{0.5+0.2*sin(60)}) ({\u+0.5+0.3*cos(90)},{0.5+0.3*sin(90)}) ( (\u+0.5,1) };
				\draw[red, thick] plot  [line width=2,  tension=1] coordinates {   ({\u+0.5+0.125*cos(180)},{0.5+0.125*sin(180)}) ({\u+0.5+0.2*cos(240)},{0.5+0.2*sin(240)}) ({\u+0.5+0.3*cos(270)},{0.5+0.3*sin(270)}) ( (\u+0.5,0) };

				\draw[green, thick] plot  [line width=2,  tension=1] coordinates {   ({\u+0.5+0.125*cos(120)},{0.5+0.125*sin(120)}) ({\u+0.5+0.35*cos(150)},{0.5+0.35*sin(150)}) ({\u+0.5+0.5*cos(180)},{0.5+0.5*sin(180)}) };
				
				\draw[blue, thick] plot  [line width=2,  tension=1] coordinates {   ({\u-1+1+0.5+0.125*cos(150)},{0.5+0.125*sin(150)}) (\u-1+1+0.15,0.325) ( (\u-1+1,0) };
			}

\draw[black] ({2-(0.15+0.45*cos(70))},{1-0.1}) node{$L_{t(b_{i})}$};
\draw[black] ({2-(0.5+0.35*cos(330))},{1-(0.5-0.1+0.35*sin(330))}) node{$L_{s(c_i)}$};
\draw[black] ({2-1.2},{1-0.1}) node{$L_{t(c_{i})}$};
			
			
			\draw[->] ({0.5+0.25*cos(-40)}, {0.5+0.25*sin(-40)}) arc (-40:1:0.25) node[pos=0.7, right]{$c_i$};

			\draw[->] ({1.5-0.145+0.75*cos(258)}, {1.08+0.75*sin(258)}) arc (258:271:0.75) node[pos=0.65, below]{$b_i$};

			\end{tikzpicture}
		}
		\end{array}
		\end{displaymath}
		\caption{The two types of $6$-gons occurring in $\mathbb{T}_n \setminus L$. Recall that $L_{t(d_i)}=L_{t(b_{i+1})}$,  $L_{s(a_i)}=L_{s(c_i)}$ and $L_{t(a_i)}=L_{t(c_i)}=L_{s(d_i)}=L_{s(b_{i})}$.
		}\label{FigurePolygons}
	\end{figure}
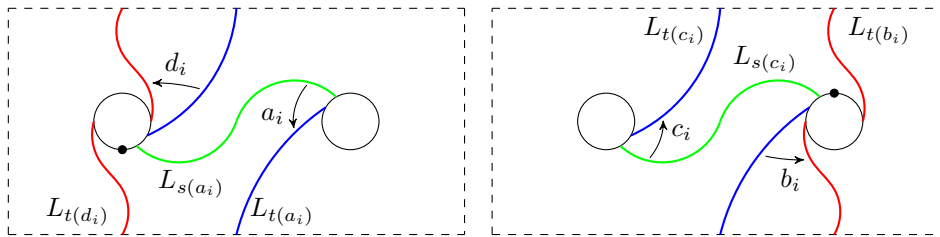
	
\noindent The laminates cut $\partial \mathbb{T}_n$ into segments. Each such boundary segment $\delta$  sits in a polygon which contains a  marked point $\bullet$ and hence determines a unique admissible path: if $p$ is the maximal admissible path corresponding to $\bullet$ and $\delta$ is bounded by the laminates $L_{x_s}$ and $L_{x_t}$ (following the counter-clockwise orientation on the boundary), then we associate with $\delta$ the subpath $q$ of $p$ which starts on $x_s$ and ends on $x_t$. In particular, $q=p$ if and only if $\delta$ contains $\bullet$. For the remaining cases see Figure \ref{FigurePolygons}. \label{InducedSubpath}
	
	\subsection{Curves and winding numbers}\label{SectionComplexOfLoops}
	\noindent Certain homotopy classes of curves in $\mathbb{T}_n$ give rise to indecomposable objects in $\mathcal{D}^b(\Lambda_n)$. We consider two types of curves, namely \emph{loops} and \emph{arcs}. The former are closed whereas the latter start and end at marked points. Before we explain how to construct a complex from a curve, we introduce some necessary background material. As indicated above, not all loops give rise to an object and the necessary condition can be expressed in terms of its winding number which we define in Section \ref{SectionQuiverOfALoop}.

	\subsubsection{Loops and arcs} \ \medskip

		\noindent Suppose $\Sigma$ is a compact, oriented surface, with or without boundary, with a finite  subset $\marked \subseteq \Sigma$ of marked points. The interior marked points are called \textbf{punctures} and their set shall be denoted by $\mathscr{P}$. We will mainly consider two examples in this paper: $\Sigma= \mathbb{T}_n$ or $\Sigma=\mathscr{T}^n$.
			
		 A \textbf{curve} in $\Sigma$ is an immersion $\gamma:\Omega \rightarrow \Sigma$, where $\Omega=[0,1]$ or $\Omega=S^1$ and $\partial \Omega= \gamma^{-1}(\marked)$. If $\Omega= [0,1]$, the curve $\gamma$ is called an \textbf{arc} and if $\Omega = S^1$, it is called a \textbf{loop}.
		 
		  A \textbf{homotopy of curves} is a homotopy $H: [0,1] \times \Omega \rightarrow \Sigma$ of the underlying maps which is constant on $[0,1] \times \partial \Omega$ and such that $H^{-1}(\mathscr{P}) \subseteq [0,1] \times \partial \Omega$.    By definition, we assume that a loop is not homotopic to a constant map and that an arc cannot be homotoped into a contractible neighborhood $U \subset \Sigma$ which contains a single puncture. In particular, there exists a canonical bijection between homotopy classes of loops on $\mathbb{T}_n$ and loops on $\mathscr{T}^n$.\medskip

\noindent Throughout the paper, we will stick to a few conventions:

\begin{convention} \
	\begin{itemize}
		\item Unless stated otherwise, we assume loops to be primitive: a loop $\gamma: S^1 \rightarrow \Sigma$ is said to be \textbf{primitive} if $\gamma$ is not homotopic to a loop which factors through a non-trivial covering map of $S^1$.
		\item 	Up to homotopy, we may and will assume that the number of intersections of a curve with all the laminates is minimal in its  homotopy class and that all such intersections are double points which lie in the interior. 
		\item Depending on the circumstances, we regard curves as oriented or unoriented objects by choosing (or not choosing) and orientation on its domain. The correct interpretation of the words ``arc'' and  ``loop'' should be apparent from the context. 
		\end{itemize}
\end{convention}

		\subsubsection{Gradings on curves} \ \medskip
		
		\noindent Let $\gamma$ be a curve on $\mathbb{T}_n$ and denote by $\gamma \cap L$ the set of intersections of $\gamma$ with the laminates.  Regarding $\gamma \cap L$ as a subset of the domain of $\gamma$ in the natural way, we can speak of \textit{neighboring} intersections.
		 
		 \begin{definition}
		 A \textbf{grading} of $\gamma$ is a function $g: \gamma \cap L \rightarrow \mathbb{Z}$ which satisfies the following property for all pairs $(p,q)$ of neighboring intersections  $p,q \in \gamma \cap L$:\smallskip
		 
	\begin{changemargin}{0.3cm}{0.3cm} 
\noindent	\textit{If $\delta$ denotes the segment of $\gamma$ between $p$ and $q$, oriented such that $\delta$ starts at $p$ and ends at $q$, and  $\Delta \subseteq \mathbb{T}_n \setminus L$ denotes the open polygon which contains $\delta \setminus \{p, q\}$, then }
		
		$$
	g(q) = \begin{cases}
              g(p) + 1, & \textit{if the marked point in $\Delta$ is on the left of $\delta$}; \\
              g(p) - 1, & \textit{otherwise.}
             \end{cases}
             $$

\end{changemargin}
		 \end{definition}

\noindent The notion of grading carries over to homotopy classes of curves in the obvious way. A \textbf{graded curve} is a pair $(\gamma, g)$, where $\gamma$ is a curve and $g$ is a grading of $\gamma$. If $g$ is a grading of $\gamma$ and $n \in \mathbb{Z}$, then so is its \textbf{shift} $g[n]$, defined by $g[n](p)=g(p)-n$ for all $p \in \gamma \cap L$. While all arcs can be graded, there exist loops which do not admit a grading and hence we say that a loop (or generally any curve) is \textbf{gradable} if it admits a grading.\medskip

\noindent If a curve $\gamma$ is gradable, then any grading $g$ is determined completely by a single value $g(p)$ for any $p \in \gamma \cap L$ and all gradings of $\gamma$ are shifts of $g$.

	\subsubsection{The quiver of a curve and the winding number of a loop} \label{SectionQuiverOfALoop} \ \medskip

	\noindent 	Let $\gamma \subseteq \mathbb{T}_n$ be an oriented curve. Denote by $q_0, \dots, q_m$ the ordered sequence of elements in  $\gamma \cap L$ and for each $i \in [0,m]$, denote by $L_{x_i}$ the laminate which contains $q_i$. Then, we write $P_i=\Lambda_n x_i$ for the indecomposable projective $\Lambda_n$-module associated with $x_i$ and denote by $\gamma_i$ the segment of $\gamma$ between $q_i$ and $q_{i+1}$. The orientation of $\gamma$ induces a canonical orientation on $\gamma_i$.
	
	This data gives rise to a quiver $Q(\gamma)$ which is of type $\widetilde{A}$ if $\gamma$ is a loop and which is of type $A$ if $\gamma$ is an arc. We will see that for every grading $g$ of $\gamma$, $Q(\gamma)$ encodes complexes in the same way as the quivers in Figure \ref{FigureQuivers} and Figure \ref{FigurePicardGroupComplex} encode the complexes $\mathcal{O}(\lambda)$ and $\Bbbk(i, \lambda)$.\medskip
	
	\noindent The quiver $Q(\gamma)$ has vertices $\{P_0, \dots, P_m\}$ as defined above and arrows $\sigma_0, \dots, \sigma_m$ whose start and end points are determined by the segments $\gamma_i$ in the following way. For each $i \in [0,m)$, $\sigma_i$ connects $P_i$ with $P_{i+1}$ -- more precisely, if $\bullet$ denotes the unique marked point in the polygon $\Delta_i \supseteq \gamma_i$, we have 

		\[
\sigma_i= \begin{cases}{\begin{tikzcd}P_i \arrow{r}{} & P_{i+1},\end{tikzcd}} & \textrm{if $\bullet$ lies to the left of $\gamma_i$ in $\Delta_i$;} \\ {\begin{tikzcd}P_i & \arrow{l}{} P_{i+1},\end{tikzcd}} & \textrm{if $\bullet$ lies to the right of $\gamma_i$ in $\Delta_i$.} \end{cases}\]	
	
	\begin{definition}
	Assume that $\gamma$ is a loop.	The \textbf{winding number $\omega(\gamma)$ of $\gamma$} is defined as the difference $b-a$, where
		$a$ denotes the number of arrows $\sigma_i$ from $P_i$ to $P_{i+1}$ and $b$ denotes the number of arrows $\sigma_j$ from $P_{j+1}$ to $P_j$.
	\end{definition} 
	
	\noindent A change of orientation of a loop changes the sign of its winding number. On the other hand, one observes that $Q(\gamma)$ does not depend on the chosen orientation as an abstract quiver. Gradable loops can be characterized as follows.
\begin{lem}\label{LemmaConditionGradable}
	A loop $\gamma$ is gradable if and only if $\omega(\gamma)=0$ if and only if $Q(\gamma)$ has the same number of clockwise and counter-clockwise arrows.  
	\end{lem}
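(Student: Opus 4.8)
The plan is to reduce the existence of a grading to the solvability of a system of cyclic difference equations and then to identify the single obstruction with the winding number. Since $\gamma$ is a loop, its intersections with the laminates form a cyclic sequence $q_0, \dots, q_m$ (indices read modulo $m+1$), and every consecutive pair $(q_i, q_{i+1})$ is a pair of neighboring intersections, including the wrap-around pair $(q_m, q_0)$. For each such pair let $\Delta_i$ be the polygon containing the segment $\gamma_i$ joining $q_i$ to $q_{i+1}$, and set $\epsilon_i = +1$ if the marked point of $\Delta_i$ lies to the left of $\gamma_i$ and $\epsilon_i = -1$ otherwise. Unwinding the definition, a function $g \colon \gamma \cap L \to \mathbb{Z}$ is a grading if and only if $g(q_{i+1}) = g(q_i) + \epsilon_i$ for all $i \in \mathbb{Z}_{m+1}$.

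Next I would observe that the very same left/right alternative governs the orientation of the arrow $\sigma_i$ in $Q(\gamma)$: the case $\epsilon_i = +1$ (marked point on the left) is precisely the case in which $\sigma_i$ points from $P_i$ to $P_{i+1}$, while $\epsilon_i = -1$ is the case in which $\sigma_i$ points from $P_{i+1}$ to $P_i$. Hence, in the notation of the definition of the winding number, the number of indices with $\epsilon_i = +1$ equals $a$ and the number with $\epsilon_i = -1$ equals $b$, so that $\sum_{i} \epsilon_i = a - b = -\omega(\gamma)$.

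For the main argument I would sum the defining relations around the loop: on the cyclic index set the left-hand side telescopes, so $0 = \sum_i \bigl(g(q_{i+1}) - g(q_i)\bigr) = \sum_i \epsilon_i$ for any grading $g$. Thus a grading can exist only if $\sum_i \epsilon_i = 0$, i.e.\ only if $\omega(\gamma) = 0$. Conversely, if $\omega(\gamma) = 0$ then $\sum_i \epsilon_i = 0$, and one defines a grading by fixing $g(q_0)$ and setting $g(q_i) = g(q_0) + \sum_{j=0}^{i-1} \epsilon_j$ for $1 \le i \le m$; the relation $g(q_{i+1}) = g(q_i) + \epsilon_i$ holds for $0 \le i \le m-1$ by construction, and the remaining wrap-around relation holds precisely because $\sum_{j=0}^{m} \epsilon_j = 0$. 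This settles the equivalence ``$\gamma$ gradable $\iff \omega(\gamma)=0$''. The last equivalence is then immediate: $\omega(\gamma) = b - a = 0$ means $a = b$, and under the identification of forward arrows $P_i \to P_{i+1}$ with one rotational direction around the $\widetilde{A}$-cycle and backward arrows with the other, $a = b$ is exactly the assertion that $Q(\gamma)$ has equally many clockwise and counter-clockwise arrows.

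The step I expect to require the most care is the sign bookkeeping of the second paragraph: one must check that the ``left/right'' dichotomy defining the grading increment coincides, with matching signs, with the ``left/right'' dichotomy orienting the arrows of $Q(\gamma)$, and that the orientation of each segment $\gamma_i$ induced by the loop is the one tacitly used in both definitions. Once this matching is pinned down, the rest is the elementary cyclic telescoping above.
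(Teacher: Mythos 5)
Your proof is correct: the sign bookkeeping checks out (marked point on the left gives both the increment $g(q_{i+1})=g(q_i)+1$ and the arrow $P_i\to P_{i+1}$ counted in $a$, so $\sum_i\epsilon_i=a-b=-\omega(\gamma)$), and the cyclic telescoping argument, including the wrap-around relation, is exactly the intended one. The paper in fact leaves the proof environment for this lemma empty, so your write-up supplies precisely the routine verification the authors omitted.
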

\noindent Note that whether a loop is gradable or not does not depend on the choice of an orientation.
	\begin{exa}\label{WindingNumbersBoundary}
Every boundary component  $B \subseteq \partial\mathbb{T}_n$ determines an embedding $S^1 \rightarrow \mathbb{T}_n$ which we regard as a simple loop $\gamma$ after a small deformation. Then $\gamma$ inherits a canonical orientation from $B$ (the induced orientation on $\partial \mathbb{T}_n$). The quiver of $\gamma$ is depicted in Figure \ref{FigureWindingNumberBoundary} from which we determine its winding number as $2-4=-2$. We refer to this winding number as the \textbf{winding number of $B$}.\end{exa}

\begin{figure}
	\begin{tikzpicture}[scale=4, hobby]
	\draw[dashed] (0,0)--(1,0);
	\draw[dashed] (0,0)--(0,1);
	\draw[dashed] (0,1)--(1,1);
	\draw[dashed] (1,1)--(1,0);

	\foreach \u in {0}
	{
		\filldraw (\u+0.5,0.625) circle (0.5pt);
		\filldraw (\u+0.5,0.375) circle (0.5pt);
		\draw (\u+0.5,0.5) circle (0.125);
		
		\draw[red, thick] plot  [line width=2,  tension=1] coordinates {   ({\u+0.5+0.125*cos(0)},{0.5+0.125*sin(0)}) ({\u+0.5+0.2*cos(60)},{0.5+0.2*sin(60)}) ({\u+0.5+0.3*cos(90)},{0.5+0.3*sin(90)}) ( (\u+0.5,1) };
		\draw[red, thick] plot  [line width=2,  tension=1] coordinates {   ({\u+0.5+0.125*cos(180)},{0.5+0.125*sin(180)}) ({\u+0.5+0.2*cos(240)},{0.5+0.2*sin(240)}) ({\u+0.5+0.3*cos(270)},{0.5+0.3*sin(270)}) ( (\u+0.5,0) };
		
		\draw[green, thick] plot  [line width=2,  tension=1] coordinates {   ({\u+0.5+0.125*cos(120)},{0.5+0.125*sin(120)}) ({\u+0.5+0.35*cos(150)},{0.5+0.35*sin(150)}) ({\u+0.5+0.5*cos(180)},{0.5+0.5*sin(180)}) };
		\draw[green, thick] plot  [line width=2,  tension=1] coordinates {   ({\u+0.5+0.125*cos(300)},{0.5+0.125*sin(300)}) ({\u+0.5+0.35*cos(330)},{0.5+0.35*sin(330)}) ({\u+0.5+0.5*cos(360)},{0.5+0.5*sin(360)}) };
		
		\draw[blue, thick] plot  [line width=2,  tension=1] coordinates {   ({\u+0.5+0.125*cos(-30)},{0.5+0.125*sin(-30)}) (\u+0.85,0.625)  (\u+1,1) };
		\draw[blue, thick] plot  [line width=2,  tension=1] coordinates {     (\u-1+1,0) (\u-1+1+0.15,0.325) ({\u-1+1+0.5+0.125*cos(150)},{0.5+0.125*sin(150)})  };
		
	}
	
	\filldraw[black] ({0.5+0.375*cos(91.7)},{0.5+0.375*sin(91.7)}) circle (0.6pt);
	\filldraw[black] ({0.5+0.375*cos(271.7)},{0.5+0.375*sin(271.7)}) circle (0.6pt);
	
	\filldraw[black] ({0.5+0.375*cos(20)},{0.5+0.375*sin(20)}) circle (0.6pt);
	\filldraw[black] ({0.5+0.375*cos(205.3)},{0.5+0.375*sin(205.3)}) circle (0.6pt);
	
	\filldraw[black] ({0.5+0.375*cos(154)},{0.5+0.375*sin(154)}) circle (0.6pt);
	\filldraw[black] ({0.5+0.375*cos(334)},{0.5+0.375*sin(334)}) circle (0.6pt);

	
	\draw[->, thick] ({0.5+0.375*cos(97.3)},{0.5+0.375*sin(97.3)}) arc (97.3:148.5:0.375); 
	\draw[<-, thick] ({0.5+0.375*cos(159.5)},{0.5+0.375*sin(159.5)}) arc (159.5:199.8:0.375); 
	\draw[<-, thick] ({0.5+0.375*cos(210.8)},{0.5+0.375*sin(210.8)}) arc (210.8:266.2:0.375);
	\draw[->, thick] ({0.5+0.375*cos(277.2)},{0.5+0.375*sin(277.2)}) arc (277.2:328.5:0.375);
	\draw[<-, thick] ({0.5+0.375*cos(339.5)},{0.5+0.375*sin(339.5)}) arc (339.5:374.5:0.375);
	\draw[<-, thick] ({0.5+0.375*cos(339.5)},{0.5+0.375*sin(339.5)}) arc (339.5:374.5:0.375);
	\draw[<-, thick] ({0.5+0.375*cos(25.5)},{0.5+0.375*sin(25.5)}) arc (25.5:86.2:0.375);
	
	\end{tikzpicture}
	\caption{The quiver of the clockwise boundary loop with $4$ clockwise and $2$  counter-clockwise arrows.} \label{FigureWindingNumberBoundary}
\end{figure}
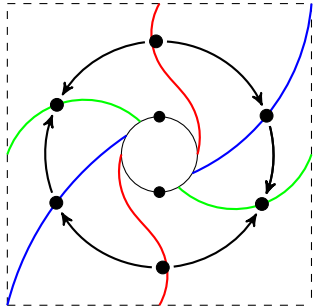
	
	\subsection{The complex of a graded curve}\label{SectionComplexOfACurve}
	\ \medskip

	\noindent In order to define a complex of a curve, we recall the definition of a local system.
	
	\begin{definition}
	Let $\gamma$ be a curve with domain $\Omega$. A \textbf{local system} $\mathcal{V}$ on $\gamma$ consists of a collection of vector spaces $(\mathcal{V}_z)_{z \in \Omega}$ and isomorphisms $\mathcal{V}_u: \mathcal{V}_{u(0)} \rightarrow \mathcal{V}_{u(1)}$ for every path $u: [0,1] \rightarrow \Omega$ which only depends on the homotopy class of $u$. Moreover, if $v$ is a path with $v(0)=u(1)$, then $\mathcal{V}_{v \ast u}= \mathcal{V}_v \circ \mathcal{V}_u$, where $v \ast u$ denotes the concatenation of $v$ with $u$.
	\end{definition}

\noindent The well-defined number $\dim \mathcal{V} \coloneqq \dim \mathcal{V}_z$ is the \textbf{dimension of $\mathcal{V}$}. After the choice of a base point on $\Omega$, one sees that isomorphism classes of indecomposable local systems (coproducts being defined pointwise) on a loop are in bijection with powers of irreducible polynomials over $\Bbbk$ and hence with the polynomials $(X-\lambda)^m$, where $\lambda \in \Bbbk^{\times}$ and $m =\dim \mathcal{V}$. On the other hand, every indecomposable local system on an arc is isomorphic to the constant $1$-dimensional local system, i.e.\ $\mathcal{V}_z= \Bbbk$ for all $z \in [0,1]$ and every isomorphism $\mathcal{V}_{u(0)} \rightarrow \mathcal{V}_{u(1)}$ above is the identity.\medskip

\noindent Let $\mathcal{V}$ be a local system on a graded curve $(\gamma, g)$. We attach a complex $\P_{(\gamma, g)}(\mathcal{V})$ to the quiver $Q(\gamma)$ by, roughly speaking,  ``folding'' $Q(\gamma)$ to a complex in the same way as the quivers in Figure \ref{FigurePicardGroupComplex}. 

We use the notation of Section \ref{SectionQuiverOfALoop}. Set $\mathcal{V}_i=\mathcal{V}_{q_i}$. Then, as a graded $\Lambda_n$-module, $$\P_{(\gamma, g)}(\mathcal{V})= \bigoplus_i \left(P_i \otimes \mathcal{V}_i\right)[-g(q_i)].$$

\noindent The differential is described as follows. For each $i$, let $t_i$ denote the map between $P_i$ and $P_{i+1}$ induced by the unique admissible path $\alpha_i$ determined by the boundary segment in $\partial \mathbb{T}_n \cap \partial \Delta_i$ which is bounded by $L_{x_i}$ and $L_{x_{i+1}}$, c.f. the last paragraph of Section \ref{SectionToriMarkedPoints}. The segment $\gamma_i$ of $\gamma$ which is bounded by $L_{x_i}$ and $L_{x_{i+1}}$ also determines an isomorphism $u_i$ between $\mathcal{V}_i$ and $\mathcal{V}_{i+1}$. The differential is the sum of  all maps $t_i \otimes u_i$, shifted appropriately. Lemma \ref{LemmaConditionGradable} ensures that the procedure above defines a differential. Note that $t_i: P_{t(\alpha_i)} \rightarrow P_{s(\alpha_i)}$ and  $(x_i, x_{i+1})=(t(\alpha_i), s(\alpha_i))$ if and only if the marked point in $\Delta_i$ lies to the left of $\gamma_i$. Hence the domain (resp.~codomain) of $t_i$ agrees with the start (resp.~end) of $\sigma_i$ in $Q(\gamma)$.\medskip

\noindent The above construction respects direct sums in that $\P_{(\gamma, g)}(\mathcal{U} \oplus \mathcal{V}) \cong \P_{(\gamma, g)}(\mathcal{U}) \oplus \P_{(\gamma, g)}(\mathcal{U})$. In fact, $\P=\P_{(\gamma, g)}(\mathcal{V})$ is indecomposable if and only if $\mathcal{V}$ is indecomposable and its isomorphism class is invariant under isomorphisms of local systems and change of orientation on $\gamma$. If $\gamma$ is a loop and $\mathcal{V}$ is indecomposable, then $\P$ is an instance of a \textit{band complex} in the sense of \cite{BekkertMerklen} and \cite{BurbanDrozd2004}. In particular, $\tau \P \cong \P$ as first proved by Bobinski \cite{Bobinski}. Moreover, $\dim \mathcal{V}$ is the level of $\P$  in the corresponding homogeneous tube.

If $\gamma$ is an arc and $\mathcal{V}$ is indecomposable, then $\P$ is a so-called \textit{string complex}. As we may assume that $\mathcal{V}$ is constant, we often suppress the additional datum of a local system in this case.\medskip

\noindent Every homotopy $H:[0,1] \times \Omega$ between curves $\gamma_0=H|_{\{0\} \times \Omega}$ and $\gamma_1=H|_{\{1\} \times \Omega}$ which are in minimal position with $L$ induces a bijection $\gamma_0 \cap L \cong \gamma_1 \cap L$ and hence a bijection between the sets  gradings on $\gamma_0$ and $\gamma_1$. Because the domains of $\gamma_0$ and $\gamma_1$ agree, we further have a canonical bijection between local systems on $\gamma_0$ and $\gamma_1$. The following statement is found in \cite[Theorem 2.12]{OpperPlamondonSchroll}. The bijection between curves and indecomposable objects was first proved in \cite{HaidenKatzarkovKontsevich}.

\begin{prp}\label{PropositionBijectionObjectsCurves}
Let $\cX$ denote the set of triples $(\gamma,g, \mathcal{V})$, where $(\gamma, g)$ is a graded curve and $\mathcal{V}$ is an indecomposable local system on $\gamma$. Let further $\sim$ denote the equivalence relation on $\cX$  such that $(\gamma, g, \mathcal{V}) \sim (\gamma', g', \mathcal{V}')$ if and only if there exists a homotopy between $\gamma$ and $\gamma'$ under which $g$ corresponds to $g'$ and $\mathcal{V}$ corresponds to $\mathcal{V}'$. The assignment
	\[\begin{tikzcd}
	(\gamma, g, \mathcal{V}) \arrow[mapsto]{r} & \P_{(\gamma, g)}(\mathcal{V}),
	\end{tikzcd}
	\]
	\noindent induces a bijection from the set of $\sim$-equivalence classes of $\cX$ to the set of isomorphism classes of indecomposable objects in $\mathcal{D}^b(\Lambda_n)$.
	
\end{prp}
\noindent In summary, Proposition \ref{PropositionBijectionObjectsCurves} states that every indecomposable object $X \in \mathcal{D}^b(\Lambda_n)$ is represented by a curve endowed with a grading and an  indecomposable local system. To simplify notation, we often refer to such a curve and the local system by $\gamma_X$ and $\mathcal{V}_X$ respectively so that $X \cong \P_{(\gamma_X, g)}(\mathcal{V}_X)$ for some unique grading $g$ on $\gamma_X$. In this case, we say that $X$ is \textbf{represented} by $\gamma_X$. Finally,  if $Y \in \Perf(C_n)$ is indecomposable, we write $\gamma_Y$ and $\mathcal{V}_Y$ as short for $\gamma_{\mathbb{F}(Y)}$ and $\mathcal{V}_{\mathbb{F}(Y)}$ and say that $Y$ is represented by  $\gamma_Y$. As a consequence of Theorem \ref{TheoremBurbanDrozdImage}, we prove in Section \ref{SectionFractional} that loops on $\mathbb{T}_n$ (or equivalently, on $\mathscr{T}^n$) represent precisely the indecomposable perfect complexes of $C_n$. \medskip

	\begin{exa}\label{ExampleLoopsOnTorus}
		Let $\gamma_{\Pic}: S^1 \rightarrow \mathbb{T}_n$ be the loop defined by  $\gamma_{\Pic}(e^{2\pi i t}):=(n \cdot t, \frac{1}{8})$ for all $t \in [0,1]$. Similarly, for $j \in [0,n)$, let $\gamma_{\Bbbk(x)}^j:S^1 \rightarrow \mathbb{T}_n$ denote the loop defined by $\gamma_{\Bbbk(x)}^j(e^{2\pi i t}):=(j+1, t)$, $t \in [0,1]$.
		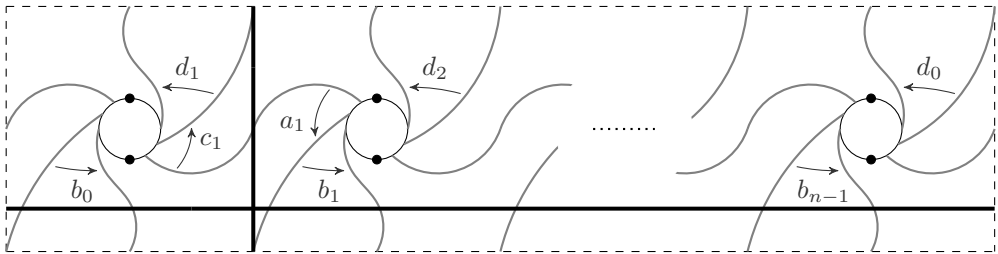
\begin{figure}[H]
			\begin{displaymath}
			\begin{tikzpicture}[scale=3.25, hobby]
			\draw[dashed] (0,0)--(4,0);
			\draw[dashed] (0,0)--(0,1);
			\draw[dashed] (0,1)--(4,1);
			\draw[dashed] (4,1)--(4,0);

			\foreach \u in {0,1,3}
			{
				\filldraw (\u+0.5,0.625) circle (0.5pt);
				\filldraw (\u+0.5,0.375) circle (0.5pt);
				\draw (\u+0.5,0.5) circle (0.125);
				
				\draw[gray, thick] plot  [line width=2,  tension=1] coordinates {   ({\u+0.5+0.125*cos(0)},{0.5+0.125*sin(0)}) ({\u+0.5+0.2*cos(60)},{0.5+0.2*sin(60)}) ({\u+0.5+0.3*cos(90)},{0.5+0.3*sin(90)}) ( (\u+0.5,1) };
				\draw[gray, thick] plot  [line width=2,  tension=1] coordinates {   ({\u+0.5+0.125*cos(180)},{0.5+0.125*sin(180)}) ({\u+0.5+0.2*cos(240)},{0.5+0.2*sin(240)}) ({\u+0.5+0.3*cos(270)},{0.5+0.3*sin(270)}) ( (\u+0.5,0) };
				
				\draw[gray, thick] plot  [line width=2,  tension=1] coordinates {   ({\u+0.5+0.125*cos(-30)},{0.5+0.125*sin(-30)}) (\u+0.85,0.625) ( (\u+1,1) };
				
				\draw[gray, thick] plot  [line width=2,  tension=1] coordinates {   ({\u-1+1+0.5+0.125*cos(150)},{0.5+0.125*sin(150)}) (\u-1+1+0.15,0.325) ( (\u-1+1,0) };

				\draw[gray, thick] plot  [line width=2,  tension=1] coordinates {   ({\u+0.5+0.125*cos(120)},{0.5+0.125*sin(120)}) ({\u+0.5+0.35*cos(150)},{0.5+0.35*sin(150)}) ({\u+0.5+0.5*cos(180)},{0.5+0.5*sin(180)}) };
				\draw[gray, thick] plot  [line width=2,  tension=1] coordinates {   ({\u+0.5+0.125*cos(300)},{0.5+0.125*sin(300)}) ({\u+0.5+0.35*cos(330)},{0.5+0.35*sin(330)}) ({\u+0.5+0.5*cos(360)},{0.5+0.5*sin(360)}) };
				
			}
			\foreach \u in {2}
			{
				\draw[gray, thick] plot  [line width=2,  tension=1] coordinates {   ({\u+0.5+0.125*cos(120)},{0.5+0.125*sin(120)}) ({\u+0.5+0.35*cos(150)},{0.5+0.35*sin(150)}) ({\u+0.5+0.5*cos(180)},{0.5+0.5*sin(180)}) };
				\draw[gray, thick] plot  [line width=2,  tension=1] coordinates {   ({\u+0.5+0.125*cos(300)},{0.5+0.125*sin(300)}) ({\u+0.5+0.35*cos(330)},{0.5+0.35*sin(330)}) ({\u+0.5+0.5*cos(360)},{0.5+0.5*sin(360)}) };
				
				\draw[gray, thick] plot  [line width=2,  tension=1] coordinates {   ({\u-1+1+0.5+0.125*cos(150)},{0.5+0.125*sin(150)}) (\u-1+1+0.15,0.325) ( (\u-1+1,0) };
				\draw[gray, thick] plot  [line width=2,  tension=1] coordinates {   ({\u+0.5+0.125*cos(-30)},{0.5+0.125*sin(-30)}) (\u+0.85,0.625) ( (\u+1,1) };
				
			}
			
			
			\filldraw[white] (2+0.5,0.5) circle (0.275);

			\draw[dotted, thick] (2+0.5-0.125,0.5)--(2+0.5+0.125,0.5);

			\draw[line width=1.5, black] (0,0.175)--(0.75,0.175);
			
			\draw[line width=1.5, black] (0.75, 0.175)--(4, 0.175);
			\draw[line width=1.5, black] (1,0)--(1, 0.75);
			\draw[line width=1.5, black] (1,0.75)--(1,1);
			
	\draw[->, color=black!80] ({0.5-0.145+0.75*cos(258)}, {1.08+0.75*sin(258)}) arc (258:271:0.75) node[pos=0.65, below]{$b_2$};
	\draw[->, color=black!80] ({1.5-0.145+0.75*cos(258)}, {1.08+0.75*sin(258)}) arc (258:271:0.75) node[pos=0.65, below]{$b_1$};
	\draw[->, color=black!80] ({3.5-0.145+0.75*cos(258)}, {1.08+0.75*sin(258)}) arc (258:271:0.75) node[pos=0.65, below]{$b_3$};
	
	\draw[->, color=black!80] ({0.5+0.145+0.75*cos(75)}, {0.5-0.58+0.75*sin(75)}) arc (75:91:0.75) node[pos=0.5, above]{$d_1$};
	\draw[->, color=black!80] ({1.5+0.145+0.75*cos(75)}, {0.5-0.58+0.75*sin(75)}) arc (75:91:0.75) node[pos=0.5, above]{$d_0$};
	\draw[->, color=black!80] ({3.5+0.145+0.75*cos(75)}, {0.5-0.58+0.75*sin(75)}) arc (75:91:0.75) node[pos=0.5, above]{$d_2$};
	
	\draw[->, color=black!80] ({0.5+0.25*cos(-40)}, {0.5+0.25*sin(-40)}) arc (-40:1:0.25) node[pos=0.7, right]{$c_1$};
	\draw[->, color=black!80] ({1.5+0.25*cos(140)}, {0.5+0.25*sin(140)}) arc (140:187:0.25) node[pos=0.75, left]{$a_1$};

			\end{tikzpicture}
			\end{displaymath}
			\caption{The loops $\gamma_{\Pic}$ (horizontal) and $\gamma_{\Bbbk(x)}^1$ (vertical).} \label{FigureLoopsOfPicardSkyscraper}
		\end{figure}
		
		\noindent Replacing the vertices of $Q(\gamma_{\Pic})$ by bullet points and assigning the underlying path of $t_i$ as the label of the arrow $\sigma_i$, $Q(\gamma_{\Pic})$ becomes the quiver on the left hand side of Figure \ref{FigurePicardGroupComplex} on page \pageref{FigurePicardGroupComplex}, whereas $Q(\gamma_{\Bbbk(x)}^i)$ is the quiver on the right hand side. Thus, suppressing the grading we have $\P_{\gamma_{\Pic}}(\mathcal{V}) \cong \mathcal{O}(\lambda)$ and $\P_{\gamma_{\Bbbk(x)}^i}(\mathcal{W}) \cong \Bbbk(i,\lambda)$ for suitable $1$-dimensional local systems $\mathcal{V}$ and $\mathcal{W}$.
		
	\end{exa}

\subsubsection{Fractional Calabi-Yau objects: loops and boundary arcs}\label{SectionFractional}\ \medskip
	
\noindent 	One of the main differences between band complexes and string complexes is that the latter are generally not $\tau$-invariant. A criterion which allows us to characterize the $\tau$-invariant indecomposable objects in the derived category of any gentle algebra is provided in \cite[Proposition 2.16]{OpperDerivedEquivalences}. It says that a string complex $X$ satisfies $\tau^m X \cong X[d]$ for some $m > 0$ and some $d \in \mathbb{Z}$ if and only if $\gamma_X$ is homotopic to a boundary arc on a component with exactly $m'$ marked points and winding number $-d'$ such that $(m,d)$ is an integer multiple of $(m',d')$.

	\begin{prp}\label{PropositionClassificationCalabiYauObjects}Let $m \geq 1$, $d \in \mathbb{Z}$ and let $X \in \mathcal{D}^b(\Lambda_n)$. Then, the following are true.
	\begin{enumerate}
	    \item If $\tau^m X \cong X[d]$, then $X$ is $\tau$-invariant or $\tau^2 X \cong X[2]$;
	    \item  $X$ is $\tau$-invariant if and only if each indecomposable direct summand of $X$ is represented by a loop.
	    \item We have $\tau^2 X \cong X[2]$ if and only if each indecomposable direct summand of $X$ is represented by a boundary arc.
	\end{enumerate}
  
	\end{prp}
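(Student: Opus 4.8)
The plan is to reduce everything to the level of indecomposable summands and then combine two inputs: Bobinski's theorem that every band complex (loop) satisfies $\tau\P\cong\P$, which is recalled above, and the criterion of \cite[Proposition 2.16]{OpperDerivedEquivalences} characterising when a string complex (arc) $X$ admits a relation $\tau^m X\cong X[d]$. Two elementary observations will be used throughout. First, since every object of $\mathcal{D}^b(\Lambda_n)$ has bounded finite-dimensional cohomology, an isomorphism $Y\cong Y[c]$ with $c\neq 0$ forces $Y=0$; in particular no nonzero object is isomorphic to a nontrivial shift of itself. Second, both $\tau$ and the shift preserve the loop/arc dichotomy: a loop is $\tau$-invariant by Bobinski, whereas an arc can never be $\tau$-invariant, since $\tau^1 X\cong X[0]$ would force $\gamma_X$ to be a boundary arc on a component with a single marked point by the criterion, contradicting that every component of $\mathbb{T}_n$ carries exactly two marked points; applying the quasi-inverse $\tau^{-1}$ then shows that $\tau$ sends loops to loops and arcs to arcs. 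Consequently every $X$ splits as $X\cong X^{\mathrm{loop}}\oplus X^{\mathrm{arc}}$, and any relation $\tau^m X\cong X[d]$ restricts to each summand separately by Krull--Schmidt.

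Next I would record the precise shape of the relations available to a boundary arc. Every boundary component of $\mathbb{T}_n$ has exactly two marked points and, as computed in Example \ref{WindingNumbersBoundary}, winding number $-2$. Hence by the criterion a string complex $X$ satisfies some relation $\tau^m X\cong X[d]$ if and only if $\gamma_X$ is a boundary arc, and in that case the set of pairs for which the relation holds is exactly $\{(2j,2j)\mid j\geq 1\}$; in particular $\tau^2 X\cong X[2]$, and whenever $\tau^m X\cong X[d]$ holds with $m>0$ one has $m=d$. This last point follows because the subgroup of $\mathbb{Z}^2$ consisting of all $(a,b)$ with $\tau^a X\cong X[b]$ meets the coordinate axes only in the origin (by the first observation together with the criterion) and contains $(2,2)$.

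With these facts in place parts (2) and (3) follow quickly. For (2), if every summand is a loop then $\tau X\cong X$ by Bobinski; conversely, if $\tau X\cong X$ then $\tau X^{\mathrm{arc}}\cong X^{\mathrm{arc}}$, and since $\tau$ permutes the finitely many indecomposable arc summands, a suitable power gives $\tau^k X_i\cong X_i$ for each arc summand $X_i$, i.e.\ a relation with $d=0$; this is incompatible with the admissible pairs $(2j,2j)$, so $X^{\mathrm{arc}}=0$ and every summand is a loop. Part (3) is analogous: boundary arcs satisfy $\tau^2 X\cong X[2]$, giving one direction; conversely $\tau^2 X\cong X[2]$ restricts to the loop part as $X^{\mathrm{loop}}\cong X^{\mathrm{loop}}[2]$ (using $\tau$-invariance of loops), forcing $X^{\mathrm{loop}}=0$, after which the permutation argument applied to the arc part yields $\tau^{2k}X_i\cong X_i[2k]$ for each summand, so each $X_i$ is a boundary arc by the criterion.

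Finally, part (1) combines the two computations. Assuming $\tau^m X\cong X[d]$ with $m\geq 1$, restriction to the loop part gives $X^{\mathrm{loop}}\cong X^{\mathrm{loop}}[d]$, so $X^{\mathrm{loop}}=0$ unless $d=0$; restriction to the arc part and the permutation argument produce, for each arc summand, a relation $(km,kd)$ which must lie in $\{(2j,2j)\}$, whence $m=d$ and each arc summand is a boundary arc. If $X^{\mathrm{arc}}=0$ then $X$ is a sum of loops and is $\tau$-invariant; if $X^{\mathrm{loop}}=0$ then $X$ is a sum of boundary arcs and satisfies $\tau^2 X\cong X[2]$; and both parts cannot be nonzero simultaneously, since that would force $d=0$ from the loops and $m=d$ from the arcs, hence $m=0$, contradicting $m\geq 1$. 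I expect the main obstacle to be the bookkeeping in this last step---correctly extracting the relation $(km,kd)$ from the Krull--Schmidt permutation and pinning it down to the diagonal form $(2j,2j)$---so that the loop constraint $d=0$ and the arc constraint $m=d$ become genuinely incompatible; once the admissible pairs are identified as exactly the positive diagonal multiples of $(2,2)$, the trichotomy of part (1) is forced.
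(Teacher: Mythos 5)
Your proof is correct and follows essentially the same route as the paper: reduce to indecomposable summands via the Krull--Schmidt cyclic-permutation argument, identify the admissible pairs $(m,d)$ as the multiples of $(1,0)$ for loops and of $(2,2)$ for boundary arcs, and note that the resulting constraints $d=0$ and $m=d$ are incompatible for $m\geq 1$. The only difference is presentational: you make the splitting $X\cong X^{\mathrm{loop}}\oplus X^{\mathrm{arc}}$ and the subgroup-of-$\mathbb{Z}^2$ argument explicit where the paper writes ``one easily derives''.
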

	\begin{proof}  From  Example \ref{WindingNumbersBoundary} and the discussion preceding the proposition, we know that an indecomposable object $Y \in \mathcal{D}^b(\Lambda_n)$ satisfies  $\tau^p Y \cong Y[q]$ for a pair $(p,q) \in \mathbb{Z}^2$ if and only if there exists $(m,d) \in \{(1,0), (2,2)\}$ such that $\tau^m Y \cong Y[d]$.  If $(m,d)=(1,0)$, $Y$ is represented by a loop and if $(m,d)=(2,2)$, then $Y$ is represented by a boundary arc. Suppose that $X$ satisfies $\tau^m X \cong X[d]$ and $X \cong \bigoplus_{i=1}^r{X_i}$ with each $X_i$ indecomposable. Then, for each $j \in [1,r]$, there exists a non-empty subset $J \subset \{1, \dots, r\}$ such that $j \in J$ and a cyclic permutation $\sigma: J \rightarrow J$ such that $\tau^m X_i \cong X_{\sigma(i)}[d]$ for all $i \in J$. Since $\sigma^{|J|}=\text{Id}_J$, we conclude that $\tau^{m |J|} X_j \cong X_j[d |J|]$ and that $(m|J|, d|J|)$ is an integer multiple of $(1,0)$ or $(2,2)$. Since $|J| \neq 0$, one easily derives that either $\tau X_i \cong X_i$ for all $i \in [1,r]$ or $\tau^2 X_i \cong X_i[2]$ for all $i \in [1,r]$.
	\end{proof}

\begin{cor}
	Let $X \in \mathcal{D}^b(\Lambda_n)$ be indecomposable. Then,  $X$ lies in the essential image of the functor $\mathbb{F}: \Perf(C_n) \longrightarrow \mathcal{D}^b(\Lambda_n)$  if and only if $\gamma_X$ is a loop.
\end{cor}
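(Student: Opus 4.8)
The plan is to obtain the statement as an immediate consequence of the two structural results already established, namely Theorem~\ref{TheoremBurbanDrozdImage} (Burban--Drozd's characterization of $\Img\mathbb{F}$) and part~(2) of Proposition~\ref{PropositionClassificationCalabiYauObjects}. The entire content of the corollary is that these two descriptions of the essential image, one algebraic and one topological, can be matched up for indecomposable objects.

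First I would invoke Theorem~\ref{TheoremBurbanDrozdImage}, which identifies $\Img\mathbb{F}$ with the full subcategory of $\tau$-invariant objects. Since the essential image is precisely the class of objects isomorphic to something in the range of $\mathbb{F}$, this gives the equivalence: an indecomposable $X\in\mathcal{D}^b(\Lambda_n)$ lies in $\Img\mathbb{F}$ if and only if $X\cong\tau X$. No indecomposability is needed at this step; it simply rephrases membership in $\Img\mathbb{F}$ as $\tau$-invariance.

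Next I would apply Proposition~\ref{PropositionClassificationCalabiYauObjects}(2), which asserts that $X$ is $\tau$-invariant if and only if each of its indecomposable direct summands is represented by a loop. Here the hypothesis that $X$ is indecomposable does the work: the only indecomposable summand of $X$ is $X$ itself, so the criterion collapses to the single condition that the curve $\gamma_X$ attached to $X$ by the bijection of Proposition~\ref{PropositionBijectionObjectsCurves} is a loop (as opposed to an arc). Chaining the two equivalences, $X\in\Img\mathbb{F}\iff X\cong\tau X\iff\gamma_X$ is a loop, which is exactly the assertion.

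There is no genuine obstacle in this argument; the work has been done in the preceding proposition and theorem, and the only care required is to confirm that the phrase \textquotedblleft represented by a loop\textquotedblright{} appearing in Proposition~\ref{PropositionClassificationCalabiYauObjects} agrees with the meaning of \textquotedblleft$\gamma_X$ is a loop\textquotedblright{} fixed in the notation following Proposition~\ref{PropositionBijectionObjectsCurves}, which it does by construction.
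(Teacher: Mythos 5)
Your argument is correct and is precisely the route the paper intends: the corollary is stated immediately after Proposition~\ref{PropositionClassificationCalabiYauObjects} with no separate proof, because it follows by combining Theorem~\ref{TheoremBurbanDrozdImage} (membership in $\Img\mathbb{F}$ equals $\tau$-invariance) with part~(2) of that proposition, exactly as you do. Your remark that indecomposability reduces the summand-wise criterion to the single condition on $\gamma_X$ is the only point that needs saying, and you say it.
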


\subsection{Morphisms and Intersections}\ \medskip
	
	\noindent We recall the relationship between morphisms and intersections of curves. A finite set $\{\gamma_1, \dots, \gamma_m\}$ of curves is in \textbf{minimal position} if no three curves from the set intersect in a single point in the interior (only ``double-points'') and for all (not necessarily distinct) $i, j \in [1,m]$, the number of (self-)intersections of $\gamma_i$ and $\gamma_j$ is minimal within their respective homotopy classes.
	\begin{convention}
		For the rest of this paper, we will always assume every instance of a finite set of curves to be in minimal position.
	\end{convention}  
\noindent Every finite set of curves can be deformed into minimal position by homotopy, and hence this assumption does not impose any restrictions on the homotopy classes of the curves. \medskip
	
\noindent 	Let $(\gamma_1, \gamma_2)$ be a pair of distinct curves and for $i \in \{1,2\}$, let $\Omega_i$ denote the domain of $\gamma_i$. The set $\gamma_1 \overrightarrow{\cap} \gamma_2$ of \textbf{oriented intersections} consists of all pairs $(s_1,s_2) \in \Omega_1 \times \Omega_2$ such that $p\coloneqq\gamma_1(s_1)=\gamma_2(s_2)$, and such that if $p$ is a marked point, then  locally around $p$, $\gamma_1$ ``comes before'' $\gamma_2$ in the counter-clockwise orientation as shown in Figure \ref{FigureDirectedBoundaryIntersection}. A \textbf{self-intersection} of a curve $\gamma$ with domain $\Omega$ is a pair $(s_1, s_2) \in \Omega^2$ with $s_1 \neq s_2$ and $\gamma(s_1)=\gamma(s_2)$.
	\begin{figure}
		\begin{displaymath}
		\begin{tikzpicture}[scale=0.8, hobby]
		\draw[thick, 
		decoration={markings, mark=at position 0.4 with {\arrow{<}}},
		postaction={decorate}
		](0,0) circle (50pt);
		\filldraw (-50pt, 0) circle (2pt);
		\draw (-40pt,0) node{$p$};
		\draw (0,1.3) node {$\partial \mathbb{T}_n$};
		\draw [line width=0.5, color=black] plot  [ tension=1] coordinates {  (-50pt,0) (-3, 1) (-5, 1.5)  };
		\draw [line width=0.5, color=black] plot  [ tension=1] coordinates {  (-50pt,0) (-3, -0.5) (-5, -1.5)  };
		\draw (-3,1.3) node{$\gamma_1$};
		\draw (-3,-0.8) node{$\gamma_2$};
		
		\draw[dashed, ->] ({3.2*cos(165)},{3.2*sin(165)}) arc (164:188:3.2);
		\end{tikzpicture}
		\end{displaymath}
		\caption{An oriented boundary intersection $p$ from $\gamma_1$ to $\gamma_2$.} \label{FigureDirectedBoundaryIntersection}
	\end{figure}
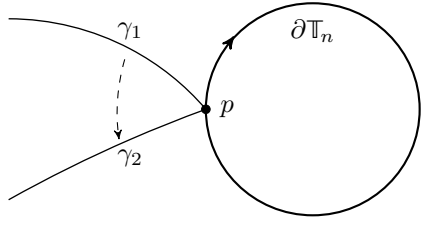

		\noindent Every interior intersection of $\gamma_1$ and $\gamma_2$ contributes elements in both $\gamma_1 \overrightarrow{\cap} \gamma_2$ and $\gamma_2 \overrightarrow{\cap} \gamma_1$, whereas every boundary intersection determines an element in only one of the two sets. Frequently, we do not distinguish between an oriented intersection and its image in the surface.\medskip
		
	\noindent The following proposition is a special case of \cite[Theorem 3.3]{OpperPlamondonSchroll}.
	
	\begin{prp}\label{PropositionMorpismIntersection}
	Let $X,Y \in \mathcal{D}^b(\Lambda_n)$ be indecomposable. Whenever $X$ or $Y$ is $\tau$-invariant, we assume that its associated local system has dimension $1$. 
	If $X$ is not isomorphic to a shift of $Y$ or $\gamma_X$ is an arc, then a homogeneous basis of $\Hom^{\bullet}(X,Y)$ is in bijection with $\gamma_X \overrightarrow{\cap} \gamma_Y$.
	\end{prp}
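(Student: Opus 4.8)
The plan is to obtain this as a specialization of the general geometric description of graded morphisms in the derived category of a gentle algebra, \cite[Theorem 3.3]{OpperPlamondonSchroll}, applied to $\Lambda=\Lambda_n$. First I would recall the precise output of that theorem: given indecomposables $X$ and $Y$ represented by graded curves with local systems $(\gamma_X, g_X, \mathcal{V}_X)$ and $(\gamma_Y, g_Y, \mathcal{V}_Y)$ in minimal position, it produces an explicit homogeneous basis of $\Hom^{\bullet}(X,Y)$ whose elements are indexed by the oriented intersections in $\gamma_X \overrightarrow{\cap} \gamma_Y$, each intersection being counted with multiplicities governed by the induced maps of local systems. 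The one feature of that theorem to keep track of is its exceptional contribution: when $X$ and $Y$ are represented by one and the same loop up to homotopy, there appear additional ``tube'' morphisms, generated by the identity and its Serre dual, which are not realised by transverse intersections. This reflects the fact that a primitive loop carries no geometric self-intersection to account for its identity endomorphism.

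With this in hand, the first simplification concerns multiplicities. By the standing hypothesis, any $\tau$-invariant summand has a one-dimensional local system, and by Proposition \ref{PropositionClassificationCalabiYauObjects} the $\tau$-invariant indecomposables are exactly those represented by loops; moreover every indecomposable local system on an arc is one-dimensional. Hence $\dim\mathcal{V}_X=\dim\mathcal{V}_Y=1$ in all cases covered by the statement, so each oriented intersection contributes a single basis vector and the indexing set becomes literally $\gamma_X \overrightarrow{\cap} \gamma_Y$.

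It then remains to exclude the exceptional tube morphisms, and this is exactly what the hypothesis ``$X \not\cong Y[k]$ or $\gamma_X$ is an arc'' guarantees. If $\gamma_X$ is an arc, then $X$ is a string object and the exceptional contribution of \cite[Theorem 3.3]{OpperPlamondonSchroll} does not arise: the identity and all self-morphisms of a string object are already recorded by oriented intersections supported at the marked endpoints of $\gamma_X$, so no correction survives even when $Y$ is a shift of $X$. If instead $X \not\cong Y[k]$ for every $k \in \mathbb{Z}$, then $X$ and $Y$ cannot be represented by a common loop carrying a common (one-dimensional) local system, whence the exceptional term is again absent. In both cases the basis reduces to one indexed by $\gamma_X \overrightarrow{\cap} \gamma_Y$, which is the assertion. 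The step I expect to require the most care is precisely this last matching of conventions with \cite[Theorem 3.3]{OpperPlamondonSchroll}: one must verify that the displayed hypothesis is exactly the condition under which every basis element not coming from a boundary or transverse intersection is killed, which amounts to a careful reading of how that theorem encodes the identity endomorphism and its Serre dual for a band object on a loop.
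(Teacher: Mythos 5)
Your proposal is correct and follows essentially the same route as the paper, which simply records the proposition as a special case of \cite[Theorem 3.3]{OpperPlamondonSchroll} and observes that the extra basis elements (the isomorphism and the Auslander--Reiten connecting morphism) only occur when $X$ and $Y$ are band objects on a common loop. Your additional care about local-system multiplicities and the arc case matches the conventions the paper relies on.
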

\noindent The remaining case is slightly different. When $X$ is $\tau$-invariant with $\dim \mathcal{V}_X=1$ and $X \cong Y[m]$ for some $m \in \mathbb{Z}$, then \cite[Theorem 3.3]{OpperPlamondonSchroll} states that there exists a homogeneous basis $\{\phi, \omega\} \cup \mathcal{I}$ of $\Hom^{\bullet}(X,Y)$ and a bijection $\mathcal{I} \xrightarrow{\sim} \gamma_X \overrightarrow{\cap} \gamma_Y$, where $\phi \in \Hom^m(X,Y)$ is an isomorphism and $\omega \in \Hom^{m+1}(X,Y)$ occurs as the connecting morphism in an Auslander-Reiten triangle
	\[
	\begin{tikzcd}
	  Y[m] \arrow{r} & V  \arrow{r} & X \arrow{r}{\omega} & Y[m+1].	\end{tikzcd}
	\]
	
	\noindent We note that the number of self-intersections of a loop $\gamma$ in minimal position coincides with the cardinality of $\gamma \overrightarrow{\cap} \gamma'$, where $\gamma' \simeq \gamma$ such that $\{\gamma, \gamma'\}$ is in minimal position. We summarize the situation for $\tau$-invariant objects as follows. 
	\begin{cor}\label{CorollaryMorphisms}
		Let $X, Y \in \mathcal{D}^b(\Lambda_n)$ be $\tau$-invariant indecomposable objects such that $\dim \mathcal{V}_X=1=\dim \mathcal{V}_Y$ and $\gamma_X \not \simeq \gamma_Y$. With the notation $\operatorname{hom}^{\bullet}(U,V)\coloneqq \dim \Hom^{\bullet}(U,V)$ we have
		\begin{enumerate}
			\setlength\itemsep{0.5em}
			
			\item $\gamma_X$ and $\gamma_Y$ have precisely $\operatorname{hom}^{\bullet}(X,Y)$ intersections;
			
			\item $\gamma_X$ has precisely $\frac{1}{2} \cdot \operatorname{hom}^{\bullet}(X,X)- 1$ self-intersections, \label{Point2List}
			
		\end{enumerate}
	\end{cor}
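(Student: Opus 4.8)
The plan is to deduce both statements directly from Proposition \ref{PropositionMorpismIntersection} (and the remaining case recorded immediately before the corollary) together with careful bookkeeping of oriented versus unoriented intersections. First I would observe that since $X$ and $Y$ are indecomposable and $\tau$-invariant, Proposition \ref{PropositionClassificationCalabiYauObjects} forces $\gamma_X$ and $\gamma_Y$ to be loops. In particular neither curve has an endpoint on a marked point, so none of their (self-)intersections is a boundary intersection; by the minimal-position convention every such intersection is a transverse interior double point. This structural input is what makes the passage between oriented and unoriented intersection counts uniform.

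For part (1), since $\gamma_X \not\simeq \gamma_Y$, the bijection of Proposition \ref{PropositionBijectionObjectsCurves} shows that $X$ is not isomorphic to any shift of $Y$ (here the hypothesis $\dim \mathcal{V}_X = 1 = \dim \mathcal{V}_Y$ ensures the correspondence is the clean one). Hence Proposition \ref{PropositionMorpismIntersection} applies and puts a homogeneous basis of $\Hom^{\bullet}(X,Y)$ in bijection with $\gamma_X \overrightarrow{\cap} \gamma_Y$. As recalled above, each interior intersection of the two distinct curves contributes exactly one element to $\gamma_X \overrightarrow{\cap} \gamma_Y$, and there are no boundary intersections because both curves are loops. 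Therefore the number of intersection points of $\gamma_X$ and $\gamma_Y$ equals $\lvert \gamma_X \overrightarrow{\cap} \gamma_Y \rvert = \operatorname{hom}^{\bullet}(X,Y)$, which is the claim.

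For part (2), I would invoke the remaining case with $Y = X$: since $X$ is $\tau$-invariant with $\dim \mathcal{V}_X = 1$ and is trivially a shift of itself, there is a homogeneous basis $\{\phi, \omega\} \cup \mathcal{I}$ of $\Hom^{\bullet}(X,X)$ together with a bijection $\mathcal{I} \xrightarrow{\sim} \gamma_X \overrightarrow{\cap} \gamma_X$, where $\phi$ is the isomorphism and $\omega$ is the Auslander--Reiten connecting morphism. Thus $\operatorname{hom}^{\bullet}(X,X) = \lvert \gamma_X \overrightarrow{\cap} \gamma_X \rvert + 2$. The decisive point is that, unlike the distinct-curve case, a single self-intersection point $p = \gamma_X(s_1) = \gamma_X(s_2)$ with $s_1 \neq s_2$ contributes the two ordered pairs $(s_1,s_2)$ and $(s_2,s_1)$ to $\gamma_X \overrightarrow{\cap} \gamma_X$. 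Hence $\lvert \gamma_X \overrightarrow{\cap} \gamma_X \rvert = 2c$, where $c$ is the number of self-intersections of $\gamma_X$, and solving $\operatorname{hom}^{\bullet}(X,X) = 2c + 2$ gives $c = \tfrac{1}{2}\operatorname{hom}^{\bullet}(X,X) - 1$.

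The computations are immediate once the correspondences are invoked; the only genuine care is needed in the final paragraph, where one must correctly separate the two ``non-geometric'' basis elements $\phi$ and $\omega$ from the crossing contributions and account for the factor of two coming from the orientation of self-intersections. I expect this orientation bookkeeping for self-crossings, rather than any deep categorical input, to be the main thing to get right.
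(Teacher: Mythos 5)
Your argument is correct and is essentially the argument the paper intends: the corollary is stated as a direct summary of Proposition \ref{PropositionMorpismIntersection} together with the remaining case ($X$ a shift of $Y$) recorded immediately before it, and your bookkeeping — one element of $\gamma_X \overrightarrow{\cap} \gamma_Y$ per interior crossing of distinct loops, two per self-crossing, plus the two extra basis elements $\phi$ and $\omega$ — is exactly what is needed. The preliminary observations (loops have no boundary intersections, and $\gamma_X \not\simeq \gamma_Y$ rules out $X \cong Y[k]$ via Proposition \ref{PropositionBijectionObjectsCurves}) are also the right justifications for why the cited results apply.
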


	\begin{notation}
		If $X,Y \in \mathcal{D}^b(\Lambda_n)$ are indecomposable and $p \in \gamma_X \overrightarrow{\cap} \gamma_Y$, let $f_p \in \Hom^{\bullet}(X,Y)$ denote the basis element associated with $p$. 
	\end{notation}	

\noindent \textbf{A remark on degrees}: If $(\gamma, g)$ and $(\gamma', g')$ are graded and $\dim \mathcal{V}=1=\dim \mathcal{V}'$, then every $p \in \gamma \overrightarrow{\cap} \gamma'$ defines an integer $\deg(p)$, namely the degree of $f_p: \P_{(\gamma, g)}(\mathcal{V}) \rightarrow \P_{(\gamma', g')}(\mathcal{V}')$.  Let $\Delta$ denote the $6$-gon which contains $p$, where as usual we assume that $\gamma, \gamma'$ and all laminates are  in minimal position. Let $\bullet \in \partial \Delta$ denote the unique marked point. 

Starting in $\bullet$ and following $\partial \Delta$ counter-clockwise, let us denote by $q$ the first intersection of $\gamma$ with a laminate and following $\partial \Delta$ further in the same direction, denote by $q'$ the first intersection of $\gamma'$ with a laminate after $q$. The following is a consequence of the constructions in \cite[Section 3.3]{OpperPlamondonSchroll}.
\begin{lem}\label{LemmaDegreeOfIntersections}
With the notation above, $\deg(p)= g'(q')-g(q)$.
\end{lem}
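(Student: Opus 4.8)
The plan is to reduce the statement to a bookkeeping of cohomological degrees, using the explicit form of the basis morphism $f_p$ supplied by the graph-map construction of \cite[Section 3.3]{OpperPlamondonSchroll}. The starting point is the very definition of the complex of a curve: in $\P_{(\gamma, g)}(\mathcal{V}) = \bigoplus_i (P_i \otimes \mathcal{V}_i)[-g(q_i)]$ the summand attached to a laminate intersection lying on $L_{x_i}$ sits in cohomological degree equal to the value of the grading at that intersection. In particular, the summand of $X = \P_{(\gamma, g)}(\mathcal{V})$ indexed by $q$ lives in cohomological degree $g(q)$, and the summand of $Y = \P_{(\gamma', g')}(\mathcal{V}')$ indexed by $q'$ lives in cohomological degree $g'(q')$.

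The key geometric input, which I would extract from \cite[Section 3.3]{OpperPlamondonSchroll}, is the identification of the two projective summands between which $f_p$ is nonzero near $p$. I would argue that the oriented intersection $p$ from $\gamma$ to $\gamma'$ produces a graph map whose relevant component starts at the summand of $X$ indexed by $q$ and lands in the summand of $Y$ indexed by $q'$, and that $q$ and $q'$ are exactly the crossings singled out by scanning $\partial \Delta$ counter-clockwise from the marked point $\bullet$. This scanning rule is precisely the bookkeeping that records the source of $f_p$ as the summand at $q$ and its target as the summand at $q'$: at a boundary intersection it coincides with the convention of Figure~\ref{FigureDirectedBoundaryIntersection} that $\gamma$ ``comes before'' $\gamma'$ counter-clockwise, while at an interior intersection it pins down which of the two oriented intersections supported at the crossing is under consideration. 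Translating the orientation and overlap conventions of \cite{OpperPlamondonSchroll} into the local picture inside the $6$-gon $\Delta$, and confirming that the matched pair nearest $\bullet$ is the correct reference point at which to read off the degree, is the step I expect to demand the most care.

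Granting this identification, the conclusion is immediate. By definition $\deg(p)$ is the integer for which $f_p$ is a degree-zero chain map $X \rightarrow Y[\deg(p)]$. Its nonzero component carries the degree-$g(q)$ summand of $X$ into the summand of $Y[\deg(p)]$ coming from the degree-$g'(q')$ summand of $Y$, which after the shift sits in cohomological degree $g'(q') - \deg(p)$. Since a chain map preserves cohomological degree, we must have $g(q) = g'(q') - \deg(p)$, that is, $\deg(p) = g'(q') - g(q)$, as claimed. I would close by verifying the two local configurations separately, namely the interior intersection, where $p$ lies in the open $6$-gon and the overlap is read off inside $\Delta$, and the boundary intersection, where $p = \bullet$ and $q$, $q'$ flank the marked point; in both cases the same degree-matching argument applies verbatim.
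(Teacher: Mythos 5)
Your proposal is correct and matches the paper's (implicit) reasoning: the paper gives no proof beyond citing the constructions of \cite[Section 3.3]{OpperPlamondonSchroll}, and your degree-matching argument --- the summand of $X$ at $q$ sits in cohomological degree $g(q)$, the summand of $Y$ at $q'$ in degree $g'(q')$, and a chain map $X \to Y[\deg(p)]$ forces $g(q)=g'(q')-\deg(p)$ --- is exactly the intended unpacking of that citation. You also correctly isolate the only nontrivial input, namely that the graph/singleton map $f_p$ has a nonzero component between precisely the summands indexed by $q$ and $q'$ singled out by the counter-clockwise scan from $\bullet$, which is what the cited construction supplies.
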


\subsection{Mapping Cones and resolutions of crossings} \ \medskip

\noindent Let $\Sigma$ be a marked surface. Suppose $\gamma, \gamma' \subseteq \Sigma$ are curves in minimal position and suppose $p \in \gamma \overrightarrow{\cap} \gamma'$ is an oriented intersection which is not a puncture. We obtain a new curve by resolving $p$ as follows. If $p \in \partial \Sigma$, denote by $\gamma_p$ the concatenation of $\gamma$ followed by $\gamma'$ at the end points which correspond to $p$. If $p$ lies in the interior, we resolve $p$ by cutting $\gamma$ and $\gamma'$ at $p$ and gluing them back together as shown in Figure \ref{FigureResolutionOfCrossings}. In this case, the resulting path $\gamma_p$ is not necessarily a curve but  a multi-curve, i.e.\ a finite set of possibly non-primitive curves. We refer to the individual curves of a multi-curve as its \textbf{components}. To simplify the notation in the next theorem, we say that an object $Z \in \mathcal{D}^b(\Lambda_n)$ is represented by a non-primitive loop $\gamma^t$, where $t > 1$ and $\gamma$ is primitive, if $Z \cong \P_{(\gamma, g)}(\mathcal{U})$ for some local system $\mathcal{U}$ corresponding to a polynomial $X^t-\lambda$, $\lambda \in \Bbbk^{\times}$. Note that for $p\coloneqq \characteristic \Bbbk$ and $k \geq 0$ such that $t= p^k \cdot  m$ with $\gcd(m, p)=1$, $\mathcal{U}$ splits into $m$ indecomposable local systems of dimension $p^k$. We further say that a multi-curve $\{\gamma_1, \dots, \gamma_m\}$ represents an object $X \in \mathcal{D}^b(\Lambda_n)$ if there exists a decomposition $X \cong \bigoplus_{i=1}^m X_i$ such that $\gamma_i$ represents $X_i$.

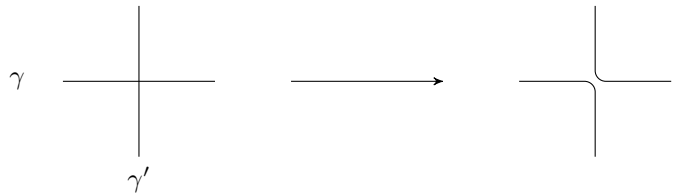
\begin{figure}
	\begin{displaymath}\begin{tikzpicture}
			\draw[white] (-1.5,0)--(7.5,0);
			
			\draw (0, -1)--(0,1);
			\draw (-1,0)--(1,0);
			\draw[->] (2,0)--(4,0);
			
			\draw[rounded corners] (6,-1)--(6,0)--(5,0);
			\draw[rounded corners] (6,1)--(6,0)--(7,0);			
			
			\draw (0,-1.3) node{$\gamma'$};
			\draw (-1.6, 0) node{$\gamma$};
			
		\end{tikzpicture}
	\end{displaymath}
	\caption{Resolution of an interior intersection from $\gamma$ to $\gamma'$.} \label{FigureResolutionOfCrossings}
\end{figure}
\noindent The following theorem states that the resolved curves represents the mapping cone of a morphism and is taken from \cite{OpperPlamondonSchroll} with a slight adjustment of the statement due to an inaccuracy in \cite{CanakciPauksztelloSchrollCorrigendum} which we comment on below.
\begin{thm}[c.f.\ {\cite[Theorem 4.1]{OpperPlamondonSchroll}}] \label{TheoremResolutionCrossings}
	Let $X, Y \in \mathcal{D}^b(\Lambda_n)$ be indecomposable, let $p \in \gamma_X \overrightarrow{\cap} \gamma_Y$ and let $\gamma_p$ denote the resolved multi-curve. Then, $\gamma_p$ represents the mapping cone of $f_p \in \Hom^{\bullet}(X,Y)$ in the sense above.
\end{thm}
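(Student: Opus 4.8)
The plan is to deduce the theorem from \cite[Theorem 4.1]{OpperPlamondonSchroll}, whose proof already establishes the principle that resolving a crossing computes a mapping cone, while re-deriving the conclusion in the multi-curve language fixed above so as to incorporate the case that was mis-stated in \cite{CanakciPauksztelloSchrollCorrigendum}. First I would make both sides explicit at the level of complexes. Writing $X \cong \P_{(\gamma_X, g_X)}(\mathcal{V}_X)$ and $Y \cong \P_{(\gamma_Y, g_Y)}(\mathcal{V}_Y)$ as complexes folded from the quivers $Q(\gamma_X)$ and $Q(\gamma_Y)$, the basis morphism $f_p$ attached to the oriented intersection $p$ is, by the constructions of \cite[Section 3.3]{OpperPlamondonSchroll}, supported on the projective summand indexed by the laminate through $p$ and is homogeneous of the degree recorded in Lemma \ref{LemmaDegreeOfIntersections}. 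Its mapping cone is the usual cone complex of $f_p$, whose differential combines those of $X$ and $Y$ with an off-diagonal contribution given by $f_p$.

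Next I would analyse the resolution of Figure \ref{FigureResolutionOfCrossings} combinatorially, as a surgery on the two vertices of $Q(\gamma_X) \sqcup Q(\gamma_Y)$ lying over $p$. For a boundary intersection the surgery concatenates the two quivers along the single arrow that records $f_p$, producing one quiver whose folding is manifestly the cone. For an interior intersection the cut-and-reglue either splices the two quivers into a longer cyclic quiver or disconnects them, which is precisely the reason $\gamma_p$ must be read as a multi-curve. In every case, folding the surgered quiver with the gradings inherited from $g_X$ and $g_Y$ reproduces the cone differential arrow by arrow; the compatibility of the gradings across the new arrows follows from Lemma \ref{LemmaConditionGradable} and the degree formula.

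The delicate point, and the reason the statement is phrased as it is, concerns the local systems carried by the components of $\gamma_p$, and this is where I expect the main obstacle to lie. When the interior resolution returns a non-primitive loop $\gamma^t$, the fibrewise gluing isomorphisms $u_i$ of $\mathcal{V}_X$ and $\mathcal{V}_Y$ compose, once around $\gamma$, to a single Jordan block; the induced local system therefore corresponds to a polynomial $X^t - \lambda$ with $\lambda \in \Bbbk^{\times}$ read off from this monodromy, which in characteristic $p=\characteristic \Bbbk$ splits into $m$ indecomposable systems of dimension $p^k$ when $t=p^k m$ and $\gcd(m,p)=1$ --- exactly the interpretation of a non-primitive loop adopted above. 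Establishing this, rather than the naive expectation (corrected in \cite{CanakciPauksztelloSchrollCorrigendum}) that a resolution always returns reduced rank-one curves, requires tracking the composite of the maps $u_i$ all the way around the loop and matching it against the connecting differential of the cone; I would conclude by checking that the eigenvalue and multiplicity obtained in this way agree with the indecomposable decomposition of $\Cone(f_p)$ as a band object.
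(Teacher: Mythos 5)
The paper does not actually prove this statement: it imports it from \cite[Theorem 4.1]{OpperPlamondonSchroll} and only adjusts the conclusion so that non-primitive components of $\gamma_p$ are read as objects $\P_{(\gamma,g)}(\mathcal{U})$ with $\mathcal{U}$ corresponding to a polynomial $X^t-\lambda$, which need not split into one-dimensional local systems when $\characteristic \Bbbk$ divides $t$. Your proposal is therefore a reconstruction of the external proof, and its strongest part is exactly the point the paper singles out: tracking the monodromy of the glued local system once around a non-primitive loop and matching the resulting polynomial $X^t-\lambda$ against the decomposition of the cone as a band object. That is the right way to justify the adjusted statement.

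There is, however, a concrete gap in the middle of your argument. You assert that folding the surgered quiver ``reproduces the cone differential arrow by arrow'' and that for a boundary intersection the folded concatenation ``is manifestly the cone''. Neither holds as stated. An intersection of $\gamma_X$ and $\gamma_Y$ in minimal position corresponds to a maximal common subword of the two walks, so $f_p$ is in general a graph map supported on a whole overlap, not on the single projective summand indexed by the laminate through $p$. Consequently $\Cone(f_p)$, as the naive total complex of $X[1]\oplus Y$, contains a contractible direct summand carried by that overlap, and the complex of the resolved curve is only reached after an explicit change of basis splitting this summand off --- precisely the conjugation by $1-\overline{v}$ carried out in the proof of Lemma \ref{LemmaMappingConesAtTwoEnds} for the boundary case. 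Without this Gaussian-elimination step your identification of the two complexes does not go through; once it is supplied, your outline becomes essentially the argument of \cite{OpperPlamondonSchroll} together with the local-system correction stated after the theorem.
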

 
\noindent Phrased in  our notation it was claimed in \cite[Lemma 2.5]{CanakciPauksztelloSchrollCorrigendum} that an object $\P_{(\gamma, g)}(\mathcal{U})$ with $\mathcal{U}$ corresponding to a polynomial $X^{\dim U}-\lambda$ splits into $\dim \mathcal{U}$ indecomposable direct summands of the form $\P_{(\gamma, g)}(\mathcal{U}_i)$ with $\dim \mathcal{U}_i=1$. However, as explained above, this is only true if the characteristic of $\Bbbk$ does not divide $\dim \mathcal{U}$.

\subsection{Compositions of morphisms via triangles, bigons and tridents}
\ \medskip

\noindent In this section, we recall the geometric description of compositions of morphisms $\mathcal{D}^b(\Lambda_n)$. Suppose that $\gamma_1, \gamma_2, \gamma_3 \subseteq \mathbb{T}_n$ are curves  which have oriented intersections $p \in \gamma_1 \overrightarrow{\cap} \gamma_2$ and $q \in \gamma_2 \overrightarrow{\cap} \gamma_3$. Let us denote by $\widetilde{\mathbb{T}}_n$ a fixed universal cover of $\mathbb{T}_n$.  By a lift of a loop $\gamma: S^1 \rightarrow \mathbb{T}_n$, we mean a lift $\mathbb{R} \rightarrow \widetilde{\mathbb{T}}_n$ along the universal cover $\mathbb{R} \rightarrow S^1$.\medskip

\begin{definition}
\noindent  Let $\Delta \subseteq \mathbb{R}^2$ be a triangle equipped with the induced orientation and denote by $S_1, S_2, S_3$ the three segments of $\partial \Delta$ in clockwise order. Moreover, we denote by $D^2 \subseteq \mathbb{R}^2$ the closed unit disc. Let $r \in \gamma_1 \overrightarrow{\cap} \gamma_3$. Then, the triple $(p, q, r)$ is called
\begin{enumerate}

	\item an \textbf{intersection triangle}, if there exists an orientation-preserving embedding $\varphi: \Delta \rightarrow \widetilde{\mathbb{T}}_n$ such that $\varphi|_{S_i}$ is a homeomorphism onto a segment of a lift of $\gamma_i$ and the corners of $\Delta$ are mapped bijectively onto  $\{p, q, r\}$;
	
	\item a \textbf{double-bigon}, if all curves are arcs and there exists an orientation-preserving embedding $\varphi: D^2 \rightarrow \widetilde{\mathbb{T}}_n$ which maps $S^1 \cap \left(\mathbb{R} \times \mathbb{R}_{\geq 0}\right)$, $[-1,1] \times \{0\}$ and $S^1 \cap \left(\mathbb{R} \times \mathbb{R}_{\leq 0}\right)$  bijectively onto lifts of $\gamma_1, \gamma_2$ and $\gamma_3$, respectively; in particular, $\varphi$ maps $p, q$ and $r$ to the same point in $\{\pm 1\} \subseteq D^2$.
		\item a \textbf{trident} if $p, q, r$ correspond to the same point on the boundary and locally the curves are arranged as in Figure \ref{FigureIntersectionTriangle}.
\end{enumerate}
\end{definition}

\begin{definition}\label{DefinitionSetComposition}
	In the notation above, define $C(p,q)$ as the set of all $r \in \gamma_1 \overrightarrow{\cap} \gamma_3$ such that $(p,q,r)$ is an intersection triangle, a double-bigon or a trident.
\end{definition}

	\begin{figure}
		\begin{displaymath}\arraycolsep=10pt
		\begin{array}{ccc}
		{		
			\begin{tikzpicture}
			\draw[white] (0,2)--(0,-2);
			\foreach \i in {1,2,3}
			{
				\draw  ({2*cos(90+\i * 120)},{2*sin(90+\i *120)})--({2*cos(210+\i * 120)},{2*sin(210+\i *120)});
				\filldraw ({2*cos(90+\i * 120)},{2*sin(90+\i *120)}) circle (1.5pt);
		
				\draw ({1.3*cos(150-\i * 120)},{1.3*sin(150-\i *120)}) node{$\widetilde{\gamma}_{\i}$};
			}
			
			\draw ({2.35*cos(90-1 * 120)},{2.35*sin(90-1 *120)}) node{$\widetilde{p}$};
				\draw ({2.35*cos(90-2 * 120)},{2.35*sin(90-2 *120)}) node{$\widetilde{q}$};
					\draw ({2.35*cos(90-3 * 120)},{2.35*sin(90-3 *120)}) node{$\widetilde{r}$};

			\draw[->] ({2*cos(90-1 * 120)+1*cos(123)},{2*sin(90-1 * 120)+1*sin(123)}) arc (123:177:1) node[pos=0.5, left]{$f_p$};
			\draw[->, dashed] ({2*cos(90-2 * 120)+1*cos(3)},{2*sin(90-2 * 120)+1*sin(3)}) arc (3:57:1) node[pos=0.5, right]{$f_q$};
					\draw[<-, dashed] ({2*cos(90-3 * 120)+1*cos(243)},{2*sin(90-3 * 120)+1*sin(243)}) arc (243:297:1) node[pos=0.5, below]{$f_r$};
			
						\end{tikzpicture}
		} & 
		{				\begin{tikzpicture}
			\draw[white] (0,2)--(0,-2.5);

			\filldraw (-1,0) circle (2pt);
			
			\foreach \i in {1,2,3}
			{
				\pgfmathsetmacro\v{\i-2}
				\draw (-1,0)--({3*cos(180+\v*30)},{3*sin(180+\v*30)});
				\draw ({3.35*cos(180+\v*30)},{3.35*sin(180+\v*30)}) node{$\gamma_{\i}$};
			} 
			\draw[->] (-0.5,0) circle (0.5) node{$\partial \mathbb{T}_n$};

			\draw[->, dashed] ({-1+1.75*cos(140)},{1.75*sin(140)}) arc(140:177:1.75) node[pos=0.4, left]{$f_p$};
				\draw[->, dashed] ({-1+1.75*cos(183)},{1.75*sin(183)}) arc(183:220:1.75) node[pos=0.6, left]{$f_q$};
					\draw[->, dashed] ({-1+1*cos(140)},{1*sin(140)}) arc(140:220:1) node[pos=0.25, left]{$f_r$};
			\end{tikzpicture}
		}   &
		
		{\begin{tikzpicture}[hobby]
			\draw[white] (-3.5,0) circle(2pt);
			\draw[white] (0,2)--(0,-2.5);
			
			\filldraw (-1,0) circle (2pt);
			\filldraw (-3,0) circle (2pt);
			\foreach \i in {1,2,3}
			{
				\draw [color=black] plot  [tension=1] coordinates {   (-1,0) (-2,2-\i) (-3,0)};	
			}

			\end{tikzpicture}}
		\end{array}
		\end{displaymath}
		\caption{From left to right: an intersection triangle, a trident and a double-bigon.} \label{FigureIntersectionTriangle}
	\end{figure}
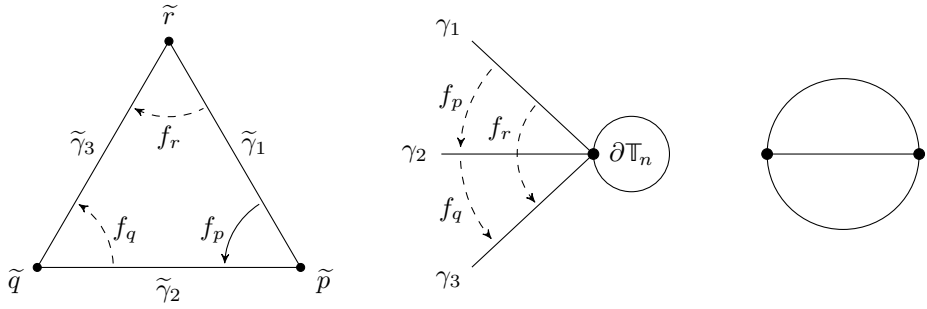

	\begin{definition}
		Let $\gamma, \gamma' \subseteq \mathbb{T}_n$ be curves and let $f: X_{\gamma} \rightarrow X_{\gamma'}$ be a morphism. The \textbf{support of $f$} is the subset $\supp f \subseteq \gamma \overrightarrow{\cap} \gamma'$ consisting of all $p \in \gamma \overrightarrow{\cap } \gamma'$ such that $f_p$ appears with non-zero scalar in a decomposition of $f$ with respect to the basis of $\Hom^{\bullet}(X,Y)$ associated with $\gamma \overrightarrow{\cap} \gamma'$.
	\end{definition}

\noindent  Compositions of morphisms can now be described in the following way:
\begin{prp}\cite[Theorem 2.8.]{OpperDerivedEquivalences}
For curves $\gamma_1, \gamma_2, \gamma_3 \subseteq \mathbb{T}_n$, let $p \in \gamma_1 \overrightarrow{\cap} \gamma_2$ and $q \in \gamma_2 \overrightarrow{\cap} \gamma_3$ be intersections and let $f_p: X_{\gamma_1} \rightarrow X_{\gamma_2}$, $f_q: X_{\gamma_2} \rightarrow X_{\gamma_3}$ denote the corresponding morphisms. If $\gamma_1  \not \simeq \gamma_2$ or $\gamma_1$ is an arc, then 

$$\supp \left(f_q \circ f_p\right)= C(p, q).$$
\end{prp}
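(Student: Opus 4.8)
The plan is to compute the composite $f_q \circ f_p \colon X_{\gamma_1} \to X_{\gamma_3}$ at the level of chain maps between the string/band complexes attached to the three curves and then expand the result in the intersection basis of $\Hom^\bullet(X_{\gamma_1}, X_{\gamma_3})$ furnished by Proposition \ref{PropositionMorpismIntersection}. The standing hypothesis (that $\gamma_1 \not\simeq \gamma_2$, or else the source curve is an arc) is precisely what guarantees that every morphism out of $X_{\gamma_1}$ in play is a linear combination of the basis elements $f_r$, $r \in \gamma_1 \overrightarrow{\cap} \gamma_3$, with no exceptional isomorphism or Auslander--Reiten contribution to account for. First I would recall the explicit chain-level representatives of $f_p$ and $f_q$ from the constructions in \cite[Section 3.3]{OpperPlamondonSchroll}: an \emph{interior} oriented intersection contributes a graph map, i.e.\ a morphism supported on the maximal common overlap of the two strings around the intersection point, whereas a \emph{boundary} intersection contributes a map supported on the admissible path determined by the marked point. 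Lemma \ref{LemmaDegreeOfIntersections} then fixes the degrees, and hence the cohomological placement, of these supports.

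Next I would pass to a fixed universal cover $\widetilde{\mathbb{T}}_n$ and lift $\gamma_1, \gamma_2, \gamma_3$ together with the chosen intersections $p, q$. Working upstairs turns the (possibly self-intersecting, possibly loop-valued) problem into one about honest embedded segments and makes the three configurations of Definition \ref{DefinitionSetComposition} literally visible as subregions of the plane. Computing $f_q \circ f_p$ then amounts to tracing how the overlap region supporting $f_p$ is propagated by $f_q$: the composite is again a sum of graph maps and boundary maps, and the coefficient of a given basis vector $f_r$ is nonzero exactly when the overlaps supporting $f_p$ and $f_q$ glue along their common string-segment into the overlap supporting $f_r$. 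The heart of the argument is to translate this gluing condition into geometry via a case distinction according to whether $p$ and $q$ are interior or boundary. When both are interior, the two overlaps close up into the image of an embedded triangle, which is an intersection triangle; when the configuration degenerates at a marked point it yields either a double-bigon (all curves arcs, two boundary intersections) or a trident (three strands through a single marked point). In each of the three cases one checks that the composite chain map is, up to a nonzero scalar, the representative of $f_r$, and in particular is not null-homotopic.

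The main obstacle I anticipate is the converse bookkeeping: showing that no \emph{other} $r$ can appear and that the three listed configurations are mutually exhaustive. Concretely, one must rule out accidental cancellations and verify that whenever the algebraic composite has a nonzero component along $f_r$ the corresponding overlaps are forced to bound an embedded triangle, bigon or trident, rather than some immersed region with extra self-folding; here the gentleness relations $b_i a_i = d_i c_i = 0$ together with the minimality of intersection numbers are essential, since they prevent a putative composite support from running through a forbidden path or from being homotoped away. I would organize this as a single combinatorial lemma on the concatenation of string overlaps, after which the identification $\supp\!\left(f_q \circ f_p\right) = C(p,q)$ follows by matching each surviving term with its configuration and invoking the case-by-case nonvanishing.
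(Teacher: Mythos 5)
Note first that the paper does not prove this proposition at all: it is imported verbatim as \cite[Theorem 2.8]{OpperDerivedEquivalences}, so there is no in-paper argument to compare against. Your strategy — represent $f_p$ and $f_q$ by explicit chain maps (graph maps for interior intersections, singleton maps for boundary ones), lift everything to the universal cover, and match the nonvanishing components of the composite with the three configurations of Definition \ref{DefinitionSetComposition} — is indeed the approach taken in the cited source and in the related literature, so the overall plan is sound.

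However, as written the proposal has a genuine gap: the step that actually constitutes the theorem is only announced, not carried out. The composite of two graph maps is in general \emph{not} a graph map or a singleton map on the nose; it is only chain-homotopic to a linear combination of the basis morphisms of Proposition \ref{PropositionMorpismIntersection}, and some of those basis elements are themselves only homotopy classes with no distinguished graph-map representative (the ``quasi-graph maps'' of \cite{ArnesenLakingPauksztello}). So ``tracing how the overlap supporting $f_p$ is propagated by $f_q$'' does not immediately yield the coefficients of the $f_r$: one must construct explicit homotopies reducing the composite chain map to basis form and verify both that no coefficient vanishes when a triangle/bigon/trident is present and that no extra term survives when none is — this is precisely the ``combinatorial lemma on the concatenation of string overlaps'' you defer, and it is where the gentleness relations and the minimal-position hypothesis do their work. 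Two further points are passed over entirely: the case where some $\gamma_i$ is a loop carrying a nontrivial local system, where the scalars of the local system enter the coefficients of the composite and must be tracked to rule out cancellation; and the degenerate subcases of the trident and double-bigon, where $p$, $q$ and $r$ lie at the same marked point and the cyclic order of the strands at that point (rather than an embedded region) decides membership in $C(p,q)$. Until the concatenation lemma is stated and proved in a form covering these cases, the argument is a correct outline rather than a proof.
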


\noindent For later reference we want to emphasize the following observation:

\begin{lem}\label{LemmaInteriorMorphismsClosedUnderComposition}
	Let $f, g$ be  morphisms between indecomposable objects in $\mathcal{D}^b(\Lambda_n)$ such that $g \circ f$ is defined. If $f$ or $g$  is supported only at interior intersections, then so is  $g \circ f$. 
	\end{lem}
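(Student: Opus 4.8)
The plan is to reduce to single basis morphisms and then invoke the composition formula of the preceding proposition. Writing $f=\sum_{p}\lambda_p f_p$ and $g=\sum_q \mu_q f_q$ in terms of the bases attached to the relevant oriented intersection sets, bilinearity of composition gives $g\circ f=\sum_{p,q}\lambda_p\mu_q\,(f_q\circ f_p)$, so that $\supp(g\circ f)\subseteq\bigcup_{p\in\supp f,\,q\in\supp g}\supp(f_q\circ f_p)$. By the composition formula each $\supp(f_q\circ f_p)$ equals $C(p,q)$. Hence it suffices to prove the purely geometric statement: if $p$ or $q$ is an interior intersection, then every $r\in C(p,q)$ is an interior intersection. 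Equivalently, by contraposition, a boundary intersection $r\in C(p,q)$ forces both $p$ and $q$ to be boundary intersections.

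First I would dispose of the two degenerate configurations in Definition \ref{DefinitionSetComposition}. If $r\in C(p,q)$ arises from a trident, then by definition $p,q,r$ all correspond to the same boundary point, so $p$ and $q$ are boundary intersections. If $r$ arises from a double-bigon, then all three curves are arcs and the embedded disc of Figure \ref{FigureIntersectionTriangle} meets $\partial\mathbb{T}_n$ exactly at its two distinguished points, which are the common endpoints realising $p$, $q$ and $r$; thus again $p,q,r$ are all boundary intersections. In either case the conclusion holds, and in particular no such $r$ can occur once $p$ or $q$ is interior.

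The heart of the argument is the intersection-triangle case, where I claim that a valid intersection triangle has all three corners in the interior and so never contributes a boundary element to $C(p,q)$. The point is the directional bookkeeping: the corners of $\Delta$ realise the oriented intersections $p\in\gamma_1\overrightarrow{\cap}\gamma_2$, $q\in\gamma_2\overrightarrow{\cap}\gamma_3$ and $r\in\gamma_1\overrightarrow{\cap}\gamma_3$ in these specific directions. At an interior double point a pair of curves has oriented intersections in both directions, so the required direction is always available; but at a boundary marked point a pair of curves admits an oriented intersection in only one of the two directions, namely the one prescribed by the counter-clockwise convention of Figure \ref{FigureDirectedBoundaryIntersection}. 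Because $\varphi$ is orientation-preserving and the sides $S_1,S_2,S_3$ of $\Delta$ are listed in clockwise order, the local order of the two sides meeting at a corner on $\partial\mathbb{T}_n$ is exactly opposite to the counter-clockwise order required for that corner to represent $p$, $q$ or $r$ in the composable direction. Hence no corner of $\Delta$ can lie on the boundary, which proves the claim and completes the reduction.

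The obstacle I expect is making this last orientation computation fully rigorous rather than merely reading it off a picture: one must check, uniformly for each of the three corners, that the clockwise side ordering together with the orientation-preserving embedding reverses the relevant counter-clockwise ordering at a boundary corner. I would carry this out by passing to the universal cover $\widetilde{\mathbb{T}}_n$, fixing the standard orientation on $\Delta\subseteq\mathbb{R}^2$, and comparing the induced boundary orientation of $\Delta$ at a corner with the local counter-clockwise ordering of arc ends at the corresponding marked point. A minor separate point is that the composition formula requires $\gamma_1\not\simeq\gamma_2$ or that $\gamma_1$ be an arc; the remaining case, in which the source and middle object are represented by the same loop up to shift, is handled directly using the explicit basis $\{\phi,\omega\}\cup\mathcal{I}$ from the discussion following Proposition \ref{PropositionMorpismIntersection}, the additional isomorphism $\phi$ and Auslander--Reiten connecting map $\omega$ being accounted for separately since they do not affect the support statement at interior intersections.
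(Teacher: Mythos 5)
Your reduction to basis elements, and your treatment of tridents and double-bigons, match the paper's (very brief) argument: in both of those configurations $p$, $q$ and $r$ are all boundary intersections, so neither can occur once $p$ or $q$ is interior. The problem is the intersection-triangle case, where your key claim --- that a valid intersection triangle has \emph{all three} corners in the interior --- is false, and the uniform orientation computation you propose cannot establish it. At the corners $\widetilde{p}$ and $\widetilde{q}$ the rotational sense in which one passes from the first curve to the second (sweeping across the interior of $\Delta$) is counter-clockwise, i.e.\ exactly the sense required by Figure \ref{FigureDirectedBoundaryIntersection}; so those two corners are perfectly allowed to sit at marked points. Indeed the paper relies on such triangles elsewhere: in Lemma \ref{LemmaInjectivity} and in the proof of Lemma \ref{LemmaOctahedral1}, a morphism supported at an interior intersection is factored through $\Dpart$ precisely by exhibiting an intersection triangle whose corners at $p$ and $q$ are boundary intersections and whose third side is a boundary arc. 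Your claim would rule out all such factorizations, so it proves too much.

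The correct statement --- and the one the paper actually uses --- is asymmetric: only the corner $\widetilde{r}$, the one carrying the composite $r \in \gamma_1 \overrightarrow{\cap} \gamma_3$, is forced into the interior. At that corner the two sides of $\Delta$ are traversed from $\gamma_1$ to $\gamma_3$ in the \emph{clockwise} sense (this is why $f_r$ points the opposite way around the triangle from $f_p$ and $f_q$ in Figure \ref{FigureIntersectionTriangle}), which is incompatible with the counter-clockwise convention defining oriented boundary intersections; a marked point at that corner would only support an element of $\gamma_3 \overrightarrow{\cap} \gamma_1$. Since $r$ being interior is all the lemma needs, your proof is salvageable, but you must restrict the orientation computation to the corner $\widetilde{r}$ and drop the claim about $\widetilde{p}$ and $\widetilde{q}$; as written, the step you flag as needing to be "made fully rigorous" is the step that fails.
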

	\begin{proof}
		The assumptions imply that no trident or double-bigon appears in the set $C(p,q)$. Note that the point $\widetilde{r}$ in Figure \ref{FigureIntersectionTriangle} cannot be a boundary intersection due to the definition of oriented intersections at the boundary.
	\end{proof}

\section{Spherical objects as simple loops on tori}\label{SectionSphericalObjectsTori}
	
	\noindent In this section, we show that isomorphism classes of spherical objects in $\mathcal{D}^b(\Lambda_n)$ are -- up to shift --  in bijection with a certain set of simple loops on $\mathbb{T}_n$. We call a curve on a marked surface \textbf{simple} if all its self-intersections lie on the boundary. In other words, a simple loop has no self-intersections and a simple arc can only intersect itself at its end points.
	
	\begin{lem}\label{LemmaImageOfSphericalIsSpherical}
		Let $Y \in \mathcal{D}^b(\Lambda_n)$. Then, $Y$ is spherical if and only if $Y \cong \mathbb{F}(X)$ for some spherical object $X \in \Perf(C_n)$. In particular, every spherical object in $\mathcal{D}^b(\Lambda_n)$ is $1$-spherical.
	\end{lem}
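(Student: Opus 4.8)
The plan is to prove both implications by transporting the two ingredients of sphericity—the graded endomorphism algebra $\Bbbk[z]/(z^2)$ and the Calabi--Yau property—across the fully faithful embedding $\mathbb{F}$, using Theorem \ref{TheoremBurbanDrozdImage} to recognise which objects of $\mathcal{D}^b(\Lambda_n)$ lie in $\Img\mathbb{F}$. The unifying observation is that, for the Serre functor $\nu$ of $\mathcal{D}^b(\Lambda_n)$ and the Auslander--Reiten translate related by $\nu\cong\tau[1]$, an object is $\tau$-invariant precisely when its Serre dual $\nu(-)$ is the shift $(-)[1]$; this matches the fact (Example \ref{ExampleSerreFunctorsCycles}) that every object of $\Perf(C_n)$ is $1$-Calabi--Yau. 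Throughout I use that $\mathbb{F}$, being exact and fully faithful, induces graded isomorphisms $\Hom^{\bullet}(\mathbb{F}X,\mathbb{F}X)\cong\Hom^{\bullet}(X,X)$.

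First I would treat the ``if'' direction. Suppose $X\in\Perf(C_n)$ is spherical. Since every object of $\Perf(C_n)$ is $1$-Calabi--Yau (Example \ref{ExampleSerreFunctorsCycles}), $X$ is in fact $1$-spherical, so $\Hom^{\bullet}(X,X)\cong\Bbbk[z]/(z^2)$ with $\deg z=1$. Full faithfulness transports this isomorphism to $\mathbb{F}X$, giving the required endomorphism algebra. It then remains to verify that $\mathbb{F}X$ is $1$-Calabi--Yau in $\mathcal{D}^b(\Lambda_n)$, i.e. $\nu(\mathbb{F}X)\cong\mathbb{F}X[1]$. But $\mathbb{F}X$ is $\tau$-invariant by Theorem \ref{TheoremBurbanDrozdImage}, so $\nu(\mathbb{F}X)\cong(\tau\mathbb{F}X)[1]\cong\mathbb{F}X[1]$, as required.

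Conversely, let $Y\in\mathcal{D}^b(\Lambda_n)$ be $d$-spherical. Then $Y$ is indecomposable, $\nu Y\cong Y[d]$, and $\Hom^{\bullet}(Y,Y)\cong\Bbbk[z]/(z^2)$ with $\deg z=d$. Rewriting the Calabi--Yau relation through $\nu\cong\tau[1]$ yields $\tau Y\cong Y[d-1]$, a $\tau$-periodicity relation of exponent $m=1$. Here I would invoke the classification of $\tau$-periodic objects: by the characterisation of string complexes recalled just before Proposition \ref{PropositionClassificationCalabiYauObjects}, a string complex admitting such a relation must be a boundary arc on a component carrying exactly $m=1$ marked points, which is impossible since every boundary component of $\mathbb{T}_n$ carries two marked points. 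Hence $\gamma_Y$ is a loop, and so $Y$ is $\tau$-invariant by Proposition \ref{PropositionClassificationCalabiYauObjects}(2). Consequently $\tau Y\cong Y$, whence $Y\cong Y[d-1]$; this isomorphism is a nonzero element of the degree-$(d-1)$ part of $\Hom^{\bullet}(Y,Y)$, which is nonzero only in degrees $0$ and $d$, forcing $d=1$. Finally, $\tau$-invariance and Theorem \ref{TheoremBurbanDrozdImage} give $Y\cong\mathbb{F}X$ for some $X\in\Perf(C_n)$, and full faithfulness returns the isomorphism $\Hom^{\bullet}(X,X)\cong\Bbbk[z]/(z^2)$; since $X$ is automatically $1$-Calabi--Yau, it is $1$-spherical. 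The ``in particular'' clause is exactly the equality $d=1$ obtained above.

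I expect the main obstacle to be the careful bookkeeping of the shift conventions in the Calabi--Yau relation—pinning down $\nu\cong\tau[1]$ so that $\tau$-invariant objects are genuinely $1$-Calabi--Yau—together with the clean exclusion of the boundary-arc alternative in the $\tau$-periodicity dichotomy via the exponent-one relation. Once these are settled, the remainder is a formal consequence of the full faithfulness of $\mathbb{F}$ and the identification of $\Img\mathbb{F}$ with the $\tau$-invariant objects.
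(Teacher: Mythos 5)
Your proof is correct and takes essentially the same route as the paper, whose entire argument is to cite Theorem \ref{TheoremBurbanDrozdImage} together with Proposition \ref{PropositionClassificationCalabiYauObjects}; your exclusion of the boundary-arc alternative via the ``exactly $m=1$ marked points'' criterion is just an unwinding of that proposition's dichotomy for the exponent-one relation $\tau Y\cong Y[d-1]$. Note only that you (correctly) use the convention $\nu\cong\tau[1]$, which is the one consistent with $\tau$-invariant objects being $1$-Calabi--Yau, whereas the paper's text states $\tau\cong\nu[1]$.
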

	\begin{proof}
		This follows from Theorem \ref{TheoremBurbanDrozdImage} and Proposition \ref{PropositionClassificationCalabiYauObjects}.
	\end{proof}

	\begin{lem}
		Let $Y \in \mathcal{D}^b(\Lambda_n)$. Then $Y$ is spherical if and only if $Y \cong \P_{(\gamma, g)}(\mathcal{V})$, where $\gamma$ is simple and $\dim \mathcal{V}=1$.
	\end{lem}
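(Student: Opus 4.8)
My plan is to read off both implications from the dictionary between indecomposable objects and graded curves (Proposition~\ref{PropositionBijectionObjectsCurves}) together with the $\Hom$-counting of Corollary~\ref{CorollaryMorphisms}, treating the dimension of the local system as a separate point. I will use freely that a spherical object is indecomposable and that, by Lemma~\ref{LemmaImageOfSphericalIsSpherical}, every spherical object of $\mathcal{D}^b(\Lambda_n)$ is $1$-spherical and lies in $\Img\mathbb{F}$.

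For the forward implication, suppose $Y$ is spherical. Being indecomposable and contained in $\Img\mathbb{F}$, its curve $\gamma_Y$ is a loop by the corollary following Proposition~\ref{PropositionClassificationCalabiYauObjects}, so $Y\cong\P_{(\gamma,g)}(\mathcal{V})$ for a loop $\gamma$ and an indecomposable local system $\mathcal{V}$ associated with a polynomial $(X-\lambda)^m$, $m=\dim\mathcal{V}$. To see $m=1$ I would invoke that such a band object is the level-$m$ term of a homogeneous tube and hence has $\End(Y)\cong\Bbbk[t]/(t^m)$, so $\dim_\Bbbk\End(Y)=m$ (see the discussion in Section~\ref{SectionComplexOfACurve}); since sphericity forces $\End(Y)=\Hom(Y,Y)=\Bbbk$, we conclude $m=1$. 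With $\dim\mathcal{V}=1$ in hand, Corollary~\ref{CorollaryMorphisms}(2) applies and says that $\gamma$ has exactly $\tfrac12\operatorname{hom}^\bullet(Y,Y)-1$ self-intersections; as $\Hom^\bullet(Y,Y)\cong\Bbbk[z]/(z^2)$ is two-dimensional, this number is $0$, so $\gamma$ is simple.

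For the converse, let $Y\cong\P_{(\gamma,g)}(\mathcal{V})$ with $\gamma$ a simple loop and $\dim\mathcal{V}=1$. Since $\gamma$ is a loop, $Y$ is indecomposable and $\tau$-invariant by Proposition~\ref{PropositionClassificationCalabiYauObjects}, hence lies in $\Img\mathbb{F}$ by Theorem~\ref{TheoremBurbanDrozdImage}; transporting the Serre functor $[1]$ of $\Perf(C_n)$ across the equivalence $\mathbb{F}$ onto $\Img\mathbb{F}$ (Example~\ref{ExampleSerreFunctorsCycles}) shows that $Y$ is $1$-Calabi-Yau. It then remains to identify $\Hom^\bullet(Y,Y)$ as a graded ring. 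Because $\dim\mathcal{V}=1$ and $\gamma$ has no self-intersections, the description preceding Corollary~\ref{CorollaryMorphisms} furnishes a two-element homogeneous basis $\{\phi,\omega\}$ of $\Hom^\bullet(Y,Y)$, with $\phi$ the identity and $\omega$ the connecting morphism of the Auslander-Reiten triangle of $Y$. The $1$-Calabi-Yau isomorphisms $\Hom(Y,Y[i])\cong\Hom(Y,Y[1-i])^*$ then pin $\phi$ and $\omega$ to degrees $0$ and $1$ respectively; in particular $\omega^2\in\Hom(Y,Y[2])=0$, so $\Hom^\bullet(Y,Y)\cong\Bbbk[z]/(z^2)$ with $z=\omega$ of degree $1$, and $Y$ is $1$-spherical, hence spherical.

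The step I expect to be the crux is establishing $\dim\mathcal{V}=1$: the intersection dictionary of Corollary~\ref{CorollaryMorphisms} is only available for one-dimensional local systems, so higher-dimensional band objects must be ruled out by the separate input that the endomorphism ring of a level-$m$ tube object is $\Bbbk[t]/(t^m)$. Once $\dim\mathcal{V}=1$ is secured, both directions collapse to the combinatorial equivalence between $\gamma$ being simple, having no self-intersections, and $\operatorname{hom}^\bullet(Y,Y)=2$, together with the routine identification of the two surviving classes with $1$ and $z$ in $\Bbbk[z]/(z^2)$.
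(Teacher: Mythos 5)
Your argument is correct and follows essentially the same route as the paper: exclude local systems of dimension $\geq 2$ by an endomorphism-count, then identify ``simple'' with ``$\operatorname{hom}^{\bullet}(Y,Y)=2$'' via Corollary~\ref{CorollaryMorphisms} and the $1$-Calabi--Yau property inherited from $\Perf(C_n)$. The only difference is that where you invoke the standard fact that a level-$m$ tube object has at least $m$-dimensional degree-zero endomorphism ring (correct, though the paper never states it explicitly, and strictly one only gets $\Bbbk[t]/(t^m)\hookrightarrow\End(Y)$ rather than an isomorphism when $\gamma$ self-intersects), the paper instead cites Arnesen--Laking--Pauksztello for the bound $\dim\Hom^{\bullet}(X,X)\geq 3$ when $\dim\mathcal{V}_X\geq 2$.
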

	\begin{proof} By \cite[Proposition 5.16]{ArnesenLakingPauksztello} we have  $\dim \Hom^{\bullet}(X,X) \geq 3 $ for every $\tau$-invariant indecomposable object $X \in \mathcal{D}^b(\Lambda_n)$ such that $\dim \mathcal{V}_X \geq 2$. Thus, such objects are never spherical and the assertion is a consequence of Corollary \ref{CorollaryMorphisms}.
	\end{proof}
	
	\noindent It still remains to be discussed which simple loops are gradable and hence correspond to a family of spherical objects.\medskip

	\noindent A simple loop $\gamma$ on a surface $\Sigma$ is said to be \textbf{non-separating} if its complement $\Sigma \setminus \gamma$ is connected. Otherwise, $\gamma$ is called \textbf{separating}. 
	\begin{lem}\label{LemmaHopf}
		Let $\gamma \subseteq \mathbb{T}_n$ be a separating simple loop. Then $\omega(\gamma) \neq 0$. In particular, $\gamma$ does not represent an object of $\mathcal{D}^b(\Lambda_n)$.
	\end{lem}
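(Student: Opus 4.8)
The plan is to compute $\omega(\gamma)$ through the line field $\eta$ underlying the winding number function, as announced just before the statement. Recall that $\omega(\gamma)$ measures the total turning of the tangent line of $\gamma$ relative to $\eta$ as one traverses $\gamma$ once; since winding numbers of loops on $\mathbb{T}_n$ and on $\mathscr{T}^n$ agree, I would pass to the punctured torus $\mathscr{T}^n$. The single geometric input I would record about $\eta$ is that it is \emph{nonsingular}; equivalently, it is the restriction of a line field on the closed torus with no zeros (this is consistent with, and indeed forced by, the value $\omega=-2$ computed for a boundary loop in Example \ref{WindingNumbersBoundary}). The main tool is the Poincaré--Hopf/Gauss--Bonnet identity for line fields: if $S$ is a compact subsurface on which $\eta$ is nonsingular, then the winding numbers of its boundary curves, taken with the boundary orientation that places $S$ on the left, satisfy $\sum_{c\subseteq\partial S}\omega(c)=2\chi(S)$.

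Before using it I would sanity-check the identity and fix the orientation convention against known data. Applied to the whole surface $\mathbb{T}_n$, whose Euler characteristic is $\chi(\mathbb{T}_n)=-n$, it reads $\sum_{B\subseteq\partial\mathbb{T}_n}\omega(B)=-2n$; since each boundary component carries winding number $-2$ by Example \ref{WindingNumbersBoundary}, the left-hand side is also $n\cdot(-2)=-2n$, confirming both the formula and that the canonical (induced) boundary orientation is the one placing the surface on the left.

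Next I would analyze the topology of a separating simple loop. On the genus-one surface $\mathscr{T}^n$, cutting along $\gamma$ produces two subsurfaces $S_1,S_2$ whose genera add up to $1$; hence exactly one of them, say $S_2$, has genus $0$. As $\eta$ is nonsingular, I may fill in the punctures lying in $S_2$ without creating zeros, and a genus-$0$ surface with the single boundary circle $\gamma$ becomes a disc, so $\chi(S_2)=1$ after filling. Applying the identity to $S_2$ yields $\omega(\gamma)=2\chi(S_2)=2$ up to the sign coming from the orientation of $\gamma$; in particular $\omega(\gamma)=\pm 2\neq 0$. As a consistency check, the genus-one side $S_1$ fills to a one-holed torus with $\chi=-1$ and gives the opposite sign, as it must.

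Finally, the last assertion follows formally: by Lemma \ref{LemmaConditionGradable} a loop is gradable precisely when its winding number vanishes, so $\gamma$ admits no grading, and by Proposition \ref{PropositionBijectionObjectsCurves} only graded curves represent objects of $\mathcal{D}^b(\Lambda_n)$; hence $\gamma$ represents no object. The only genuine obstacle is the first paragraph: pinning down the line-field identity $\sum_{\partial S}\omega=2\chi(S)$ with the correct normalization (the factor $2$ reflects that line directions live in $\mathbb{RP}^1$, a double cover of the circle of tangent directions) and justifying that nonsingularity of $\eta$ permits filling the punctures. Once this is in place, the topological classification of separating curves and the Euler-characteristic count are routine.
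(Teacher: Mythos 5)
Your argument is correct and takes essentially the same approach as the paper: both isolate the genus-zero side of the separating curve and apply the Poincar\'e--Hopf identity for the line field, with the boundary winding numbers $-2$ from Example \ref{WindingNumbersBoundary} supplying the essential input. The only cosmetic difference is that the paper keeps the boundary components of the genus-zero piece and sums their winding numbers directly ($\omega(\gamma)-2(b-1)=2\chi(\Sigma)$), whereas you first fill in the punctures on that side --- a step legitimised precisely by those same boundary values --- and then read off $\omega(\gamma)=2\chi(D^2)=2$.
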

	\begin{proof}This is a consequence of the fact  that $\omega$ is the winding number function of a line field, e.g.\ see \cite[Lemma 2.4]{OpperDerivedEquivalences}. Since $\gamma$ is separating and by additivity of the Euler characteristic, $\gamma$ bounds a unique subsurface $\Sigma$ of genus $0$. We denote by $B_1, \dots, B_b$ the boundary components of $\Sigma$ and assume that $\gamma$ inherits its orientation from $\Sigma$. Since all boundary winding numbers are equal to $-2$ (see Example \ref{WindingNumbersBoundary}), it follows from the Poincar\'e-Hopf index theorem \cite{Hopf} that
		\[\omega(\gamma)  - 2 (b-1) = \sum_{i=1}^b{\omega(B_i)}=2 \chi(\Sigma)=4-2b \]
		and hence $\omega(\gamma)=2$. 
	\end{proof}

	\begin{cor}\label{CorollarySphericlaCurvesAreNonSeparating}
		Let $X \in \Perf(C_n)$ be indecomposable. Then, $X$ is spherical if and only if $\gamma_X$ is a simple, non-separating loop and $\dim \mathcal{V}_X =1$.
	\end{cor}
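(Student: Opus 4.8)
The plan is to transport the question from $\Perf(C_n)$ to $\mathcal{D}^b(\Lambda_n)$ along the embedding $\mathbb{F}$ and then to read off the geometric conditions from the two preceding lemmas. First I would record that, since $\mathbb{F}$ is exact and fully faithful, $\mathbb{F}(X)$ is again indecomposable, and that by the Corollary following Proposition \ref{PropositionClassificationCalabiYauObjects} its representing curve $\gamma_X$ is a loop. The first genuine step is the equivalence ``$X$ is spherical if and only if $\mathbb{F}(X)$ is spherical''. One implication is immediate; for the converse I would use Lemma \ref{LemmaImageOfSphericalIsSpherical}, which writes any spherical $\mathbb{F}(X)$ as $\mathbb{F}(X')$ with $X'$ spherical, and then invoke full faithfulness of $\mathbb{F}$ to conclude $X \cong X'$.

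Next I would apply the preceding Lemma, which characterises the spherical objects of $\mathcal{D}^b(\Lambda_n)$ as precisely those of the form $\P_{(\gamma,g)}(\mathcal{V})$ with $\gamma$ simple and $\dim \mathcal{V}=1$. Combined with the uniqueness of the curve--object correspondence (Proposition \ref{PropositionBijectionObjectsCurves}), this turns the previous equivalence into the statement that $X$ is spherical if and only if $\gamma_X$ is simple and $\dim \mathcal{V}_X = 1$. At this point the only remaining task is to insert the word \emph{non-separating}.

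For the forward direction, if $X$ is spherical then $\gamma_X$ is a simple loop representing the honest object $\mathbb{F}(X)$; since Lemma \ref{LemmaHopf} asserts that a separating simple loop has non-zero winding number and hence represents no object of $\mathcal{D}^b(\Lambda_n)$, the loop $\gamma_X$ must be non-separating. For the converse, the hypotheses already include ``$\gamma_X$ simple'' and ``$\dim \mathcal{V}_X = 1$'', which by the equivalence above suffice to conclude that $X$ is spherical; the non-separating hypothesis is automatically satisfied (again because $\gamma_X$ represents $\mathbb{F}(X)$) and is not otherwise needed.

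I expect no serious obstacle, since essentially all the content already sits in Lemma \ref{LemmaImageOfSphericalIsSpherical}, Lemma \ref{LemmaHopf} and the preceding spherical-loop Lemma, and the corollary is merely their assembly. The one point requiring a little care is the passage of sphericalness across $\mathbb{F}$, handled by full faithfulness, together with the observation that ``representing an object'' forces gradability and hence, via Lemma \ref{LemmaHopf}, the non-separating property; here one must be careful to claim no more than that separating simple loops are excluded.
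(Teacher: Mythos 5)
Your proof is correct and follows exactly the route the paper intends: the corollary is stated without proof immediately after Lemma \ref{LemmaHopf}, as the direct assembly of Lemma \ref{LemmaImageOfSphericalIsSpherical} (transfer of sphericality across $\mathbb{F}$), the unnumbered lemma characterising spherical objects of $\mathcal{D}^b(\Lambda_n)$ by simple curves with one-dimensional local systems, and Lemma \ref{LemmaHopf} to exclude separating loops. Your observation that the non-separating hypothesis is automatic in the converse direction (since $\gamma_X$ already represents the object $\mathbb{F}(X)$) is also consistent with the paper's remark that the genuinely missing converse ingredient — gradability of every non-separating simple loop — is deferred to the following section.
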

	
	\noindent We prove in the next section that every non-separating simple loop on $\mathbb{T}_n$ is gradable and hence represents an object of $\mathcal{D}^b(\Lambda_n)$.
	
	\section{The mapping class group of a torus with boundary}\label{SectionMappingClassGroup}
	\noindent This section discusses the mapping class group of $\mathbb{T}_n$ and its connection to auto-equivalences of $\mathcal{D}^b(\Lambda_n)$. We recommend \cite{FarbMargalit} for additional information about mapping class groups.
	
	\subsection{Mapping class groups and their connection to auto-equivalences}
	
	\noindent By an \textbf{isotopy}, we mean a smooth path of diffeomorphisms which is constant on marked points. To spell this out, an isotopy is a smooth map $I:[0,1] \times \mathbb{T}_n \rightarrow \mathbb{T}_n$ such that for all $t \in [0,1]$, the map $I(t,-): \mathbb{T}_n \rightarrow \mathbb{T}_n$ is a diffeomorphism and such that $I(t, x)=x$ for all $t \in [0,1]$ and all marked points $x \in \mathbb{T}_n$. 
	
	\begin{definition}\label{DefinitionMappingClassGroup}
		The \textbf{mapping class group} $\MCG(\mathbb{T}_n)$ is the group consisting of all isotopy classes of orientation-preserving diffeomorphisms $H:\mathbb{T}_n \rightarrow \mathbb{T}_n$ which preserves the set of marked points. The \textbf{pure mapping class group} $\PMCG(\mathbb{T}_n)$ is the subgroup of all those diffeomorphisms which restrict to the identity on $\partial \mathbb{T}_n$.	
	\end{definition}

\noindent 	The mapping class group and the pure mapping class group fit into an exact sequence

		\begin{equation}\label{ShortExactSequenceMCG}\begin{tikzcd}\mathbf{1} \arrow{r} & \PMCG(\mathbb{T}_n) \arrow{r} & \MCG(\mathbb{T}_n) \arrow{r}{|_{\marked}} & \mathfrak{S}_n \ltimes \mathbb{Z}_2^n \arrow{r} & \mathbf{1},\end{tikzcd}\end{equation}
		\noindent where $|_{\marked}$ restricts a diffeomorphism to the set of marked points yielding an element in $\mathfrak{S}_n \ltimes \mathbb{Z}_2^n$.  A transposition in $\mathfrak{S}_n$ is realized by a so-called \textit{half-twist}  which permutes a pair of boundary components, see Figure \ref{FigureHalfTwists}.
		\begin{figure}[H]
			\centering
			\begin{tikzpicture}[scale=0.3]
			\draw (0,0) circle (6);
			\draw (-2, 0) circle(1);
			\draw (2, 0) circle(1);
			
			\draw[thick, dashed] (-6,0)--(-3,0);
			\draw[thick, dashed] (6,0)--(3,0);
			\draw[->] (8,0)--(14,0);
			
			\draw ({0+22},0) circle (6);
			\draw ({-2+22}, 0) circle(1);
			\draw ({2+22}, 0) circle(1);

			\draw[scale=1,domain=0:1,smooth, dashed,variable=\t, thick] plot ({22+(-6+3*\t)* cos(deg((6-6+3*\t)/3 * pi))},{(-6+3*\t)* sin(deg((6-6+3*\t)/3 * pi))});
			\draw[scale=1,domain=0:1,smooth, dashed,variable=\t, thick] plot ({22+(6-3*\t)* cos(deg((6-6+3*\t)/3 * pi))},{(6-3*\t)* sin(deg((6-6+3*\t)/3 * pi))});        
			
			\draw (-2,0) node{$A$};
			\draw (2,0) node{$B$};
			
			\draw[dashed] (-1,0)--(1,0);	
			\draw[dashed] (21,0)--(23,0);
				
			\draw (20,0) node{$B$};
			\draw (24,0) node{$A$};

			\draw[blue, very thick] (0,6)--(0,-6);
			\begin{scope}[rotate around={90:(22,0)}]
			\draw[blue, very thick] ({22+(-6+3*0)* cos(deg((6-6+3*0)/3 * pi))},{(-6+3*0)* sin(deg((6-6+3*0)/3 * pi))}) to[curve through ={({22+(-6+3*0.25)* cos(deg((6-6+3*0.25)/3 * pi))},{(-6+3*0.25)* sin(deg((6-6+3*0.25)/3 * pi))}) . . ({22+(-6+3*0.5)* cos(deg((6-6+3*0.5)/3 * pi))},{(-6+3*0.5)* sin(deg((6-6+3*0.5)/3 * pi))}) . . ({22+(-6+3*0.9)* cos(deg((6-6+3*0.9)/3 * pi))},{(-6+3*0.9)* sin(deg((6-6+3*0.9)/3 * pi))}) ..  ({22+(6-3*(1-0.1))* cos(deg((6-6+3*(1-0.1))/3 * pi))},{(6-3*(1-0.1))* sin(deg((6-6+3*(1-0.1))/3 * pi))}) .. ({22+(6-3*(1-0.5))* cos(deg((6-6+3*(1-0.5))/3 * pi))},{(6-3*(1-0.5))* sin(deg((6-6+3*(1-0.5))/3 * pi))}) .. ({22+(6-3*(1-0.75))* cos(deg((6-6+3*(1-0.75))/3 * pi))},{(6-3*(1-0.75))* sin(deg((6-6+3*(1-0.75))/3 * pi))})
			}] ({22+(6-3*(1-1))* cos(deg((6-6+3*(1-1))/3 * pi))},{(6-3*(1-1))* sin(deg((6-6+3*(1-1))/3 * pi))});
			\end{scope}
			
			\end{tikzpicture}
			
			\caption{The action of a half-twist on curves. It acts as the identity outside of a neighborhood of $A$ and $B$.}
			\label{FigureHalfTwists}
		\end{figure}
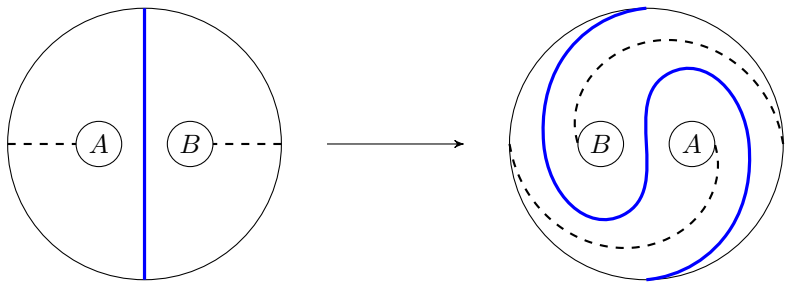
\noindent 		Another example of a mapping class is the fractional twist around a boundary component $B$. It rotates the surface in a collar neighborhood of $B$ and acts as the identity outside of this neighborhood, see Figure \ref{FigureFractionalTwist}.
		
		\begin{figure}[H]
\begin{displaymath}
\begin{tikzpicture}[scale=0.6]
\draw[decoration={markings, mark=at position 0.4 with {\arrow{<}}},
postaction={decorate}
] (0,0) circle (1);
\foreach \x in {0,2}
{
	\filldraw ({1*cos(\x*180*0.5)},{1*sin(\x*180*0.5)}) circle (2pt);
	\draw[dashed] ({3*cos(\x*180*0.5)},{3*sin(\x*180*0.5)})--({1*cos(\x*180*0.5)},{1*sin(\x*180*0.5)});
}
\draw (0,0) circle (3);
\draw[->] (4, 0)--(7,0);

\draw (11,0) circle (3);
\draw[decoration={markings, mark=at position 0.4 with {\arrow{<}}},
postaction={decorate}
] (11,0) circle (1);
\begin{scope}[hobby]
\foreach \x in {0,2}
{
	\filldraw ({1*cos(\x*180*0.5)+11},{1*sin(\x*180*0.5)}) circle (2pt);
	\draw[thick, dashed] plot coordinates {({3*cos(\x*180*0.5)+11},{3*sin(\x*180*0.5)})  ({7/3)*cos(\x*180*0.5+60)+11},{(7/3)*sin(\x*180*0.5+60)}) ({(5/3)*cos(\x*180*0.5+120)+11},{(5/3)*sin(\x*180*0.5+120)}) ({1*cos(\x*180*0.5+180)+11},{1*sin(\x*180*0.5+180)})};
}
\end{scope}
\end{tikzpicture}
\end{displaymath}
\caption{The action of a counter-clockwise fractional twist on arcs.} \label{FigureFractionalTwist}
\end{figure}
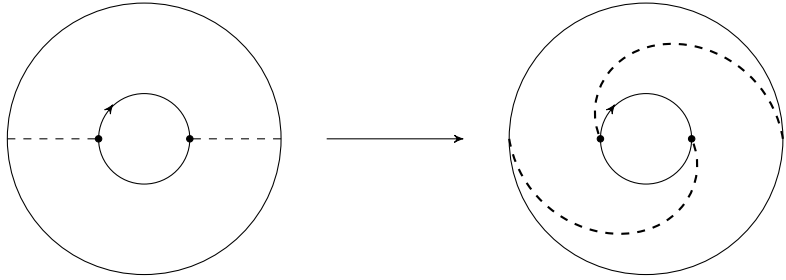

	\begin{rem}\label{RemarkSmoothTopologicalMappingClassGroup}
		Replacing diffeomorphisms by homeomorphisms and smooth maps by continuous maps in Definition \ref{DefinitionMappingClassGroup} leads to isomorphic groups. Below we will switch freely  between ``homeomorphisms'' and ``diffeomorphisms'' and will not distinguish between continuous and smooth isotopies. 
	\end{rem}

\noindent It turns out that there is a close relationship between auto-equivalences of $\mathcal{D}^b(\Lambda_n)$ and mapping classes of its geometric model $\mathbb{T}_n$. The following is a special case of \cite[Theorem C]{OpperDerivedEquivalences}.
	\begin{prp}\label{PropositionExactSequenceAutoequivalences}
	There exists a short exact sequence
		\begin{equation}\label{AutoEquivalenceSequence}\begin{tikzcd}\mathbf{1} \arrow{r} & \left(\Bbbk^{\times}\right)^{n+1} \times \mathbb{Z} \arrow{r} & \Aut\left(\mathcal{D}^b(\Lambda_n)\right) \arrow{r}{\Psi} &  \MCG(\mathbb{T}_n, \omega) \arrow{r} & \mathbf{1},\end{tikzcd}\end{equation}
\noindent where $\MCG(\mathbb{T}_n, \omega) \subseteq \MCG(\mathbb{T}_n)$ denotes the subgroup of all mapping classes which preserve the winding numbers of all loops.		
	\end{prp}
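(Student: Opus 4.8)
The plan is to specialize the general description of the auto-equivalence group of a gentle algebra from \cite[Theorem C]{OpperDerivedEquivalences} to the algebra $\Lambda_n$. That theorem asserts, for any gentle algebra with surface model $(\Sigma, \marked, \eta)$ where $\eta$ is the line field inducing the winding number function $\omega$, that every $\Bbbk$-linear triangle auto-equivalence is \emph{standard}, i.e.\ induced by the geometry of the surface, and that the resulting assignment of a mapping class fits into a short exact sequence whose quotient is the subgroup of mapping classes preserving all winding numbers and whose kernel consists of the standard auto-equivalences acting trivially on the surface. Since $\Lambda_n$ is gentle with surface model $(\mathbb{T}_n, \marked, \omega)$, it remains only to identify the kernel and the quotient explicitly for $\Sigma = \mathbb{T}_n$.

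For the quotient, I would observe that a mapping class preserves the homotopy class of the line field $\eta$ if and only if it preserves the winding numbers $\omega(\gamma)$ of all loops $\gamma$, because the homotopy class of a line field on a surface both determines and is determined by its winding number function. Hence the quotient appearing in the general theorem coincides with the group $\MCG(\mathbb{T}_n, \omega)$ as stated, and its surjectivity and the exactness at the middle term are inherited directly from \cite[Theorem C]{OpperDerivedEquivalences} (with the relevant mapping classes realized by spherical twists, half-twists and fractional twists as in Section \ref{SectionMappingClassGroup}).

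For the kernel, I would unwind the general description into its two constituent types of auto-equivalence. The shift functor $[1]$ fixes every underlying graded curve up to re-grading and contributes a central copy of $\mathbb{Z}$. The remaining kernel elements fix the curve, the grading, and the isomorphism type of every indecomposable object but rescale the eigenvalue $\lambda \in \Bbbk^{\times}$ of its local system; by Proposition \ref{PropositionBijectionObjectsCurves} and the local-system dictionary of Section \ref{SectionComplexOfLoops}, such auto-equivalences are parametrized by the group $\Hom(H_1(\mathbb{T}_n; \mathbb{Z}), \Bbbk^{\times})$ of characters of the first homology, acting by multiplying the monodromy of each local system along $\gamma$ by the character evaluated on $[\gamma]$. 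Since $\mathbb{T}_n$ is a genus-one surface with $n$ boundary components, its first homology is free of rank $2 \cdot 1 + n - 1 = n+1$, so this factor is $(\Bbbk^{\times})^{n+1}$. As the rescaling functors leave the grading untouched and the shift leaves local systems untouched, the two commute and the kernel is the direct product $(\Bbbk^{\times})^{n+1} \times \mathbb{Z}$, yielding the claimed sequence.

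The main obstacle is not the specialization itself but correctly reading off the kernel from the general statement: one must verify that the product decomposition is genuinely direct, with no semidirect interaction between the shift and the rescaling characters, and that no further standard auto-equivalences act trivially on the surface beyond those accounted for above. Both points reduce to the explicit form of the standard auto-equivalences in \cite{OpperDerivedEquivalences} together with the rank computation $\rk H_1(\mathbb{T}_n; \mathbb{Z}) = n+1$; once these are in place the remaining content of the proposition follows formally from the general theorem.
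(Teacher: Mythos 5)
Your proposal is correct and takes essentially the same route as the paper: the paper's entire proof consists of observing that the statement is a special case of \cite[Theorem C]{OpperDerivedEquivalences}, exactly as you propose, with the kernel then identified concretely. Your homological description of the torus factor as $\Hom(H_1(\mathbb{T}_n;\mathbb{Z}),\Bbbk^{\times})\cong(\Bbbk^{\times})^{n+1}$ acting by twisting local systems is equivalent to the paper's description via rescaling the $n+1$ arrows $c_0,\dots,c_{n-1},d_0$ of the quiver; the only slip is your phrase that kernel elements ``fix the isomorphism type of every indecomposable object but rescale the eigenvalue'' --- rescaling the eigenvalue changes the isomorphism class of band objects, so one should say they fix each curve and grading while acting on the local-system parameter.
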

	\noindent The definition of $\Psi$ is recalled in Section \ref{SectionDiffeomorphismsPuncturedTori}. The $\mathbb{Z}$-component in the kernel of the sequence \eqref{AutoEquivalenceSequence}  acts as the shift functor on $\mathcal{D}^b(\Lambda_n)$, the other component by multiplying the arrows $c_0, \dots, c_{n-1}$ and $d_0$ in the quiver of $\Lambda_n$ with non-zero scalars. For any auto-equivalence $T \in \Aut(\mathcal{D}^b(\Lambda_n))$, the mapping class $\Psi(T)$ is uniquely determined by the following property:
	\begin{prp}[{\cite[Theorem A]{OpperDerivedEquivalences}}]\label{PropositionTheoremA} For all indecomposable objects $X \in \mathcal{D}^b(\Lambda_n)$,
$$
\Psi(T)\left(\gamma_X\right) \simeq \gamma_{T(X)}.
$$
\end{prp}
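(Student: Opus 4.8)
The plan is to transport the auto-equivalence $T$ across the object--curve dictionary of Proposition~\ref{PropositionBijectionObjectsCurves} and to recognise the resulting operation on homotopy classes of curves as a winding-number-preserving mapping class; this recovers \cite[Theorem A]{OpperDerivedEquivalences}.

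First I would note that $T$, being a $\Bbbk$-linear triangle auto-equivalence, sends indecomposables to indecomposables, commutes with the shift functor and, since it intertwines the Serre functor and hence $\tau$, preserves both the class of loops and the class of arcs as well as the homogeneous tubes of band objects. Via Proposition~\ref{PropositionBijectionObjectsCurves} this induces a well-defined bijection $\Phi_T$ on homotopy classes of curves on $\mathbb{T}_n$, determined by $\Phi_T([\gamma_X]) = [\gamma_{T(X)}]$: independence of the chosen grading follows from commutation with $[1]$, and independence of the local system from the fact that $T$ permutes the tubes.

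The key point is then that $\Phi_T$ preserves geometric intersection numbers. Since $T$ is fully faithful and exact, $\dim \Hom^{\bullet}(X,Y) = \dim \Hom^{\bullet}(T(X),T(Y))$ for all indecomposables $X,Y$; by Proposition~\ref{PropositionMorpismIntersection} and Corollary~\ref{CorollaryMorphisms} these graded dimensions compute the number of oriented intersections of the associated curves, up only to the single correction term for $\tau$-invariant objects recorded in Corollary~\ref{CorollaryMorphisms}. Hence $\Phi_T$ carries disjoint curves to disjoint curves and, restricted to arcs, defines a simplicial automorphism of the arc complex of $\mathbb{T}_n$. Invoking the rigidity of the arc complex in the spirit of Irmak--McCarthy \cite{IrmakMcCarthy}, every such automorphism is induced by an orientation-preserving homeomorphism, unique up to isotopy, yielding a candidate mapping class $\Psi(T)$ with $\Psi(T)(\gamma_X)\simeq \gamma_{T(X)}$ on arcs; this then propagates to loops, since a loop is pinned down up to homotopy by its intersection pattern with a suitable finite family of arcs. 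Finally, because $T$ intertwines $\tau$ --- which on the geometric side is encoded by the winding number function $\omega$ --- the homeomorphism $\Psi(T)$ must preserve all winding numbers, so it lies in $\MCG(\mathbb{T}_n,\omega)$ and agrees with the map of \eqref{AutoEquivalenceSequence}.

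The main obstacle is the passage from ``$\Phi_T$ preserves intersection data'' to ``$\Phi_T$ is realised by a genuine homeomorphism''. This rigidity step is where the real work lies: one must verify the hypotheses of the arc-complex rigidity theorem while accounting for the correction term of Corollary~\ref{CorollaryMorphisms} and the behaviour at the boundary marked points, and then upgrade the resulting automorphism from arcs to all curves in a way that tracks gradings, so that genuinely the winding numbers --- and not merely the unsigned intersection numbers --- are preserved.
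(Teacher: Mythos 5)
This proposition is quoted from \cite[Theorem A]{OpperDerivedEquivalences} and the paper offers no proof of it; the closest thing to one is the sketch of the construction of $\Psi$ in Section \ref{SectionDiffeomorphismsPuncturedTori}, and your strategy coincides with that sketch: use the object--curve dictionary to produce an automorphism of the arc complex $\mathcal{A}(\mathbb{T}_n)$, realise it by a mapping class via a rigidity theorem, and then extend from simple arcs to all curves. One bibliographical correction: for the surface with boundary $\mathbb{T}_n$ the relevant rigidity result is Disarlo's theorem \cite{Disarlo}, not Irmak--McCarthy \cite{IrmakMcCarthy}, which the paper invokes only for the punctured torus $\mathscr{T}^n$.

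There is, however, a genuine gap at the step ``$\Phi_T$ carries disjoint curves to disjoint curves, hence defines a simplicial automorphism of the arc complex.'' By Definition \ref{DefinitionArcComplex}, two arcs span an edge of $\mathcal{A}(\mathbb{T}_n)$ when they are disjoint \emph{away from their endpoints}; such arcs may well share marked points and therefore have $\Hom^{\bullet}(X,Y)\neq 0$, since by Proposition \ref{PropositionMorpismIntersection} boundary intersections contribute basis elements just as interior ones do. Consequently, preservation of the total graded dimension $\dim\Hom^{\bullet}(X,Y)$ does not by itself prevent $T$ from trading a boundary intersection for an interior one, which would destroy the simplicial structure. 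Closing this gap requires an intrinsic, equivalence-invariant characterisation of the morphisms supported at interior intersections (in \cite{OpperDerivedEquivalences} this is done categorically; the analogous device in the present paper for $\mathcal{D}^b(C_n)$ is the ideal $\Hom^{\bullet}_{\operatorname{Int}}$ defined via the singularity category in Section \ref{SectionMorphismsIntersections}, cf.\ Lemmas \ref{LemmaInteriorMorphismStable} and \ref{LemmaInteriorMorphismsSpanningSet}). You correctly name this as the main obstacle but do not supply the argument, so as written the proof is incomplete at precisely the point where the real work lies. The remaining steps --- preservation of arcs versus loops via Proposition \ref{PropositionClassificationCalabiYauObjects}, propagation from arcs to loops by intersection patterns with a triangulation, and membership in $\MCG(\mathbb{T}_n,\omega)$ (which for $\mathbb{T}_n$ is automatic by Corollary \ref{CorollaryStabilizer}) --- are sound in outline.
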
		
		
\noindent In other words, the action of $\Psi(T)$ on curves on $\mathbb{T}_n$ mirrors the action of $T$ on indecomposable objects of $\mathcal{D}^b(\Lambda_n)$.
	
	\begin{lem}\label{LemmaTranspositionsAreInStabilizer}
	Half-twists and fractional twists are elements of $\MCG(\mathbb{T}_n, \omega)$.
	\end{lem}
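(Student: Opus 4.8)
The plan is to reduce the statement to a finite check on homology classes by exploiting that $\omega$ is the winding number function of a line field $\eta$ on $\mathbb{T}_n$ (see \cite[Lemma 2.4]{OpperDerivedEquivalences}), the same input used in the proof of Lemma \ref{LemmaHopf}. First I would record the general principle that for any orientation-preserving diffeomorphism $\phi$ of $\mathbb{T}_n$ and any loop $\gamma$ one has $\omega(\phi(\gamma)) = \omega_{\phi^{\ast}\eta}(\gamma)$, i.e.\ the winding number of $\gamma$ measured against the pulled-back line field. Since any two line fields differ by a class in $H^1(\mathbb{T}_n;\mathbb{Z})$ and the corresponding winding numbers differ by the pairing of this class with $[\gamma]$, the difference $\omega(\phi(\gamma)) - \omega(\gamma)$ depends only on the homology class $[\gamma] \in H_1(\mathbb{T}_n;\mathbb{Z})$ and is additive in it. Consequently $\phi$ lies in $\MCG(\mathbb{T}_n, \omega)$ as soon as $\omega(\phi(\gamma)) = \omega(\gamma)$ holds for $\gamma$ ranging over a generating set of $H_1(\mathbb{T}_n;\mathbb{Z})$, which I take to consist of the two torus cycles $\alpha, \beta$ together with the boundary classes $[B_1], \dots, [B_n]$.

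For the fractional twist around a boundary component $B$ the check is immediate, and in fact no homological reduction is needed: such a twist is supported in a collar neighbourhood of $B$ and restricts to the identity on the complement of this collar, into which every loop can be homotoped. Hence a fractional twist fixes the free homotopy class of every loop and therefore preserves all winding numbers.

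For the half-twist exchanging two boundary components $B_i$ and $B_{i+1}$ I would instead use the homological reduction, since loops running between the two components are genuinely altered and cannot all be pushed off the support. The support $N$ of the half-twist is a regular neighbourhood of $B_i \cup B_{i+1}$ together with a connecting arc. The torus cycles $\alpha, \beta$ and the boundary classes $[B_k]$ with $k \neq i, i+1$ admit representatives disjoint from $N$, so the half-twist fixes their homotopy classes and hence their winding numbers. On the remaining two generators the half-twist interchanges $B_i$ and $B_{i+1}$ as oriented boundary loops; since every boundary component has winding number $-2$ by Example \ref{WindingNumbersBoundary}, the values $\omega(B_i)$ and $\omega(B_{i+1})$ coincide and the difference vanishes on these classes as well. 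Thus the cohomology class $[\phi^{\ast}\eta]-[\eta]$ annihilates a generating set and is therefore zero, giving $\phi \in \MCG(\mathbb{T}_n,\omega)$.

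The main obstacle is the homological reduction itself, namely establishing that $\omega(\phi(\gamma)) - \omega(\gamma)$ factors through $H_1(\mathbb{T}_n;\mathbb{Z})$ rather than merely through free homotopy. This is exactly where the line field interpretation of $\omega$ is indispensable, because the half-twist does not fix the homotopy classes of all loops, so the naive push-off argument that settles the fractional twist is unavailable here. Once this linearity is in place the verification on generators is routine, the only genuinely geometric input being the equality of the boundary winding numbers.
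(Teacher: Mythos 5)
Your proof is correct, but it is organized differently from the paper's. The paper argues locally: it observes that a fractional twist fixes every free homotopy class of loops (as you do), and for a half-twist it identifies exactly how a curve crossing the supporting arc $\delta$ is modified — a copy of the explicit loop of Figure \ref{FigureSpecialCurveHalfTwist} is inserted at each crossing — so that preservation of all winding numbers reduces to the vanishing of the winding number of that single loop, which is then checked by hand or via the argument of Remark \ref{RemarkDoubleLoopOfAnArcHasVanishingWindingNumber} in $H_1(\mathbb{P}(T\mathbb{T}_n),\mathbb{Z})$. You instead argue globally: using that homotopy classes of line fields form a torsor over $H^1(\mathbb{T}_n;\mathbb{Z})$ and that the defect $\gamma \mapsto \omega(\phi(\gamma))-\omega(\gamma)$ is the pairing of $[\phi^{\ast}\eta]-[\eta]$ with $[\gamma]$, you reduce to evaluating on the generators $\alpha, \beta, [B_1],\dots,[B_n]$ of $H_1(\mathbb{T}_n;\mathbb{Z})$, where the only nontrivial case is the pair of swapped boundary components, handled by the equality $\omega(B_i)=-2=\omega(B_{i+1})$ from Example \ref{WindingNumbersBoundary}. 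Both proofs ultimately rest on the same geometric input — the boundary winding numbers all coincide — but your version buys uniformity (one linearity principle settles any mapping class whose action on $H_1$ is controlled, and would apply verbatim to other diffeomorphisms supported on subsurfaces), while the paper's version is more elementary in that it pinpoints the single loop responsible for the potential failure and can be verified by direct computation with the quiver $Q(\gamma)$, without invoking the classification of line fields.
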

	\begin{proof}
	Fractional twists act trivially on the homotopy class of any loop and hence preserve all winding numbers. A half-twist which permutes boundary components $A$ and $B$ stabilizes an embedded path $\delta$ which connects $A$ and $B$ and inverts its orientation. For example, in Figure \ref{FigureHalfTwists}, $\delta$ is the horizontal line between $A$ and $B$ which crosses the blue vertical line transversely. A curve $\gamma$ is affected by the half-twist if and only if $\gamma$ and $\delta$ intersect. Locally around an intersection, a segment of $\gamma$ (the blue vertical curve in Figure \ref{FigureHalfTwists}) is replaced by the blue thick curve on the right hand side of Figure \ref{FigureHalfTwists}. As a result, a half-twist preserves all winding numbers if and only if the winding number $u$ of the loop in Figure \ref{FigureSpecialCurveHalfTwist} vanishes. This can be verified by hand or, more conceptually, by the argument explained in Remark \ref{RemarkDoubleLoopOfAnArcHasVanishingWindingNumber}.	\end{proof}
	
	\begin{figure}[H]
			\centering
			\begin{tikzpicture}[scale=0.3]
			\draw ({0},0) circle (6);
			\draw ({-2}, 0) circle(1);
			\draw ({2}, 0) circle(1);

			\draw (-2,0) node{$A$};
			\draw (2,0) node{$B$};


			\begin{scope}[closed hobby]
			\draw[very thick] plot coordinates {(2, 2) (4,0) (2,-2) (0,0) (-2,2) (-4,0) (-2,-2) (0,0)  };
			\end{scope}
			
			\end{tikzpicture}
			\caption{}
			\label{FigureSpecialCurveHalfTwist}
		\end{figure}

		\begin{rem}\label{RemarkDoubleLoopOfAnArcHasVanishingWindingNumber}
Let  $\gamma:[0,1] \rightarrow \mathbb{T}_n$ be an oriented (but not necessarily embedded) arc which cannot be homotoped onto $\partial \mathbb{T}_n$. One can define a gradable loop $\gamma_{\operatorname{loop}}$ as in Figure \ref{FigureSpecialCurveHalfTwist} by concatenating $\gamma$ with the clockwise simple closed boundary arc based at $\gamma(1)$, followed by the inverse $\gamma^{-1}$ and the counter-clockwise simple closed boundary arc based at $\gamma(0)$.  Using the homological definition of $\omega$ one can give a simple argument for the vanishing of $\omega(\gamma_{\operatorname{loop}})$: there exists a homomorphism $\overline{\omega}: H_1(\mathbb{P}(T\mathbb{T}_n), \mathbb{Z}) \rightarrow \mathbb{Z}$, where $\mathbb{P}(T\mathbb{T}_n)$ denotes the projectivized tangent bundle. More precisely, $\overline{\omega}$ is the resulting homomorphism of the intersection pairing with the submanifold of $\mathbb{P}(T\Sigma)$ determined by a line field. Further details on the definition of $\overline{w}$ (in the case of compact surfaces) can be found in \cite{LekiliPolishchukGentle}. The derivative $\dot \gamma$ of an immersed loop $\gamma$ in minimal position defines a simple loop in $\mathbb{P}(T\mathbb{T}_n)$ and hence defines a class $[\dot{\gamma}] \in H_1(\mathbb{P}(T\mathbb{T}_n), \mathbb{Z})$. Then $\omega(\gamma)=\overline{\omega}([\dot{\gamma}])$ and the claim follows from the observation that $[\dot{\gamma_{\operatorname{loop}}}]=[\dot \delta_0]-[\dot \delta_1]$, where $\delta_0$ and $\delta_1$ denote the clockwise simple loops around the boundary components which contain $\gamma(0)$ and $\gamma(1)$ respectively.
		\end{rem}

	\subsection{Dehn twists, Humphries generators and their spherical twists}\label{SectionDehnTwists} An important example of a mapping class and the geometric counterpart of spherical twists is the \textit{Dehn twist} about a  given simple loop $\gamma$.\medskip
	
	\noindent Let $W$ be a  tubular neighbourhood of $\gamma$, i.e.\ a neighbourhood $W$ with an orientation-preserving diffeomorphism $\phi:S^1 \times [-1,1] \rightarrow W$ such that $\phi|_{S^1 \times \{0\}}=\gamma$. Then, the \textbf{Dehn twist} about $\gamma$ is the mapping class of the diffeomorphism $D_{\gamma}: \mathbb{T}_n \rightarrow \mathbb{T}_n$ defined  on $\mathbb{T}_n \setminus W$ by $D_{\gamma}|_{\mathbb{T}_n \setminus W}:=\text{Id}_{\mathbb{T}_n \setminus W}$  and on $W$ by 
	
	\[ D_{\gamma}(\phi(z, t))\coloneqq \phi\left(z \cdot e^{2\pi i (t+1)}, t\right).\]
	
	\noindent While the diffeomorphism $D_{\gamma}$ depends on $W$, its mapping class is well-defined. Since the map $\phi$ above is assumed to be orientation preserving, the mapping class of $D_{\gamma}$ does not depend on the orientation of $\gamma$.\medskip
	
	\noindent In analogy to spherical twists (see Corollary \ref{CorollaryTwistFunctorsUnderEmbeddings}) the set of Dehn twists is closed under conjugation.
	\begin{lem}\label{LemmaConjugationOfDehnTwists}
		Let $\gamma$ be a simple loop on an oriented surface $\mathbb{T}_n$ and let $F: \mathbb{T}_n \rightarrow \mathbb{T}_n$ be a diffeomorphism. Then, $D_{F \circ \gamma}$ and $F \circ D_{\gamma} \circ F^{-1}$ define the same mapping classes.
	\end{lem}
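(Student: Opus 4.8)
The plan is to produce an explicit diffeomorphism representing $D_{F \circ \gamma}$ which coincides \emph{on the nose} with $F \circ D_\gamma \circ F^{-1}$. Since the mapping class of a Dehn twist is independent of the chosen tubular neighbourhood (as noted in the definition of $D_\gamma$), exhibiting one such coincidence is enough to conclude that the two mapping classes agree. First I would fix a tubular neighbourhood $W$ of $\gamma$ together with the orientation-preserving diffeomorphism $\phi: S^1 \times [-1,1] \to W$ with $\phi|_{S^1 \times \{0\}} = \gamma$ which is used to define $D_\gamma$. Because $F$ is an orientation-preserving diffeomorphism, $F(W)$ is a tubular neighbourhood of the simple loop $F \circ \gamma$, and $\phi' \coloneqq F \circ \phi : S^1 \times [-1,1] \to F(W)$ is again an orientation-preserving diffeomorphism satisfying $\phi'|_{S^1 \times \{0\}} = F \circ \gamma$. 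Thus $(F(W), \phi')$ is a legitimate choice of data defining $D_{F \circ \gamma}$.

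Next I would compute $F \circ D_\gamma \circ F^{-1}$ directly against this data. Outside $F(W)$, every point has the form $F(y)$ with $y \in \mathbb{T}_n \setminus W$, so $F \circ D_\gamma \circ F^{-1}$ fixes it because $D_\gamma$ restricts to the identity on $\mathbb{T}_n \setminus W$. On $F(W)$, a point has the form $F(\phi(z,t)) = \phi'(z,t)$, and
\[
F \circ D_\gamma \circ F^{-1}\bigl(\phi'(z,t)\bigr) = F\bigl(D_\gamma(\phi(z,t))\bigr) = F\bigl(\phi(z \cdot e^{2\pi i (t+1)}, t)\bigr) = \phi'\bigl(z \cdot e^{2\pi i (t+1)}, t\bigr).
\]
The right-hand side is precisely the defining formula for $D_{F \circ \gamma}$ with respect to the data $(F(W), \phi')$. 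Hence $F \circ D_\gamma \circ F^{-1}$ and $D_{F \circ \gamma}$ agree as diffeomorphisms for this choice, and in particular they define the same mapping class.

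The computation itself is routine; the only point that genuinely requires attention is the orientation-preservation of $F$. This is what guarantees that $\phi' = F \circ \phi$ is an \emph{admissible} (orientation-preserving) parametrisation and hence that the twisting direction encoded by the factor $e^{2\pi i (t+1)}$ is the correct one for $D_{F \circ \gamma}$ rather than its inverse. Were $F$ orientation-reversing, the identical argument would instead identify $F \circ D_\gamma \circ F^{-1}$ with $D_{F \circ \gamma}^{-1}$; since every diffeomorphism representing an element of $\MCG(\mathbb{T}_n)$ is orientation-preserving, this case does not occur in our setting, so no further case analysis is needed.
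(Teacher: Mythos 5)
Your proof is correct and is the standard argument (it is essentially Fact 3.7 in Farb--Margalit, which the paper implicitly relies on by stating the lemma without proof): transport the tubular neighbourhood and its parametrisation by $F$ and check the defining formula on the nose. Your remark on orientation is also the right one to make, since the lemma as stated only says ``diffeomorphism'' while the conjugation formula picks up an inverse for orientation-reversing $F$; in the paper's setting all mapping classes are orientation-preserving by Definition \ref{DefinitionMappingClassGroup}, so your reading resolves the ambiguity correctly.
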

	\noindent By classical results of Dehn and Humphries, the group $\PMCG(\mathbb{T}_n)$ is generated by a finite set of Dehn twists. We refer to the Dehn twists about the loops $\gamma_{\Pic}$ and  $\gamma_{\Bbbk(x)}^0, \dots, \gamma_{\Bbbk(x)}^{n-1}$ as the \textbf{Humphries generators}.\medskip
	
	\noindent  The following is a special case of a theorem due to Humphries. 
	\begin{thm}[Humphries]\label{TheoremHumphriesGenerators}
		$\PMCG(\mathbb{T}_n)$ is generated by the Humphries generators, i.e.\ the mapping classes of the Dehn twists about the loops $\gamma_{\Pic}, \gamma_{\Bbbk(x)}^0, \dots, \gamma_{\Bbbk(x)}^{n-1}$ (see Example \ref{ExampleLoopsOnTorus}) as well as the Dehn twists about the simple boundary loops.
	\end{thm}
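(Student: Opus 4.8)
The plan is to recognise $\mathbb{T}_n$ as an orientable surface of genus one with $n$ boundary components (the boundary marked points contribute nothing once the boundary is fixed pointwise) and to deduce the statement from the classical generation theory for mapping class groups, organised in three layers: first generation by all Dehn twists, then reduction of that infinite family to the finitely many listed generators by a conjugation argument, and finally the recovery of separating and boundary twists via standard relations.

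First I would invoke the Dehn--Lickorish theorem in the form valid for surfaces with boundary fixed pointwise (see \cite{FarbMargalit}): $\PMCG(\mathbb{T}_n)$ is generated by the a priori infinite set of Dehn twists about simple closed curves, the boundary-parallel twists being a special case. The remaining task is then to rewrite an arbitrary such twist as a word in the listed generators.

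Second, I would analyse the proposed configuration. The loop $\gamma_{\Pic}$ is latitudinal and meets each longitudinal loop $\gamma_{\Bbbk(x)}^i$ in a single point, so that $\gamma_{\Pic}$ together with any one $\gamma_{\Bbbk(x)}^i$ fills the genus-one handle, while the full collection of longitudinal loops together with the boundary loops organises the $n$ holes. The crucial point is that the subgroup $G \leq \PMCG(\mathbb{T}_n)$ generated by these twists already acts transitively on isotopy classes of non-separating simple closed curves; this is established by the change-of-coordinates principle together with the connectivity of the graph of non-separating curves, realising each elementary move (by an intersection-number reduction argument) as a product of the listed twists. Once transitivity is known, any non-separating curve is of the form $c = F(\gamma_{\Pic})$ with $F \in G$, and Lemma \ref{LemmaConjugationOfDehnTwists} yields $D_c = F \circ D_{\gamma_{\Pic}} \circ F^{-1} \in G$; hence every non-separating Dehn twist lies in $G$.

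Finally, the separating twists are produced from the non-separating ones by the lantern and chain relations, since each separating curve bounds a subsurface whose twist can be expanded into twists about non-separating curves already contained in $G$, while the boundary twists are among the listed generators. I expect the transitivity step to be the main obstacle: showing that this particular finite configuration, rather than the full family of all conjugate twists, suffices is precisely the substance of Humphries' theorem, so the genuine work lies in verifying that the horizontal--vertical--boundary system on $\Sigma_{1,n}$ is a bona fide Humphries generating configuration, after which the general theorem applies directly.
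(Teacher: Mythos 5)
Your plan is the classical Lickorish--Humphries route carried out directly on the bounded surface: generation by all Dehn twists, then transitivity of the subgroup $G$ generated by the listed twists on non-separating curves combined with Lemma \ref{LemmaConjugationOfDehnTwists}, then lantern and chain relations for the separating twists. The route is viable in principle, but as written it has a genuine gap: the two steps carrying all of the content are asserted rather than proved. Transitivity of $G$ itself (not merely of $\PMCG(\mathbb{T}_n)$, which is Proposition \ref{PropositionTransitivityNonSeparatingLoops}) on non-separating simple closed curves is exactly the substance of the theorem, and you concede this yourself; the concluding appeal to "the general theorem" is circular unless you either carry out the intersection-reduction argument for this specific configuration on $\mathbb{T}_n$ or reduce to a version of Humphries' theorem stated for a surface without boundary --- which is a different proof. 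The separating-twist step also needs more care than "each separating curve bounds a subsurface whose twist can be expanded into twists about non-separating curves already in $G$": a non-boundary-parallel separating curve $c$ in $\mathbb{T}_n$ cuts off a planar piece in which \emph{every} essential curve is again separating in $\mathbb{T}_n$, so no lantern contained in that piece helps; the lantern must be embedded so that its three interior curves run through the genus-one handle, the genus-bounding curve (the case where the planar piece contains all $n$ boundary components) requires the chain relation separately, and the boundary twists must be tracked throughout since they do not die in this computation.

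For comparison, the paper's proof avoids all of this by gluing a once-punctured disc to each boundary component and invoking the exact sequence $\mathbf{1} \rightarrow \mathbb{Z}^n \rightarrow \PMCG(\mathbb{T}_n) \rightarrow \PMCG(\Sigma) \rightarrow \mathbf{1}$ of \cite[Proposition 3.19]{FarbMargalit}, whose kernel is generated by the boundary twists and whose quotient is the pure mapping class group of the $n$-punctured torus; generation of the latter by the twists about $\gamma_{\Pic}, \gamma_{\Bbbk(x)}^0, \dots, \gamma_{\Bbbk(x)}^{n-1}$ is then quoted from \cite[Corollary 4.15]{FarbMargalit}. If you wish to salvage your argument with minimal effort, replace your first and third layers by this capping sequence: it reduces the whole statement to the citable punctured-surface result and disposes of the separating and boundary twists in one stroke, leaving nothing to verify by hand.
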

	\begin{proof}We include a proof in lack of a suitable reference. We regard the surface $\mathscr{T}^n$ as being obtained from $\mathbb{T}_n$ by gluing a once-punctured disc to every boundary component. If we denote by $B_0, \dots, B_{n-1}$ the components of $\partial \mathbb{T}_n$, then \cite[Proposition 3.19]{FarbMargalit} asserts the existence of a short exact sequence
		
		\begin{equation}\label{ShortExactSequenceMCGInclusion}\begin{tikzcd}\mathbf{1} \arrow{r} & \mathbb{Z}^n \arrow{r}{\iota} & \PMCG(\mathbb{T}_n) \arrow{r}{\pi} & \PMCG(\mathscr{T}^n) \arrow{r} & \mathbf{1}.\end{tikzcd}\end{equation}
		\noindent Here, $\PMCG(\mathscr{T}^n)$ denotes the pure mapping class group of $\mathscr{T}^n$, i.e.\ the group of self-diffeomorphisms which fix every puncture modulo isotopies which are constant on all punctures.
		
		The inclusion $\iota$ sends a tuple $(m_i) \in \mathbb{Z}^n$ to the composition of Dehn twists $\prod_{i=0}^{n-1}D_{B_i}^{m_i}$, where $D_{B_i}$ denotes the Dehn twist around the boundary loop of $B_i$. The projection $\pi$ sends a diffeomorphism to its radial extension: every self-diffeomorphism $\varphi$ of the annulus $S^1$ extends to a self-diffeomorphism $\overline{\varphi}$ of $D^2 \setminus \{(0,0)\}$ defined by
		\[\overline{\varphi}(x)\coloneqq \lVert x\rVert  \cdot \varphi\left(\frac{x}{\lVert x \rVert}\right).\]

		\noindent By \cite[Corollary 4.15]{FarbMargalit} and the discussion leading to it, $\PMCG(\mathscr{T}^n)$ is generated by the Humphries generators.\end{proof}
		
\noindent We note that every Dehn twist about a boundary loop acts trivially on any homotopy class of loops.\footnote{By deforming any given loop $\gamma$ we may assume that the tubular neighbourhood in Section \ref{SectionDehnTwists} of the boundary loop and $\gamma$ are disjoint.} In particular, we find the following.
\begin{prp}\label{PropositionBoundaryTwistsStabiliser}
Every Dehn twist about a boundary loop is an element of $\MCG(\mathbb{T}_n,\omega)$.
\end{prp}

	\noindent The relevance of the Humphries generators for us and the relationship between spherical twists and Dehn twists becomes apparent in the following application of \cite[Theorem C]{OpperDerivedEquivalences} and Proposition \ref{PropositionTheoremA}.
	\begin{thm}\label{TheoremEquationForPsi}
	
Let $X \in \mathcal{D}^b(\Lambda_n)$ be spherical. Then $\Psi(T_X)= D_{\gamma_X}$ and for every indecomposable object $Y \in \mathcal{D}^b(\Lambda_n)$, 
$$\gamma_{T_X(Y)} \simeq D_{\gamma_X}(\gamma_Y).$$
\end{thm}

 \noindent The following proposition is a direct consequence of Theorem \ref{TheoremEquationForPsi}, Theorem \ref{TheoremImages} and Corollary  \ref{CorollaryTwistFunctorsUnderEmbeddings}.
	
		\begin{prp}\label{PropositionSphericalTwistsBecomeDehnTwists}
		Let $x \in \mathbb{P}^1_j \subseteq C_n$ be closed and smooth and let $\mathcal{L}\in \Pic^{\mathbb{0}}(C_n)$, i.e.\ $\mathcal{L}$ is a line bundle of multi-degree $(0, \dots, 0)$. Let  $Y \in \Perf(C_n)$ be indecomposable. Then, $D_{\gamma_{\Bbbk(x)}^j}(\gamma_Y)$ is a representative of $T_{\Bbbk(x)}(Y)$ and $D_{\gamma_{\Pic}}(\gamma_Y)$ is a representative of $T_{\mathcal{L}}(Y)$.
	\end{prp}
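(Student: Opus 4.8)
The plan is to deduce the statement by chaining three inputs: the identification of the loops representing $\mathbb{F}(\Bbbk(x))$ and $\mathbb{F}(\mathcal{L})$ furnished by Theorem \ref{TheoremImages} together with Example \ref{ExampleLoopsOnTorus}; the compatibility of spherical twists with the fully faithful embedding $\mathbb{F}$ provided by Corollary \ref{CorollaryTwistFunctorsUnderEmbeddings}; and the translation of a spherical twist into the Dehn twist about its representing loop given by Theorem \ref{TheoremEquationForPsi}.

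First I would record the sphericity facts that license the later steps. Both $\Bbbk(x)$ for $x \in \mathbb{P}^1_j$ smooth and $\mathcal{L} \in \Pic^{\mathbb{0}}(C_n)$ are spherical in $\Perf(C_n)$ by Example \ref{ExampleSphericalObjects}, so Lemma \ref{LemmaImageOfSphericalIsSpherical} makes $\mathbb{F}(\Bbbk(x))$ and $\mathbb{F}(\mathcal{L})$ spherical objects of $\mathcal{D}^b(\Lambda_n)$; this is exactly what permits both the twists $T_{\mathbb{F}(\Bbbk(x))}$, $T_{\mathbb{F}(\mathcal{L})}$ and the application of Theorem \ref{TheoremEquationForPsi}. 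By Theorem \ref{TheoremImages} we have $\mathbb{F}(\Bbbk(x)) \cong \Bbbk(j,\lambda)$ and $\mathbb{F}(\mathcal{L}) \cong \mathcal{O}(\mu)$ for suitable $\lambda, \mu \in \Bbbk^{\times}$, and Example \ref{ExampleLoopsOnTorus} identifies these complexes, up to the irrelevant choice of local system, with $\P_{\gamma_{\Bbbk(x)}^j}(\mathcal{W})$ and $\P_{\gamma_{\Pic}}(\mathcal{V})$. Since Proposition \ref{PropositionBijectionObjectsCurves} guarantees that the homotopy class of the representing curve is well defined, this gives $\gamma_{\Bbbk(x)} \simeq \gamma_{\Bbbk(x)}^j$ and $\gamma_{\mathcal{L}} \simeq \gamma_{\Pic}$ in the notation of the convention on representing curves.

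Next I would combine the remaining two ingredients. For indecomposable $Y \in \Perf(C_n)$, full faithfulness of $\mathbb{F}$ makes $\mathbb{F}(Y)$ indecomposable, and Corollary \ref{CorollaryTwistFunctorsUnderEmbeddings} yields $\mathbb{F}(T_{\Bbbk(x)}(Y)) \cong T_{\mathbb{F}(\Bbbk(x))}(\mathbb{F}(Y))$; as $T_{\Bbbk(x)}$ is an auto-equivalence the left-hand object is again indecomposable, so its representing loop is defined. Applying Theorem \ref{TheoremEquationForPsi} with the spherical object $X = \mathbb{F}(\Bbbk(x))$ and the object $\mathbb{F}(Y)$ then produces
$$
\gamma_{\mathbb{F}(T_{\Bbbk(x)}(Y))} \;\simeq\; D_{\gamma_{\mathbb{F}(\Bbbk(x))}}(\gamma_{\mathbb{F}(Y)}) \;\simeq\; D_{\gamma_{\Bbbk(x)}^j}(\gamma_Y),
$$
where the final homotopy uses the identification of the previous paragraph together with the fact that a Dehn twist depends only on the homotopy class of its defining loop. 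Unwinding the convention $\gamma_{T_{\Bbbk(x)}(Y)} = \gamma_{\mathbb{F}(T_{\Bbbk(x)}(Y))}$ shows that $D_{\gamma_{\Bbbk(x)}^j}(\gamma_Y)$ represents $T_{\Bbbk(x)}(Y)$, and the verbatim argument with $\mathcal{L}$, $\gamma_{\Pic}$ and $T_{\mathcal{L}}$ in place of $\Bbbk(x)$, $\gamma_{\Bbbk(x)}^j$ and $T_{\Bbbk(x)}$ settles the second claim. I do not expect a serious obstacle: the mathematical content is entirely absorbed by the cited results, and the only points requiring care are the verification that the images under $\mathbb{F}$ remain spherical (needed to invoke both $T_{\mathbb{F}(-)}$ and Theorem \ref{TheoremEquationForPsi}) and the bookkeeping that passes freely between an object in $\Perf(C_n)$, its image in $\mathcal{D}^b(\Lambda_n)$, and the associated homotopy class of loops.
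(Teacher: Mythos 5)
Your argument is exactly the one the paper intends: the paper declares the proposition a direct consequence of \cite[Theorem C]{OpperDerivedEquivalences} (restated as Theorem \ref{TheoremEquationForPsi}), Theorem \ref{TheoremImages} and Corollary \ref{CorollaryTwistFunctorsUnderEmbeddings}, and you chain precisely these three inputs, adding only the (correct) sphericity bookkeeping via Example \ref{ExampleSphericalObjects} and Lemma \ref{LemmaImageOfSphericalIsSpherical} and the loop identifications from Example \ref{ExampleLoopsOnTorus}. No gaps; your write-up is a sound expansion of the paper's one-line justification.
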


\begin{cor}\label{CorollaryStabilizer}
$\MCG(\mathbb{T}_n)=\MCG(\mathbb{T}_n, \omega)$.
\end{cor}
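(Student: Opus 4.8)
The plan is to prove $\MCG(\mathbb{T}_n, \omega) = \MCG(\mathbb{T}_n)$ by exhibiting a generating set of $\MCG(\mathbb{T}_n)$ and checking that each generator preserves the winding numbers of all loops. Since $\MCG(\mathbb{T}_n, \omega)$ is a subgroup of $\MCG(\mathbb{T}_n)$ by definition, membership of a generating set is enough.

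First I would read off a generating set from the short exact sequence \eqref{ShortExactSequenceMCG}: the group $\MCG(\mathbb{T}_n)$ is generated by $\PMCG(\mathbb{T}_n)$ together with any lifts of generators of the quotient $\mathfrak{S}_n \ltimes \mathbb{Z}_2^n$. The latter are realized by half-twists (the transpositions in $\mathfrak{S}_n$) and by fractional twists (the $\mathbb{Z}_2^n$-factor, permuting the two marked points on a boundary component), and both families lie in $\MCG(\mathbb{T}_n, \omega)$ by Lemma \ref{LemmaTranspositionsAreInStabilizer}. For the kernel, Theorem \ref{TheoremHumphriesGenerators} shows that $\PMCG(\mathbb{T}_n)$ is generated by the Humphries generators, i.e.\ the Dehn twists about the boundary loops and about $\gamma_{\Pic}, \gamma_{\Bbbk(x)}^0, \dots, \gamma_{\Bbbk(x)}^{n-1}$. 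Dehn twists about boundary loops act trivially on the homotopy class of every loop, so they preserve all winding numbers and lie in $\MCG(\mathbb{T}_n, \omega)$ for free.

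The remaining generators are the interior Dehn twists $D_{\gamma_{\Pic}}$ and $D_{\gamma_{\Bbbk(x)}^j}$, and here the categorical dictionary does the work. By Proposition \ref{PropositionSphericalTwistsBecomeDehnTwists}, together with the defining property of $\Psi$ and Theorem \ref{TheoremEquationForPsi}, these Dehn twists agree, on the homotopy class of every loop $\delta$, with the mapping classes $\Psi(T_{\mathcal{L}})$ and $\Psi(T_{\Bbbk(x)})$ of the corresponding spherical twists; that is, $D_{\gamma_{\Pic}}(\delta) \simeq \Psi(T_{\mathcal{L}})(\delta)$ and likewise for $\gamma_{\Bbbk(x)}^j$. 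Since the winding number of a loop depends only on its homotopy class and $\Psi$ lands in $\MCG(\mathbb{T}_n, \omega)$ by Proposition \ref{PropositionExactSequenceAutoequivalences}, we obtain $\omega(D_{\gamma_{\Pic}}(\delta)) = \omega(\Psi(T_{\mathcal{L}})(\delta)) = \omega(\delta)$ for every loop $\delta$, and the analogous chain of equalities for the skyscraper loops. Hence $D_{\gamma_{\Pic}}$ and $D_{\gamma_{\Bbbk(x)}^j}$ preserve all winding numbers and lie in $\MCG(\mathbb{T}_n, \omega)$.

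Assembling these observations, every member of a generating set of $\MCG(\mathbb{T}_n)$ lies in $\MCG(\mathbb{T}_n, \omega)$, forcing the two groups to coincide. I expect the only genuine subtlety to be the interior Humphries generators; the key point is that one need not identify $D_{\gamma_{\Pic}}$ with $\Psi(T_{\mathcal{L}})$ as full mapping classes (they could a priori differ by a class acting trivially on loops). Mere agreement on homotopy classes of loops already yields invariance of every winding number, which is precisely the condition defining $\MCG(\mathbb{T}_n, \omega)$.
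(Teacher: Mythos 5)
Your proof is correct and follows essentially the same route as the paper: reduce to the generating set consisting of half-twists and fractional twists (handled by Lemma \ref{LemmaTranspositionsAreInStabilizer}) together with the Humphries generators of $\PMCG(\mathbb{T}_n)$, and dispose of the interior Dehn twists via Proposition \ref{PropositionSphericalTwistsBecomeDehnTwists} and the fact that $\Psi$ lands in $\MCG(\mathbb{T}_n, \omega)$. The one point to tighten is your closing remark: the agreement of $D_{\gamma_{\Pic}}$ with $\Psi(T_{\mathcal{L}})$ on homotopy classes is only established for curves representing indecomposable objects (in particular only for \emph{gradable} loops), so rather than relying on agreement on all loops it is cleaner to note that the uniqueness property characterizing $\Psi(T)$ (Theorem \ref{TheoremEquationForPsi} together with the determination of a mapping class by its action on simple arcs) forces $D_{\gamma_{\Pic}}=\Psi(T_{\mathcal{L}})$ as full mapping classes, whence it preserves all winding numbers.
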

\begin{proof}Let $H \in \MCG(\mathbb{T}_n)$. By \eqref{ShortExactSequenceMCG}, there exists $F \in \MCG(\mathbb{T}_n)$, which is a composition of half-twists and fractional twists, such that $H \circ F \in \PMCG(\mathbb{T}_n)$. Since $\Img \Psi \subseteq \MCG(\mathbb{T}_n, \omega)$, it follows from Theorem \ref{TheoremImages} and Theorem \ref{TheoremEquationForPsi} that the Dehn twists around the Humphries generators are elements of $\MCG(\mathbb{T}_n,\omega)$. It therefore follows from Theorem \ref{TheoremHumphriesGenerators} and Proposition \ref{PropositionBoundaryTwistsStabiliser} that $\PMCG(\mathbb{T}_n) \subseteq \MCG(\mathbb{T}_n, \omega)$. Finally, Lemma \ref{LemmaTranspositionsAreInStabilizer} implies $F \in \MCG(\mathbb{T}_n, \omega)$ and hence $H \in \MCG(\mathbb{T}_n, \omega)$.
\end{proof}

	\noindent  With Theorem \ref{TheoremHumphriesGenerators} and Proposition  \ref{PropositionSphericalTwistsBecomeDehnTwists} in mind, we show in Proposition \ref{PropositionTransitivityNonSeparatingLoops} that $\MCG(\mathbb{T}_n)$ acts transitively on non-separating simple loops. The proof is essentially the same as on page 37 in \cite{FarbMargalit}. However, we include it as we need a slightly stronger statement than what is proved there.
	
	\begin{prp}\label{PropositionTransitivityNonSeparatingLoops} The pure mapping class group $\PMCG(\mathbb{T}_n)$ acts transitively on the set of non-separating simple loops. 
	\end{prp}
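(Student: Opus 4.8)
The plan is to apply the change of coordinates principle (cf. \cite[Section 1.3]{FarbMargalit}) while keeping careful track of the boundary, so that the resulting homeomorphism restricts to the identity on $\partial \mathbb{T}_n$ and hence lies in $\PMCG(\mathbb{T}_n)$ rather than merely in $\MCG(\mathbb{T}_n)$. Let $\gamma$ and $\gamma'$ be two non-separating simple loops. First I would cut $\mathbb{T}_n$ along each of them. Since $\gamma$ is non-separating, the cut surface $\mathbb{T}_n^{\gamma}$ is connected; because cutting along a simple loop preserves the Euler characteristic and $\chi(\mathbb{T}_n)=-n$, the classification of compact orientable surfaces forces $\mathbb{T}_n^{\gamma}$ to have genus $0$ with $n+2$ boundary components. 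The same holds for $\mathbb{T}_n^{\gamma'}$. Each cut surface inherits the $n$ original boundary circles of $\mathbb{T}_n$, each carrying its two marked points, together with two new boundary circles $\gamma^{+},\gamma^{-}$ (resp. $\gamma'^{+},\gamma'^{-}$) created by the cut.

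Second, I would construct a homeomorphism $\Phi\colon \mathbb{T}_n^{\gamma} \to \mathbb{T}_n^{\gamma'}$ sending the $i$-th original boundary circle to the $i$-th original boundary circle by the identity (in particular fixing all marked points) and sending $\{\gamma^{+},\gamma^{-}\}$ onto $\{\gamma'^{+},\gamma'^{-}\}$. Since both cut surfaces are planar (genus $0$) with the same number of boundary components and identically decorated original boundaries, the classification of surfaces guarantees such a homeomorphism. Planarity also supplies enough flexibility — e.g.\ a homeomorphism supported near the two new circles that interchanges them while fixing every original boundary pointwise — to realize either of the two possible matchings of the new circles, which is exactly the freedom I will need in the final step.

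Third, I would reglue. The loops $\gamma$ and $\gamma'$ are recovered by identifying $\gamma^{+}$ with $\gamma^{-}$ (resp. $\gamma'^{+}$ with $\gamma'^{-}$) along the orientation-reversing gluing maps that reconstruct the orientable surface $\mathbb{T}_n$. Choosing the matching of the new circles compatibly with these gluings, $\Phi$ descends to a homeomorphism $\phi\colon \mathbb{T}_n \to \mathbb{T}_n$ with $\phi(\gamma)=\gamma'$. By construction $\phi$ is the identity on $\partial \mathbb{T}_n$, so its isotopy class lies in $\PMCG(\mathbb{T}_n)$ and carries $[\gamma]$ to $[\gamma']$, which is the claimed transitivity.

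The main obstacle is the orientation bookkeeping at the cut: for $\phi$ to be orientation-preserving and for $\Phi$ to descend to a well-defined homeomorphism, the matching $\gamma^{\pm}\mapsto \gamma'^{\pm}$ must be compatible with the two regluing maps. If the naive matching is incompatible, I would correct $\Phi$ by precomposing with the planar homeomorphism that swaps $\gamma^{+}$ and $\gamma^{-}$ while fixing the original boundaries pointwise, which exists precisely because the cut surface has genus $0$. This is exactly the point where the argument needs slightly more than the closed-surface statement in \cite{FarbMargalit}, since it must preserve the labelled, pointwise-fixed boundary data rather than only the homeomorphism type.
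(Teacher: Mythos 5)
Your proposal is correct and follows essentially the same route as the paper: cut along each loop, observe that non-separation plus the Euler characteristic forces both cut surfaces to be connected of genus $0$ with $n+2$ boundary components, build an orientation-preserving homeomorphism matching the original boundary circles (the paper uses half-twists and Lemma \ref{LemmaExtendBoundarydiffeomorphisms} to make it the identity there, which is the same flexibility you invoke), and reglue. The orientation bookkeeping you flag at the end is handled in the paper by composing with an orientation-reversing self-diffeomorphism of one of the cut surfaces at the outset, but this is only a cosmetic difference.
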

	\begin{proof}
		Suppose  that $\gamma$ and $\gamma'$ are non-separating simple loops on $\mathbb{T}_n$. Then, the surfaces $\Sigma$ and $\Sigma'$, obtained from $\mathbb{T}_n$ by cutting at $\gamma$ and $\gamma'$ respectively, are both connected surfaces of genus $0$ with exactly $n+2$ boundary components. Note that the boundary components $B_1, \dots, B_n$ of $\mathbb{T}_n$ are canonically identified with boundary components of $\Sigma$. We denote by $D_1, D_2$ (resp. $D_1', D_2'$)  the remaining two components of $\partial \Sigma$ (resp. $\partial \Sigma'$).
		
		Since $\Sigma$ and $\Sigma'$ have the same genus and the same number of boundary components, there exists a diffeomorphism $\phi: \Sigma \rightarrow \Sigma'$. After composing $\phi$ with any orientation-reversing self-diffeomorphism of $\Sigma$ or $\Sigma'$, we may assume that $\phi$ is orientation-preserving. By a series of half-twists, one can permute any two boundary components on $\Sigma$ by an orientation-preserving diffeomorphism which restricts to the identity on all other boundary components. In this way, we obtain an orientation-preserving diffeomorphism $\phi: \Sigma \rightarrow \Sigma'$, which restricts to orientation-preserving diffeomorphisms $B_i \rightarrow B_i$ for every $i \in [1,n]$. By Lemma \ref{LemmaExtendBoundarydiffeomorphisms} below, there exists an orientation-preserving diffeomorphism $\psi: \Sigma \rightarrow \Sigma$ such that $\phi \circ \psi$ restricts to the identity $B_i \rightarrow B_i$ for all $i \in [1,n]$. Identifying $D_1$ with $D_2$ and $D_1'$ with $D_2'$ respectively, one sees that $\phi \circ \psi$ gives rise to a homeomorphism $H: \mathbb{T}_n \rightarrow \mathbb{T}_n$ such that $H(\gamma)=\gamma'$ and  which restricts to the identity on $B_i$ for all $i \in [1,n]$. \end{proof}
		
	\noindent The following well-known lemma was used in the previous proof.
	
	\begin{lem}\label{LemmaExtendBoundarydiffeomorphisms}
		Let $\Sigma$ be a compact, oriented surface, $B$ a boundary component of $\Sigma$ and let $f: B \rightarrow B$ be an orientation-preserving diffeomorphism, where $B$ is equipped with the induced orientation. Then, for every closed neighborhood $W$ of $B$, $f$ extends to a diffeomorphism $F_W:\Sigma \rightarrow \Sigma$ which restricts to the identity on the complement of $W$.
	\end{lem}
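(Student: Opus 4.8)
The plan is to reduce the statement to two standard facts: the existence of a collar neighborhood of $B$, and the path-connectedness of the group of orientation-preserving diffeomorphisms of the circle. First I would invoke the collar neighborhood theorem to fix an embedding $\phi\colon B \times [0,1] \hookrightarrow \Sigma$ with $\phi(x,0)=x$ for all $x \in B$ and whose image $N \coloneqq \phi(B \times [0,1])$ is a closed neighborhood of $B$. Since $W$ contains an open neighborhood of the compact set $B$, after shrinking the collar (reparametrizing the $[0,1]$-factor) we may assume $N \subseteq W$.

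Next, because $f$ is an orientation-preserving diffeomorphism of $B \cong S^1$, it is isotopic to the identity through diffeomorphisms: there exists a smooth map $H\colon B \times [0,1] \to B$ such that $H(-,s)$ is a diffeomorphism for every $s$, with $H(-,0)=\operatorname{Id}_B$ and $H(-,1)=f$. This is the key input, and it rests on the path-connectedness of $\mathrm{Diff}^+(S^1)$ (which deformation retracts onto the rotation group $SO(2)$); it is precisely here that the orientation-preserving hypothesis is used, as an orientation-reversing $f$ would lie in the other component.

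Then I would glue these together. Choose a smooth function $g\colon [0,1] \to [0,1]$ that is constantly $1$ near $0$ and constantly $0$ near $1$, and define $F_W$ to be the identity on $\Sigma \setminus N$ and to send $\phi(x,t) \mapsto \phi\bigl(H(x, g(t)), t\bigr)$ on $N$. On $B = \phi(B \times \{0\})$ this equals $\phi(f(x), 0) = f(x)$, while near the inner boundary $\phi(B \times \{1\})$ it is the identity, so the two pieces agree on a neighborhood of the seam; thus $F_W$ is a well-defined smooth map restricting to the identity outside $W \supseteq N$. For each fixed $t$ the assignment $x \mapsto H(x, g(t))$ is a diffeomorphism of $B$, so $F_W$ is a smooth bijection of $N$ that equals the identity near $\phi(B \times \{1\})$ and therefore glues to a diffeomorphism of all of $\Sigma$. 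The only step requiring genuine care is the smoothness of the interpolation across the seam $\phi(B \times \{1\})$, which is exactly why $g$ is taken locally constant there; the rest is routine bookkeeping, and the single nontrivial ingredient remains the isotopy $H$ supplied by the connectedness of $\mathrm{Diff}^+(S^1)$.
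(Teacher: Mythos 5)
Your proof is correct and follows essentially the same route as the paper's: both fix a collar neighborhood of $B$ contained in $W$, use the fact that every orientation-preserving diffeomorphism of $S^1$ is isotopic to the identity, and spread the isotopy across the collar. The only differences are cosmetic --- the paper constructs the isotopy explicitly as a convex combination of lifts to $\mathbb{R}$ where you cite the connectedness of $\mathrm{Diff}^+(S^1)$, and your cutoff function $g$ makes the smoothness of the gluing at the inner seam explicit, a point the paper's proof glosses over.
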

	\begin{proof}The mapping class group of the circle is trivial: given an orientation-preserving self-diffeomorphism $\phi: S^1 \rightarrow S^1$,  consider a lift $\tilde{\phi}:\mathbb{R} \rightarrow \mathbb{R}$. In particular, $\tilde{\phi}(x+1)=\tilde{\phi}(x)+1$ for all $x \in \mathbb{R}$. Then, $\tilde{\phi}$ is orientation-preserving, i.e. strictly increasing, and the convex homotopy $H:[0,1] \times \mathbb{R} \rightarrow \mathbb{R}$, $(t,x) \mapsto (1-t)\tilde{\phi}(x) + t x$, is an isotopy which induces an isotopy from $\phi$ to the identity. Thus, there exists an isotopy $\psi: [0,1] \times B \rightarrow B$ from $f$ to $\text{Id}_B$. For $U \subseteq W$ a collar neighbourhood of $B$ with diffeomorphism $\phi:U \rightarrow  [0,1] \times B$, define $F_W$ as the extension of the identity of $\Sigma \setminus U$ by the map $\psi\circ \phi:U \rightarrow U$.
	\end{proof}

	\section{The proofs of Theorem \ref{IntroTheoremClassificationSphericalObjects} and Theorem \ref{IntroTheoremTransitivitySphericalOjects}}\label{SectionAllTheProofs}
	
\noindent  We give a proof of Theorem \ref{IntroTheoremClassificationSphericalObjects} and Theorem \ref{IntroTheoremTransitivitySphericalOjects} from the introduction. For convenience of the reader we state them again.
 \begin{thm}\label{TheoremClassificationSphericalObjects}
 	
 	There exists a bijection between isomorphism classes of spherical objects on $C_n$, up to shift, and pairs $([\gamma], \lambda)$, where $\lambda \in \Bbbk^{\times}$ and $[\gamma]$ is the homotopy class of an unoriented, non-separating simple loop $\gamma$ on the $n$-punctured torus $\mathscr{T}^n$.	
 \end{thm}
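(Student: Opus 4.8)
The plan is to move the entire question into $\mathcal{D}^b(\Lambda_n)$ along the embedding $\mathbb{F}$ and then read the classification off the surface dictionary of Section~\ref{SectionSurfaceModelLambda}. Since $\mathbb{F}$ is exact and fully faithful, it is injective on isomorphism classes and commutes with the shift; together with Lemma~\ref{LemmaImageOfSphericalIsSpherical} this shows that $X \mapsto \mathbb{F}(X)$ is a shift-equivariant bijection from isomorphism classes of spherical objects in $\Perf(C_n)$ onto those in $\mathcal{D}^b(\Lambda_n)$. It therefore suffices to parametrize the spherical objects of $\mathcal{D}^b(\Lambda_n)$ up to shift, and by Corollary~\ref{CorollarySphericlaCurvesAreNonSeparating} (recall $\gamma_{\mathbb{F}(X)}=\gamma_X$) these are exactly the objects $P^\bullet_{(\gamma,g)}(\mathcal{V})$ for which $\gamma$ is a simple, non-separating loop and $\dim\mathcal{V}=1$.

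Next I would invoke the bijection of Proposition~\ref{PropositionBijectionObjectsCurves}, which identifies isomorphism classes of indecomposables with triples $(\gamma,g,\mathcal{V})$ modulo homotopy and isomorphism of local systems, and restrict it to the spherical locus just described. Two reductions turn such triples into the pair $([\gamma],\lambda)$. First, the grading is eliminated by passing to shifts: since $P^\bullet_{(\gamma,g)}(\mathcal{V})[1]\cong P^\bullet_{(\gamma,g[1])}(\mathcal{V})$ and any two gradings of a gradable loop differ by a shift, working up to shift is precisely the same as forgetting $g$. This uses that non-separating simple loops are gradable, which follows from the results of Section~\ref{SectionMappingClassGroup}: by Proposition~\ref{PropositionTransitivityNonSeparatingLoops} and Corollary~\ref{CorollaryStabilizer} all such loops lie in a single winding-number-preserving orbit of the mapping class group, and since $\gamma_{\Pic}$ is one of them with $\omega=0$, Lemma~\ref{LemmaConditionGradable} gives gradability for all of them. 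Second, a $1$-dimensional indecomposable local system on a loop is classified by the eigenvalue of its monodromy, yielding $\mathcal{V}\leftrightarrow\lambda\in\Bbbk^{\times}$.

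It remains to record that the loop enters only as an unoriented homotopy class and to transfer it to the punctured torus. The isomorphism class of $P^\bullet_{(\gamma,g)}(\mathcal{V})$ is invariant under reversing the orientation of $\gamma$, so the object depends only on $[\gamma]$; and the canonical bijection between homotopy classes of loops on $\mathbb{T}_n$ and on the $n$-punctured torus $\mathscr{T}^n$ from Section~\ref{SectionComplexOfLoops} lets me regard $[\gamma]$ as a class on $\mathscr{T}^n$. Concatenating all of these identifications yields the asserted bijection $X\mapsto([\gamma_X],\lambda)$.

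The only genuinely delicate point is the compatibility of $\lambda$ with orientation: reversing the orientation of $\gamma$ replaces the monodromy eigenvalue $\lambda$ by $\lambda^{-1}$ while fixing the underlying object, so $\lambda$ is only well defined once an orientation has been chosen. To make $([\gamma],\lambda)$ well defined on unoriented classes I would fix, once and for all, an orientation on each non-separating simple loop --- for instance by transporting the orientation of $\gamma_{\Pic}$ along the transitive action of Proposition~\ref{PropositionTransitivityNonSeparatingLoops} --- and read off $\lambda$ relative to it. With this convention in place, Proposition~\ref{PropositionBijectionObjectsCurves} guarantees that for a fixed oriented loop distinct values of $\lambda$ give non-isomorphic objects, so the map is injective, while surjectivity is immediate from the characterization of the spherical locus; everything else is a direct concatenation of the cited statements.
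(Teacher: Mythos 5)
Your argument is correct and follows the same overall architecture as the paper: pass to $\mathcal{D}^b(\Lambda_n)$ via $\mathbb{F}$, invoke Corollary~\ref{CorollarySphericlaCurvesAreNonSeparating} to identify spherical objects with simple non-separating loops carrying one-dimensional local systems, and then verify that every such loop actually occurs. The one place where you genuinely diverge is the occurrence step, i.e.\ gradability of an arbitrary non-separating simple loop $\gamma$. The paper establishes this by running the transitivity argument first: it writes a mapping class $H$ with $H(\gamma)\simeq\gamma^0_{\Bbbk(x)}$ as a product of Humphries generators, lifts $H$ to a composition $T$ of spherical twists via Proposition~\ref{PropositionSphericalTwistsBecomeDehnTwists}, and exhibits $\gamma$ as the loop of the concrete object $T^{-1}(\Bbbk(z))$ — so the proofs of Theorems~\ref{TheoremClassificationSphericalObjects} and~\ref{TheoremTransitivitySphericalOjects} are interleaved. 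You instead stay on the topological side: Proposition~\ref{PropositionTransitivityNonSeparatingLoops} puts $\gamma$ in the $\PMCG(\mathbb{T}_n)$-orbit of $\gamma_{\Pic}$, Corollary~\ref{CorollaryStabilizer} says this orbit preserves winding numbers, and $\omega(\gamma_{\Pic})=0$ plus Lemma~\ref{LemmaConditionGradable} gives gradability without ever constructing the object. This is a clean shortcut, though it is not logically independent of the paper's route, since Corollary~\ref{CorollaryStabilizer} is itself proved using Proposition~\ref{PropositionSphericalTwistsBecomeDehnTwists}; what it buys is a self-contained proof of Theorem~\ref{TheoremClassificationSphericalObjects} that does not need the transitivity theorem as a byproduct. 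Your extra care about the orientation dependence of $\lambda$ (reversal sends $\lambda$ to $\lambda^{-1}$, so a choice of orientation per homotopy class must be fixed) addresses a point the paper passes over with the phrase ``after choosing a base point''; any fixed choice suffices for a bijection, so the slight non-canonicity of your proposed convention is harmless.
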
 
\noindent We will see in Section \ref{SectionSurfaceModelCycle}, $\mathscr{T}^n$ is a geometric model for $\mathcal{D}^b(C_n)$. We will derive from $\mathbb{T}_n$ which is the geometric model for $\mathcal{D}^b(\Lambda_n)$. From this point of view, the appearance of the punctured torus in Theorem \ref{TheoremClassificationSphericalObjects} is not surprising.
 	\begin{thm}\label{TheoremTransitivitySphericalOjects}The group of auto-equivalences of $\mathcal{D}^b(C_n)$ acts transitively on the set of isomorphism classes of spherical objects in $\Perf(C_n)$.
 \end{thm}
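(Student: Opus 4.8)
The plan is to send every spherical object to a single fixed reference object, after which transitivity follows formally. Since ``there is an auto-equivalence of $\mathcal{D}^b(C_n)$ carrying $X$ to $Y$'' is an equivalence relation on spherical objects, it suffices to produce, for each spherical $Z \in \Perf(C_n)$, an auto-equivalence $\Phi$ of $\mathcal{D}^b(C_n)$ with $\Phi(Z) \cong \mathcal{O}_{C_n}$; the structure sheaf is spherical by Example \ref{ExampleSphericalObjects}. By Theorem \ref{TheoremClassificationSphericalObjects}, $Z$ is encoded up to shift by a pair $([\gamma_Z], \lambda)$ with $\gamma_Z$ a non-separating simple loop on $\mathbb{T}_n$ and $\lambda \in \Bbbk^{\times}$, so I would first normalise the loop and then the scalar. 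Note that $Z$ is indecomposable, since spherical objects are indecomposable.

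For the loop, Proposition \ref{PropositionTransitivityNonSeparatingLoops} provides $h \in \PMCG(\mathbb{T}_n)$ with $h(\gamma_Z) \simeq \gamma_{\Pic}$. By Humphries' Theorem \ref{TheoremHumphriesGenerators}, $h$ is a product of Dehn twists about $\gamma_{\Pic}, \gamma_{\Bbbk(x)}^0, \dots, \gamma_{\Bbbk(x)}^{n-1}$ and about the simple boundary loops; the boundary twists act trivially on homotopy classes of loops and may be discarded. By Proposition \ref{PropositionSphericalTwistsBecomeDehnTwists}, $D_{\gamma_{\Pic}}$ and $D_{\gamma_{\Bbbk(x)}^j}$ realise on loops the spherical twists $T_{\mathcal{L}}$ (for $\mathcal{L} \in \Pic^{\mathbb{0}}(C_n)$) and $T_{\Bbbk(x)}$ (for a smooth point $x \in \mathbb{P}^1_j$), which are auto-equivalences of $\mathcal{D}^b(C_n)$ by the Seidel--Thomas construction recalled in Section \ref{SectionSphericalTwists} together with Example \ref{ExampleSphericalObjects}. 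Composing these twists in the order prescribed by the Humphries factorisation of $h$ and iterating the identity $\gamma_{T_X(Y)} \simeq D_{\gamma_X}(\gamma_Y)$ of Theorem \ref{TheoremEquationForPsi} (each intermediate object being indecomposable, as the image of the indecomposable $Z$ under an auto-equivalence), I obtain an auto-equivalence $\Phi_1$ of $\mathcal{D}^b(C_n)$ with $\gamma_{\Phi_1(Z)} \simeq h(\gamma_Z) \simeq \gamma_{\Pic}$.

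It remains to fix the scalar. The object $\Phi_1(Z)$ is again spherical, hence is represented by a loop, and since that loop is $\gamma_{\Pic}$, Theorem \ref{TheoremImages} shows that $\Phi_1(Z)$ is, up to shift, a line bundle of multi-degree $\mathbb{0}$; write $\Phi_1(Z) \cong L[k]$ with $L \in \Pic^{\mathbb{0}}(C_n)$ and $k \in \mathbb{Z}$. Absorbing the shift into $\Phi_1$ (the shift functor is an auto-equivalence) I may assume $\Phi_1(Z) \cong L$. As $\Pic^{\mathbb{0}}(C_n)$ is a group acting on its own underlying set simply transitively by tensor product, and tensoring by a line bundle is an auto-equivalence of $\mathcal{D}^b(C_n)$, I may choose $\mathcal{M} \in \Pic^{\mathbb{0}}(C_n)$ with $L \otimes \mathcal{M} \cong \mathcal{O}_{C_n}$; then $\Phi \coloneqq (- \otimes \mathcal{M}) \circ \Phi_1$ satisfies $\Phi(Z) \cong \mathcal{O}_{C_n}$, as required.

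I expect the loop-normalisation step to be the crux. It relies simultaneously on the purely topological transitivity of $\PMCG(\mathbb{T}_n)$ on non-separating simple loops and on the fact that a generating set of Dehn twists lifts to genuine auto-equivalences of $\mathcal{D}^b(C_n)$, which is exactly where Humphries' theorem meets the spherical-twist/Dehn-twist dictionary of Proposition \ref{PropositionSphericalTwistsBecomeDehnTwists} and Theorem \ref{TheoremEquationForPsi}. By contrast, once the loop equals $\gamma_{\Pic}$ the scalar step is routine, being governed by the explicit description of degree-$0$ line bundles in Theorem \ref{TheoremImages} and the group structure of the Jacobian $\Pic^{\mathbb{0}}(C_n)$.
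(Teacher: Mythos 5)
Your proof is correct and follows essentially the same route as the paper: classify spherical objects by non-separating simple loops, move the loop into a standard position using the transitivity of $\PMCG(\mathbb{T}_n)$ together with the Humphries generators realised as spherical twists via Proposition \ref{PropositionSphericalTwistsBecomeDehnTwists}, then normalise the remaining scalar. The only (immaterial) difference is the choice of reference object: you land on $\mathcal{O}_{C_n}$ via $\gamma_{\Pic}$ and kill the scalar by tensoring with a degree-zero line bundle, whereas the paper lands on a smooth skyscraper sheaf via $\gamma^0_{\Bbbk(x)}$ and finishes with the transitive action of $\Aut(C_n)$ on smooth points.
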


\begin{proof}[Proofs of Theorem \ref{TheoremClassificationSphericalObjects} and Theorem \ref{TheoremTransitivitySphericalOjects}]
 \noindent We recall that homotopy classes of loops on $\mathbb{T}_n$ are naturally in bijection with homotopy classes of loops on $\mathscr{T}^n$ and so it is sufficient to give a bijection between isomorphism classes of spherical objects in $\Perf(C_n)$ and non-separating simple loops on $\mathbb{T}_n$ instead. According to Corollary \ref{CorollarySphericlaCurvesAreNonSeparating} an indecomposable object $X \in \mathcal{D}^b(C_n)$ is spherical if and only if $\gamma_X \subseteq \mathbb{T}_n$ is a simple non-separating loop and the associated local system $\mathcal{V}_X$ has dimension $1$. The parameter $\lambda \in \Bbbk^{\times}$ in Theorem \ref{TheoremClassificationSphericalObjects} specifies the isomorphism class of $\mathcal{V}_X$ after choosing a base point for $\gamma_X$. To finish the proof of Theorem \ref{TheoremClassificationSphericalObjects} it is left to prove that every simple non-separating loop is gradable and hence represents a spherical object. In order to prove Theorem \ref{TheoremTransitivitySphericalOjects} it is sufficient to show that for every spherical object $X \in \Perf(C_n)$ there exists an auto-equivalence $T$ of $\mathcal{D}^b(C_n)$ such that $T(X) \cong \Bbbk(z)[m]$, where $m \in \mathbb{Z}$ and $z \in C_n$ is a smooth point. Since $\Aut(C_n)$ acts transitively on its set of smooth points, this will imply Theorem \ref{TheoremTransitivitySphericalOjects}.
	
Let $X \in \Perf(C_n)$ be spherical. By Corollary \ref{CorollarySphericlaCurvesAreNonSeparating},  $\gamma=\gamma_X$ is a simple non-separating loop on $\mathbb{T}_n$ and by Proposition \ref{PropositionTransitivityNonSeparatingLoops}, there exists a mapping class $H \in \PMCG(\mathbb{T}_n)$ such that $H(\gamma) \simeq \gamma^0_{\Bbbk(x)}$. Since Dehn twists about boundary loops act trivially on homotopy classes of loops and by Theorem \ref{TheoremHumphriesGenerators}, we may assume that $H$ is a composition of Humphries generators and their inverses. By Proposition \ref{PropositionSphericalTwistsBecomeDehnTwists} and Corollary \ref{CorollaryTwistFunctorsUnderEmbeddings}, there exists $T \in \Aut(\mathcal{D}^b(C_n))$, which is a composition of spherical twists (and their inverses) by skyscraper sheaves of smooth points and $\mathcal{O}_{C_n}$, with the property that $\mathbb{F}(T(X))$ is represented by $H(\gamma) \simeq \gamma^0_{\Bbbk(x)}$. Thus, $\mathbb{F}(T(X))\cong \Bbbk(0, \lambda)[m]$ for some $m \in \mathbb{Z}$ and $\lambda \in \Bbbk^{\times}$. Since $\mathbb{F}$ is an embedding, $T(X)$ is isomorphic to a shift of $\Bbbk(z)$ for a smooth point $z \in \mathbb{P}^1_0 \subseteq C_n$. This finishes the proof of Theorem \ref{TheoremTransitivitySphericalOjects}.

 The proof above and Proposition \ref{PropositionTransitivityNonSeparatingLoops} also show that every non-separating simple loop on $\mathbb{T}_n$ represents a spherical object of $\Perf(C_n)$. Namely, $\gamma$ above is represented by $T^{-1}(\Bbbk(z))$. Together with Corollary \ref{CorollarySphericlaCurvesAreNonSeparating}, this completes the proof of Theorem \ref{TheoremClassificationSphericalObjects}.\end{proof}

\section{A surface model and auto-equivalences of the derived category of a cycle}\label{SectionSurfaceModelCycle}
\noindent We describe the  group of auto-equivalences of $\mathcal{D}^b(C_n)$. The result is derived by regarding $\mathcal{D}^b(C_n)$ as the Verdier quotient of $\mathcal{D}^b(\Lambda_n)$ at objects which are represented by boundary arcs. The underlying idea to contract boundary arcs by passing to a localization was used in \cite{HaidenKatzarkovKontsevich} to classify indecomposable objects in the partially wrapped Fukaya categories of a graded marked surface $\Sigma$ in terms of curves. For our purposes, we use the same approach to provide a topological description of morphisms and their compositions in $\mathcal{D}^b(C_n)$ as in the case of the category $\mathcal{D}^b(\Lambda_n)$. We note that the approach by localization was used by Lekili and Polishchuk \cite{LekiliPolishchukAuslanderOrders} to prove equivalences between derived categories of certain stacky nodal curves (which includes the case $C_n$) and wrapped Fukaya categories of punctured surfaces in the sense of \cite{HaidenKatzarkovKontsevich}. The topological model for $\mathcal{D}^b(C_n)$  reflects this relationship.

\subsection{The category of boundary arcs, the category of loops and perpendicular categories} 

\begin{definition}
Let $\Dinv \subseteq \mathcal{D}^b(\Lambda_n)$ denote the additive closure of indecomposable objects which are represented by loops and denote by $\Dpart$ the additive closure of all indecomposable objects which are represented by boundary arcs. 
\end{definition} 
\noindent The following is a consequence of  Proposition \ref{PropositionClassificationCalabiYauObjects} and characterizes the categories $\Dpart$ and $\Dinv$.
\begin{prp}\label{PropositionCharaterizationDpartDinv}
 $\Dpart$ coincides with the full subcategory of objects $X \in  \mathcal{D}^b(\Lambda_n)$ such that $\tau^2 X \cong X[2]$ and $\Dinv$ is the full subcategory containing all $\tau$-invariant objects. In particular, $\Dinv$ is the essential image of $\mathbb{F}$.
\end{prp}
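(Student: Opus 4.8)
The plan is to read off both identifications directly from the three parts of Proposition~\ref{PropositionClassificationCalabiYauObjects}, using only that $\mathcal{D}^b(\Lambda_n)$ is Krull--Schmidt (being the bounded derived category of a finite-dimensional algebra), so that every object decomposes, uniquely up to isomorphism and reordering, into indecomposable summands. This reduces every statement about the additive closures $\Dinv$ and $\Dpart$ to a statement about indecomposable summands.

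First I would treat the two equalities of full subcategories. By definition $\Dinv$ (respectively $\Dpart$) is the additive closure of the indecomposable objects represented by loops (respectively boundary arcs); by Krull--Schmidt an object $X$ lies in this additive closure precisely when each of its indecomposable direct summands is represented by a loop (respectively by a boundary arc). Proposition~\ref{PropositionClassificationCalabiYauObjects}(2) identifies the former condition with $\tau$-invariance of $X$, and part~(3) identifies the latter with $\tau^2 X \cong X[2]$. This yields $\Dinv = \{X : X \cong \tau X\}$ and $\Dpart = \{X : \tau^2 X \cong X[2]\}$ as full subcategories, which is the main content of the statement.

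The \emph{in particular} clause then follows immediately: Theorem~\ref{TheoremBurbanDrozdImage} states that $\Img \mathbb{F}$ is exactly the full subcategory of $\tau$-invariant objects, and by the previous paragraph this coincides with $\Dinv$; hence $\Dinv = \Img \mathbb{F}$.

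I do not expect a genuine obstacle here, since all the real work is already contained in Proposition~\ref{PropositionClassificationCalabiYauObjects}. The only point requiring any care is the compatibility of the two descriptions of $\Dinv$ and $\Dpart$---one as an additive closure, the other as a full subcategory cut out by a categorical condition. This compatibility is precisely what parts~(2) and~(3) of Proposition~\ref{PropositionClassificationCalabiYauObjects} supply, since they are phrased for arbitrary objects in terms of their indecomposable summands; it rests on $\tau$ being an auto-equivalence (so that $\tau$-invariance and the relation $\tau^2(-) \cong (-)[2]$ are compatible with finite direct sums) together with the uniqueness in Krull--Schmidt.
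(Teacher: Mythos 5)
Your argument is correct and matches the paper's own (unwritten) reasoning: the paper states this proposition as an immediate consequence of Proposition \ref{PropositionClassificationCalabiYauObjects}, exactly as you do, with the \emph{in particular} clause supplied by Theorem \ref{TheoremBurbanDrozdImage}. Your explicit appeal to the Krull--Schmidt property to pass between the additive-closure and categorical descriptions is the right (and only) point needing care, and you handle it correctly.
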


\noindent We recall the notion of perpendicular categories.

\begin{definition}
Given a full subcategory $\mathcal{U} \subseteq \mathcal{D}^b(\Lambda_n)$, its \textbf{right perpendicular category} $\mathcal{U}^\perp$ is the full subcategory consisting of all objects $X \in \mathcal{D}^b(\Lambda_n)$ such that $\Hom^{\bullet}(Y,X)=0$ for all $Y \in \mathcal{U}$. Similar, the \textbf{left perpendicular category} $^\perp \mathcal{U}$ is defined by interchanging the roles of $X$ and $Y$.
\end{definition}

\noindent An application of the five lemma shows that both perpendicular categories are triangulated. Moreover, $X \oplus Y \in \mathcal{U}^{\perp}$ if and only if $X, Y \in \mathcal{U}^{\perp}$, i.e.\ perpendicular categories are \textit{thick} subcategories.

\begin{lem}\label{LemmaInvPartOrthogonalCategories}
We have
$$
\begin{array}{ccc} {^\perp\Dinv}=\Dpart=\Dinv^{\perp} & \textrm{ and } & {^\perp}\Dpart=\Dinv=\Dpart^{\perp}. \end{array}
$$
\noindent In particular, the categories $\Dinv$ and $\Dpart$ are triangulated.
\end{lem}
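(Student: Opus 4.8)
The plan is to prove the two identities by reducing each to two inclusions and using Serre duality to collapse the left and right perpendiculars. First I would record the consequence of Serre duality: since the Serre functor $\nu$ satisfies $\tau \cong \nu[1]$, for all $X,Y$ one has $\Hom^{\bullet}(X,Y) \cong \Hom^{\bullet}(Y,\nu X)^{*}$, so $\Hom^{\bullet}(X,Y)=0$ if and only if $\Hom^{\bullet}(Y,\tau X)=0$. Both $\Dinv$ and $\Dpart$ are $\tau$-stable: for $\Dinv$ this is $\tau$-invariance, and for $\Dpart$ it follows from $\tau^2 X \cong X[2] \Rightarrow \tau^2(\tau X)\cong (\tau X)[2]$ together with Proposition~\ref{PropositionCharaterizationDpartDinv}. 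For any $\tau$-stable full subcategory $\mathcal{U}$ the displayed equivalence gives $X\in\mathcal{U}^{\perp}\iff \nu X\in {}^{\perp}\mathcal{U}\iff X\in {}^{\perp}\mathcal{U}$, the last step because $\tau$-stability of $\mathcal{U}$ makes ${}^{\perp}\mathcal{U}$ $\nu$-stable. Hence $\Dinv^{\perp}={}^{\perp}\Dinv$ and $\Dpart^{\perp}={}^{\perp}\Dpart$, and it suffices to prove the two equalities $\Dinv^{\perp}=\Dpart$ and $\Dpart^{\perp}=\Dinv$.

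For the easy inclusions $\Dpart\subseteq\Dinv^{\perp}$ and $\Dinv\subseteq\Dpart^{\perp}$ I would use that a boundary arc is isotopic into $\partial\mathbb{T}_n$ while a loop lies in the interior, so the two curves are disjoint in minimal position; Proposition~\ref{PropositionMorpismIntersection} then forces vanishing of $\Hom^{\bullet}$ in both directions. Since perpendicular categories are thick, the reverse inclusions amount to showing that an indecomposable $X$ in $\Dinv^{\perp}$ (resp.\ $\Dpart^{\perp}$) must be a boundary arc (resp.\ a loop). A loop lies in $\Dinv$ and a boundary arc lies in $\Dpart$, so neither can be perpendicular to itself because the identity is a nonzero self-map; thus the whole content is to exclude \emph{non-boundary} arcs.

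The case $\Dpart^{\perp}\subseteq\Dinv$ is formal: if $\gamma_X$ is a non-boundary arc then it has an endpoint at a marked point $p$ on some boundary component $B_i$, and the boundary arc $Y'$ on $B_i$ shares the endpoint $p$. This shared marked point is an oriented intersection, so by Proposition~\ref{PropositionMorpismIntersection} it contributes a basis element to $\Hom^{\bullet}(X,Y')$ or to $\Hom^{\bullet}(Y',X)$; either way $X\notin\Dpart^{\perp}={}^{\perp}\Dpart$. Together with $\Dinv\subseteq\Dpart^{\perp}$ this gives $\Dpart^{\perp}=\Dinv$.

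The genuine obstacle is $\Dinv^{\perp}\subseteq\Dpart$, i.e.\ that a non-boundary arc $\gamma_X$ must meet some gradable loop. The key geometric lemma I would prove is: every essential simple arc on $\mathbb{T}_n$ that is not boundary-parallel has positive geometric intersection with some non-separating simple closed curve $\delta$. Granting this, $\delta$ is gradable (non-separating simple loops are gradable, as established in the proof of Theorems~\ref{TheoremClassificationSphericalObjects} and~\ref{TheoremTransitivitySphericalOjects}), so it represents an object $Y_\delta\in\Dinv$ with $\dim\mathcal{V}_{Y_\delta}=1$; an interior crossing produces an oriented intersection, whence $\Hom^{\bullet}(X,Y_\delta)\neq 0$ by Proposition~\ref{PropositionMorpismIntersection}, contradicting $X\in\Dinv^{\perp}$. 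To prove the lemma I would cut $\mathbb{T}_n$ along $\gamma_X$: if $\gamma_X$ is non-separating the cut surface is connected, and an embedded arc joining the two copies of $\gamma_X$ recloses to a simple closed curve meeting $\gamma_X$ exactly once, which is non-separating because an odd geometric intersection forces nonzero algebraic intersection; if $\gamma_X$ is separating but not boundary-parallel, the genus lies entirely on one side, and I would route a simple closed curve through that handle-carrying side and across $\gamma_X$, arranging its class in $H_1(\mathbb{T}_n)$ to involve the handle generator so that it is non-separating. This is standard surface topology in the spirit of the change-of-coordinates principle of \cite{FarbMargalit}, and is the only non-formal step. Combining with $\Dpart\subseteq\Dinv^{\perp}$ yields $\Dinv^{\perp}=\Dpart$, and the final assertion that $\Dinv$ and $\Dpart$ are triangulated is immediate since they are now exhibited as perpendicular categories.
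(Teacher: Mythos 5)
Your overall strategy is sound, and on the inclusion $\Dinv^{\perp}\subseteq\Dpart$ you are considerably more explicit than the paper. The paper obtains both containments $\Dpart\subseteq{}^{\perp}\Dinv$ and $\Dpart\subseteq\Dinv^{\perp}$ at once from the single observation that loops and boundary arcs are disjoint, so it needs no Serre-duality bookkeeping, and it then invokes the dichotomy that every indecomposable is either a loop or an arc meeting a boundary arc at a marked point; as you correctly note, this settles ${}^{\perp}\Dpart=\Dinv=\Dpart^{\perp}$ and only the easy half of the other identity, the remaining step (a non-boundary arc must meet a gradable loop) being left implicit. Your Serre-duality reduction of left to right perpendiculars is correct: for a $\tau$-stable, hence $\nu$-stable, subcategory $\mathcal{U}$ one has $\Hom^{\bullet}(\nu X,Y)\cong\Hom^{\bullet}(X,\nu^{-1}Y)$, so $\mathcal{U}^{\perp}$ is $\nu$-stable and therefore coincides with $\nu(\mathcal{U}^{\perp})={}^{\perp}\mathcal{U}$. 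Your treatment of $\Dpart^{\perp}\subseteq\Dinv$ via the shared marked point is the same as the paper's.

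The one genuine gap is in your key geometric lemma: you state and prove it only for \emph{simple} arcs, and the proof (cutting $\mathbb{T}_n$ along $\gamma_X$, or locating the genus on one side of it) requires $\gamma_X$ to be embedded. The representing arc of an indecomposable object of $\mathcal{D}^b(\Lambda_n)$ may have interior self-intersections, so the argument as written does not exclude non-simple, non-boundary arcs from $\Dinv^{\perp}$. The repair is cheap and also removes your separating/non-separating case distinction: the loops $\gamma_{\Pic},\gamma_{\Bbbk(x)}^{0},\dots,\gamma_{\Bbbk(x)}^{n-1}$ are non-separating and explicitly gradable (they represent $\mathcal{O}(\lambda)$ and $\Bbbk(i,\lambda)$), and they cut $\mathbb{T}_n$ into $n$ annular collars, one around each boundary component. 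Any arc, simple or not, that is disjoint from all of them in minimal position lies in one such annulus and is therefore a boundary arc; hence every non-boundary arc has an interior crossing with one of these loops, and Proposition~\ref{PropositionMorpismIntersection} gives the required non-vanishing of $\Hom^{\bullet}$ in both directions. With this substitution for the cut-and-reclose construction, the rest of your argument goes through.
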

\begin{proof}
If $X \in \Dinv$ and $Y \in \Dpart$, then any pair of direct summands of $X$ and $Y$ are represented by disjoint curves on $\mathbb{T}_n$. Thus, there are no non-zero morphisms between $X$ and $Y$ in any direction. In particular, ${^\perp}\Dinv \supseteq \Dpart \subseteq \Dinv^{\perp}$. Any indecomposable object of $\mathcal{D}^b(\Lambda_n)$ is either contained in $\Dinv$ or is represented by an arc and hence has a non-zero morphism to some object in $\Dpart$. This proves equality of $\Dpart$ and the perpendicular categories. The same type of arguments imply the second equality.
\end{proof}

\begin{lem}\label{LemmaEquivalencesRestrict}
Let $T \in \Aut(\mathcal{D}^b(\Lambda_n))$. Then, $T$ restricts to auto-equivalences of $\Dinv$ and $\Dpart$, respectively.
\end{lem}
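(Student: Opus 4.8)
The plan is to exploit the characterizations of $\Dinv$ and $\Dpart$ in terms of the Auslander--Reiten translation $\tau$ provided by Proposition \ref{PropositionCharaterizationDpartDinv}, namely that $\Dinv$ consists of the $\tau$-invariant objects and $\Dpart$ of those objects $X$ with $\tau^2 X \cong X[2]$. The single fact that makes everything work is that $T$ commutes with $\tau$ up to natural isomorphism, so the whole argument reduces to propagating these two closed conditions through $T$.

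First I would record this commutation. Since $\Lambda_n$ has finite global dimension, the derived Nakayama functor $\nu$ is the Serre functor of $\mathcal{D}^b(\Lambda_n)$, and, as recalled after the definition of a Serre dual (see \cite[\S I.1]{ReitenVandenBergh}), a Serre functor commutes with every $\Bbbk$-linear triangle auto-equivalence up to natural isomorphism; thus $T\nu \cong \nu T$. As $T$ is a triangle functor it commutes with the shift, so from $\tau \cong \nu[1]$ we obtain a natural isomorphism $T\tau \cong \tau T$, and the same holds for the quasi-inverse $T^{-1}$.

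Next, the containments. If $X \in \Dinv$, i.e.\ $X \cong \tau X$, then $\tau T(X) \cong T(\tau X) \cong T(X)$, so $T(X) \in \Dinv$; hence $T(\Dinv) \subseteq \Dinv$, and likewise $T^{-1}(\Dinv) \subseteq \Dinv$. Running the identical argument with the relation $\tau^2 X \cong X[2]$, and using that $T$ commutes with $[2]$, gives $T(\Dpart) \subseteq \Dpart$ together with $T^{-1}(\Dpart) \subseteq \Dpart$.

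Finally I would assemble the conclusion. On each of the subcategories $\Dinv$ and $\Dpart$ the restriction of $T$ is still fully faithful, this property being inherited from $\mathcal{D}^b(\Lambda_n)$, and the restriction of $T^{-1}$ provides a two-sided quasi-inverse by the containments above; hence $T$ restricts to auto-equivalences of both subcategories. I expect no serious obstacle here beyond the commutation $T\tau \cong \tau T$, which is where all the content sits; the role of the quasi-inverse $T^{-1}$ is only to upgrade the inclusions $T(\Dinv)\subseteq \Dinv$ and $T(\Dpart)\subseteq\Dpart$ to essential surjectivity onto these subcategories.
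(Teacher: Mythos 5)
Your argument is correct and follows exactly the paper's route: the paper's proof likewise invokes the fact that $\tau$ commutes with every triangle auto-equivalence (i.e.\ is central in $\Aut(\mathcal{D}^b(\Lambda_n))$) together with the $\tau$-theoretic characterization of $\Dinv$ and $\Dpart$ from Proposition \ref{PropositionCharaterizationDpartDinv}. You merely spell out the details (Serre functor commutation, the quasi-inverse giving essential surjectivity) that the paper leaves implicit.
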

\begin{proof}
Since $\tau$ lies in the center of $\Aut(\mathcal{D}^b(\Lambda_n))$, Proposition \ref{PropositionCharaterizationDpartDinv} implies that both subcategories are stable under triangle equivalences. \end{proof}

\subsection{A geometric model of the derived category of $C_n$ via Verdier quotients}

\subsubsection{Digression: Verdier quotients by thick subcategories}\ \medskip

\noindent Let  $\mathcal{T}$ be a triangulated category and let $\mathcal{U}$ be a thick subcategory. The \textbf{Verdier quotient} of $\mathcal{T}$ by $\mathcal{U}$ is a triangulated category $\quotient{\mathcal{T}}{\mathcal{U}}$ and an exact functor $\pi=\pi_{\mathcal{U}}: \mathcal{T} \rightarrow \quotient{\mathcal{T}}{\mathcal{U}}$ such that $\mathcal{U}$ is in the kernel of $\pi$ and $\pi$ is universal with this property. The objects of $\quotient{\mathcal{T}}{\mathcal{U}}$ are the same as the objects of $\mathcal{T}$ and $\pi(X) \coloneqq X$ for all $X \in \mathcal{T}$. A morphism from an object $X$ to an object $Y$ is an equivalence class of \textbf{roofs} $(\alpha, f)$
$$
    \begin{tikzcd}& W \arrow{ddr}{f} \arrow[swap]{ddl}{\alpha} \\ \\ X  & & Y, 
    \end{tikzcd}$$

\noindent where $W \in \mathcal{T}$ and $\alpha$ and $f$ are morphisms in $\mathcal{T}$ such that the mapping cone of $\alpha$ is an object of $\mathcal{U}$. \medskip

\noindent Two roofs $X \xleftarrow{\alpha} W \xrightarrow{f} Y$ and $X \xleftarrow{\beta} W' \xrightarrow{g} Y$ are \textbf{equivalent}, if there exists a  roof  $X \xleftarrow{u} U \xrightarrow{v} Y$ and maps $w: U \rightarrow W$ and $w': U \rightarrow W'$ which make the following diagram commutative:
\[\begin{tikzcd}
& W \arrow{dr}{f} \arrow[swap]{dl}{\alpha}
\\ X & U \arrow[swap, dashed]{l}{u} \arrow[dashed]{r}{v} \arrow[near start, dashed]{d}{w'} \arrow[near start, dashed]{u}{w} & Y
\\ & W' \arrow{ul}{\beta} \arrow[swap]{ur}{g}\end{tikzcd}\]

\noindent If $X \xleftarrow{\alpha} W_1 \xrightarrow{f} Y$ and $Y \xleftarrow{\beta} W_2 \xrightarrow{g} Z$ are roofs, we can form the homotopy pullback $W'=W_1 \times_Y W_2$ along $f$ and $\beta$. If $u: W' \longrightarrow W_1$ and $v: W' \longrightarrow W_2$ denote the structure maps of the pullback, the composition $(\beta, g) \circ (\alpha, f)$ is the equivalence class of the roof 

$$
\begin{tikzcd} & & W' \arrow{ddr}{v} \arrow[swap]{ddl}{u} \\ \\ & W_1 \arrow[opacity=60]{ddr}{f} \arrow[swap]{ddl}{\alpha} & & W_2 \arrow{ddr}{g} \arrow[swap]{ddl}{\beta} \\ \\ X  & & Y && Z.
\end{tikzcd}$$

\noindent Finally, a triangle in $\quotient{\mathcal{T}}{\mathcal{U}}$ is distinguished if and only if it is isomorphic to the image  of a distinguished triangle in $\mathcal{T}$ under $\pi$.\medskip

\noindent We will frequently use the following standard facts:\\

\begin{itemize}

    \item   A morphism $f: X \rightarrow Y$ in $\mathcal{T}$ becomes invertible in $\quotient{\mathcal{T}}{\mathcal{U}}$ if and only if the mapping cone of $f$ lies in $\mathcal{U}$. In particular, $\pi(U) \cong 0$ for all $U \in \mathcal{U}$.

    \item Two morphisms $f, g :X \rightarrow Y$ are identified in $\quotient{\mathcal{T}}{\mathcal{U}}$ if and only $f-g$ factors through $\mathcal{U}$. In particular, the canonical map $\Hom(X,Y) \rightarrow \Hom(\pi(X), \pi(X))$ is an isomorphism if $X \in {^{\perp}\mathcal{U}}$ or $Y \in { \mathcal{U}^{\perp}}$.
    
      \item If $\pi(X) \cong \pi(Y)$, then there exists a morphism $\alpha \in \Hom(X,Y) \cup \Hom(Y,X)$ whose mapping cone is an object of $\mathcal{U}$.
    
    \item If $X \in \mathcal{T}$ and $U \in \mathcal{U}$, then the canonical maps $X \longrightarrow X \oplus U$ and $X \oplus U \longrightarrow X$ induce mutually inverse isomorphisms in $\quotient{\mathcal{T}}{\mathcal{U}}$.
\end{itemize}

\noindent Proofs of the first two statements above can be found in Neeman's book \cite{Neeman}: Lemma 2.1.35, Lemma 2.1.26. The last two follow easily from the composition law and the fact that every roof $X \xleftarrow{f} W \xrightarrow{g} Y$ is equivalent to the composition $(f, \operatorname{Id}_W) \circ (\operatorname{Id}_W, g)$.

\medskip

\subsubsection{The Verdier quotient by the boundary category} \ \medskip

\noindent The following proposition serves as the basis for the geometric model. It was first proved  in \cite{LekiliPolishchukMirrorSymmetry}. 

\begin{prp}\label{PropositionVerdierQuotientIsDb}
There exists a triangle  equivalence
$$
    \begin{tikzcd}\mathscr{G}: \quotient{\mathcal{D}^b(\Lambda_n)}{\Dpart} \arrow{rr}{\simeq} && \mathcal{D}^b(C_n). \end{tikzcd}
$$

\end{prp}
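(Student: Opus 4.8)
The plan is to realise $\mathscr{G}$ as the functor induced on the Verdier quotient by the push-forward along the non-commutative resolution $\mathbb{X}_n \to C_n$, and then to identify its kernel with $\Dpart$. Concretely, let $p \colon \mathbb{X}_n \to C_n$ denote the canonical morphism from the non-commutative curve of \cite{BurbanDrozdTilting} to $C_n$, and transport the derived push-forward $\mathbb{R}p_\ast \colon \mathcal{D}^b(\Coh \mathbb{X}_n) \to \mathcal{D}^b(C_n)$ through the tilting equivalence $\mathcal{D}^b(\Coh \mathbb{X}_n) \simeq \mathcal{D}^b(\Lambda_n)$ to obtain an exact functor $\Phi \colon \mathcal{D}^b(\Lambda_n) \to \mathcal{D}^b(C_n)$. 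By construction $\Phi \circ \mathbb{F}$ is isomorphic to the inclusion $\Perf(C_n) \hookrightarrow \mathcal{D}^b(C_n)$, so $\Phi$ restricts on $\Dinv = \Img \mathbb{F}$ to a quasi-inverse of $\mathbb{F}$. Its essential image thus contains $\Perf(C_n)$ together with the skyscrapers of the nodes, and these generate $\mathcal{D}^b(C_n)$. It therefore suffices to prove that $\Phi$ is a Verdier localisation with kernel $\Dpart$: the universal property then produces a fully faithful $\mathscr{G} = \overline{\Phi}$, whose essential image is a thick subcategory containing the above generators and hence all of $\mathcal{D}^b(C_n)$.

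The first half of the kernel computation is the inclusion $\Dpart \subseteq \ker \Phi$. By Proposition \ref{PropositionCharaterizationDpartDinv} the objects of $\Dpart$ are precisely the indecomposables with $\tau^2 X \cong X[2]$, i.e.\ the boundary arcs; these are the ``exceptional'' modules that the Auslander-type order $\mathbb{X}_n$ adds over each node, and one checks on the Burban--Drozd tilting module that $\mathbb{R}p_\ast$ annihilates them. For the reverse inclusion $\ker \Phi \subseteq \Dpart$, note that $\ker \Phi$ is thick, so it suffices to test indecomposables. By Proposition \ref{PropositionBijectionObjectsCurves} together with Proposition \ref{PropositionCharaterizationDpartDinv}, every indecomposable of $\mathcal{D}^b(\Lambda_n)$ is represented by a loop, a boundary arc, or a non-boundary arc. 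Loops span $\Dinv$, on which $\Phi$ inverts $\mathbb{F}$ and is in particular faithful, so they are not annihilated; and a non-boundary arc is sent by $\Phi$ to a non-zero, non-perfect object of $\mathcal{D}^b(C_n)$ (an arc encircling a puncture maps to the structure sheaf of the corresponding node). Hence only boundary arcs lie in the kernel, giving $\ker \Phi = \Dpart$.

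With the kernel identified, $\Phi$ factors as $\mathcal{D}^b(\Lambda_n) \xrightarrow{\pi} \quotient{\mathcal{D}^b(\Lambda_n)}{\Dpart} \xrightarrow{\overline{\Phi}} \mathcal{D}^b(C_n)$. The main obstacle is to show that $\overline{\Phi}$ is fully faithful, equivalently that $\Phi$ is a genuine localisation at $\Dpart$ rather than merely a kernel-exact functor: one must verify that every morphism in $\mathcal{D}^b(C_n)$ is represented by a roof over $\Dpart$ in $\mathcal{D}^b(\Lambda_n)$, and that two such roofs agreeing after $\Phi$ already coincide in the quotient. This is a local question at the nodes, where $\mathbb{X}_n$ restricts to the Auslander order of $\Bbbk[[x,y]]/(xy)$ and the corresponding statement --- that $\mathcal{D}^b$ of the Auslander algebra localises onto $\mathcal{D}^b(\Bbbk[[x,y]]/(xy))$ after killing the extra simple --- is exactly what is carried out in \cite{LekiliPolishchukAuslanderOrders} and \cite{LekiliPolishchukMirrorSymmetry}; one then globalises, using that the complete orthogonality $\Dinv = {}^{\perp}\Dpart = \Dpart^{\perp}$ of Lemma \ref{LemmaInvPartOrthogonalCategories} makes $\pi|_{\Dinv}$ fully faithful and confines all new identifications to the interior arcs. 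I would therefore either import this full-faithfulness statement directly from \cite{LekiliPolishchukMirrorSymmetry}, whose equivalence this proposition records, or reprove it node by node via the roof calculus of \cite{Neeman} combined with the explicit action of $\mathbb{R}p_\ast$ on string and band objects.
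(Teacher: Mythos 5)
Your overall strategy --- push forward along the non-commutative resolution, compute the kernel, and invoke the universal property of the Verdier quotient --- is close in spirit to the paper's argument, and your identification of the kernel with $\Dpart$ is essentially right (though the claim that $\mathbb{R}p_\ast$ annihilates the boundary objects is only asserted, not checked; the paper does this concretely by exhibiting the generators of the kernel as the three-term complexes representing the two boundary segments on each component and then using the concatenation result of \cite{OpperPlamondonSchroll} to see that these generate $\Dpart$). The genuine gap is exactly the point you flag yourself: knowing $\ker \Phi = \Dpart$ does not make $\Phi$ a Verdier localisation, and full faithfulness of the induced functor $\overline{\Phi}$ on the quotient is the entire content of the proposition. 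Your two proposed ways of closing it do not work as stated. Importing the statement from \cite{LekiliPolishchukMirrorSymmetry} is circular here, since that equivalence \emph{is} the proposition being proved (the paper attributes the result to that reference precisely because it is the same statement). And ``reprove it node by node via the roof calculus'' is a plan, not an argument; verifying by hand that every morphism in $\mathcal{D}^b(C_n)$ lifts to a roof over $\Dpart$ and that no extra identifications occur is a substantial computation that your proposal does not begin.

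The paper closes this gap with a tool you are missing: it works at the abelian level first. Burban--Drozd show that the exact functor $\mathbb{G}: \Coh \mathbb{X}_n \to \Coh C_n$ induces an equivalence of the Serre quotient $\Coh \mathbb{X}_n / T$ with $\Coh C_n$, where $T$ is the semi-simple kernel; Miyachi's localisation theorem then upgrades this automatically to a triangle equivalence $\mathcal{D}^b(\mathbb{X}_n)/\mathcal{D}^b_T(\mathbb{X}_n) \simeq \mathcal{D}^b(C_n)$, with full faithfulness of the induced functor built in. The remaining work is purely the identification $\mathcal{D}^b_T(\mathbb{X}_n) = \Dpart$ via truncation functors and the explicit generators. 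If you want to salvage your version, the cleanest repair is to replace your roof-calculus step by exactly this reduction to the Serre quotient plus Miyachi; without it, the proof is incomplete at its most important point.
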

\begin{proof}
Since $\Dpart$ is triangulated, the quotient is well-defined. As shown in \cite[Theorem 2, Theorem 6]{BurbanDrozdTilting}, there exists an exact functor $\mathbb{G}:\Coh \mathbb{X}_n \rightarrow \Coh C_n$, where $\mathbb{X}_n$ denotes the non-commutative curve mentioned at the beginning of Section \ref{SectionCategoricalResolutionsCycles}. The kernel of $\mathbb{G}$ is a semi-simple abelian category $T$.  Moreover, it is proved in \cite{BurbanDrozdTilting} that $\mathbb{G}$ induces an equivalence between the Serre quotient $\quotient{\Coh \mathbb{X}_n}{T}$ and $\Coh C_n$. By a result of Miyachi \cite[Theorem 3.2.]{MiyachiLocalization}, the derived functor of $\mathbb{G}$ induces an equivalence $$
    \begin{tikzcd} \quotient{\mathcal{D}^b(\mathbb{X}_n)}{\mathcal{D}^b_{T}(\mathbb{X}_n)} \arrow{rr}{\simeq} && \mathcal{D}^b(C_n), \end{tikzcd}
$$
\noindent where $\mathcal{D}^b_{T}(\mathbb{X}_n)$ denotes the full subcategory of all objects whose cohomologies lie in $T$. By induction and the use of truncation functors of the standard $t$-structure in  $\mathcal{D}^b(\mathbb{X}_n)$, we see that $\mathcal{D}^b_{T}(\mathbb{X}_n)$ coincides with the triangulated hull $\mathcal{T}$ of $T$ inside $\mathcal{D}^b(\mathbb{X}_n) \simeq \mathcal{D}^b(\Lambda_n)$. Proposition 12 in \cite{BurbanDrozdTilting} and its proof generalize to arbitrary cycles in the obvious way and we observe that that $T$ consists precisely of appropriate shifts of all the complexes

\begin{equation}\label{arraylabel}
\begin{array}{ccc} {\begin{tikzcd}[ampersand replacement=\&] P_{t(b_i)} \arrow{r}{b_i} \& P_{s(b_i)}  \arrow{r}{a_i} \& P_{s(a_i)}  \end{tikzcd}} & \textrm{ and } & {\begin{tikzcd}[ampersand replacement=\&] P_{t(d_i)} \arrow{r}{d_i} \& P_{s(d_i)}  \arrow{r}{c_i} \& P_{s(c_i)}.  \end{tikzcd}}  \end{array}
\end{equation}
\noindent where $i \in \mathbb{Z}_n$. For each $i \in \mathbb{Z}_n$, these complexes represent the two segments on the boundary component $B_i$ cut out by the marked points. By \cite[Theorem 4.1]{OpperPlamondonSchroll}, the concatenation of two arcs at a marked point $p$ represents the mapping cone of the morphism associated with $p$. It follows that the complexes in \eqref{arraylabel} generate $\Dpart_i$. In particular, $\mathcal{T}=\Dpart$ which establishes the claimed equivalence.
\end{proof}

\begin{rem}
The restriction of a Verdier quotient functor to the perpendicular category of its kernel is fully faithful. By Lemma \ref{LemmaInvPartOrthogonalCategories} and Proposition \ref{PropositionVerdierQuotientIsDb}, we obtain an embedding $\mathfrak{G}:\Dinv \hookrightarrow \mathcal{D}^b(C_n)$. In fact, it follows from \cite[Theorem 6]{BurbanDrozdTilting} that $\mathfrak{G}\circ \mathbb{F}$ is isomorphic to the identity functor of $\Perf(C_n)$.
\end{rem}

\noindent In the subsequent sections, we denote by $\pi: \mathcal{D}^b(\Lambda_n) \rightarrow \quotient{\mathcal{D}^b(\Lambda_n)}{\Dpart}$ the localization functor.

\subsubsection{Indecomposable objects of $\mathcal{D}^b(C_n)$ as curves on the $n$-punctured torus}\ \medskip

\noindent We give an interpretation of isomorphism classes on indecomposable objects in $\mathcal{D}^b(C_n)$ as curves on the $n$-punctured torus. Our classification follows already from the results in \cite{HaidenKatzarkovKontsevich}. However, since some of the intermediate results will be useful in later parts of this paper, we include a slightly more detailed proof of the classification result which avoids the language of $A_{\infty}$-categories. In what follows, we denote a mapping cone of a morphism $f$ in a triangulated category by $C_f$.\medskip

\noindent For our description of indecomposable objects in $\mathcal{D}^b(C_n)$, we need a mild generalization of \cite[Theorem 4.1.]{OpperPlamondonSchroll} which states that if $p$ is an intersection of  two arcs at a marked point and $f=f_p$, then  $\gamma_{C_f}$  is obtained by concatenation of the original arcs at $p$.
\begin{lem}\label{LemmaMappingConesAtTwoEnds}
Let $Z_1, Z_2 \in \Dpart$, let $X \in \mathcal{D}^b(\Lambda_n)$ be indecomposable and let $f: Z_1 \oplus Z_2 \rightarrow X$ be a morphism whose components $f_i:Z_i \rightarrow X$ correspond to boundary intersections  $p_i \in \gamma_{Z_i} \overrightarrow{\cap} \gamma_X$ at distinct end points of $\gamma_X$. Then, $C_f$ is represented by the concatenation of $\gamma_X$ with $\gamma_{Z_1}$ and $\gamma_{Z_2}$ at $p_1$ and $p_2$.
\end{lem}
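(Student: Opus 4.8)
The plan is to reduce this two-sided statement to two successive applications of the single-intersection result Theorem~\ref{TheoremResolutionCrossings} by means of the octahedral axiom. Since $p_1$ and $p_2$ lie at distinct end points of $\gamma_X$, the curve $\gamma_X$ is necessarily an arc; denote its two end points by $p_1$ and $p_2$. Writing $\iota_1 \colon Z_1 \hookrightarrow Z_1 \oplus Z_2$ for the split inclusion, the composite $f \circ \iota_1$ is exactly $f_1$, so the octahedral axiom applied to $Z_1 \xrightarrow{\iota_1} Z_1 \oplus Z_2 \xrightarrow{f} X$ yields a distinguished triangle $Z_2 \to C_{f_1} \to C_{f} \to Z_2[1]$. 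Comparing this with the cones at the level of complexes, the first map is identified with $\bar f_2 \coloneqq c \circ f_2$, where $c \colon X \to C_{f_1}$ is the canonical morphism into the cone. Hence $C_f \cong \Cone(\bar f_2)$, and it remains to analyse this single cone.

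First I would apply Theorem~\ref{TheoremResolutionCrossings} to the boundary intersection $p_1 \in \gamma_{Z_1} \overrightarrow{\cap} \gamma_X$ and the morphism $f_1 = f_{p_1}$. As $p_1$ lies on $\partial \mathbb{T}_n$, the resolved curve is simply the concatenation of $\gamma_{Z_1}$ and $\gamma_X$ at $p_1$; thus $X_1 \coloneqq C_{f_1}$ is indecomposable and $\gamma_{X_1}$ is the arc obtained from $\gamma_X$ by attaching $\gamma_{Z_1}$ at the end point $p_1$. In particular $\gamma_{X_1}$ still has $p_2$ as an end point and coincides with $\gamma_X$ in a neighbourhood of $p_2$, so $\gamma_{Z_2}$ and $\gamma_{X_1}$ admit a boundary intersection $p_2' \in \gamma_{Z_2} \overrightarrow{\cap} \gamma_{X_1}$ at the marked point $p_2$. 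Since $\gamma_{X_1} \not\simeq \gamma_{Z_2}$, Proposition~\ref{PropositionMorpismIntersection} allows me to expand any element of $\Hom^{\bullet}(Z_2, X_1)$ in the basis indexed by $\gamma_{Z_2} \overrightarrow{\cap} \gamma_{X_1}$, one of whose elements is $f_{p_2'}$.

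The hard part is to show that $\bar f_2 = c \circ f_2$ equals $\lambda\, f_{p_2'}$ for some $\lambda \in \Bbbk^{\times}$. For this I would use the composition formula $\supp(c \circ f_2) = C(p_2, q)$ from \cite[Theorem 2.8]{OpperDerivedEquivalences}, where $q \in \gamma_X \overrightarrow{\cap} \gamma_{X_1}$ is the boundary intersection realizing $c$, together with the explicit description of the connecting maps of the resolution triangle in \cite{OpperPlamondonSchroll}: the morphism $c$ is a boundary morphism located at $p_1$, and because $\gamma_{X_1}$ agrees with $\gamma_X$ near the opposite end point $p_2$, the only element of $C(p_2, q)$ is the one producing $p_2'$, with non-zero coefficient — the distinctness of $p_1$ and $p_2$ rules out any trident or double-bigon contributing an intersection at $p_2$ (cf.\ Definition~\ref{DefinitionSetComposition}). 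The non-vanishing $\lambda \neq 0$ is the genuinely delicate point; as a cross-check I would also confirm it via the long exact sequence obtained by applying $\Hom^{\bullet}(Z_2, -)$ to $Z_1 \to X \to X_1 \to Z_1[1]$, verifying that the boundary morphism $f_2$ at $p_2$ cannot factor through $f_1$, whose support sits at the distinct end point $p_1$. Granting $\bar f_2 = \lambda\, f_{p_2'}$, we have $\Cone(\bar f_2) \cong \Cone(f_{p_2'})$, and a second application of Theorem~\ref{TheoremResolutionCrossings} to the boundary intersection $p_2'$ identifies this cone with the concatenation of $\gamma_{X_1}$ and $\gamma_{Z_2}$ at $p_2$. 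Since $\gamma_{X_1}$ was itself the concatenation of $\gamma_X$ and $\gamma_{Z_1}$ at $p_1$, the outcome is precisely the concatenation of $\gamma_X$ with $\gamma_{Z_1}$ and $\gamma_{Z_2}$ at $p_1$ and $p_2$, which completes the proof.
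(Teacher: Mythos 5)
Your argument is correct in outline but takes a genuinely different route from the paper. The paper works entirely at the chain level: it writes each $f_i$ as a graph map or singleton single map, conjugates the cone by the invertible endomorphism $1-\overline{v}$ to split off a contractible summand at each end, and then observes that because $p_1$ and $p_2$ sit at \emph{distinct} ends of the string complex of $X$, the two transformations cannot overlap (except at the arrow $\alpha$) and so can be performed independently. You instead treat the one-ended case, Theorem~\ref{TheoremResolutionCrossings}, as a black box and glue the two ends via the octahedral axiom, reducing everything to the identification $c\circ f_2=\lambda f_{p_2'}$, which you extract from the composition calculus $\supp(f_q\circ f_{p_2})=C(p_2,q)$. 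This is a clean categorical repackaging, and the non-vanishing of $\lambda$ that worries you is in fact automatic once $p_2'\in C(p_2,q)$, since the composition formula is an equality of supports; your long-exact-sequence cross-check (that $f_2$ cannot factor through $f_1$ because their supports sit at distinct ends) is a valid independent confirmation. The one step you should tighten is the input that the canonical map $c\colon X\to C_{f_1}$ is supported at a \emph{single} intersection $q$, and the precise location of $q$: in minimal position $\gamma_X$ and $\gamma_{X_1}$ overlap along all of $\gamma_X$ and share the end point $p_2$, so $q$ is either the boundary intersection at $p_2$ (making $(p_2,q,p_2')$ a trident) or an interior intersection where the two arcs diverge near $p_1$ (making it a thin intersection triangle), not a boundary morphism ``located at $p_1$'' as you write. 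This fact about $c$ is essentially the content of Remark~\ref{RemarkConnectingMorphismRepresentable}, which in the paper is \emph{deduced from} the chain-level proof of this very lemma, so you must be careful to source it from the explicit resolution triangle in \cite{OpperPlamondonSchroll} rather than from the remark, to avoid circularity. What each approach buys: the paper's computation is longer but yields the explicit form of the connecting morphisms as a by-product (used later in Lemma~\ref{LemmaOctahedral1} and Remark~\ref{RemarkConnectingMorphismRepresentable}); yours is shorter and more axiomatic but consumes that same information as an input.
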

\begin{proof}
Following the explicit description in Section $3$ of \cite{OpperPlamondonSchroll}, we see that $f_i$ is of one of the following shapes:

\begin{equation}\label{GraphMapDiagram}
     \begin{tikzcd}[row sep=5pt]
     Z_i \arrow{dd}{f_i}  & \arrow[dash,dashed]{r} & \bullet \arrow[dash]{r} \arrow[dashed, swap]{dd}{\alpha} \arrow[phantom]{r}[yshift=-4ex]{(\ddagger)} & \bullet \arrow[dash]{r}  \arrow[equal]{dd} & \cdots \arrow[dash]{r} & \circ \arrow[equal]{dd} & & \arrow[dashed, dash]{r} & \circ \arrow{dd}{\beta} \\ &&&&&& \text{or} \\
     X  & \arrow[dash,dashed]{r} & \bullet \arrow[dash]{r} & \bullet \arrow[dash]{r} & \cdots \arrow[dash]{r} & \circ & & \arrow[dashed, dash]{r} & \circ & \text{}
     \end{tikzcd}
\end{equation}
\noindent The rows represent the quivers of $Z_i$ and $X$ respectively with arrows being replaced by unoriented edges for the sake of generality, so that each horizontal line represents one of the maps  $t_i \otimes u_i$ in Section \ref{SectionComplexOfACurve} which constitute the differential. The vertex $\circ$ represents the projective module at the end of a string complex. Downward arrows indicate maps induced by non-trivial admissible paths and double lines represent isomorphisms. Dashed lines and dashed arrows may not be present. 

Chain maps as in the left diagram are called \textit{graph maps} and maps as on the right hand side are called \textit{singleton single maps} in \cite{ArnesenLakingPauksztello}. For example, the identity map of a string or band complex is an instance of a graph map.  If it exists, let $\alpha$ denote the unique downward arrow in the diagram associated with $f_i$. 

If $f_i$ is a singleton single map, then its mapping cone (as a chain complex) is a string complex and its quiver is obtained by joining the quiver of $Z_i$ (the upper row of the diagram) with the quiver of $X$ by the arrow $\beta$.

If $f_i$ is a graph map and $\alpha$ exists, then in the left most square in \eqref{GraphMapDiagram} the upper and the lower horizontal lines are oriented the same way in the original quivers of $\gamma_{Z_i}$ and $\gamma_X$. Let $v$ denote one of the diagonal arrows in ($\ddagger$), that is,

\begin{equation}\label{DiagramGraphMap}
 \begin{tikzcd}[row sep=5pt]
    \bullet  \arrow[dashed, swap]{dd}{\alpha} & \bullet \arrow{l} \arrow[equal]{dd} & & \bullet \arrow{r}  \arrow[dashed, swap]{dd}{\alpha} & \bullet   \arrow[equal]{dd} \\ & & \text{or}   \\
     \bullet  & \bullet \arrow{l} \arrow{uul}{v} & &   \bullet \arrow{r} \arrow{uur}{v}& \bullet
     \end{tikzcd}\end{equation}
If $v$ is a map which renders the corresponding diagram in \eqref{DiagramGraphMap} commutative, then $v$ corresponds to an endomorphism $\overline{v}:C_{f_i} \rightarrow C_{f_i}$ of graded $\Lambda_n$-modules (not chain complexes) of degree $0$ such that $\overline{v}^2=0$. Hence $1-\overline{v}$ is an invertible graded map and one can verify that after conjugation with $1-\overline{v}$, $C_{f_i}$ becomes the direct sum of chain complexes whose representation by quivers is
\begin{displaymath}
     \begin{tikzcd}[row sep=5pt]
     \arrow[dash,dashed]{r} & \bullet \arrow[]{dd}{\alpha} & & \bullet \arrow[dash]{r}  \arrow[equal]{dd} & \cdots \arrow[dash]{r} & \circ \arrow[equal]{dd} \\ & & \oplus \\
    \arrow[dash,dashed]{r} & \bullet &  & \bullet \arrow[dash]{r} & \cdots \arrow[dash]{r} & \circ
     \end{tikzcd}
     \end{displaymath}
\noindent The second direct summand is contractible and the first is a string complex which is represented by the concatenation of $\gamma_X$ and $\gamma_{Z_i}$ at $p_i$.

 The proof is analogous, if $\alpha$ is not present in which case the horizontal arrows in the upper and lower row in \eqref{DiagramGraphMap} will have mutually opposite orientations. 

After this preparation, the actual proof of the assertion is quite short: If $p_1$ and $p_2$ correspond to distinct ends of $\gamma_X$ (even though these might correspond to the \textit{same} marked point in $\mathbb{T}_n$, e.g.\ if $\gamma_X$ is closed), then the above transformations of the complex can be performed independently without interference as graph maps at different ends of a string complex can not overlap other than at the arrow $\alpha$. In particular, $C_f$ is represented by the concatenation of $\gamma_X$ with $\gamma_{Z_1}$ and $\gamma_{Z_2}$ at $p_1$ and $p_2$.
\end{proof}

\begin{rem}\label{RemarkConnectingMorphismRepresentable}
In terms of diagrams as in \eqref{GraphMapDiagram}, the connecting morphism $C_f \rightarrow \left(Z_1 \oplus Z_2\right)[1]$ is given by the projection onto the upper row and   the map $X \rightarrow C_f$ is the inclusion of the lower row. If $C_f \not \simeq 0$, both correspond to graph maps or singleton single maps under the transformations in the proof of Lemma \ref{LemmaMappingConesAtTwoEnds}  and  are represented by the ``obvious'' intersections as illustrated in Figure \ref{FigureObviousIntersections}.
\end{rem}
\begin{figure}[H]
	\centering
	\begin{tikzpicture}[hobby, scale=1]
		\begin{scope}
			\draw (-1,2.3) node{$X$};
			\draw (-0.25,0.75) node{$C_{f_1 \oplus f_2}$};

			\draw[decoration={markings, mark=at position 0.4 with {\arrow{<}}},
			postaction={decorate}
			]  (3,1) ellipse (1 and 1);
			\draw[decoration={markings, mark=at position 0.4 with {\arrow{<}}},
			postaction={decorate}
			]  (-2,1) ellipse (1 and 1);
			\draw  plot[ tension=.7] coordinates {(3,0) (2,0) (-1,2) (-2,2)};
			\draw  plot[tension=.7] coordinates {(2,1) (-1,1)};
			
			
			\draw[orange,ultra thick] ({-2+1*cos(0)},{1+1*sin(0)}) arc(0:90:1) node[pos=0.35, left, black]{$Z_1$};
			\draw[orange,ultra thick] ({3+1*cos(180)},{1+1*sin(180)}) arc(180:270:1) node[pos=0.35, right, black]{$Z_2$};
			
				\filldraw (2,1) circle (2pt);
			\filldraw (-1,1) circle (2pt);
			\filldraw (3,0) circle (2pt);
			\filldraw (-2,2) circle (2pt);
			\filldraw (0.5,1) circle (2pt);
			
			
	\draw[<-] ({-2+1.25*cos(-5)},{2+1.25*sin(-5)}) arc(-5:-38:1.125) node[pos=0.75, right]{\footnotesize $f_1$};
		\draw[<-] ({3+1.25*cos(175)},{0+1.25*sin(175)}) arc(175:142:1.125) node[pos=0.75, left]{\footnotesize $f_2$};
			
		\end{scope}	
	\end{tikzpicture}

\caption{The curves of the objects $X, Z_1, Z_2$ and the mapping cone $C_{f_1 \oplus f_2}$. The intersections $\bullet$ represent the maps $f_1, f_2$ as well as $X \rightarrow C_{f_1 \oplus f_2}$ and $C_{f_1 \oplus f_2} \rightarrow \left(Z_1 \oplus Z_2\right)[1]$.} \label{FigureObviousIntersections}
\end{figure}
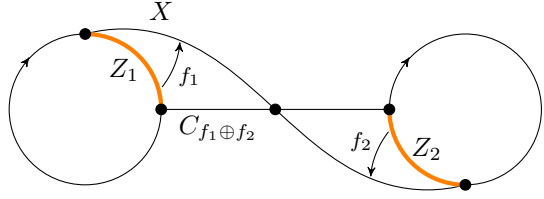

\noindent In order to understand when two indecomposable objects are isomorphic, we need to understand distinguished triangles whose mapping cone is an object of $\Dpart$, or equivalently, mapping cones of maps $Z \longrightarrow X$, where $Z \in \Dpart$ and $X \in \mathcal{D}^b(\Lambda_n)$ is indecomposable. The following lemma serves as a preparation. 

\begin{lem}\label{LemmaMappingConesFactorizingMorphisms}Let $\mathcal{T}$ be a triangulated category and for $i=1,2$, let $\beta_i: Z_i \rightarrow X$ be a morphism in $\mathcal{T}$. If there exists $\rho: Z_1 \rightarrow Z_2$ such that $\beta_1=\beta_2 \circ \rho$, then $C_{\beta_1 \oplus \beta_2} \cong Z_1[1] \oplus C_{\beta_2}$.
\end{lem}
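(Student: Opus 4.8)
The plan is to reduce the computation of $C_{\beta_1 \oplus \beta_2}$ to two elementary and purely formal facts about triangulated categories: that precomposing a morphism with an isomorphism does not change its mapping cone, and that a direct sum of distinguished triangles is again distinguished. No input specific to $\mathcal{D}^b(\Lambda_n)$ or the surface model is needed.

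First I would use the factorization $\beta_1 = \beta_2 \circ \rho$ to put the morphism $\beta_1 \oplus \beta_2 = (\beta_1\ \beta_2)\colon Z_1 \oplus Z_2 \rightarrow X$ into a simpler normal form. Writing morphisms out of $Z_1 \oplus Z_2$ as rows and endomorphisms of $Z_1 \oplus Z_2$ as $2 \times 2$ matrices acting on the right of such rows, consider the automorphism
\[
\phi = \begin{pmatrix} \operatorname{Id}_{Z_1} & 0 \\ \rho & \operatorname{Id}_{Z_2} \end{pmatrix} \colon Z_1 \oplus Z_2 \longrightarrow Z_1 \oplus Z_2,
\]
whose inverse is obtained by replacing $\rho$ with $-\rho$. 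A direct matrix computation gives $(0\ \beta_2)\circ \phi = (\beta_2\rho\ \ \beta_2) = (\beta_1\ \beta_2)$, using precisely the hypothesis $\beta_1 = \beta_2\rho$. Since $\phi$ is an isomorphism, the standard fact that a morphism of triangles which is an isomorphism on two of the three terms is an isomorphism on the third (applied to the evident commutative square relating $(\beta_1\ \beta_2)$ and $(0\ \beta_2)$ via $\phi$ and $\operatorname{Id}_X$) yields $C_{\beta_1 \oplus \beta_2} \cong C_{(0\ \beta_2)}$.

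Next I would identify $X$ with $0 \oplus X$ and observe that under this identification $(0\ \beta_2)$ is literally the direct sum of the zero morphism $Z_1 \rightarrow 0$ and the morphism $\beta_2 \colon Z_2 \rightarrow X$. Invoking that the direct sum of two distinguished triangles is distinguished, the cone splits as $C_{(0\ \beta_2)} \cong C_{0} \oplus C_{\beta_2}$, where $C_0$ is the cone of $Z_1 \rightarrow 0$. Rotating the standard triangle on $\operatorname{Id}_{Z_1}$ identifies $C_0 \cong Z_1[1]$, which assembles into the claimed isomorphism $C_{\beta_1 \oplus \beta_2} \cong Z_1[1] \oplus C_{\beta_2}$. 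Since each step is formal I do not expect a genuine obstacle; the only point demanding care is the bookkeeping of signs and matrix conventions in the identity $(\beta_1\ \beta_2) = (0\ \beta_2)\circ\phi$, in particular choosing the form of $\phi$ (rather than its inverse) for which the relation holds on the nose.
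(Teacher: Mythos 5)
Your proposal is correct and matches the paper's own proof essentially step for step: the paper uses the same automorphism $\left(\begin{smallmatrix}\operatorname{Id} & 0 \\ \rho & \operatorname{Id}\end{smallmatrix}\right)$ to replace $\left(\begin{smallmatrix}\beta_1 & \beta_2\end{smallmatrix}\right)$ by $\left(\begin{smallmatrix}0 & \beta_2\end{smallmatrix}\right)$ and then identifies the cone of the latter as the direct sum of a split triangle on $Z_1$ and the triangle of $\beta_2$. No substantive difference.
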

\begin{proof}Consider the commutative diagram
\[\begin{tikzcd}[ampersand replacement=\&]
    Z_1 \oplus Z_2 \arrow{rr}{\left(\begin{smallmatrix}\beta_1 & \beta_2 \end{smallmatrix}\right)} \arrow[swap]{d}{\alpha=\left(\begin{smallmatrix}\operatorname{Id} & 0 \\ \rho & \operatorname{Id}\end{smallmatrix}\right)} \& \& X \arrow{rr} \arrow[equal]{d} \&\& C_{\beta_1 \oplus \beta_2} \arrow{d}{} \arrow{rr}{} \&\& \left(Z_1 \oplus Z_2\right)[1] \arrow{d}{\alpha[1]}\\ 
    Z_1 \oplus Z_2 \arrow{rr}{\left(\begin{smallmatrix}0 & \beta_2\end{smallmatrix}\right)} \&\& X \arrow{rr}{\left(\begin{smallmatrix}0  & c \end{smallmatrix}\right)^T} \& \&  Z_1[1] \oplus C_{\beta_2}\arrow[swap]{rr}{\left(\begin{smallmatrix}\operatorname{Id} & 0 \\ 0 & d \end{smallmatrix}\right)} \& \&\left(Z_1 \oplus Z_2\right)[1].
\end{tikzcd}\]
\noindent The vertical maps in the  diagram are invertible and its rows are distinguished triangles. The lower row is the direct sum of a distinguished triangle involving $\beta_2, c$ and $d$ and a split triangle.
\end{proof}

\begin{lem}\label{LemmaOctahedral1}
	\noindent Let $0 \neq f:Z \longrightarrow X$ be a morphism in $\mathcal{D}^b(\Lambda_n)$, where $Z \in \Dpart$ and $X$ is indecomposable. Then, $C_f \cong Z' \oplus W$, where $Z' \in \Dpart$ and $\gamma_{W}$ is obtained from $\gamma_X$ by clockwise rotations at its end points. Moreover, the connecting morphism $C_f \rightarrow Z[1]$ is a sum of a morphism $Z' \rightarrow Z[1]$ and a morphism $W \rightarrow Z[1]$ which is supported at a single intersection.
\end{lem}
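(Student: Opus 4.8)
The plan is to reduce the arbitrary morphism $f$ to a ``minimal'' one by successively splitting off direct summands of $Z$, and then to read off the remaining cone from the concatenation results established earlier. First I would observe that $\gamma_X$ must be an arc: since $\Hom^{\bullet}(Z,X)=0$ whenever $Z\in\Dpart$ and $X\in\Dinv$ by Lemma \ref{LemmaInvPartOrthogonalCategories}, the hypothesis $f\neq 0$ forces $X\notin\Dinv$, so by Proposition \ref{PropositionCharaterizationDpartDinv} and Proposition \ref{PropositionBijectionObjectsCurves} the curve $\gamma_X$ is an arc with two ends $e_1,e_2$ at marked points. I then decompose $Z\cong\bigoplus_j Z_j$ into indecomposables, each represented by a boundary arc, and write $f=(f_j)_j$ with $f_j\colon Z_j\to X$. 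As each boundary arc can be isotoped into a collar of $\partial\mathbb{T}_n$, every oriented intersection underlying a nonzero $f_j$ is a boundary intersection at $e_1$ or $e_2$ (Proposition \ref{PropositionMorpismIntersection}); any summand with $f_j=0$ splits off the cone as $Z_j[1]$, which I absorb into $Z'$.

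The core of the argument is to peel off the ``non-minimal'' boundary arcs at each end. Fixing an end, say $e_1$, the indecomposable boundary arcs admitting a nonzero map to $X$ with oriented intersection at $e_1$ are precisely those approaching $e_1$ from the side lying counter-clockwise before $\gamma_X$. Using the composition rule in terms of intersection triangles (the proposition preceding Lemma \ref{LemmaInteriorMorphismsClosedUnderComposition}) together with the ``unwrapping'' maps between successive boundary arcs on that component, I would show that these morphisms are totally ordered by factorization through a unique shortest arc $Z_0^{(1)}$: every other contributing $f_j$ factors as $f_j=f_0^{(1)}\circ\rho_j$ with $\rho_j\colon Z_j\to Z_0^{(1)}$. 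Lemma \ref{LemmaMappingConesFactorizingMorphisms} then strips each such $Z_j$ off the cone as a summand $Z_j[1]\in\Dpart$, contributing to $Z'$, while retaining only $f_0^{(1)}$. Running the same argument at $e_2$ leaves a morphism $f_0\colon Z_0\to X$ with $Z_0$ consisting of at most one boundary arc per surviving end. Establishing this factorization is the main obstacle, as it is where the precise geometry of boundary arcs and the intersection-triangle description of composition are genuinely used; particular care is needed to treat the two ends independently and to dispose of the degenerate cases (an end receiving no map, or $C_{f_0}\simeq 0$).

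Finally I would apply Lemma \ref{LemmaMappingConesAtTwoEnds} to $f_0$: its cone $W\coloneqq C_{f_0}$ is represented by the concatenation of $\gamma_X$ with the minimal boundary segment at each surviving end, which is exactly $\gamma_X$ with its endpoints rotated clockwise along $\partial\mathbb{T}_n$ to the adjacent marked points, the direction being forced by the oriented-intersection convention of Figure \ref{FigureDirectedBoundaryIntersection}. Collecting the split-off summands into $Z'\in\Dpart$ yields $C_f\cong Z'\oplus W$. For the connecting morphism I would invoke Remark \ref{RemarkConnectingMorphismRepresentable}: the component $W\to Z[1]$ arising from $C_{f_0}$ is represented by the ``obvious'' boundary intersection of Figure \ref{FigureObviousIntersections}, hence is supported at a single intersection, whereas the remaining component $Z'\to Z[1]$ records the connecting maps produced by the factorizations of Lemma \ref{LemmaMappingConesFactorizingMorphisms}. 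This exhibits the connecting morphism $C_f\to Z[1]$ as the asserted sum and completes the argument.
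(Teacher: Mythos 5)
Your overall architecture is the right one --- reduce to a ``minimal'' boundary map at each end of $\gamma_X$, compute that cone by Lemma \ref{LemmaMappingConesAtTwoEnds}, and read off the connecting morphism from Remark \ref{RemarkConnectingMorphismRepresentable} --- but the reduction step as you describe it does not go through, for two related reasons. First, your claim that every oriented intersection supporting a nonzero $f_j$ is a boundary intersection at $e_1$ or $e_2$ is false: an indecomposable object of $\Dpart$ whose arc winds around its boundary component more than once meets $\gamma_X$ in \emph{interior} intersections inside the collar, and the corresponding basis morphisms are nonzero. So the components of $f$ need not be boundary-intersection morphisms at all, and in the extreme case where $Z$ is a single such winding arc there is nothing to ``peel'' and Lemma \ref{LemmaMappingConesAtTwoEnds} is not applicable to the retained map. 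Second, Lemma \ref{LemmaMappingConesFactorizingMorphisms} only strips off a summand $Z_j[1]$ when the object factored \emph{through} is itself another summand of the source; but the distinguished arc through which all the contributions at a given end factor is in general an auxiliary object that does not occur as a direct summand of $Z$. (There is also the issue that a single component $f_j$ may be supported at several intersections, possibly at both ends, so it cannot factor through one end's distinguished arc.)

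The paper's proof resolves exactly these points by a device absent from your proposal: each intersection component $f_i^a$ is factored as $\beta_i^a\circ\alpha_i^a$ through an auxiliary object $Z_i^a\in\Dpart$ chosen (via the composition rule of Definition \ref{DefinitionSetComposition}) so that $\beta_i^a$ \emph{is} a boundary-intersection morphism; this rewrites $f=\beta\circ\alpha$ with $\alpha\colon Z\to\bigoplus Z_i^a$ and $\beta\colon\bigoplus Z_i^a\to X$. One then applies the octahedral axiom to obtain a triangle $C_\alpha\to C_f\to C_\beta\xrightarrow{w}C_\alpha[1]$, checks $w=0$ using Remark \ref{RemarkConnectingMorphismRepresentable}, and only then invokes Lemma \ref{LemmaMappingConesFactorizingMorphisms} (applied to $\beta$, whose source really does contain the $\prec$-maximal objects $Z^{\pm}$ as summands) and Lemma \ref{LemmaMappingConesAtTwoEnds}. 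To repair your argument you would need to add this intermediate factorization, the ordering of the auxiliary objects at each end, and the octahedral step with the vanishing of $w$; without them the cone of $f$ is not identified.
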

\begin{proof}
Choose a splitting $Z = \bigoplus_{a}{Z^a}$ into indecomposable summands. By splitting of summands using Lemma \ref{LemmaMappingConesFactorizingMorphisms} we may assume that each component $Z^a \rightarrow X$ of $f$ is non-zero. Let  $f^a=\sum{f_i^a}$ denote the corresponding decomposition of $f|_{Z^a}$ such that for all $a$ and all $i \neq j$, $f_i^a$ and $f_j^a$ are non-zero multiples of morphisms associated to distinct intersections $p_i^a, p_j^a \in \gamma_{Z^a} \overrightarrow{\cap} \gamma_X$. For each pair $(a, i)$, there exists an object $Z_i^a \in \Dpart$ and boundary intersections $q_i^a \in \gamma_{Z^a} \overrightarrow{\cap} \gamma_{Z_i^a}$ and $t_i^a \in \gamma_{Z_i^a} \overrightarrow{\cap} \gamma_X$  such that $p_i^a \in C(q_i^a, t_i^a)$, see Definition \ref{DefinitionSetComposition}. In particular, if $\alpha_i^a: Z^a \rightarrow Z_i^a$ and $\beta_i^a: Z_i^a \rightarrow X$ denote the corresponding morphisms, then $f_i^a$ is a non-zero multiple of $\beta_i^a \circ \alpha_i^a$. The above conditions determine the homotopy class of $\gamma_{Z_i^a}$ as well as $q_i^a$ and $t_i^a$ uniquely. In particular, we observe that for all pairs $(a,i)$ and $(a,j)$, $\gamma_{Z_i^a}$ and $\gamma_{Z_j^a}$ differ by full rotations around the boundary and therefore have identical end points.

Our goal is to apply Lemma \ref{LemmaMappingConesFactorizingMorphisms} to compute $C_f$. The set  $\mathcal{Z}=\{Z_i^a \}$ is ordered by the condition that $Z_i^a \prec Z_j^b$ if and only if there exists a morphism $\rho: Z_i^a \rightarrow Z_j^b$ associated with a boundary intersection such that $\beta_i^a$ is a non-zero multiple of  $\beta_j^b \circ \rho$. We note that the relation $\prec$ is transitive and reflexive but in general not anti-symmetric as distinct $Z_i^a$ and $Z_j^b$ may be isomorphic. Moreover, $Z_i^a$ and $Z_j^b$ are not necessarily $\prec$-comparable but they are if $t_i^a$ and $t_j^b$ correspond to the same end point of $\gamma_X$. Hence, $\mathcal{Z}$ naturally splits into two (possibly empty) subsets $\mathcal{Z}=\mathcal{Z}^+ \sqcup \mathcal{Z}^-$, one for each end point of $\gamma_X$, such that any two elements from the same subset are $\prec$--comparable. We denote by $Z^{\pm}$ any (fixed) $\prec$--maximal element in $\mathcal{Z}^{\pm}$ and by $z^{\pm}: {Z}^{\pm} \rightarrow X$ the corresponding map from the set $\{\beta_i^a\}$.

Set $\alpha \coloneqq \bigoplus_{(a,i)}{\alpha_i^a}$ and $\beta^{\pm} \coloneqq \bigoplus_{Z_i^a \in \mathcal{Z}^{\pm}}{\beta_i^a}$. Then, $\alpha, \beta^{+}$ and $\beta^-$ are supported only at boundary intersections. We also write $\beta= \beta^+ + \beta^-$ so that $ \beta \circ \alpha=f$ after rescaling each $\alpha_i^a$ such that $\beta_i^a \circ \alpha_i^a=f_i^a$. Then, $\beta^{\pm}$ factors through $z^{\pm}$ and hence $\beta$ factors through $z^+ \oplus z^-$. Finally, the octahedral axiom shows the existence of a distinguished triangle

\[\begin{tikzcd}
C_{\alpha} \arrow{r} & C_f \arrow{r} & C_{\beta} \arrow{r}{w} & C_{\alpha}[1]\end{tikzcd}.\]

\noindent Note that the map $w$ can be chosen to be a composition of maps $u:C_{\beta} \rightarrow \big(\bigoplus Z_i^a \big)[1]$ and  $v[1]:\big(\bigoplus Z_i^a \big)[1]\rightarrow C_{\alpha}[1]$ which appear in distinguished triangles

\begin{displaymath}
	\begin{tikzcd}
		 \bigoplus Z_i^a \arrow{r}{\beta} & X \arrow{r} & C_{\beta} \arrow{r}{u} & \big(\bigoplus Z_i^a \big)[1], \\
	   Z \arrow{r}{\alpha} & \bigoplus Z_i^a \arrow{r}{v} & C_{\alpha} \arrow{r} & Z[1].	  
	\end{tikzcd}
\end{displaymath}
It follows from Remark \ref{RemarkConnectingMorphismRepresentable} that $w=0$. Finally, Lemma \ref{LemmaMappingConesFactorizingMorphisms} and $C_{\alpha} \in \Dpart$ imply that $C_f \cong C_{\beta} \oplus C_{\alpha} \cong C_{z^+ \oplus z^-} \oplus C_{\alpha}$. Now, the assertion follows from Lemma \ref{LemmaMappingConesAtTwoEnds} and Remark \ref{RemarkConnectingMorphismRepresentable}.\end{proof}

\begin{rem}\label{RemarkDualVersions}
	Dual versions of Lemma \ref{LemmaMappingConesFactorizingMorphisms} for morphisms $X \rightarrow Z_i$ and Lemma \ref{LemmaOctahedral1} for morphisms $X \rightarrow Z$ are proved in the same way.
\end{rem}
The inclusion $\mathbb{T}_n \subseteq \mathscr{T}^n$ induces a surjective map  from the of homotopy classes of curves on $\mathbb{T}_n$ to the set of homotopy classes of curves on $\mathscr{T}^n$.  More precisely, we view $\mathscr{T}^n$ as being glued from $\mathbb{T}_n$ and a $1$-punctured disc (the puncture being the center) for each boundary component of $\mathbb{T}_n$. Given a curve $\gamma$ on $\mathbb{T}_n$, we apply the radial contraction of each disc to its puncture in the center. This yields a curve $\overline{\gamma}$ on $\mathscr{T}^n$ with the same domain as $\gamma$. If $\gamma$ if an arc with end points on boundary components $B_1$ and $B_2$, then $\overline{\gamma}$ is an arc whose end points are the punctures associated with $B_1$ and $B_2$. If $\gamma$ is a loop, then so is $\overline{\gamma}$. We further observe that if $\gamma_1, \gamma_2 \subseteq \mathbb{T}_n$ are arcs, then $\overline{\gamma_1} \simeq \overline{\gamma_2}$ if and only if $\gamma_1$ and $\gamma_2$ are obtained from each other through an application of fractional twists.

\begin{prp}\label{PropositionBijectionArcsDbCoh}
There exists a bijection between isomorphism classes of indecomposable objects of $\mathcal{D}^b(C_n)$ up to shift and homotopy classes of curves on $\mathscr{T}^n$ equipped with indecomposable local systems. More precisely, if $X \in \mathcal{D}^b(\Lambda_n)$ is indecomposable and $(\gamma, \mathcal{V})$ is its associated curve $\gamma$ with local system $\mathcal{V}$, then $(\overline{\gamma}, \mathcal{V})$ is the pair assigned to $\mathscr{G}\circ \pi(X) \in \mathcal{D}^b(C_n)$.
\end{prp}
\begin{proof}
Let $X \in \mathcal{D}^b(\Lambda_n)$ be such that no direct summand of $X$ lies in $\Dpart$. If $\pi(X)$ is indecomposable and $X \cong X_1 \oplus X_2$, then by additivity of $\pi$, $\pi(X) \cong \pi(X_1) \oplus \pi(X_2)$. Consequently, if $\pi(X_1) \not \cong 0$, then $X_2 \in \Dpart$ which shows that $X$ is indecomposable whenever $\pi(X)$ is. Next, suppose that $X$ is indecomposable and $\pi(X) \cong \pi(Y)$, where $Y \in \mathcal{D}^b(\Lambda_n)$. Then, $Y$ is isomorphic to the mapping cone of a map $g: Z \rightarrow X$, where $Z \in \Dpart$. By Lemma \ref{LemmaOctahedral1}, $C_g \cong Z' \oplus W$, where $Z' \in \Dpart$ and $W$ is indecomposable such that $\gamma_{W}$ is obtained from $\gamma_X$ by rotation at end points. By restricting to the case $Y=Y_1 \oplus Y_2$ and using the Krull-Remak-Schmidt property of $\mathcal{D}^b(\Lambda_n)$, we see that $\pi(X) \cong \pi(W)$ is indecomposable. On the other hand, if $Y \in \mathcal{D}^b(\Lambda_n)$ is indecomposable such that $\gamma_X$ and $\gamma_{Y}$ differ by rotations at end points, then $\pi(X) \cong \pi(Y)$ up to shift by Lemma \ref{LemmaMappingConesAtTwoEnds}.
\end{proof}

\begin{notation} By abuse of notation, we will occasionally use $\gamma_X$ for both the curve $\gamma=\gamma_X \subseteq \mathbb{T}_n$ of an indecomposable object $X \in \mathcal{D}^b(\Lambda_n)$ and the curve $\gamma_{\mathscr{G} \circ \pi(X)} \subseteq \mathscr{T}^n$ which represents $\mathscr{G} \circ \pi(X) \in \mathcal{D}^b(C_n)$. As explained above, the latter is given by $\overline{\gamma}$.
	\end{notation}

\subsubsection{Morphisms in $\mathcal{D}^b(C_n)$ as intersections of curves}\label{SectionMorphismsIntersections} \ \medskip

\noindent We extend the graphical description of morphisms and compositions to the category $\mathcal{D}^b(C_n)$. The computations of morphisms in the Verdier quotient of $\mathcal{D}^b(\Lambda_n)$ ``at the boundary'' which we obtain in this subsection \ref{TheoremGeometricDescriptionMorphismPuncturedCase} are by no means specific to the algebra $\Lambda_n$ and can be generalized to arbitrary gentle algebras and similar Verdier quotients, see Remark \ref{RemarkGeneralizations}.\medskip

\noindent To begin with, we recall the definition of exceptional cycles in a triangulated category due to Broomhead, Pauksztello and Ploog.

\begin{definition}[\cite{BroomheadPauksztelloPloog}] \label{DefinitionExceptionalCycles}
Let $r \geq 2$. A collection $\{U_i\}_{i \in \mathbb{Z}_r}$ of objects in a triangulated category $\mathcal{T}$ with Serre functor $\mathcal{S}$ is an \textbf{exceptional $r$-cycle} if 

\begin{enumerate}
    \item every object $U_i$ is exceptional, i.e.\ $\dim \Hom^{\bullet}(U_i,U_i)=1$; 
    \item for each $i \in \mathbb{Z}_r$, the exists $m_i \in \mathbb{Z}$ such that $\mathcal{S} U_i \cong U_{i+1}[m_i]$;
    \item $\Hom^{\bullet}(U_i,U_j)=0$, unless $j=i$ or $j=i+1$.
\end{enumerate}
\end{definition}
\noindent As in the case of spherical objects, every exceptional cycle $\mathcal{U}=\{U_i\}$ induces an auto-equivalence $T_{\mathcal{U}}$ (\cite{BroomheadPauksztelloPloog}) described by the evaluation triangle \eqref{EvaluationTriangle}, where $X = \oplus_{i \in \mathbb{Z}_r}{U_i}$.\medskip

\noindent For every component $B \subseteq \partial \mathbb{T}_n$ the segments on $B$ between the two marked points correspond to an exceptional $2$-cycle $U^1_B, U^2_B$  which follows from \cite[Section 2.5]{OpperDerivedEquivalences}. We set $U_B \coloneqq U^1_B \oplus U^2_B$ and denote by $T_{B}$ the associated twist functor. From Corollary \ref{CorollaryTwistFunctorsUnderEmbeddings} (which is also true for $T_B)$, we conclude that $T_B$ and $T_{B'}$ commute for any pair of components $B, B'$.

\begin{cor}\label{CorollaryThetaWellDefined}
The functor $\vartheta\coloneqq \prod_{B \subseteq \partial \mathbb{T}_n} T_B$, the composition of the twist functors $T_B$, is well-defined up to isomorphism of functors.
\end{cor}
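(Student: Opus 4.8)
The plan is to observe that the only ambiguity in the expression $\prod_{B \subseteq \partial \mathbb{T}_n} T_B$ lies in the order in which the finitely many functors $T_B$ are composed, and to show that this order is immaterial up to natural isomorphism. Each individual $T_B$ is already well-defined up to isomorphism: it is the exceptional-cycle twist attached to $U_B = U_B^1 \oplus U_B^2$, and as such depends only on the object $U_B$ and not on any further choice. Hence it suffices to prove that the functors $T_B$ commute pairwise, which is precisely the statement recorded in the discussion preceding the corollary.

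To make that commutativity explicit, I would argue as follows. Fix two distinct components $B, B' \subseteq \partial \mathbb{T}_n$. The underlying exceptional objects $U_B^1, U_B^2$ and $U_{B'}^1, U_{B'}^2$ are represented by boundary arcs lying on disjoint boundary components, so these curves can be placed in minimal position without any intersection. Since these arcs are not $\tau$-invariant, Proposition \ref{PropositionMorpismIntersection} applies and forces $\Hom^{\bullet}(U_B, U_{B'}) = 0 = \Hom^{\bullet}(U_{B'}, U_B)$. The exceptional-cycle analogue of Lemma \ref{LemmaImageSphericalObjectsUnderItsTwist} (vanishing of $\Hom^{\bullet}$ implies the twist acts as the identity) then gives $T_{B'}(U_B) \cong U_B$ from $\Hom^{\bullet}(U_{B'}, U_B) = 0$. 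Applying the conjugation formula of Corollary \ref{CorollaryTwistFunctorsUnderEmbeddings}, which holds verbatim for the twist $T_{B}$, with the auto-equivalence $F = T_{B'}$ yields
\[
 T_{B'} \circ T_B \circ T_{B'}^{-1} \cong T_{T_{B'}(U_B)} \cong T_B,
\]
and therefore $T_B \circ T_{B'} \cong T_{B'} \circ T_B$.

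Having established pairwise commutativity, the corollary follows formally: since $\partial \mathbb{T}_n$ has only finitely many components, any two orderings of the product $\prod_B T_B$ are related by a sequence of transpositions of adjacent factors, each of which replaces a composite $T_B \circ T_{B'}$ by the naturally isomorphic $T_{B'} \circ T_B$. Composing these isomorphisms shows that all orderings produce naturally isomorphic functors, so $\vartheta$ is well-defined up to isomorphism of functors. I expect the only genuinely delicate point to be the justification that the conjugation formula of Corollary \ref{CorollaryTwistFunctorsUnderEmbeddings} and the vanishing criterion of Lemma \ref{LemmaImageSphericalObjectsUnderItsTwist} carry over from spherical twists to exceptional-cycle twists; once this transfer is granted (as indicated by the parenthetical in the text, and guaranteed by the fact that the evaluation triangle \eqref{EvaluationTriangle} defining $T_B$ has the same formal shape), the remaining combinatorial bookkeeping is routine.
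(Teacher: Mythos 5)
Your proof is correct and follows essentially the same route as the paper, which derives commutativity of the $T_B$ from the conjugation formula of Corollary \ref{CorollaryTwistFunctorsUnderEmbeddings} (noted there to hold for $T_B$ as well) in the sentence immediately preceding the corollary. You merely make explicit the two steps the paper leaves implicit, namely that $\Hom^{\bullet}(U_{B'},U_B)=0$ because the representing boundary arcs lie on disjoint boundary components, and that this vanishing forces $T_{B'}(U_B)\cong U_B$ via the evaluation triangle.
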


\noindent It follows from \cite[Corollary 5.2.]{OpperPlamondonSchroll} and Lemma \ref{LemmaOctahedral1} that $\vartheta$ coincides with $\tau^{-1}$ on the level of objects so that $\Psi(\vartheta)=\Psi(\tau^{-1}) \in \MCG(\mathbb{T}_n)$. In case of discrete derived categories such a statement was already proved in \cite{BroomheadPauksztelloPloog}. In fact, even the following is true.
\begin{prp}\label{PropositionIsomorphismTauTheta}
There exists an isomorphism of functors $\vartheta \simeq \tau^{-1}$.
\end{prp}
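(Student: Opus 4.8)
The plan is to use the short exact sequence of Proposition \ref{PropositionExactSequenceAutoequivalences} to reduce the claim to a computation inside the kernel $\left(\Bbbk^{\times}\right)^{n+1}\times\mathbb{Z}$. Both $\vartheta$ and $\tau^{-1}$ are auto-equivalences of $\mathcal{D}^b(\Lambda_n)$, and by the discussion preceding the proposition we have $\Psi(\vartheta)=\Psi(\tau^{-1})$. Since $\Psi$ is a group homomorphism, $\vartheta\circ\tau$ lies in $\ker\Psi=\left(\Bbbk^{\times}\right)^{n+1}\times\mathbb{Z}$, where the $\mathbb{Z}$-factor acts as the shift and the factor $\left(\Bbbk^{\times}\right)^{n+1}$ acts by rescaling the arrows $c_0,\dots,c_{n-1}$ and $d_0$. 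It therefore suffices to show that $\vartheta\circ\tau$ is the trivial element of this kernel. The essential input is that $\vartheta(Y)\cong\tau^{-1}(Y)$ for every object $Y$; applying this to $Y=\tau X$ gives $(\vartheta\circ\tau)(X)\cong X$ for all $X\in\mathcal{D}^b(\Lambda_n)$, and since distinct parameters give non-isomorphic band objects, $\vartheta\circ\tau$ fixes every indecomposable together with its local-system parameter.

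First I would determine the shift component. Write $\vartheta\circ\tau\cong\phi\circ[m]$, where $\phi\in\left(\Bbbk^{\times}\right)^{n+1}$ is an arrow rescaling and $m\in\mathbb{Z}$; these two commute. Evaluating on a simple $\Lambda_n$-module $S_v$ (for any vertex $v$), every arrow acts as zero on $S_v$, so the rescaling automorphism fixes it, $\phi(S_v)\cong S_v$. Hence $(\vartheta\circ\tau)(S_v)\cong S_v[m]$, and since $S_v[m]\cong S_v$ forces $m=0$, the shift component vanishes and $\vartheta\circ\tau\cong\phi$.

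It remains to show that $\phi$ is trivial, which I would do by testing it against the explicit band objects of Theorem \ref{TheoremImages}. By Proposition \ref{PropositionClassificationCalabiYauObjects} the complexes $\mathcal{O}(\lambda)$ and $\Bbbk(i,\lambda)$ are $\tau$-invariant, so each is fixed by $\vartheta\circ\tau\cong\phi$ with the same parameter. On the other hand, a direct computation with the quiver descriptions in Figure \ref{FigurePicardGroupComplex} shows how $\phi$ acts on parameters: rescaling $d_0\mapsto\nu d_0$ sends $\mathcal{O}(\lambda)$ to $\mathcal{O}(\nu^{\pm1}\lambda)$, while rescaling $c_i\mapsto\mu_i c_i$ sends $\Bbbk(i,\lambda)$ to $\Bbbk(i,\mu_i^{\pm1}\lambda)$, and no other generator of the kernel affects these particular families. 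Thus fixing $\mathcal{O}(\lambda)$ for all $\lambda$ forces $\nu=1$, and fixing $\Bbbk(i,\lambda)$ for all $\lambda$ forces $\mu_i=1$ for each $i$. Since the $n+1$ scalars $(\mu_0,\dots,\mu_{n-1},\nu)$ are precisely the coordinates of $\phi$, we conclude that $\phi$ is trivial, whence $\vartheta\circ\tau\cong\operatorname{Id}$ and $\vartheta\simeq\tau^{-1}$.

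The main obstacle is the last step: one has to know precisely how the kernel $\left(\Bbbk^{\times}\right)^{n+1}$ acts on the isomorphism classes of the one-parameter families $\mathcal{O}(\lambda)$ and $\Bbbk(i,\lambda)$, that is, that this action on parameters is faithful with each generator detected by exactly one family. This is a routine but careful bookkeeping with the differentials in Figure \ref{FigurePicardGroupComplex}; everything else is a formal consequence of the exact sequence and of the $\tau$-invariance of band complexes.
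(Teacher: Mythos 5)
Your proof is correct and follows essentially the same route as the paper: observe that $\Psi(\vartheta\tau)$ is trivial, so $\vartheta\tau$ lies in the kernel $\left(\Bbbk^{\times}\right)^{n+1}\times\mathbb{Z}$ of the sequence in Proposition \ref{PropositionExactSequenceAutoequivalences}, and then detect the scalars by the action on the parametrized families $\mathcal{O}(\lambda)$ and $\Bbbk(i,\lambda)$, which $\vartheta\tau$ fixes because $\vartheta$ agrees with $\tau^{-1}$ on objects. The only (harmless) deviation is that you kill the shift component separately by testing on simple modules, whereas the paper reads it off from the same band objects.
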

\begin{proof}
The mapping class $\Psi(\vartheta \tau)$ is trivial. By Proposition \ref{PropositionExactSequenceAutoequivalences}, it is therefore a composition of a shift and an equivalence which is induced by an automorphism of $\Lambda_n$ which multiplies each arrow $\alpha \in \{d_0\}\cup\{c_i\}_{i \in \mathbb{Z}_n}$ in the quiver of Figure \ref{FigureQuiverLambdaN} with a scalar $\lambda_{\alpha} \in \Bbbk^{\times}$. We have $\vartheta\tau(Y) \cong Y$ for all $Y \in \Dinv={^{\perp}\Dpart}$. In particular, this applies to  $\mathcal{O}(\lambda)$ and all objects $\Bbbk(i, \mu)$ which shows that the shift is trivial and $\lambda_{\alpha}=1$ for all $\alpha \in \{d_0\}\cup\{c_i\}_{i \in \mathbb{Z}_n}$.
\end{proof}

\noindent For each boundary component $B \in \partial \mathbb{T}_n$, the definition of $T_{B}$ gives rise to a natural transformation $\eta^B: \operatorname{Id}_{\mathcal{D}^b(\Lambda_n)} \rightarrow T_B$ given on $X$ by the map $X \rightarrow T_B(X)$ in the evaluation triangle. Again, the composition of all $\eta^B$ is independent from the order of composition and defines a natural transformation $\eta: \operatorname{Id}_{\mathcal{D}^b(\Lambda_n)} \rightarrow \vartheta$. That is, if $B_1, \dots, B_n$ are the components of $\partial \mathbb{T}_n$ and $\eta(i)\coloneqq \eta^{B_i}$, then $\eta_X$ is the composition $X \xrightarrow{\eta(1)_X} T_{B_1}(X) \xrightarrow{\eta(2)_{T_{B_1}(X)}} T_{B_2} \circ T_{B_1}(X) \xrightarrow{} \cdots \rightarrow \vartheta(X)$.\medskip

\begin{rem}
The natural transformation $\eta$ gives a different explanation for the natural transformation $\operatorname{Id}_{\mathcal{D}^b(\Coh \mathbb{X}_n)} \rightarrow \tau^{-1}$ from \cite[Corollary 1]{BurbanDrozdTilting} in the derived category of the non-commutative curve $\mathbb{X}_n$.
\end{rem}

\noindent For all $X, Y \in \mathcal{D}^b(\Lambda_n)$, $\eta$ induces an infinite tower of  graded $\Bbbk$-linear maps 
\[\begin{tikzcd}
\Hom^{\bullet}(X,Y) \arrow{r} & \Hom^{\bullet}(X, \vartheta(Y)) \arrow{r} & \Hom^{\bullet}(X, \vartheta^2(Y)) \arrow{r} & \cdots 
\end{tikzcd}\]
which allows us to define its colimit. Moreover, for each number $m \geq 0$, $\eta$ give rise to a map 
\begin{equation}\label{structuremaps}\begin{tikzcd}[row sep=20pt] \Hom^{\bullet}(X, \vartheta^m(Y)) \arrow{rr} &&  \Hom^{\bullet}(\pi(X),\pi(Y)) \\ f \arrow[mapsto]{rr} && (\alpha, \vartheta^{-m}(f)), \end{tikzcd} \end{equation}
\noindent where $\alpha: \vartheta^{-m}(X) \rightarrow X$ is the map induced by the $m$-fold power $\eta^m$ of $\eta$. More precisely, $\eta^m: \operatorname{Id} \rightarrow \vartheta^m$ is given on $X$ by the composition $X \xrightarrow{\eta_X} \vartheta(X) \xrightarrow{\eta_{\vartheta(X)}} \vartheta^2(X) \rightarrow \cdots \rightarrow \vartheta^m(X)$.

\begin{lem}\label{LemmaWellDefinedColimit}
Let $X, Y \in \mathcal{D}^b(\Lambda_n)$. Then,
 the diagram
\[\begin{tikzcd}
\Hom^{\bullet}(X,Y) \arrow{rr}{\left(\eta^m\right)_Y \circ {-}} \arrow{d}[swap]{\operatorname{can}} & & \Hom^{\bullet}(X, \vartheta^m(Y)) \arrow{dll} \\ \Hom^{\bullet}(\pi(X), \pi(Y))
\end{tikzcd}\]
\noindent commutes. 
\end{lem}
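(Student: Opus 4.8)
The plan is to unwind both composites in the triangle down to explicit roofs and then to exhibit a common refinement of the two resulting roofs. Fix a homogeneous $g \in \Hom^{\bullet}(X,Y)$; since $\vartheta$ is a triangle functor and $\eta$ is compatible with the shift, it suffices to argue degreewise and treat $g$ as a morphism. Chasing $g$ along the horizontal map $(\eta^m)_Y \circ -$ and then through the structure map \eqref{structuremaps} produces the roof
\[
X \xleftarrow{\ \alpha\ } \vartheta^{-m}(X) \xrightarrow{\ \vartheta^{-m}((\eta^m)_Y)\,\circ\,\vartheta^{-m}(g)\ } Y,
\]
where $\alpha = \vartheta^{-m}((\eta^m)_X)$ is the map induced by $\eta^m$ and I have used additivity of $\vartheta^{-m}$ to split $\vartheta^{-m}((\eta^m)_Y \circ g)$. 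Chasing $g$ along $\operatorname{can}$ instead produces the trivial roof $X \xleftarrow{\operatorname{Id}} X \xrightarrow{g} Y$. Thus the lemma reduces to the equivalence of these two roofs in $\quotient{\mathcal{D}^b(\Lambda_n)}{\Dpart}$.

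First I would check that both expressions are legitimate roofs, i.e.\ that the cone of the backward leg lies in $\Dpart$. For each boundary component $B$ the evaluation triangle exhibits the cone of $(\eta^B)_X$ as $\Hom^{\bullet}(U_B,X)\otimes U_B[1]$, which lies in $\Dpart$; composing over all $B$ shows the cone of $(\eta^m)_X$ lies in $\Dpart$, and applying the auto-equivalence $\vartheta^{-m}$, which preserves $\Dpart$ by Lemma \ref{LemmaEquivalencesRestrict}, shows the same for $\alpha$. In particular $\alpha$ is inverted by $\pi$, so both roofs are admissible.

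The heart of the argument is then a single identity. I claim the apex $\vartheta^{-m}(X)$, equipped with the comparison maps $\operatorname{Id}\colon \vartheta^{-m}(X)\to\vartheta^{-m}(X)$ and $\alpha\colon\vartheta^{-m}(X)\to X$, refines both roofs. The only nonformal commutativity to verify is that the two right legs agree after these comparison maps, namely
\[
g\circ\alpha \;=\; \vartheta^{-m}((\eta^m)_Y)\circ\vartheta^{-m}(g).
\]
This is exactly the image under $\vartheta^{-m}$ of the naturality square $(\eta^m)_Y\circ g = \vartheta^m(g)\circ(\eta^m)_X$ of the natural transformation $\eta^m\colon\operatorname{Id}\to\vartheta^m$, once one uses $\vartheta^{-m}\circ\vartheta^m\cong\operatorname{Id}$ to simplify $\vartheta^{-m}(\vartheta^m(g)) = g$. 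All remaining triangles in the refinement diagram commute trivially, so by the roof calculus recalled before the lemma the two roofs define the same class in $\Hom^{\bullet}(\pi(X),\pi(Y))$, which is the asserted commutativity.

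The main obstacle, such as it is, is bookkeeping rather than substance: one must keep straight the two possible meanings of ``the map induced by $\eta^m$'' (the value $(\eta^m)_{\vartheta^{-m}(X)}$ versus $\vartheta^{-m}((\eta^m)_X)$, which need not coincide for a general natural transformation) and insert the comparison maps on the correct side so that the naturality square appears with the right variance. The choice $\alpha = \vartheta^{-m}((\eta^m)_X)$ is precisely the one for which the displayed identity is literally $\vartheta^{-m}$ applied to naturality; everything else is the formal machinery of roofs.
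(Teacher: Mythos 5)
Your proposal is correct and follows essentially the same route as the paper: both reduce the claim to the equivalence of the roof $(\alpha,\vartheta^{-m}((\eta^m)_Y\circ g))$ with the trivial roof $(\operatorname{Id}_X,g)$ via the common refinement with apex $\vartheta^{-m}(X)$, and both identify the one nontrivial commutativity as $\vartheta^{-m}$ applied to the naturality square of $\eta^m$. Your added check that $\operatorname{Cone}(\alpha)\in\Dpart$ and your care about distinguishing $(\eta^m)_{\vartheta^{-m}(X)}$ from $\vartheta^{-m}((\eta^m)_X)$ are reasonable explicitations of points the paper leaves implicit.
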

\begin{proof}
Let $f:X \rightarrow Y$. Denote by $\alpha_Y: Y \rightarrow \vartheta^m(Y)$ and $\alpha_X: \vartheta^{-m}(X) \rightarrow X$ the maps induced by $\eta^m$ and set $g \coloneqq \alpha_Y \circ f$. Consider the diagram
\[\begin{tikzcd}
& \vartheta^{-m}(X) \arrow{dl} \arrow[equal]{d} \arrow{drr}{\vartheta^{-m}(g)}
\\ X & \vartheta^{-m}(X) \arrow{l} \arrow[near start]{rr}{f \circ \alpha_X} \arrow{d} & & Y
\\ & X \arrow[equal]{ul} \arrow[swap]{urr}{f}
\end{tikzcd}\]
\noindent where each unlabeled arrow represents the map $\alpha_X$. As $\eta^m$ is a natural transformation it follows $f \circ \alpha_X = \vartheta^{-m}(g)$. Thus, the above diagram commutes and the roofs $\left(\alpha_X, \theta^{-m}\left((\eta^m)_Y \circ f\right)\right)$ and $(\operatorname{Id}_X, f)=\pi(f)$ are equivalent. 
\end{proof}

\begin{lem}\label{LemmaInjectivity}
Let $X, Y \in \mathcal{D}^b(\Lambda_n)$ such that no direct summand of $Y$ lies in $\Dpart$. Then, there exists $l \geq 0$ such that for all $m \geq l$, the map
$\Hom^{\bullet}(X,\vartheta^m(Y)) \rightarrow \Hom^{\bullet}(\pi(X),\pi(Y))$ is injective. In particular, for all $m \geq l$, the map $\Hom^{\bullet}(X,\vartheta^m(Y)) \rightarrow \Hom^{\bullet}(X, \vartheta^{m+1}(Y))$ is injective.
\end{lem}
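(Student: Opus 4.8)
The plan is to identify the kernel of the map $\Phi_m\colon\Hom^\bullet(X,\vartheta^m(Y))\to\Hom^\bullet(\pi(X),\pi(Y))$ explicitly and then to control it by a Serre-duality degree estimate. First I would record what $\Phi_m$ does: by the definition in \eqref{structuremaps}, $\Phi_m(f)$ is the roof $(\alpha,\vartheta^{-m}(f))$ with $\alpha\colon\vartheta^{-m}(X)\to X$ invertible in the quotient, since its cone lies in $\Dpart$. Hence $\Phi_m(f)=0$ in $\quotient{\mathcal{D}^b(\Lambda_n)}{\Dpart}$ precisely when $\vartheta^{-m}(f)$ becomes zero there, which by the second standard Verdier fact means that $\vartheta^{-m}(f)$ factors through an object of $\Dpart$; as $\vartheta$ is an equivalence preserving $\Dpart$ (Lemma \ref{LemmaEquivalencesRestrict}), this is equivalent to $f$ itself factoring through $\Dpart$. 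So $\ker\Phi_m$ is exactly the space $\mathcal{I}_m$ of morphisms $X\to\vartheta^m(Y)$ factoring through $\Dpart$, and the goal becomes $\mathcal{I}_m=0$ for $m\gg0$. Since morphisms factoring through a fixed thick subcategory form a two-sided ideal compatible with biproduct decompositions, I may assume $X$ and $Y$ are indecomposable with $Y\notin\Dpart$.

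The quick case comes next. If $X$ is a loop, i.e.\ $X\in\Dinv$, then $X\in{}^{\perp}\Dpart$ by Lemma \ref{LemmaInvPartOrthogonalCategories}, so $\Hom^\bullet(X,Z)=0$ for every $Z\in\Dpart$; no nonzero morphism out of $X$ can factor through $\Dpart$, whence $\mathcal{I}_m=0$ already for all $m\geq0$. This settles everything except the case where $X$ is represented by an arc, and it is there that the parameter $l$ is genuinely needed.

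For that case the key input is the $2$-periodicity of $\Dpart$ under the Auslander--Reiten translation. Writing a factorisation as $X\xrightarrow{a}Z\xrightarrow{b}\vartheta^m(Y)$ with $Z\in\Dpart$ indecomposable, the first factor lies in $\Hom^\bullet(X,Z)$, a finite-dimensional space whose graded support is a fixed finite set of degrees independent of $m$. For the second factor I would use the Serre functor $\mathcal{S}=\nu$ with $\tau\cong\nu[1]$ (Example \ref{ExampleSerreFunctorsCycles}) together with $\vartheta\simeq\tau^{-1}$ (Proposition \ref{PropositionIsomorphismTauTheta}); Serre duality then gives
\[
\Hom^\bullet\!\big(Z,\vartheta^m(Y)\big)\;\cong\;\Hom^\bullet\!\big(Y,\tau^{m+1}Z[-1]\big)^{\ast}.
\]
Since $Z\in\Dpart$ satisfies $\tau^2 Z\cong Z[2]$ (Proposition \ref{PropositionCharaterizationDpartDinv}), the object $\tau^{m+1}Z$ is isomorphic to $Z[2k]$ or $(\tau Z)[2k]$ with $2k\sim m$, so $\Hom^\bullet(Z,\vartheta^m(Y))$ is, up to a grading shift by $\sim -m$, the fixed space $\Hom^\bullet(Y,Z)^{\ast}$ or $\Hom^\bullet(Y,\tau Z)^{\ast}$. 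Its graded support therefore drifts to $-\infty$ as $m\to\infty$. Composing with the fixed-degree factor $\Hom^\bullet(X,Z)$, the subspace $\mathcal{I}_m$ is confined to degrees tending to $-\infty$; comparing this drifting window with the stable degree range controlling $\Phi_m$ and choosing $l$ so that the two become disjoint forces $\mathcal{I}_m=0$ for all $m\geq l$.

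Finally, the ``in particular'' clause follows formally: by Lemma \ref{LemmaWellDefinedColimit} each structure map $\Hom^\bullet(X,\vartheta^m(Y))\to\Hom^\bullet(X,\vartheta^{m+1}(Y))$ is compatible with $\Phi_m$ and $\Phi_{m+1}$, that is $\Phi_m=\Phi_{m+1}\circ(\text{structure map})$, so injectivity of $\Phi_m$ for $m\geq l$ forces the structure map to be injective there as well. I expect the main obstacle to be exactly the degree bookkeeping of the middle step: one must verify that all factorisations through the $2$-periodic family $\Dpart$ are trapped in the drifting degree window and cannot recombine into classes sitting in the stable range, which is precisely where the periodicity $\tau^2 Z\cong Z[2]$ and Serre duality do the essential work. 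This is also the point where the hypothesis on the summands of $Y$ (and the tacit restriction keeping $X$ on the loop side of the dichotomy) enters, since for arc-type $X$ the relevant comparison degenerates.
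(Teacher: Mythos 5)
Your reduction of the problem is sound: identifying $\ker\Phi_m$ with the morphisms $X\to\vartheta^m(Y)$ that factor through $\Dpart$, disposing of the case $X\in\Dinv$ via Lemma \ref{LemmaInvPartOrthogonalCategories}, and deducing the ``in particular'' clause from Lemma \ref{LemmaWellDefinedColimit} all match what is needed. The gap is in the middle step, and it is not a matter of bookkeeping: the degree argument cannot work. First, $Z$ is not fixed. Up to shift, $\Dpart$ contains infinitely many indecomposables -- the boundary arcs winding $k$ times around a component, for every $k\geq 0$ -- and the graded support of $\Hom^{\bullet}(X,Z_k)$ drifts linearly in $k$. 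A factorisation for large $m$ is free to pass through $Z_k$ with $k\sim m$, and when you add the two Serre-duality windows the drifts cancel: the composite $b\circ a$ can sit in degree roughly $-2m$ for \emph{every} choice of $k$, so there is no uniform bound on the first factor and no net drift to exploit.

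Second, and more fatally, $\Hom^{\bullet}(X,\vartheta^m(Y))$ is itself nonzero in exactly that drifting window. The boundary intersections of $\gamma_X$ with $\gamma_{\vartheta^m(Y)}$, together with the interior intersections created as $\gamma_{\vartheta^m(Y)}$ spirals around the boundary, contribute basis elements whose degrees shift by the boundary winding number with each application of $\vartheta$, i.e.\ at the same linear rate as your window. These are precisely the morphisms that survive into the colimit and produce the loop-quiver contributions of Theorem \ref{TheoremGeometricDescriptionMorphismPuncturedCase}, so there is no ``stable degree range'' from which $\mathcal{I}_m$ could be separated; graded dimensions see the would-be kernel and the surviving classes in the same degrees. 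What actually distinguishes them is a configuration statement: a morphism between arc objects factors through $\Dpart$ if and only if every intersection in its support is an interior one bounding an immersed triangle whose other two corners are endpoints of $\gamma_X$ and $\gamma_{\vartheta^m(Y)}$ and whose third side is a boundary arc (note that tridents through a boundary arc are excluded by the cyclic order of half-edges at a marked point, which is why boundary-intersection morphisms do \emph{not} factor). Since $\vartheta$ rotates the endpoints clockwise, these triangles disappear once $m$ is large -- an orientation-sensitive fact that Serre duality and the periodicity $\tau^2 Z\cong Z[2]$ cannot detect. You would need to replace the degree estimate by this graphical composition argument to close the proof.
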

\begin{proof}
Since no direct summand of $Y$ lies in $\Dpart$, the graphical description of composition shows that a morphism $f:X \rightarrow Y$ factors through $\Dpart$ if only if all $p \in \supp f$ are interior intersections and $p$ bounds an immersed intersection triangle whose other two corners are end points of $\gamma_X$ and $\gamma_Y$, and whose third side is a boundary arc. As $\vartheta$ is given by clockwise rotation of the boundary, we see  that $\gamma_X$ and $\gamma_{\vartheta^m(Y)}$ cannot bound any such triangle for $m$ large enough. For all such $m$, the first map is injective.  Injectivity of the second  map follows from Lemma \ref{LemmaWellDefinedColimit}.
\end{proof}

\noindent The following proposition describes the morphisms in $\mathcal{D}^b(C_n)$.
\begin{prp}\label{PropositionColimit}
Let $X, Y \in \mathcal{D}^b(\Lambda_n)$ such that $Y$ has no direct summands in $\Dpart$. Then, the maps $\Hom^{\bullet}(X, \vartheta^i(Y)) \rightarrow \Hom^{\bullet}(\pi(X), \pi(Y))$  induce an isomorphism

\[\begin{tikzcd} \colim_{i \in \mathbb{N}} \,\Hom^{\bullet}(X,\vartheta^i(Y)) \arrow{rr}{\simeq} & &
\Hom^{\bullet}(\pi(X), \pi(Y)).\end{tikzcd}\]
\end{prp}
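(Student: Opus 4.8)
The plan is to show that the canonical comparison map
\[
\Phi\colon \colim_{i\in\mathbb{N}}\Hom^{\bullet}(X,\vartheta^i(Y))\longrightarrow \Hom^{\bullet}(\pi(X),\pi(Y))
\]
is well defined, injective and surjective, treating the three properties separately. By additivity of all functors involved and the commutation of filtered colimits with finite biproducts I may assume $X$ is indecomposable; the hypothesis already excludes $\Dpart$-summands of $Y$, so that both Lemma~\ref{LemmaInjectivity} and the decomposition results below apply. First I would record that $\Phi$ exists: Lemma~\ref{LemmaWellDefinedColimit} says precisely that the maps $\Hom^{\bullet}(X,\vartheta^i(Y))\to\Hom^{\bullet}(\pi(X),\pi(Y))$ are compatible with the transition maps, so they factor through the colimit. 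Injectivity is then immediate from Lemma~\ref{LemmaInjectivity}: every class in the colimit is represented by some $f\in\Hom^{\bullet}(X,\vartheta^{m}(Y))$, and pushing $f$ forward to a stage $m\ge l$ does not change its class, so $\Phi([f])=0$ forces the stagewise (injective) map to kill $f$ at that stage, whence $[f]=0$.

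The structural input for surjectivity is the observation that each $\eta_X$ is a $\Dpart$-cover. Indeed, by \eqref{EvaluationTriangle} the cone of $\eta^{B}_X\colon X\to T_B(X)$ is $\bigl(\Hom^{\bullet}(U_B,X)\otimes U_B\bigr)[1]$, which lies in the thick subcategory $\Dpart$ generated by the boundary arcs $U_B$; composing over all components (Corollary~\ref{CorollaryThetaWellDefined}) and using the octahedral axiom shows $\mathrm{Cone}(\eta_X)\in\Dpart$. Since $\vartheta^{-1}\simeq\tau$ preserves $\Dpart$ (Proposition~\ref{PropositionCharaterizationDpartDinv}), the maps $\alpha_m\coloneqq\vartheta^{-m}(\eta^{m}_X)\colon\vartheta^{-m}(X)\to X$ appearing in \eqref{structuremaps} are $\Dpart$-covers as well, so $\pi(\alpha_m)$ is invertible and the $m$-th structure map sends $f$ to the class of the roof $X\xleftarrow{\alpha_m}\vartheta^{-m}(X)\xrightarrow{\vartheta^{-m}(f)}Y$. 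Surjectivity of $\Phi$ is therefore equivalent to the cofinality of the family $\{\alpha_m\}_{m\ge0}$ inside the filtered system of all $\Dpart$-covers of $X$ that computes morphisms in the Verdier quotient.

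To prove this cofinality I would start from an arbitrary representing roof $X\xleftarrow{s}W\xrightarrow{h}Y$ with $\mathrm{Cone}(s)\in\Dpart$. Rotating the defining triangle exhibits $W$ as the cone of a morphism $X[-1]\to\mathrm{Cone}(s)[-1]$ into $\Dpart$, so the dual of Lemma~\ref{LemmaOctahedral1} (Remark~\ref{RemarkDualVersions}) yields a splitting $W\cong Z'\oplus W'$ with $Z'\in\Dpart$ and $\gamma_{W'}$ obtained from $\gamma_X$ by clockwise rotations at its two end points, the leg $s$ restricting on $W'$ to the canonical rotation map $s'$. Discarding the $\Dpart$-summand $Z'$ refines the roof without changing its class and reduces me to $X\xleftarrow{s'}W'\xrightarrow{h'}Y$. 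Since $\vartheta$ is exactly the clockwise boundary rotation and $\vartheta^{-m}(X)$ rotates both ends by $m$, for $m$ larger than both rotation amounts of $W'$ there is a further rotation map $t\colon\vartheta^{-m}(X)\to W'$ with $\mathrm{Cone}(t)\in\Dpart$ and $s'\circ t=\alpha_m$; refining once more gives the equivalent roof $X\xleftarrow{\alpha_m}\vartheta^{-m}(X)\xrightarrow{h'\circ t}Y$, which by \eqref{structuremaps} is the image of $\vartheta^{m}(h'\circ t)\in\Hom^{\bullet}(X,\vartheta^{m}(Y))$. Hence $\Phi$ is surjective.

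The main obstacle I expect lies in the last paragraph: one must verify that the abstract ``rotation at end points'' produced by Lemma~\ref{LemmaOctahedral1} agrees, as an honest morphism and not merely up to the equivalence of roofs, with the concrete natural transformation $\eta$ and its powers, so that $s'\circ t$ is genuinely $\alpha_m$ (up to an invertible scalar that can be absorbed into the roof). Tracking these canonical maps through Remark~\ref{RemarkConnectingMorphismRepresentable} and the identification $\vartheta\simeq\tau^{-1}$ of Proposition~\ref{PropositionIsomorphismTauTheta} is where the care is needed; everything else reduces to formal manipulation of roofs and filtered colimits.
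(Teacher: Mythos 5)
Your proposal is correct and follows essentially the same route as the paper: well-definedness and injectivity from Lemma \ref{LemmaWellDefinedColimit} and Lemma \ref{LemmaInjectivity}, then surjectivity by refining an arbitrary roof $X \xleftarrow{s} W \to Y$ — via Lemma \ref{LemmaOctahedral1} and its dual — to one whose left leg is, up to a discarded $\Dpart$-summand, a boundary rotation of $X$, and finally factoring $\alpha_m$ through it for $m$ large. The one point you flag as delicate (that the composite rotation map really equals $\alpha_m$ up to scalar, not merely up to roof equivalence) is exactly where the paper also concentrates its effort, resolving it by exhibiting the relevant intersection triangle on the surface and defining the comparison map as a suitable multiple of $f_p$ as in Remark \ref{RemarkConnectingMorphismRepresentable}.
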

\begin{proof}
By Lemma \ref{LemmaWellDefinedColimit} and Lemma \ref{LemmaInjectivity}, the map exists and is injective. We may assume that $X$ is indecomposable. Let $g: \pi(X) \rightarrow \pi(Y)$ be a morphism and let $(\alpha, f)$ be a representing roof of $g$, where $\alpha: W \rightarrow  X$ and $f: W \rightarrow Y$. We may further assume that $W$ is indecomposable because $\pi(X)$ is indecomposable. Indeed, it follows from  Lemma \ref{LemmaMappingConesFactorizingMorphisms} and the proof of Lemma \ref{LemmaOctahedral1} that $W \cong X' \oplus Z'$, where $X'$ is indecomposable and $Z' \in \Dpart$. We then can replace $(\alpha, f)$ by the roof $(\alpha \circ \iota, f \circ \iota)$  where $\iota: X' \rightarrow W$ denotes the natural inclusion. 

By definition, the morphism $\alpha: W \rightarrow X$ sits in a distinguished triangle
\begin{displaymath}
\begin{tikzcd}
Z \arrow{r}{u} & W \arrow{r}{\alpha} & X \arrow{r} & Z[1],
\end{tikzcd}
\end{displaymath}
\noindent where $Z \in \Dpart$. Since $X$ is indecomposable, Lemma \ref{LemmaMappingConesFactorizingMorphisms} and the proof of Lemma \ref{LemmaOctahedral1} show that $Z$ must be indecomposable and $\gamma_X$ is obtained from $\gamma_W$ by a number of positive rotations at its end points. Let us denote by $r$ and $s$ the necessary number of such rotations at each end point. We denote their maximum by $m$ and claim that there exists a map $w:\vartheta^{-m}(X) \rightarrow W$ such that $\alpha \circ w: \vartheta^{-m}(X) \rightarrow X$ is induced by $\eta^m$. In this case the roof $(\alpha \circ w, f \circ w)$ is equivalent to $(\alpha, f)$ as can be seen from the commutative diagram

\[\begin{tikzcd}
&& W \arrow{dll}[swap]{\alpha} \arrow{drr}{f}
\\ X && \vartheta^{-m}(X) \arrow{ll} \arrow{u}{w} \arrow{rr}[pos=0.35]{f \circ w} \arrow[equal]{d} && Y
\\ && \vartheta^{-m}(X) \arrow{ull} \arrow[swap, pos=0.35]{urr}{f\circ w}
\end{tikzcd}\]
\noindent where all unlabeled arrows are induced by $\eta^m$. Then, $\vartheta^m(f \circ w) \in \Hom(X, \vartheta^m(Y))$ is a preimage of $g$.
Once again, the existence of $w$ can be read off from the surface: $\alpha$ corresponds to a single intersection (Remark \ref{RemarkConnectingMorphismRepresentable}) which forms an intersection triangle with the intersection corresponding to $\eta_{\vartheta^{-m}(X)}$ and a boundary intersection $p \in \gamma_{\vartheta^{-m}(X)} \overrightarrow{\cap} \gamma_W$. We define $w$ as a suitable multiple of $f_p$. \end{proof}

\noindent  Proposition \ref{PropositionColimit} can be rephrased in the following way.

\begin{thm}\label{TheoremGeometricDescriptionMorphismPuncturedCase}
Let $X,Y \in \mathcal{D}^b(C_n) \setminus \Perf(C_n)$ be indecomposable. Then, there exists a bijection between a homogeneous basis of $\Hom^{\bullet}(X,Y)$ and the set consisting of

\begin{enumerate}
    \item all interior intersections of $\gamma_X$ and $\gamma_Y$, and,
    \item paths in the loop quiver 
        \[\begin{tikzpicture} \filldraw (-0.5,0) circle (2pt);
        \draw[<-] ({0.5*cos(171))},{0.5*sin(171)}) arc (171:{-180+9}:0.5);
        \end{tikzpicture}\]
        \noindent for each  $p \in \gamma_X \overrightarrow{\cap} \gamma_Y$ which is a puncture.
\end{enumerate}

\end{thm}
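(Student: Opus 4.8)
The plan is to read the theorem off from the colimit description of morphisms in Proposition \ref{PropositionColimit} by substituting the intersection basis of Proposition \ref{PropositionMorpismIntersection} and then computing the colimit geometrically. First I would reduce to arcs: since $X$ and $Y$ are indecomposable and not perfect, Proposition \ref{PropositionBijectionArcsDbCoh} shows they are represented by arcs $\gamma_X,\gamma_Y$ on $\mathscr{T}^n$, so I can choose indecomposable lifts $\hat X,\hat Y\in\mathcal{D}^b(\Lambda_n)$, neither lying in $\Dpart$, represented by non-boundary arcs $\gamma_{\hat X},\gamma_{\hat Y}$ on $\mathbb{T}_n$ that contract to $\gamma_X,\gamma_Y$. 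Because $\hat Y$ is not a boundary arc, no $\vartheta^i(\hat Y)$ acquires a summand in $\Dpart$, so Proposition \ref{PropositionColimit} applies and yields $\Hom^{\bullet}(X,Y)\cong\colim_{i}\Hom^{\bullet}(\hat X,\vartheta^i(\hat Y))$ with structure maps induced by $\eta$.

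Next I would make each term geometric. Since $\gamma_{\hat X}$ is an arc, Proposition \ref{PropositionMorpismIntersection} gives a homogeneous basis of $\Hom^{\bullet}(\hat X,\vartheta^i(\hat Y))$ indexed by $\gamma_{\hat X}\overrightarrow{\cap}\gamma_{\vartheta^i(\hat Y)}$, which I would split into interior and boundary intersections. By Proposition \ref{PropositionIsomorphismTauTheta} the functor $\vartheta\cong\tau^{-1}$ rotates the end points of an arc once clockwise around the boundary, so $\gamma_{\vartheta^i(\hat Y)}$ is the $i$-fold boundary rotation of $\gamma_{\hat Y}$ and, in minimal position, leaves the interior away from a collar of $\partial\mathbb{T}_n$ unchanged. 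The crucial step is to describe the transition map $\Hom^{\bullet}(\hat X,\vartheta^i\hat Y)\to\Hom^{\bullet}(\hat X,\vartheta^{i+1}\hat Y)$ on this basis; here I would invoke the composition rule through the triangles $C(p,q)$ of Definition \ref{DefinitionSetComposition} together with Remark \ref{RemarkConnectingMorphismRepresentable}, which identifies $\eta$ with the obvious boundary intersection, to show that post-composing with $\eta$ carries an interior intersection to the corresponding interior intersection of the once-more-rotated curve, and a boundary intersection to the boundary intersection created by one additional turn.

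Finally I would compute the colimit. For the interior intersections the previous step shows the transition maps are eventually bijective (Lemma \ref{LemmaInjectivity} for injectivity, plus the explicit description for surjectivity onto the stable part), so their colimit is in bijection with the interior intersections of $\gamma_X$ and $\gamma_Y$ on $\mathscr{T}^n$, the contraction being a homeomorphism on the interior that preserves minimal position there; this gives part (1). For the boundary intersections I would fix a component $B$ contracting to a puncture $p$: each further application of $\vartheta$ winds the relevant end of $\gamma_{\hat Y}$ once more around $B$ and, by eventual injectivity, enlarges the set of boundary intersections at $B$ by exactly those created by that turn. Consequently, in the colimit the boundary intersections at $p$ organize into a family indexed, for each oriented intersection of $\gamma_X$ and $\gamma_Y$ located at the puncture $p$, by the number of extra turns, i.e.\ by the paths of the loop quiver; summing over all punctures yields part (2).

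The main obstacle I anticipate lies in the bookkeeping underlying the second and third paragraphs: verifying that a single application of $\vartheta$ creates precisely one new boundary intersection per oriented puncture-intersection, so that the colimit is genuinely indexed by $\mathbb{N}$ rather than by a coarser or finer set, and that the rotation introduces no spurious interior intersections surviving to $\mathscr{T}^n$. Handling this amounts to controlling curves in minimal position under repeated boundary rotation and matching the resulting oriented boundary intersections on $\mathbb{T}_n$ with the oriented intersections at the puncture on $\mathscr{T}^n$ in the correct counter-clockwise cyclic order.
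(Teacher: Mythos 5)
Your proposal is correct and follows essentially the same route as the paper: both rest on the colimit description of Proposition \ref{PropositionColimit}, identify interior intersections of the lifts with interior intersections on $\mathscr{T}^n$, and realize the loop-quiver paths at a puncture as the images of $\vartheta^m(f_q)$ for the single boundary intersection $q$ corresponding to that puncture. The paper simply fixes lifts $X',Y'$ in minimal position and enumerates the resulting roofs $X' \xleftarrow{\eta^m} \vartheta^{-m}(X') \xrightarrow{f_q} Y'[\deg q]$, which is the same bookkeeping you describe dynamically via the transition maps.
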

\begin{proof}
	Let $X', Y' \in \mathcal{D}^b(\Lambda_n)$ such that  $\pi(X') \cong X$ and $\pi(Y') \cong Y$. We may assume that the cardinality of $X' \overrightarrow{\cap} Y'$ coincides with the number of intersections of $X$ and $Y$. In particular, the boundary intersections of $\gamma_{X'}$ and $\gamma_{Y'}$ are in bijection with intersections of $\gamma_X$ and $\gamma_Y$ at punctures. Moreover, every interior intersection $p \in \gamma_X \cap \gamma_Y$ corresponds to a unique interior intersection $p' \in \gamma_{X'} \cap \gamma_{Y'}$ and $\pi(f_{p'})$ is by definition an element in the claimed basis of $\Hom^{\bullet}(X,Y)$. Let $p \in \gamma_X \overrightarrow{\cap} \gamma_Y$ be a puncture and let denote $q$ the corresponding boundary intersection of $\gamma_{X'}$ and $\gamma_{Y'}$. Then, for every $m \in 2 \mathbb{Z}$, $q \in \gamma_{\vartheta^{-m}(X')} \overrightarrow{\cap} \gamma_{Y'}$ and the paths in the loop quiver of $p$ correspond to roofs 
	$X' \xleftarrow{\eta^m} \vartheta^{-m}(X') \xrightarrow{f_q} Y'[\deg q]$, i.e.\ the image of $\vartheta^m(f_q)$ under the map $\Hom^{\bullet}(X', \vartheta^m(Y')) \rightarrow \Hom^{\bullet}(X,Y)$.
\end{proof}

\begin{rem}\label{RemarkMorphismsPuncturesAsIntersections}
The morphisms in Theorem \ref{TheoremGeometricDescriptionMorphismPuncturedCase} (2) can be visualized as intersections as follows. Let $\widetilde{\mathscr{T}}$ denote the universal cover of $\mathscr{T}^n$, where all punctures are thought as being removed from the surface. Then, locally around a puncture $p$, $\mathscr{T}^n$ is homeomorphic to $\mathbb{C}^{\times}$. Using polar coordinates and the identifications $(-\pi, \pi) \cong \mathbb{R} \cong (0,1)$, $\widetilde{\mathscr{T}}$ can be identified locally with the open unit disc $\mathbb{D}_{\operatorname{sl}}$ with a slit $[-1,0] \times \{0\}$ removed. Arcs with end point at $p$ are lifted to non-compact arcs with limit $0 \in \mathbb{D}_{\operatorname{sl}}$. The various morphisms arising from an intersection of $\gamma_X$ and $\gamma_Y$ at $p$ then correspond to the intersections at $0$ between a fixed lift of $\gamma_X$ and the different lifts of $\gamma_Y$.
\end{rem}

\begin{rem}\label{RemarkCompositions}
The isomorphism in Proposition \ref{PropositionColimit} is compatible with compositions in the following sense. For $X,Y, Z \in \mathcal{D}^b(\Lambda_n)$, consider the tower consisting of the objects 
\[\begin{tikzcd}
\Hom^{\bullet}\left(X, \vartheta^i(Y)\right) \times \Hom\left(\vartheta^i(Y), \vartheta^i(Z)\right), 
\end{tikzcd}\]
\noindent and the product of the maps $\Hom^{\bullet}(X, \vartheta^i(Y)) \rightarrow \Hom^{\bullet}(X, \vartheta^{i+1}(Y))$ with the canonical maps $\Hom(\vartheta^i(Y), \vartheta^i(Z)) \rightarrow \Hom(\vartheta^{i+1}(Y), \vartheta^{i+1}(Z))$  induced by $\vartheta$. Each object in the tower has a composition map to $\Hom^{\bullet}(X,\vartheta^m(Z))$. As $\vartheta$ is an equivalence, the colimit of this tower is naturally isomorphic to $\colim_{i \in \mathbb{N}} \Hom^{\bullet}(X, \vartheta^i(Y)) \times \Hom^{\bullet}(Y,Z)$ and has an induced composition map to $\Hom^{\bullet}(X,Z)$. Under the isomorphism in Proposition \ref{PropositionColimit}, this coincides with the usual composition map. This observation allows us to interpret compositions in $\mathcal{D}^b(C_n)$ in a similar way as in the category $\mathcal{D}^b(\Lambda_n)$ via tridents, triangles and double-bigons in the universal cover $\widetilde{\mathscr{T}}$ of $\mathscr{T}^n$ whose corners are allowed to converge to a puncture, see Remark \ref{RemarkMorphismsPuncturesAsIntersections}.
\end{rem}
\begin{exa}\label{ExampleEndomorphismRingPuncture} Suppose $\delta_0, \delta_1, \delta_2 \subseteq \mathscr{T}^n$ are arcs which meet at a puncture as in Figure \ref{FigureEndomorphismRingArcs}. The intersection generates a subalgebra of $\End^{\bullet}(\bigoplus_{i=1}^3X_{\delta_i})$ which is isomorphic to the path algebra of the quiver shown on the right hand side of Figure \ref{FigureEndomorphismRingArcs}. The degree of a full cycle equals $2$, i.e.\ the inverse winding number around the puncture. The paths in the loop quiver in Theorem \ref{TheoremGeometricDescriptionMorphismPuncturedCase} of the pair $X_{\delta_0}, X_{\delta_2}$ correspond to the paths $(\beta \alpha) (\gamma \beta \alpha)^i$, $i \geq 0$.
\end{exa}
\begin{figure}[H]
	\[\begin{tikzpicture}[scale=0.7]
	\begin{scope}[shift={(0,0)}]
	\foreach \i in {0,1,2}
	{
		\draw ({3pt*cos((\i * 120))},{3pt*sin((\i*120))})--({2*cos((\i * 120))},{2*sin((\i*120))}) node[pos=1.2]{$\delta_{\i}$};
	}
	\draw (0,0) circle (3pt);
	\end{scope}
	
	\begin{scope}[shift={(8,0)}]
	\foreach \i in {0,1,2}
	{
		\draw ({3pt*cos((\i * 120))},{3pt*sin((\i*120))})--({2*cos((\i * 120))},{2*sin((\i*120))}) node[pos=1.2]{$\delta_{\i}$};
		\filldraw[black] ({1.5*cos((\i * 120))},{1.5*sin((\i*120))}) circle (2pt);
		\draw[<-] ({1.5*cos(((\i) * 120-7))},{1.5*sin(((\i)*120-7))})  arc ({\i*120-7}:{(\i-1)*120+7}:1.5);
	}
	\draw (0,0) circle (3pt);
	
	\draw ({1.75*cos((-1 * 120+60))},{1.75*sin((-1*120+60))}) node{$\gamma$};
	\draw ({1.75*cos((-2 * 120+60))},{1.75*sin((-2*120+60))}) node{$\beta$};
	\draw ({1.75*cos((-3 * 120+60))},{1.75*sin((-3*120+60))}) node{$\alpha$};
	\end{scope}
	\end{tikzpicture}
	\]
	\caption{}
	\label{FigureEndomorphismRingArcs}
\end{figure}

\begin{rem}\label{RemarkMappingConesVerdierQuotient}  Distinguished triangles in $\mathcal{D}^b(\Lambda_n) / \Dpart$ are isomorphic to images of distinguished triangles in $\mathcal{D}^b(\Lambda_n)$. This allows us to determine the curve of the mapping cone of any morphism which correspond to an intersection. The only new and interesting case appears for an intersection at a puncture. Consider arcs $\delta_1, \delta_2$ as in Example \ref{ExampleEndomorphismRingPuncture}. For $i \geq 0$, let $f^i$ denote the morphism which corresponds to the path $(\alpha\gamma)^i\beta$ in Figure \ref{FigureEndomorphismRingArcs}. Then, $C_{f^i}$ extends $\delta_1$ and $\delta_2$ outside a small neighbourhood of $p$ and winds around the puncture counter-clockwise $i$ times. Figure \ref{FigureMapppingConesPunctures} shows the curves of $C_{f^i}$ for $i=0,1,2$. The general case is analogous.
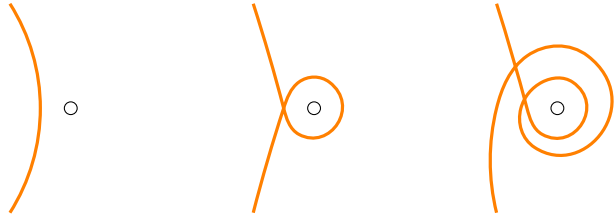
\begin{figure}[H]
	\[\begin{tikzpicture}[scale=0.8]
	\begin{scope}[shift={(0,0)}]
	\draw (0,0) circle (3pt);

	\draw[hobby, very thick, orange] plot  [] coordinates { ({2*cos((1 * 120))},{2*sin((1*120))}) ({0.5*cos((1.5 * 120))},{0.5*sin((1.5*120))}) ({2*cos((2 * 120))},{2*sin((2*120))}) };
	
	\end{scope}
	
	\begin{scope}[shift={(4,0)}]
	\draw (0,0) circle (3pt);

\draw[hobby, very thick, orange] plot  [] coordinates { ({2*cos((1 * 120))},{2*sin((1*120))}) ({0.5*cos((1.5 * 120))},{0.5*sin((1.5*120))}) ({0.5*cos((2 * 120))},{0.5*sin((2*120))}) ({0.5*cos((0.5 * 120))},{0.5*sin((0.5*120))}) ({0.5*cos((1* 120))},{0.5*sin((1*120))}) ({0.5*cos((1.5* 120))},{0.5*sin((1.5*120))}) ({2*cos((2 * 120))},{2*sin((2*120))}) };

	\end{scope}
	
	\begin{scope}[shift={(8,0)}]
	\draw (0,0) circle (3pt);

\draw[hobby, very thick, orange] plot  [] coordinates { ({2*cos((1 * 120))},{2*sin((1*120))}) ({0.5*cos((1.5 * 120))},{0.5*sin((1.5*120))}) ({0.5*cos((2 * 120))},{0.5*sin((2*120))}) ({0.5*cos((0.5 * 120))},{0.5*sin((0.5*120))}) ({0.5*cos((1* 120))},{0.5*sin((1*120))}) ({0.6*cos((1.5* 120))},{0.6*sin((1.5*120))}) ({0.75*cos((2* 120))},{0.75*sin((2*120))}) ({1*cos((0.5* 120))},{1*sin((0.5*120))}) ({1*cos((1* 120))},{1*sin((1*120))}) ({1*cos((1.5* 120))},{1*sin((1.5*120))}) ({2*cos((2 * 120))},{2*sin((2*120))}) };

	\end{scope}
	\end{tikzpicture}
	\]
	\caption{The mapping cones of $f^0, f^1$ and $f^2$ (from left to right).}
	\label{FigureMapppingConesPunctures}
\end{figure}

\end{rem}

\noindent As an easy application of Theorem \ref{TheoremGeometricDescriptionMorphismPuncturedCase} we can now describe the curves which represent the structure sheaves of singular points.
\begin{cor}\label{CorollaryCurvesOfSingularPoints} Let $z \in C_n$ be singular. Then  $\Bbbk(z)$ is represented by a vertical simple arc $\delta \subseteq \mathscr{T}^n$ which lies between two loops $\gamma_{\Bbbk(x)}^i$ and $\gamma_{\Bbbk(x)}^{i+1}$ as shown in Figure \ref{FigureSingularSkyscraper}.
\end{cor}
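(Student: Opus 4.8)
The plan is to determine the homotopy class of the arc representing $\Bbbk(z)$ by intersection-theoretic constraints, avoiding any explicit computation of the image of $\Bbbk(z)$ under $\mathbb{F}$. First I would record that $\Bbbk(z)$ is an arc at all. Since $z$ is a singular point, $\Bbbk(z)$ has infinite projective dimension and so is not a perfect complex; being a simple sheaf it is indecomposable. Hence, by the corollary following Proposition \ref{PropositionClassificationCalabiYauObjects} (indecomposable perfect complexes are exactly the loops) together with Proposition \ref{PropositionBijectionArcsDbCoh}, $\Bbbk(z)$ is represented by a single arc $\delta \subseteq \mathscr{T}^n$, unique up to homotopy and shift. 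It remains to pin down $\delta$.

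The key input is that $\Bbbk(z)$ has no morphisms, in any degree, to or from the skyscraper of any smooth point. Indeed, if $x \in \mathbb{P}^1_j$ is smooth then $x \neq z$, so $\Bbbk(x)$ and $\Bbbk(z)$ have disjoint supports and $\Hom^{\bullet}_{C_n}(\Bbbk(x),\Bbbk(z)) = 0 = \Hom^{\bullet}_{C_n}(\Bbbk(z),\Bbbk(x))$. Lifting $\Bbbk(x)$ to the band complex $\Bbbk(j,\lambda)$ on the loop $\gamma^j_{\Bbbk(x)}$ and $\Bbbk(z)$ to a lift with no direct summand in $\Dpart$, Proposition \ref{PropositionColimit} rewrites these Hom-spaces as colimits of the $\Hom^{\bullet}_{\Lambda_n}(\Bbbk(j,\lambda), \vartheta^i(-))$, each of which counts, via Proposition \ref{PropositionMorpismIntersection}, the intersections of the loop $\gamma^j_{\Bbbk(x)}$ with a boundary rotation of the lift of $\delta$ (recall $\vartheta \simeq \tau^{-1}$ acts by clockwise boundary rotation, Proposition \ref{PropositionIsomorphismTauTheta}). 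As a loop lies in the interior of $\mathbb{T}_n$, it meets an arc only in interior double points, and these are unaffected by rotating the arc at the boundary; thus the vanishing forces the lift of $\delta$ to have no interior crossing with any $\gamma^j_{\Bbbk(x)}$, i.e. $\delta$ is disjoint from every loop $\gamma^0_{\Bbbk(x)}, \dots, \gamma^{n-1}_{\Bbbk(x)}$ on $\mathscr{T}^n$.

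These $n$ loops are pairwise disjoint, pairwise non-homotopic vertical meridians which cut $\mathscr{T}^n$ into $n$ annuli, each containing exactly one puncture. An arc disjoint from all of them is therefore confined to a single such annulus and must be based at its unique puncture, and the only essential arc in a once-punctured annulus based at the puncture is the meridian, i.e. a vertical simple arc. That $\delta$ is simple rather than a higher iterate I would read off from the endomorphism computation: $\Hom^{\bullet}_{C_n}(\Bbbk(z),\Bbbk(z))$ has one-dimensional degree-zero part, so $\delta$ has no interior self-intersection (Theorem \ref{TheoremGeometricDescriptionMorphismPuncturedCase}), its infinite-dimensional positive part being entirely accounted for by the winding at the puncture described in Theorem \ref{TheoremGeometricDescriptionMorphismPuncturedCase}(2). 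Hence $\delta$ is a vertical simple arc based at a puncture and lying in one of the $n$ strips between consecutive loops.

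The remaining, and main, step is to match the strip to the prescribed node $z = \mathbb{P}^1_i \cap \mathbb{P}^1_{i+1}$. I would argue this by specialization. All smooth skyscrapers on $\mathbb{P}^1_i$ lift to band complexes on the single loop $\gamma^i_{\Bbbk(x)}$, the parameter $\lambda \in \Bbbk^{\times}$ recording the point of $\mathbb{P}^1_i \setminus \{z_{i-1}, z_i\} \cong \Bbbk^{\times}$; its two ends $\lambda \to 0, \infty$ are the two nodes $z_{i-1}, z_i$. Degenerating the band complex along the end corresponding to $z = z_i$ produces a string complex whose arc is $\gamma^i_{\Bbbk(x)}$ broken at the laminate crossing adjacent to the boundary component of $z_i$, and after contracting the boundary this is exactly the vertical arc in the strip immediately to one side of $\gamma^i_{\Bbbk(x)}$; running the same argument on $\mathbb{P}^1_{i+1}$ places $\delta$ adjacent to $\gamma^{i+1}_{\Bbbk(x)}$, so the strip is the one flanked by $\gamma^i_{\Bbbk(x)}$ and $\gamma^{i+1}_{\Bbbk(x)}$, as in Figure \ref{FigureSingularSkyscraper}. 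I expect this last step to be the hard part: identifying \emph{which} end of the band family degenerates to \emph{which} node, equivalently which boundary component of $\mathbb{T}_n$ corresponds to $z_i$, requires tracing the Burban--Drozd identification of $\lambda$ with the coordinate on $\mathbb{P}^1_i$ through the surface model, and it is precisely here that one must keep track of orientation conventions in order to land in the correct strip rather than its neighbour.
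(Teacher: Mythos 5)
Your argument for the statement as actually claimed is essentially the paper's: represent $\Bbbk(z)$ by an arc because it is not perfect, and use Hom-vanishing against the smooth skyscrapers to force disjointness from every $\gamma^j_{\Bbbk(x)}$, then one further Hom-constraint to pin down the meridian. The one substantive difference is which extra constraint you use: the paper uses $\dim \Hom^{\bullet}(\Bbbk(z),\mathcal{O}_{C_n})=1$, so that $\delta$ meets $\gamma_{\Pic}$ exactly once, whereas you use $\End^{\bullet}(\Bbbk(z))$ to exclude interior self-intersections. Both work, but note that your intermediate assertion that ``the only essential arc in a once-punctured annulus based at the puncture is the meridian'' is false on its own (the classes $b^2$, $bab$, etc.\ in $\pi_1$ of the punctured annulus all give essential arcs); it only becomes true once you impose simplicity, so the endomorphism computation is not an optional refinement but the load-bearing step, and you should present it as such.

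Your final paragraph is where you diverge, and not to your advantage. The corollary does not assert that the node $z=\mathbb{P}^1_i\cap\mathbb{P}^1_{i+1}$ lands in the strip flanked by $\gamma^i_{\Bbbk(x)}$ and $\gamma^{i+1}_{\Bbbk(x)}$ specifically; it only asserts the shape (a vertical simple arc in one of the strips), which is all that is used later (e.g.\ in Proposition \ref{PropositionKernel}, where only the \emph{set} of such arcs matters, together with the fact that distinct nodes give non-homotopic, hence distinct-strip, arcs --- which follows from $\Hom^{\bullet}(\Bbbk(z),\Bbbk(z'))=0$). So the step you single out as ``the main part'' is not required, and as sketched it is also not a proof: there is no mechanism in this paper by which a band complex ``degenerates along an end of the $\lambda$-family'' to a string complex, so tracing the coordinate $\lambda$ through the Burban--Drozd identification would have to be done by an explicit computation in the spirit of the Appendix, not by the limiting argument you describe. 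Dropping that paragraph leaves a correct proof of the stated corollary.
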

\begin{proof}
Denote by $\gamma$ a representative of $\Bbbk(z)$ in minimal position. Then, $\gamma$ is an arc as $\Bbbk(z)$ is not perfect. Since $\dim \Hom^{\bullet}(\Bbbk(z), \mathcal{O}_{C_n})=1= \dim \Hom^{\bullet}(\mathcal{O}_{C_n}, \Bbbk(z))$ and $\Hom^{\bullet}(\Bbbk(z), \Bbbk(x))=0=\Hom^{\bullet}(\Bbbk(x), \Bbbk(z))$ for all $x \neq z$, $\gamma$ must intersect $\gamma_{\Pic}$  exactly once and must be disjoint from any loop $\gamma_{\Bbbk(x)}^i$. The only homotopy classes of arcs which satisfy these constraints are of the claimed shape.
\end{proof}
	\begin{figure}[H]
			\begin{displaymath}
			\begin{tikzpicture}[scale=2.5]
			\draw[dashed] (0,0)--(4,0);
			\draw[dashed] (0,0)--(0,1);
			\draw[dashed] (0,1)--(4,1);
			\draw[dashed] (4,1)--(4,0);
			
			
			\filldraw[white] (2+0.5,0.5) circle (0.275);
		
			\draw[dotted, thick] (2+0.5-0.125,0.5)--(2+0.5+0.125,0.5);

			\draw[line width=1.5, black] (1,0)--(1, 0.75)  node[left]{$\gamma_{\Bbbk(x)}^i$};
			\draw[line width=1.5, black] (1,0.75)--(1,1);
			
			\draw[line width=1.5, black] (2,0)--(2, 0.75)  node[right]{$\gamma_{\Bbbk(x)}^{i+1}$};
			\draw[line width=1.5, black] (2,0.75)--(2,1);
			
			\draw[line width=1.5, black] (1.5,0)--(1.5, 0.75)  ;
			\draw[line width=1.5, black] (1.5,1)--(1.5,0.75) node[right]{$\delta$};
			
				\foreach \u in {0,1,3}
			{
				\filldraw[white] (\u+0.5,0.5) circle (1pt);
				\draw[black] (\u+0.5,0.5) circle (1pt);
			
			}
			\end{tikzpicture}
			\end{displaymath}
			\caption{The vertical arc $\delta$ represents the structure sheaf of a singular point. White circles indicate punctures.} \label{FigureSingularSkyscraper}
		\end{figure}
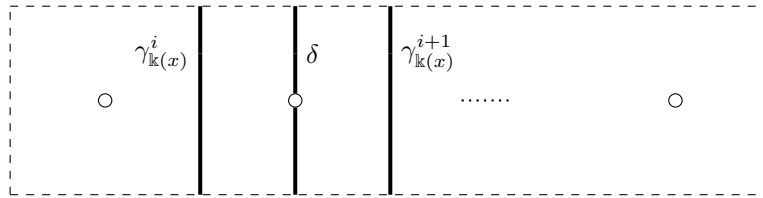

\begin{rem}\label{RemarkGeneralizations}
Theorem \ref{TheoremGeometricDescriptionMorphismPuncturedCase} and the arguments outlined in Remark \ref{RemarkCompositions} generalize to the following situation. Suppose $A$ is a gentle algebra and let $\Sigma=\Sigma_A$ denote its marked surface ($\Sigma=\mathbb{T}_n$ in case $A=\Lambda_n$). We assume that $\Sigma$ is not a cylinder or a disc. A construction of $\Sigma_A$ and its relation to $\mathcal{D}^b(A)$ are found in \cite{OpperPlamondonSchroll}. As in the case $A=\Lambda_n$ and for every subset $C$ of boundary components of $\Sigma$,  one can define the thick subcategory $\mathcal{D}_C$ which is generated by all objects which are represented by a boundary arc on a component of $C$. In special situations, $\mathcal{D}_C$ can contain $\tau$-invariant objects which correspond to boundary loops, namely if the winding number of the boundary components vanishes. As in the case of the present paper, one can express objects, morphisms, compositions and certain mapping cones in the Verdier quotient $\mathcal{D}^b(A) / \mathcal{D}_C$ through the surface $\Sigma_C$ which is obtained from $\Sigma$ by gluing punctured discs to every component in $C$. The proofs we presented here generalize with minor adaptations in order to account for the special situations described above.
\end{rem}

\subsection{Auto-equivalences and diffeomorphisms of punctured tori}\label{SectionDiffeomorphismsPuncturedTori} We study the relationship between auto-equivalences of $\mathcal{D}^b(C_n)$ and diffeomorphisms of  $\mathscr{T}^n$. Led by the homomorphism $\Psi:\Aut\left(\mathcal{D}^b(\Lambda_n)\right) \rightarrow \MCG(\mathbb{T}_n)$, we construct a surjective homomorphism of groups

\begin{displaymath} \begin{tikzcd}\Upupsilon: \Aut\left(\mathcal{D}^b(C_n)\right) \arrow{rr} && \MCG(\mathscr{T}^n). \end{tikzcd}\end{displaymath}

\noindent The group $\MCG(\mathscr{T}^n)$ will be defined below. The underlying idea of the construction exploits the relationship between the groups of automorphisms of the arc complex of a surface.

 \subsubsection{Arc complexes and their automorphisms}
 
\begin{definition}\label{DefinitionArcComplex}Let $\Sigma$ be a compact, oriented surface with or without boundary and a finite set of marked points. 
The \textbf{arc complex}  $\mathcal{A}(\Sigma)$ of $\Sigma$ is the abstract simplicial complex whose $m$-simplices are collections of $m+1$ homotopy classes of simple arcs  which are pairwise disjoint away from their end points.
\end{definition}
\noindent We refer to $0$-simplices as \textbf{vertices} and to $1$-simplices as \textbf{edges}.
\begin{rem}\label{RemarkFlagComplex}
$\mathcal{A}(\Sigma)$ is a \textit{flag complex} which means that $m+1$ vertices of $\mathcal{A}(\Sigma)$ form an $m$-simplex if and only if they are pairwise connected by an edge.
\end{rem}

\begin{rem}\label{RemarkSimplicialInclusionMap}
	The inclusion $\mathbb{T}_n \hookrightarrow \mathscr{T}^n$ (as defined in Section \ref{SectionToriMarkedPoints}) induces a surjective simplicial map $c:\mathcal{A}(\mathbb{T}_n) \rightarrow \mathcal{A}(\mathscr{T}^n)$ which maps the homotopy class of a simple arc $\gamma \subseteq \mathbb{T}_n$ to the homotopy class of an arc $\tilde{\gamma} \subseteq \mathscr{T}^n$ by connecting each end point of $\gamma$ on a component $B$ with the respective puncture which is enclosed by $B$.
\end{rem}

\noindent It is clear that every self-diffeomorphism $F$ of $\Sigma$ induces a simplicial automorphism $\Phi^{\Sigma}(F)$ of $\mathcal{A}(\Sigma)$. We write $\Aut(\mathcal{A}(\Sigma))$ for the group of simplicial automorphisms of $\mathcal{A}(\Sigma)$, so that $\Phi^{\Sigma}(-)$ becomes a group homomorphism  

$$\begin{tikzcd}\Phi^{\Sigma}:\MCG^{\pm}(\Sigma) \arrow{r} & \Aut(\mathcal{A}(\Sigma))\end{tikzcd}$$

\noindent defined on the \textbf{extended mapping class group} of $\Sigma$, i.e.\  the group of isotopy classes of \emph{all} self-diffeomorphisms of $\Sigma$ which preserve the set of marked points. An isotopy is supposed to leave the boundary and all marked points fixed.  The \textbf{mapping class group} $\MCG(\Sigma)$ is the subgroup of $\MCG^{\pm}(\Sigma)$ consisting of all orientation-preserving mapping classes. If $\Sigma=\mathbb{T}_n$, the previous definition reduces to Definition \ref{DefinitionMappingClassGroup}. We write $\Phi$ instead $\Phi^{\Sigma}$ if $\Sigma$ is apparent from the context.\medskip

\noindent In \cite{OpperDerivedEquivalences}, the map $\Phi=\Phi^{\mathbb{T}_n}$ was used to construct the homomorphism $\Psi:\Aut(\mathcal{D}^b(\Lambda_n)) \rightarrow \MCG(\mathbb{T}_n)$ in Proposition \ref{PropositionExactSequenceAutoequivalences}. Namely, for every $T \in \Aut(\mathcal{D}^b(\Lambda_n))$ one can prove that there exists a unique simplicial automorphism $\overline{T} \in \Aut(\mathcal{A}(\mathbb{T}_n))$ such that $\gamma_{T(X)} \simeq \overline{T}(\gamma_X)$ for every indecomposable object $X \in \mathcal{D}^b(\Lambda_n)$ which is represented by a simple arc, c.f.\ Theorem \ref{TheoremEquationForPsi}. One then shows that $\overline{T} \in \Phi(\MCG(\mathbb{T}_n))$. A result by Disarlo \cite{Disarlo} (for surfaces without punctures) shows that $\Phi$ is bijective and $\Psi(T)$ is the defined as the unique preimage of $\overline{T}$ under $\Phi$.\medskip

\noindent Disarlo's theorem has an analogue for punctured surfaces.  The following is a special case of a result by Irmak and McCarthy.

\begin{thm}[\cite{IrmakMcCarthy}]\label{TheoremIrmakMcCarthy}
For $n \neq 1$ and $\Sigma=\mathscr{T}^n$, $\Phi=\Phi^{\Sigma}$ is an isomorphism. For $\Sigma = \mathscr{T}^1$, $\Phi$ is surjective and induces an isomorphism $\PGL_2(\mathbb{Z}) \cong \Aut(\mathcal{A}(\mathscr{T}^1))$.
\end{thm}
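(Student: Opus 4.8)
The plan is to treat the two cases separately, the once-punctured torus being genuinely exceptional. In both cases the kernel of $\Phi=\Phi^{\Sigma}$ consists of the mapping classes that fix the isotopy class of every essential simple arc, so I would first pin it down by an Alexander-method argument: if a diffeomorphism $F$ fixes up to isotopy every arc of a fixed ideal triangulation of $\Sigma$, then after an isotopy $F$ fixes that triangulation pointwise, hence restricts to a self-map of each complementary ideal triangle fixing its edges, and therefore is isotopic to the identity. For $\Sigma=\mathscr{T}^n$ with $n\geq 2$ no nontrivial mapping class fixes every arc, so $\Phi$ is injective; for $\Sigma=\mathscr{T}^1$ the hyperelliptic involution (the class of $-\mathrm{id}$ on $H_1$) fixes every slope and hence every arc, and a short check shows it generates the entire kernel, which is exactly why only surjectivity can be expected there.

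For the once-punctured torus I would argue directly. Essential simple arcs on $\mathscr{T}^1$ are classified by their slope, i.e.\ by a primitive class of $H_1(\mathscr{T}^1;\mathbb{Z})\cong\mathbb{Z}^2$ up to sign, hence by a point of $\mathbb{Q}\cup\{\infty\}$, and two such arcs are disjoint precisely when the corresponding slopes $p/q$ and $r/s$ satisfy $|ps-qr|=1$. Thus $\mathcal{A}(\mathscr{T}^1)$ is canonically the Farey complex, and since it is a flag complex (Remark \ref{RemarkFlagComplex}) its simplicial automorphism group coincides with the automorphism group of the Farey graph, which is well known to be $\PGL_2(\mathbb{Z})$ acting on $\mathbb{P}^1(\mathbb{Q})$. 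The composite $\MCG^{\pm}(\mathscr{T}^1)\to\Aut(\mathcal{A}(\mathscr{T}^1))\cong\PGL_2(\mathbb{Z})$ is the standard action on $H_1$, which is surjective with kernel $\{\pm\mathrm{id}\}$; this yields both the claimed surjectivity and the isomorphism $\PGL_2(\mathbb{Z})\cong\Aut(\mathcal{A}(\mathscr{T}^1))$.

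The substantive case is $n\geq 2$, where I must prove surjectivity of $\Phi$. Given $\phi\in\Aut(\mathcal{A}(\mathscr{T}^n))$, the key structural observation is that the top-dimensional simplices of $\mathcal{A}(\mathscr{T}^n)$ are exactly the ideal triangulations, each carrying $6g-6+3p=3n$ arcs with $g=1$ and $p=n$, so $\phi$ sends ideal triangulations to ideal triangulations and preserves the codimension-one adjacency of two triangulations related by a flip (diagonal exchange). I would fix a reference ideal triangulation $T$, realize $\phi(T)$ by a diffeomorphism $F_0$ (allowing $F_0$ to be orientation-reversing), and replace $\phi$ by $\Phi(F_0)^{-1}\circ\phi$ so as to assume $\phi(T)=T$; a combinatorial rigidity argument, distinguishing the individual arcs of $T$ by the flips available at each one and by the isomorphism types of the resulting triangulations, then upgrades this to $\phi$ fixing each vertex of $T$. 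Because the flip graph of ideal triangulations of $\mathscr{T}^n$ is connected and $\phi$ commutes with flips, propagating along flip sequences forces $\phi$ to fix every arc lying in some triangulation, i.e.\ every vertex of $\mathcal{A}(\mathscr{T}^n)$. Hence $\phi=\mathrm{id}$, so the original $\phi=\Phi(F_0)$ lies in the image of $\Phi$; combined with the injectivity from the first paragraph this proves $\Phi$ is an isomorphism.

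The main obstacle I anticipate is the rigidity step pinning down each individual arc of the reference triangulation, rather than merely the triangulation as an unordered set, and — relatedly — controlling how $\phi$ permutes the punctures, so that the combinatorial symmetry is genuinely realized by a homeomorphism and not only an abstract relabelling. Making the ``flips determine the surface'' propagation precise, including verifying that flip-connectivity together with the local link structure of the complex transports the normalization across adjacent triangulations, is where the real work lies; the $n=1$ case is comparatively soft once the Farey identification is in hand.
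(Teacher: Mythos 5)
First, note that the paper does not prove this statement at all: it is imported wholesale from Irmak--McCarthy, so there is no in-paper argument to compare yours against. Judged on its own terms, your treatment of the once-punctured torus is essentially complete and correct: arcs on $\mathscr{T}^1$ are classified by slopes in $\mathbb{Q}\cup\{\infty\}$, disjointness is the Farey relation $|ps-qr|=1$, so $\mathcal{A}(\mathscr{T}^1)$ is the Farey flag complex, whose automorphism group is $\PGL_2(\mathbb{Z})$, and $\Phi$ becomes the quotient $\GL_2(\mathbb{Z})\to\PGL_2(\mathbb{Z})$ killing exactly the hyperelliptic involution $-\operatorname{Id}$. That yields the second sentence of the theorem.

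For $n\geq 2$ you have correctly identified the standard strategy (maximal simplices are ideal triangulations with $3n$ arcs, automorphisms permute them compatibly with flips, flip-connectivity propagates a normalization), but the decisive step is precisely the one you defer, and there are two concrete problems with the reduction as written. (i) After arranging $\phi(T)=T$ setwise, you propose to upgrade to $\phi$ fixing each arc of $T$ by a ``combinatorial rigidity argument'' and to absorb the discrepancy into $F_0$; but absorbing a permutation of the arcs of $T$ into a homeomorphism requires knowing that this permutation is realized by a homeomorphism of $(\mathscr{T}^n,\text{punctures})$ --- which is essentially the surjectivity being proved, so the reduction is circular unless you first show that $\phi$ preserves the topological type of each arc and of each configuration of arcs (endpoints at which punctures, sides of complementary regions, etc.), read off purely from links and stars in the complex. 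That combinatorial characterization of topological types is the actual content of the Irmak--McCarthy argument and is absent here. (ii) The flip propagation must handle non-flippable edges (radii of self-folded triangles), where a codimension-one face of a maximal simplex lies in only one maximal simplex rather than two; this is detectable simplicially and not fatal, but it breaks the clean ``each wall separates exactly two chambers'' picture you rely on. Finally, your injectivity argument for $n\geq 2$ needs the small additional check that a homeomorphism fixing every arc of $T$ up to isotopy fixes each complementary triangle rather than permuting them --- this is exactly what fails for $n=1$ --- before the Alexander method closes the case. In short: correct skeleton and a correct exceptional case, but the core rigidity content of the theorem is still missing.
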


\noindent We like to associate an automorphism of $\mathcal{A}(\mathscr{T}^n)$ to every auto-equivalence of $\mathcal{D}^b(C_n)$. This requires us to understand how simple loops and interior intersections are characterized within $\mathcal{D}^b(C_n)$. This will be achieved by means of the singularity category.

\subsubsection{The singularity category of $C_n$ and the ideal of interior morphisms} \ \medskip

\noindent  We recall that the \textbf{singularity category} $\Sing$ of $C_n$ is by definition the Verdier quotient of $\mathcal{D}^b(C_n)$ with respect to $\Perf(C_n)$. In this sense and due to Proposition \ref{PropositionVerdierQuotientIsDb}, $\Sing$ is obtained as the localization of the Verdier quotient $\quotient{\mathcal{D}^b(\Lambda_n)}{\Dpart}$ at the essential image of $\Dinv$ under the localization functor. One may also consider another category by first localizing $\mathcal{D}^b(\Lambda_n)$ at the subcategory $\Dinv$ and then localizing the resulting quotient at the essential image of the subcategory $\Dpart$. 
\begin{lem}
The two localizations above are canonically equivalent.
\end{lem}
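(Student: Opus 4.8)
The plan is to identify both iterated localizations with a single Verdier quotient of $\mathcal{D}^b(\Lambda_n)$ and then to deduce the equivalence from the uniqueness afforded by the universal property. Write $\mathcal{T}=\mathcal{D}^b(\Lambda_n)$ and let $\mathcal{E}\subseteq\mathcal{T}$ be the smallest thick subcategory containing both $\Dpart$ and $\Dinv$. The quotient functor $\pi_{\mathcal{E}}\colon\mathcal{T}\to\mathcal{T}/\mathcal{E}$ is universal, for functors and natural transformations alike, among exact functors out of $\mathcal{T}$ that annihilate $\mathcal{E}$; and since the kernel of an exact functor is a thick subcategory, such a functor annihilates $\mathcal{E}$ precisely when it annihilates both $\Dpart$ and $\Dinv$.

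First I would analyse the localization defining $\Sing$, namely $Q_1\coloneqq(\mathcal{T}/\Dpart)/\overline{\Dinv}$, where $\overline{\Dinv}$ denotes the thick subcategory generated by the essential image of $\Dinv$ under $\pi_{\Dpart}$. The composite $\mathcal{T}\xrightarrow{\pi_{\Dpart}}\mathcal{T}/\Dpart\to Q_1$ kills $\Dpart$ by the first functor and $\Dinv$ by the second, hence kills all of $\mathcal{E}$. Conversely, any exact functor $F\colon\mathcal{T}\to\mathcal{S}$ annihilating $\mathcal{E}$ kills $\Dpart$, so it factors uniquely through $\pi_{\Dpart}$ as some $\overline{F}$; as $\overline{F}$ then kills $\pi_{\Dpart}(\Dinv)$, and hence the thick subcategory $\overline{\Dinv}$, it factors uniquely through $Q_1$. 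The analogous statement for natural transformations shows that $\mathcal{T}\to Q_1$ has the same universal property as $\pi_{\mathcal{E}}$, giving a canonical equivalence $Q_1\simeq\mathcal{T}/\mathcal{E}$.

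Exchanging the roles of $\Dpart$ and $\Dinv$, the identical argument applies to the second construction $Q_2\coloneqq(\mathcal{T}/\Dinv)/\overline{\Dpart}$ and yields a canonical equivalence $Q_2\simeq\mathcal{T}/\mathcal{E}$; composing the two produces the desired canonical equivalence $Q_1\simeq Q_2$. Both identifications are of course instances of the transitivity of Verdier quotients along a chain of thick subcategories $\Dpart\subseteq\mathcal{E}\subseteq\mathcal{T}$ (see \cite[Chapter 2]{Neeman}), which one may cite directly in place of the universal-property argument.

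I expect the only point requiring genuine care to be the identification, at the second stage of each tower, of the subcategory at which one localizes: one must confirm that the thick subcategory $\overline{\Dinv}$ generated by $\pi_{\Dpart}(\Dinv)$ agrees with the essential image of $\mathcal{E}$ in $\mathcal{T}/\Dpart$ (and symmetrically for $\overline{\Dpart}$). This is immediate once one notes that $\pi_{\Dpart}$ annihilates $\Dpart$, so that every object of $\mathcal{E}$—being assembled from $\Dpart$ and $\Dinv$ by shifts, cones and summands—maps into the thick closure of $\pi_{\Dpart}(\Dinv)$, the reverse containment being trivial. The orthogonality of $\Dpart$ and $\Dinv$ (Lemma \ref{LemmaInvPartOrthogonalCategories}) together with the full faithfulness of $\pi_{\Dpart}$ on $\Dinv$ even shows that $\pi_{\Dpart}(\Dinv)$ is already closed under summands, so no thick closure is strictly needed; but this refinement is inessential for the argument.
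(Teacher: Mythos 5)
Your proposal is correct and follows essentially the same route as the paper, which simply observes (using Lemma \ref{LemmaInvPartOrthogonalCategories}) that both iterated localizations are the Verdier quotient of $\mathcal{D}^b(\Lambda_n)$ at the thick subcategory $\Dinv \oplus \Dpart$; your argument spells out the transitivity of Verdier quotients and the identification of the intermediate kernels, which the paper leaves implicit.
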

\begin{proof}
By Proposition \ref{PropositionCharaterizationDpartDinv}Lemma \ref{LemmaInvPartOrthogonalCategories}and Proposition\ref{PropositionVerdierQuotientIsDb}, both localizations are Verdier quotients of $\mathcal{D}^b(\Lambda_n)$ at the triangulated subcategory $\Dinv \oplus \Dpart$.
\end{proof}
\noindent Given $X,Y \in \mathcal{D}^b(C_n)$, we denote by $\Hom_{\operatorname{Int}}^{\bullet}(X,Y)$ the kernel of the canonical map
$$\begin{tikzcd}
\Hom^{\bullet}_{\mathcal{D}^b(C_n)}(X,Y) \arrow{r} & \Hom^{\bullet}_{\Sing}(X,Y).
\end{tikzcd}$$
The set $\Hom_{\Int}(X,Y) \subseteq \Hom(X,Y)$ is defined analogously. We refer to elements in $\Hom_{\Int}(X,Y)$ as \textbf{interior morphisms}.\medskip

\noindent Interior morphisms  are stable under triangle equivalences in the following sense.
\begin{lem}\label{LemmaInteriorMorphismStable}
Let $T \in \Aut(\mathcal{D}^b(C_n))$. Then, $T$ induces an isomorphism
$$\begin{tikzcd}
\Hom^{\bullet}_{\operatorname{Int}}(X,Y) \arrow{r} & \Hom^{\bullet}_{\operatorname{Int}}(T(X),T(Y)).
\end{tikzcd}$$
\end{lem}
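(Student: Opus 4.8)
The plan is to reduce the statement to two facts: that $T$ preserves $\Perf(C_n)$, and hence descends to an auto-equivalence of the singularity category $\Sing=\quotient{\mathcal{D}^b(C_n)}{\Perf(C_n)}$ compatibly with the localisation functor. Granting these, the claim becomes a formal comparison of kernels. So I would organise the argument as: (1) $T(\Perf(C_n))=\Perf(C_n)$; (2) construction of the induced equivalence $\overline{T}$ on $\Sing$ with a coherence isomorphism; (3) a commutative square of morphism spaces whose horizontal maps are isomorphisms; (4) passage to kernels.

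First I would establish step (1). By Example~\ref{ExampleSerreFunctorsCycles} the subcategory $\Perf(C_n)$ is exactly the full subcategory of objects $X\in\mathcal{D}^b(C_n)$ admitting a Serre dual relative to $\mathcal{D}^b(C_n)$, equivalently those $X$ for which $\dim_{\Bbbk}\Hom^{\bullet}(X,Z)<\infty$ for every $Z$. This is a property phrased entirely in terms of the triangulated structure, so it is preserved by any exact equivalence: since $T$ induces isomorphisms $\Hom^{\bullet}(X,Z)\cong\Hom^{\bullet}(T(X),T(Z))$ and is essentially surjective, $X$ lies in $\Perf(C_n)$ if and only if $T(X)$ does. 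Applying the same to $T^{-1}$ shows $T$ restricts to an auto-equivalence of $\Perf(C_n)$. I expect this to be the only step carrying genuine content, and the main obstacle is really just pinning down the intrinsic characterisation of $\Perf(C_n)$ that makes its preservation by $T$ automatic; everything else is formal.

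Next, since $T$ sends $\Perf(C_n)$ into itself, the composite $L\circ T$ with the localisation $L\colon\mathcal{D}^b(C_n)\to\Sing$ annihilates $\Perf(C_n)$, so the universal property of the Verdier quotient yields an exact functor $\overline{T}\colon\Sing\to\Sing$ together with a natural isomorphism $\eta\colon L\circ T\xrightarrow{\sim}\overline{T}\circ L$; running the construction for $T^{-1}$ produces a quasi-inverse, so $\overline{T}$ is an auto-equivalence. For fixed $X,Y$ I would then assemble the square
\[\begin{tikzcd}
\Hom^{\bullet}_{\mathcal{D}^b(C_n)}(X,Y) \arrow{r}{T} \arrow{d}{L} & \Hom^{\bullet}_{\mathcal{D}^b(C_n)}(T(X),T(Y)) \arrow{d}{L} \\
\Hom^{\bullet}_{\Sing}(X,Y) \arrow{r} & \Hom^{\bullet}_{\Sing}(T(X),T(Y)),
\end{tikzcd}\]
where the lower horizontal arrow is the isomorphism induced by $\overline{T}$ followed by conjugation with the components $\eta_X,\eta_Y$. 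Naturality of $\eta$ gives precisely $\eta_Y\circ\overline{T}(L(f))\circ\eta_X^{-1}=L(T(f))$, which is the commutativity of the square.

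Finally I would conclude. The top horizontal map is an isomorphism because $T$ is fully faithful, and the bottom one because $\overline{T}$ is fully faithful and $\eta_X,\eta_Y$ are invertible. Commutativity then forces the top map to restrict to an isomorphism between the kernels of the two vertical localisation maps; by definition these kernels are $\Hom^{\bullet}_{\operatorname{Int}}(X,Y)$ and $\Hom^{\bullet}_{\operatorname{Int}}(T(X),T(Y))$, which gives the asserted isomorphism. The identical argument applied degreewise covers the ungraded version $\Hom_{\operatorname{Int}}$.
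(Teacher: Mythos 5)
Your proposal is correct and follows essentially the same route as the paper, which likewise observes that $T$ preserves $\Perf(C_n)$, hence descends to an equivalence of $\Sing$ and therefore preserves the kernel of the localisation map on morphism spaces. The only difference is that you spell out the intrinsic characterisation of $\Perf(C_n)$ (as the homologically finite objects) justifying its preservation, a point the paper takes for granted.
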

\begin{proof}$T$ preserves $\Perf(C_n)$ and induces an equivalence on the singularity category. In particular, it preserves the kernel of $\Hom^{\bullet}_{\mathcal{D}^b(C_n)}(X,Y) \rightarrow  \Hom^{\bullet}_{\mathcal{D}_{\Sing}}(X,Y)$.
\end{proof}
\noindent The assertion of the next lemma is analogous to the results of Section 2.9 in \cite{OpperDerivedEquivalences} and provides a topological characterization of interior morphisms.
\begin{lem}\label{LemmaInteriorMorphismsSpanningSet}
Let $X,Y \in \mathcal{D}^b(C_n)$ be indecomposable. Then, $\Hom_{\operatorname{Int}}^{\bullet}(X,Y)$ is the set of morphisms which are supported only at interior intersections of $\gamma_X$ and $\gamma_Y$.
\end{lem}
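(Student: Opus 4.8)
The plan is to prove the two inclusions separately, after reducing to the case where both $X$ and $Y$ are represented by arcs. If one of them is perfect, then it vanishes in $\Sing$, so $\Hom^{\bullet}_{\Sing}(X,Y)=0$ and $\Hom^{\bullet}_{\Int}(X,Y)=\Hom^{\bullet}_{\mathcal{D}^b(C_n)}(X,Y)$; since a loop avoids every puncture, all intersections of $\gamma_X$ and $\gamma_Y$ are then interior and the claim is immediate. So assume $X,Y$ are arcs and use the basis of $\Hom^{\bullet}(X,Y)$ from Theorem \ref{TheoremGeometricDescriptionMorphismPuncturedCase}, which splits into the interior intersections and the puncture paths; write $S$ for the span of the interior basis elements. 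The organising principle is that, by the standard properties of Verdier quotients recalled above (taking $\mathcal{U}=\Perf(C_n)$), a morphism lies in $\Hom^{\bullet}_{\Int}(X,Y)$ if and only if it factors through an object of $\Perf(C_n)$, i.e.\ through a sum of loops, recalling that in $\mathcal{D}^b(C_n)$ the perfect complexes are exactly the objects represented by loops.

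For the inclusion $\Hom^{\bullet}_{\Int}(X,Y)\subseteq S$, suppose $f=h\circ g$ with $g\colon X\to L$ and $h\colon L\to Y$ for a loop $L$. As $\gamma_L$ misses every puncture, all intersections of $\gamma_L$ with $\gamma_X$ and with $\gamma_Y$ are interior, so both $g$ and $h$ are supported only at interior intersections. I would then invoke the analogue of Lemma \ref{LemmaInteriorMorphismsClosedUnderComposition} for $\mathcal{D}^b(C_n)$: by the description of compositions in Remark \ref{RemarkCompositions} via triangles, tridents and double-bigons in the universal cover of $\mathscr{T}^n$, the output corner of such a configuration cannot converge to a puncture once the two input corners are interior. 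This is the same orientation argument at a puncture as the one used for the boundary in the proof of Lemma \ref{LemmaInteriorMorphismsClosedUnderComposition}, relying on the one-sidedness of oriented intersections at a puncture. Hence $h\circ g$ is again supported only at interior intersections, and summing over the indecomposable loop summands of a general perfect factor gives $f\in S$.

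For the reverse inclusion $S\subseteq\Hom^{\bullet}_{\Int}(X,Y)$ I must show that every basis morphism $f_p$ attached to an interior intersection $p\in\gamma_X\overrightarrow{\cap}\gamma_Y$ factors through a loop. The plan is to build an auxiliary simple loop $\gamma_L$ near $p$: lifting to the universal cover, take a small embedded triangle with apex $\tilde p$, two sides running along $\gamma_X$ and $\gamma_Y$ in the orientation prescribed by $f_p$, and the third side a short transverse segment from a point $\tilde q\in\gamma_X$ to a point $\tilde r\in\gamma_Y$; extending this segment to a line of rational slope yields a simple, puncture-free loop $\gamma_L$ on $\mathscr{T}^n$. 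Choosing $\gamma_L$ straight and the triangle small enough that $q\in\gamma_X\overrightarrow{\cap}\gamma_L$ and $r\in\gamma_L\overrightarrow{\cap}\gamma_Y$ are genuine oriented intersections in minimal position whose triangle is the unique element of $C(q,r)$, the composition formula (the proposition following Definition \ref{DefinitionSetComposition}, together with Remark \ref{RemarkCompositions}) gives $f_r\circ f_q=c\,f_p$ with $c\in\Bbbk^{\times}$. Thus $f_p$ factors through the perfect complex $X_{\gamma_L}$ and lies in $\Hom^{\bullet}_{\Int}(X,Y)$. Since the interior basis morphisms span $S$, combining this with the previous paragraph yields the stated equality; the homogeneity of the basis makes the argument compatible with the grading throughout.

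The main obstacle will be the geometric construction in the reverse inclusion: one must guarantee that $\gamma_L$ can be isotoped into minimal position with both $\gamma_X$ and $\gamma_Y$ without destroying the local triangle at $p$, so that $q$ and $r$ remain honest oriented intersections and $C(q,r)=\{p\}$. I would control this by keeping $\gamma_L$ geodesic-like and localised near $p$, and by composing the \emph{specific} oriented corners $f_q,f_r$ rather than the full Hom-spaces, so that the $\gamma_L$-side from $q$ to $r$ pins down the third corner uniquely as $p$. As a more structural fallback for this direction, should the minimal-position bookkeeping become unwieldy, I would instead use that $\Sing$ localises as a product of the singularity categories of the nodes of $C_n$, each generated by a singular skyscraper $\Bbbk(z)$ (Corollary \ref{CorollaryCurvesOfSingularPoints}) with its $2$-periodic self-extensions; under this decomposition only morphisms at the punctures can contribute, forcing the interior morphisms into the kernel of the localisation.
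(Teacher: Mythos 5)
Your forward inclusion is sound and is essentially the paper's argument transported into $\mathcal{D}^b(C_n)$: the paper instead works upstairs in $\mathcal{D}^b(\Lambda_n)$, splits the kernel of $\Hom^{\bullet}(U,V)\to\Hom^{\bullet}(\pi_{\oplus}(U),\pi_{\oplus}(V))$ into the $\Dinv$- and $\Dpart$-parts, applies Lemma \ref{LemmaInteriorMorphismsClosedUnderComposition} to a lift $g\colon X'\to\vartheta^i(Y')$, and then uses the factorisation of $g$ through $(\eta^m)_{\vartheta^{-m+i}(Y')}$ for all $m$ to guarantee that the surviving interior intersections upstairs really correspond to interior intersections of $\gamma_X$ and $\gamma_Y$ in $\mathscr{T}^n$ rather than to puncture paths. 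You replace that last step by the assertion that the puncture analogue of Lemma \ref{LemmaInteriorMorphismsClosedUnderComposition} holds for the composition calculus of Remark \ref{RemarkCompositions}; this is a reasonable extrapolation of the orientation argument at the boundary, but note that the paper only states the downstairs calculus as a remark, so if you work downstairs you should actually verify that a corner of an intersection triangle whose other two corners are interior cannot converge to a puncture (or reduce to the upstairs statement as the paper does).

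The reverse inclusion has a genuine gap. Your argument needs $C(q,r)=\{p\}$ for the auxiliary geodesic loop $\gamma_L$, and keeping $\gamma_L$ ``localised near $p$'' cannot be achieved: $\gamma_L$ is a closed non-separating loop and will meet $\gamma_X$ and $\gamma_Y$ elsewhere, and even for the \emph{fixed} corners $q$ and $r$ the set $C(q,r)$ from Definition \ref{DefinitionSetComposition} is computed over all lifts, so the long segments of a lift of $\gamma_L$ between $\tilde q$ and the other preimages of $r$ may bound further embedded triangles. If $C(q,r)=\{p,p_1,\dots,p_k\}$ you only learn that $c_0f_p+\sum c_if_{p_i}$ is interior, and since all the $p_i$ are again interior intersections, the resulting family of combinations (one per interior intersection, each with a nonzero coefficient on ``its'' basis vector) need not span the interior subspace, so you cannot conclude that each $f_p$ itself lies in $\Hom^{\bullet}_{\operatorname{Int}}(X,Y)$. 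The paper sidesteps this entirely by factoring $f_p$ through the object represented by the doubled loop $(\gamma_X)_{\operatorname{loop}}$ of Remark \ref{RemarkDoubleLoopOfAnArcHasVanishingWindingNumber}: because that loop overlaps $\gamma_X$ along its whole length, the factorisation hits exactly the single intersection $p$ and no minimal-position bookkeeping is needed. Your fallback via a product decomposition of $\Sing$ is not developed in the paper and, as stated, does not identify the kernel with the span of the interior basis elements, so it does not close the gap.
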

\begin{proof}Let $U, V \in \mathcal{D}^b(\Lambda_n)$ and let us denote by $\pi_{\operatorname{loop}}, \pi_{\partial}$ and $\pi_{\oplus}$ the localization functors  at $\Dinv$, $\Dpart$ and $\Dinv \oplus \Dpart$, respectively. Since $\Dinv$ and $\Dpart$ are orthogonal, it is not difficult to see that the kernel of the canonical map $\Hom^{\bullet}(U,V) \rightarrow \Hom^{\bullet}(\pi_{\oplus}(U), \pi_{\oplus}(V))$ splits into the sum of the kernels of the corresponding maps induced by $\pi_{\operatorname{loop}}$ and $\pi_{\partial}$. Let $f: X \rightarrow Y$ be interior and choose a preimage $g: X \rightarrow \vartheta^i(Y)$ in $\mathcal{D}^b(\Lambda_n)$ of $f$ under the map \eqref{structuremaps} on page \pageref{structuremaps}. Then, $g$ lies in the kernel of the map induced by $\pi_{\operatorname{loop}}$ and hence factors through $\Dinv$. By Lemma \ref{LemmaInteriorMorphismsClosedUnderComposition}, $g$ must be supported at interior intersections. Moreover, by definition of $\eta$ and $\Dinv^{\perp}=\Dpart$, it follows that $g$ factors through $(\eta^m)_{\vartheta^{-m+i}(Y)}: \vartheta^{-m+i}(Y) \rightarrow \vartheta^i(Y)$ for all $m \in \mathbb{N}$. This implies that all intersections in $\supp g$ correspond to interior intersections of $\gamma_X$ and $\gamma_Y$ (as arcs in $\mathscr{T}^n$).
	
Vice versa, given an interior intersection $p \in\gamma_X \overrightarrow{\cap}\gamma_Y$, we denote by $f: X \rightarrow Y$ the corresponding morphism and choose $g$ as before. Then, $g$ is supported at a single interior intersection and factors through an object which is represented by the loop $(\gamma_{X})_{\operatorname{loop}} \subseteq \mathbb{T}_n$ (see Remark \ref{RemarkDoubleLoopOfAnArcHasVanishingWindingNumber}) and $1$-dimensional local system. As $\Hom_{\operatorname{Int}}^{\bullet}(X,Y)$ is a subvector space, this finishes the proof.
\end{proof}

\noindent We are prepared to construct the map $\Upupsilon$.

\begin{prp}\label{PropositionExistenceGroupHom} For $n > 1$, there exists a surjective group homomorphism 
\begin{displaymath} \begin{tikzcd}\Upupsilon: \Aut\left(\mathcal{D}^b(C_n)\right) \arrow{rr} && \MCG(\mathscr{T}^n). \end{tikzcd}\end{displaymath}
 
\noindent Moreover, there exists a surjective homomorphism $\Aut\left(\mathcal{D}^b(C_1)\right) \rightarrow \operatorname{PSL}_2(\mathbb{Z})$.
\end{prp}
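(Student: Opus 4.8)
The plan is to mimic the construction of $\Psi$ for $\mathcal{D}^b(\Lambda_n)$: first attach to every $T\in\Aut(\mathcal{D}^b(C_n))$ a simplicial automorphism $\overline{T}$ of the arc complex $\mathcal{A}(\mathscr{T}^n)$, then transport it through the isomorphism $\Phi$ of Theorem \ref{TheoremIrmakMcCarthy}. To build $\overline{T}$ on vertices, recall that the vertices of $\mathcal{A}(\mathscr{T}^n)$ are homotopy classes of simple arcs, which by Proposition \ref{PropositionBijectionArcsDbCoh} correspond to shift-orbits of indecomposable non-perfect objects $X$ whose curve $\gamma_X$ is simple. The key point is that simplicity and disjointness are detected by interior morphisms: by Lemma \ref{LemmaInteriorMorphismsSpanningSet} an indecomposable non-perfect $X$ is represented by a simple arc if and only if $\Hom_{\Int}^{\bullet}(X,X)=0$ (no interior self-intersections), and two distinct simple arcs $\gamma_X,\gamma_Y$ are disjoint away from the punctures if and only if $\Hom_{\Int}^{\bullet}(X,Y)=0$. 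Since $T$ preserves $\Perf(C_n)$ and, by Lemma \ref{LemmaInteriorMorphismStable}, induces isomorphisms on interior morphism spaces, it sends simple arcs to simple arcs and preserves disjointness; applying the same to $T^{-1}$ shows that $T$ induces a bijection on vertices of $\mathcal{A}(\mathscr{T}^n)$ preserving edges in both directions. As $\mathcal{A}(\mathscr{T}^n)$ is a flag complex (Remark \ref{RemarkFlagComplex}), this bijection extends to a simplicial automorphism $\overline{T}$ characterised by $\gamma_{T(X)}\simeq\overline{T}(\gamma_X)$; it is well-defined because $\gamma_X$ depends on $X$ only up to shift and $T$ commutes with $[1]$.

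The main obstacle is to show that $\overline{T}$ lies in the image of the orientation-preserving subgroup $\MCG(\mathscr{T}^n)\subseteq\MCG^{\pm}(\mathscr{T}^n)$, rather than merely in $\MCG^{\pm}(\mathscr{T}^n)$. I would establish this by showing that $\overline{T}$ preserves the counter-clockwise cyclic order of arc-germs at each puncture. Concretely, three simple arcs $\delta_0,\delta_1,\delta_2$ sharing a puncture and arranged counter-clockwise carry the composition structure of Example \ref{ExampleEndomorphismRingPuncture}: the associated puncture morphisms compose to a cycle of degree $2$ (the inverse winding number around the puncture), and this degree-$2$ cyclic composability characterises the counter-clockwise order. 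Because $T$ is a covariant triangle functor it preserves compositions and degrees; combined with the description of compositions at punctures in Remark \ref{RemarkCompositions}, it follows that the images $\gamma_{T(\delta_i)}$ again share a puncture in counter-clockwise order, so $\overline{T}$ is orientation-preserving, exactly as in the treatment of $\Psi$ in \cite{OpperDerivedEquivalences}. For $n>1$, Theorem \ref{TheoremIrmakMcCarthy} gives an isomorphism $\Phi\colon\MCG^{\pm}(\mathscr{T}^n)\xrightarrow{\sim}\Aut(\mathcal{A}(\mathscr{T}^n))$, and I define $\Upupsilon(T)\coloneqq\Phi^{-1}(\overline{T})\in\MCG(\mathscr{T}^n)$. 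The relation $\overline{TS}=\overline{T}\,\overline{S}$, read off from $\gamma_{TS(X)}\simeq\overline{T}\,\overline{S}(\gamma_X)$, together with the fact that $\Phi^{-1}$ is a group isomorphism, shows that $\Upupsilon$ is a homomorphism.

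For surjectivity I would use that every $T'\in\Aut(\mathcal{D}^b(\Lambda_n))$ preserves $\Dpart$ (Lemma \ref{LemmaEquivalencesRestrict}) and hence descends along the quotient functor to an auto-equivalence of $\mathcal{D}^b(C_n)$; write $D$ for this descent map. By construction $\Upupsilon(D(T'))$ equals the image of $\Psi(T')$ under the capping homomorphism $\mathrm{cap}\colon\MCG(\mathbb{T}_n)\to\MCG(\mathscr{T}^n)$ that fills each boundary component with a once-punctured disc, so $\Upupsilon\circ D=\mathrm{cap}\circ\Psi$ and it suffices to know both factors are surjective. Here $\Psi$ is surjective by Proposition \ref{PropositionExactSequenceAutoequivalences} and Corollary \ref{CorollaryStabilizer}, while $\mathrm{cap}$ is surjective because on pure mapping class groups it is the projection $\pi$ of the exact sequence \eqref{ShortExactSequenceMCGInclusion} (whose kernel $\mathbb{Z}^n$ consists of the boundary Dehn twists) and the half-twists of $\MCG(\mathbb{T}_n)$ cap to the puncture transpositions; concretely, the Humphries twists about $\gamma_{\Pic}$ and $\gamma_{\Bbbk(x)}^i$ are realised by the spherical twists $T_{\mathcal{L}}$ and $T_{\Bbbk(x)}$ (Proposition \ref{PropositionSphericalTwistsBecomeDehnTwists}). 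Hence $\Upupsilon$ is surjective. Finally, for $n=1$ the same construction yields $\overline{T}\in\Aut(\mathcal{A}(\mathscr{T}^1))\cong\PGL_2(\mathbb{Z})$ (Theorem \ref{TheoremIrmakMcCarthy}); orientation-preservation places $\overline{T}$ in the index-two subgroup $\PSL_2(\mathbb{Z})$, and setting $\Upupsilon(T)\coloneqq\overline{T}$ gives a homomorphism $\Aut(\mathcal{D}^b(C_1))\to\PSL_2(\mathbb{Z})$ whose surjectivity follows by realising the two standard generators of $\PSL_2(\mathbb{Z})$ through spherical twists.
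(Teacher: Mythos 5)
Your proposal is correct and follows essentially the same route as the paper: simplicity and disjointness of arcs are detected via the vanishing of interior morphisms (Lemmas \ref{LemmaInteriorMorphismStable} and \ref{LemmaInteriorMorphismsSpanningSet}), the flag-complex property extends the vertex map to $\overline{T}\in\Aut(\mathcal{A}(\mathscr{T}^n))$, orientation-preservation is forced by the covariance of $T$ and the asymmetric degree-$2$ composition of puncture morphisms (the paper instantiates this on the specific triangulation of Figure \ref{FigureSpecialTriangulation}), and surjectivity is deduced from the commutative square relating $\Upupsilon$ to $\Psi$ via the capping/radial-extension homomorphism. The minor variations (phrasing orientation-preservation as preservation of cyclic order at punctures, and realising the $\PSL_2(\mathbb{Z})$ generators directly by spherical twists for $n=1$) do not change the substance of the argument.
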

\begin{proof}
Let $F \in \Aut(\mathcal{D}^b(C_n))$. Subsequently, we shall call indecomposable, non-perfect objects $X, Y \in \mathcal{D}^b(\Coh C_n)$ \textit{disjoint} if their representing homotopy classes of arcs contain a pair of representatives with disjoint interior. If $X=Y$, being disjoint is equivalent to $\gamma_X$ being simple. Given two indecomposable, non-perfect objects $X, Y \in \mathcal{D}^b(C_n)$, it follows from Lemma \ref{LemmaInteriorMorphismsSpanningSet} that the canonical map $\Hom^{\bullet}(X,Y) \rightarrow \Hom_{\Sing}^{\bullet}(X,Y)$ is injective if and only if $X$ and $Y$ are disjoint.  By Lemma \ref{LemmaInteriorMorphismStable}, $F$ maps pairs of disjoint simple objects to disjoint simple objects and hence induces an element $\overline{F} \in \Aut(\mathcal{A}(\mathscr{T}^n))$ by Remark \ref{RemarkFlagComplex}. Note that $\overline{F}$ is uniquely determined by the property that $\overline{F}$ maps a vertex $\gamma_X$ corresponding to an object  $X \in \mathcal{D}^b(C_n)$ to the vertex $\gamma_{F(X)}$. In particular, $F \mapsto \overline{F}$ defines a group homomorphism $\Aut(\mathcal{D}^b(C_n)) \rightarrow \Aut(\mathcal{A}(\mathscr{T}^n))$. If $n \geq 2$, we set $\Upupsilon(F) \coloneqq \Phi^{-1}(\overline{F}) \in \MCG^{\pm}(\mathscr{T}^n)$, where $\Phi=\Phi^{\mathscr{T}^n}$. If $n=1$, $\Upupsilon(F) \in \PGL_2( \mathbb{Z})$ is the element which corresponds to $F$ under the isomorphism $\PGL_2(\mathbb{Z}) \cong \Aut(\mathcal{A}(\mathscr{T}^1))$ from Theorem \ref{TheoremIrmakMcCarthy}. 

 We claim that $\overline{F}$ lies in the image of $\MCG(\mathscr{T}^n)$ under $\Phi$. This is a consequence of the covariance of $F$. Namely,
let $\Delta$ denote the triangulation of $\mathscr{T}^n$ as shown in Figure \ref{FigureSpecialTriangulation}. Then, at any puncture $p$ there exist $6$ segments of arcs of $\Delta$ ending $p$. Three of these segments correspond to a chain of  non-zero morphisms $f:U \rightarrow V, g:V \rightarrow W$ and $h: W \rightarrow U[2]$ (see Example \ref{ExampleEndomorphismRingPuncture}) such that $h \circ g \circ f \neq 0$ and where  $U, V$ and $W$ are objects which are represented by pairwise distinct arcs of $\Delta$.
As pointed out above, we have $\overline{F}(\gamma_X)=\gamma_{F(X)}$ whenever $\gamma_X$ is simple. Thus, if $\overline{F}$ was the image of an orientation-reversing homeomorphism, the intersection of the arcs $\gamma_{F(U)}, \gamma_{F(V)}$ and $\gamma_{F(W)}$ and the puncture $F(p)$ would give rise to a non-zero composition $F(U)[2] \rightarrow F(W) \rightarrow F(V) \rightarrow F(U)$. However, there exists no chain of this kind as we can read off from the intersection pattern of the arcs in Figure \ref{FigureSpecialTriangulation}. We conclude that $\overline{F}$ is the image of an orientation-preserving homeomorphism. Note that $\Phi(\MCG(\mathscr{T}^1))$ is the subgroup $\operatorname{PSL}_2(\mathbb{Z}) \subseteq \operatorname{PGL}_2(\mathbb{Z})$.

It remains to prove surjectivity. We recall from \eqref{ShortExactSequenceMCGInclusion} on page \pageref{ShortExactSequenceMCGInclusion} that there is a surjective group homomorphism $\MCG(\mathbb{T}_n) \rightarrow \MCG(\mathscr{T}^n)$ which maps a homeomorphism to its radial extension. In fact, radial extension defines a group homomorphism $\overline{\pi}$ between the corresponding extended mapping class groups. Since $\MCG(\mathscr{T}^n)$ has index $2$ inside $\MCG^{\pm}(\mathscr{T}^n)$ it follows that $\overline{\pi}$ and hence the composition $\alpha \coloneqq \Phi^{\mathscr{T}^n} \circ \overline{\pi} \circ \left(\Phi^{\mathbb{T}_n}\right)^{-1}$ are surjective. The map $\alpha$ can be described as follows. Let $F \in \Aut(\mathcal{A}(\mathbb{T}_n))$ and let $c:\mathcal{A}(\mathbb{T}_n) \rightarrow \mathcal{A}(\mathscr{T}^n)$ denote the surjective simplicial map from Remark \ref{RemarkSimplicialInclusionMap}. Then, 
 
 $$
 \alpha(F)(\tilde{\gamma}) = c(F(\gamma))
 $$
 \noindent  for every simple arc $\tilde{\gamma} \subseteq \mathscr{T}^n$ and every preimage $\gamma \subseteq \mathbb{T}_n$ under $c$. If $n > 1$, it follows that the diagram

 \begin{displaymath}
 		\begin{tikzcd}
 				\Aut(\mathcal{D}^b(\Lambda_n)) \arrow{r} \arrow[twoheadrightarrow, swap]{d}{\Psi} & \Aut(\mathcal{D}^b(C_n)) \arrow{d}{\Phi\circ \Upupsilon} \\
 				\Aut(\mathcal{A}(\mathbb{T}_n)) \arrow[twoheadrightarrow]{r}{\alpha} & \Aut(\mathcal{A}(\mathscr{T}^n)).
 		\end{tikzcd}
 \end{displaymath}
\noindent commutes. Thus, $\Upupsilon$ is surjective due to $\Phi$ being invertible. Finally, if $n=1$, one replaces $\Phi\circ \Upupsilon$ by $\Upupsilon$ in the previous diagram.\end{proof}	

\begin{figure}
			\begin{displaymath}
			\begin{tikzpicture}[scale=2.5]
			\draw[dashed] (0,0)--(4,0);
			\draw[dashed] (0,0)--(0,1);
			\draw[dashed] (0,1)--(4,1);
			\draw[dashed] (4,1)--(4,0);
			
			
			\filldraw[white] (2+0.5,0.5) circle (0.275);
			\draw[dotted, thick] (2+0.5-0.125,0.5)--(2+0.5+0.125,0.5);
			
		\draw[line width=1.5, black] (0,0.5)--(0.5,0.5);
		\draw[line width=1.5, black] (0.5,0.5)--(1.5,0.5);
		\draw[line width=1.5, black] (1.5,0.5)--(2.3,0.5);
		\draw[line width=1.5, black] (2.7,0.5)--(3.5,0.5);
		\draw[line width=1.5, black] (3.5,0.5)--(4,0.5);
		
			\draw[line width=1.5, black] (0.5,0.5)--(1,1);
			\draw[line width=1.5, black] (1,0)--(1.5,0.5);

				\draw[line width=1.5, black] (1.5,0.5)--(2,1);
			\draw[line width=1.5, black] (2,0)--(2.3,0.3);
			
				\draw[line width=1.5, black] (2.7,0.7)--(3,1);
			\draw[line width=1.5, black] (3,0)--(3.5,0.5);
			
				\draw[line width=1.5, black] (0,0)--(0.5,0.5);
			\draw[line width=1.5, black] (3.5,0.5)--(4,1);
			
				\foreach \u in {0,1,3}
			{
			
					\draw[line width=1.5, black] (\u+0.5,0)--(\u+0.5, 0.75)  ;
			\draw[line width=1.5, black] (\u+0.5,1)--(\u+0.5,0.75) ;
				\filldraw[white] (\u+0.5,0.5) circle (1pt);
				\draw[black] (\u+0.5,0.5) circle (1pt);

			}
			\end{tikzpicture}
			\end{displaymath}
			\caption{} \label{FigureSpecialTriangulation}
		\end{figure}
		
\begin{cor}\label{CorollaryDefinitonUpupsilon}
Let $X \in \mathcal{D}^b(C_n)$ be indecomposable such that $\gamma_X$ is a simple arc and let $T \in \Aut(\mathcal{D}^b(C_n)$. Then,
\[
    \gamma_{T(X)} \simeq \Upupsilon(T)(\gamma_X).
\]
\end{cor}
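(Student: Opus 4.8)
The plan is to unwind the definition of $\Upupsilon$ given in the proof of Proposition \ref{PropositionExistenceGroupHom} and to exploit the compatibility between the action of a mapping class on homotopy classes of simple arcs and its image under the homomorphism $\Phi=\Phi^{\mathscr{T}^n}$.

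First I would recall that in the construction of $\Upupsilon$ the auto-equivalence $T$ induces a simplicial automorphism $\overline{T}\in\Aut(\mathcal{A}(\mathscr{T}^n))$ which is characterised by the property $\overline{T}(\gamma_X)=\gamma_{T(X)}$ for every indecomposable $X\in\mathcal{D}^b(C_n)$ whose curve $\gamma_X$ is a simple arc, i.e.\ for every vertex of the arc complex. This is exactly the defining property used in Proposition \ref{PropositionExistenceGroupHom}; the hypothesis that $\gamma_X$ be simple is what guarantees that $\gamma_X$ determines a vertex of $\mathcal{A}(\mathscr{T}^n)$ to which $\overline{T}$ may be applied.

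The second step is to invoke the identity $\Upupsilon(T)=\Phi^{-1}(\overline{T})$ for $n\geq 2$, respectively the analogous identification through the isomorphism $\PGL_2(\mathbb{Z})\cong\Aut(\mathcal{A}(\mathscr{T}^1))$ of Theorem \ref{TheoremIrmakMcCarthy} for $n=1$, which is precisely how $\Upupsilon(T)$ was defined. Combined with the fact --- immediate from the definition of $\Phi$ --- that for any mapping class $H$ the simplicial automorphism $\Phi(H)$ sends the class of a simple arc $\delta$ to the class of $H(\delta)$, so that $\Phi(H)(\gamma_X)$ agrees with the action of $H$ on the homotopy class $\gamma_X$, I would conclude
\[
\Upupsilon(T)(\gamma_X)=\Phi\bigl(\Upupsilon(T)\bigr)(\gamma_X)=\overline{T}(\gamma_X)=\gamma_{T(X)},
\]
which is the assertion.

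I do not expect a genuine obstacle: the statement is essentially a bookkeeping consequence of how $\Upupsilon$ was built. The only point requiring care is the interpretation of the symbol $\Upupsilon(T)(\gamma_X)$ --- on the left it denotes the action of a mapping class on a homotopy class of arcs, whereas in the construction $\Upupsilon(T)$ arises as a preimage under $\Phi$ of a simplicial automorphism --- so I would explicitly check that these two actions coincide on vertices of $\mathcal{A}(\mathscr{T}^n)$, and treat the cases $n\geq 2$ and $n=1$ separately, the latter replacing the inverse of $\Phi$ by the identification with $\PGL_2(\mathbb{Z})$ furnished by Theorem \ref{TheoremIrmakMcCarthy}.
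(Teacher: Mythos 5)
Your proposal is correct and is essentially the argument the paper intends: the corollary is stated as an immediate consequence of the construction of $\Upupsilon$ in Proposition \ref{PropositionExistenceGroupHom}, where $\overline{T}$ is defined precisely by $\overline{T}(\gamma_X)=\gamma_{T(X)}$ on vertices of the arc complex and $\Upupsilon(T)$ is its preimage under $\Phi$. Your explicit care about identifying the two actions on vertices, and the separate treatment of $n=1$ via the $\PGL_2(\mathbb{Z})$ identification, matches the paper's setup exactly.
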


\noindent In order to  describe the kernel of $\Upupsilon$, we will use the following result due to Sibilla.

\begin{prp}[{\cite[Lemma 3.3, Remark 3.4]{SibillaMappingClassGroupAction}}]\label{PropositionSibillaRigidityOfCycles}
Let $\{x_i\}_{i \in \mathbb{Z}_n}$ be a collection of smooth points such that $x_i \in \mathbb{P}^1_i$ for each $i \in \mathbb{Z}_n$. Suppose that $T \in \Aut(\mathcal{D}^b(C_n))$ satisfies

\begin{itemize}
    \item $T(\mathcal{O}_{C_n}) \cong \mathcal{O}_{C_n}$ and
    \item for all $i \in \mathbb{Z}_n$, $T(\Bbbk(x_i)) \cong \Bbbk(x_i)$.
\end{itemize}

\noindent Then, there exists an automorphism $f: C_n \rightarrow C_n$ such that $T$ and the induced equivalence $f^{\ast}: \mathcal{D}^b(C_n) \rightarrow \mathcal{D}^b(C_n)$  are naturally isomorphic. 
\end{prp}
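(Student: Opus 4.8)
The plan is to show that the hypotheses pin down the action of $T$ on all point-like objects, and then to recognise the resulting equivalence as pullback along an automorphism. The first step is to feed the fixed skyscrapers into the spherical-twist calculus. Each $\Bbbk(x_i)$ is $1$-spherical, so from $T(\Bbbk(x_i))\cong\Bbbk(x_i)$ and Corollary~\ref{CorollaryTwistFunctorsUnderEmbeddings} one obtains $T\circ T_{\Bbbk(x_i)}\cong T_{\Bbbk(x_i)}\circ T$, while Remark~\ref{RemarkTwistFunctorSmoothPointTensorProduct} identifies $T_{\Bbbk(x_i)}$ with $-\otimes^{\mathbb{L}}\mathcal{L}(x_i)$. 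Hence $T$ commutes with tensoring by every $\mathcal{L}(x_i)$, and combined with $T(\mathcal{O}_{C_n})\cong\mathcal{O}_{C_n}$ this shows that $T$ fixes, up to isomorphism, every line bundle $\mathcal{O}(\textstyle\sum_i a_ix_i)$ with $a_i\in\mathbb{Z}$.

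Next I would compute $T$ on the skyscraper of an arbitrary smooth point $y$. Its image $W\coloneqq T(\Bbbk(y))$ is again indecomposable and $1$-spherical, hence by Corollary~\ref{CorollarySphericlaCurvesAreNonSeparating} it is represented by a non-separating simple loop on $\mathscr{T}^n$ carrying a one-dimensional local system. Since $\Bbbk(y)\otimes\mathcal{L}(x_i)\cong\Bbbk(y)$ and $T$ commutes with these twists, $W\otimes\mathcal{L}(x_i)\cong W$; Proposition~\ref{PropositionSphericalTwistsBecomeDehnTwists} then shows that $\gamma_W$ is invariant under the Dehn twist $D_{\gamma_{\Bbbk(x)}^i}$, so $\gamma_W$ has zero geometric intersection with each $\gamma_{\Bbbk(x)}^i$. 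As the complement of the loops $\gamma_{\Bbbk(x)}^0,\dots,\gamma_{\Bbbk(x)}^{n-1}$ is a union of once-punctured annuli, the only non-separating simple loops disjoint from all of them are isotopic to the $\gamma_{\Bbbk(x)}^i$ themselves; by Example~\ref{ExampleLoopsOnTorus} and Theorem~\ref{TheoremImages} this gives $W\cong\Bbbk(y')[m]$ for a smooth point $y'$ and some $m\in\mathbb{Z}$. Comparing $\Hom^{\bullet}(\mathcal{O}_{C_n},\Bbbk(y))$, which is concentrated in degree $0$, with $\Hom^{\bullet}(\mathcal{O}_{C_n},\Bbbk(y')[m])$ and using $T(\mathcal{O}_{C_n})\cong\mathcal{O}_{C_n}$ forces $m=0$, while the normalisation $T(\Bbbk(x_i))\cong\Bbbk(x_i)$ forces the induced permutation of components to be trivial. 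I conclude $T(\Bbbk(y))\cong\Bbbk(\phi(y))$ for a bijection $\phi$ of the smooth locus that preserves each $\mathbb{P}^1_i$ and fixes $x_i$.

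The heart of the argument, and the step I expect to be the main obstacle, is to upgrade the point bijection $\phi$ to a morphism $f\in\Aut(C_n)$. I would argue that $\phi$ is algebraic using the $T$-invariant line bundles from the first paragraph: for $k\geq 2$ the bundle $\mathcal{O}(kx_i)$ carries a positive-dimensional space of sections, $T$ acts linearly on $\Hom^{\bullet}(\mathcal{O}_{C_n},\mathcal{O}(kx_i))$, and this action is compatible with passing to the cone of a section $\mathcal{O}_{C_n}\to\mathcal{O}(kx_i)$, whose cohomology is a skyscraper supported on the zero divisor. Therefore $\phi$ carries the linear system $|kx_i|$ to itself and restricts to a projective-linear transformation of each $\mathbb{P}^1_i$; equivalently one invokes Fourier--Mukai representability of auto-equivalences on the projective Gorenstein curve $C_n$ and reads algebraicity of $\phi$ off the support of the kernel. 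This representability over a singular curve is exactly the delicate point. Once $\phi$ is known to be Möbius on each component it extends to an automorphism $f$ of $C_n$ (in fact, since $\phi$ fixes $x_i$ and, preserving components, must fix the two nodes of $\mathbb{P}^1_i$, it fixes three points of $\mathbb{P}^1_i$, so $f$ may even be taken to be the identity).

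It remains to identify $T$ with $f^{\ast}$. Replacing $T$ by $T'\coloneqq (f^{\ast})^{-1}\circ T$, I reduce to the case where $T'$ fixes $\mathcal{O}_{C_n}$ and every skyscraper $\Bbbk(y)$ of a smooth point. A standard rigidity argument then yields $T'\cong\operatorname{Id}$: realising $T'$ as a Fourier--Mukai transform, the requirement that it fix all point-skyscrapers forces its kernel to be $\mathcal{O}_{\Delta}\otimes p^{\ast}L$ for some line bundle $L$, and $T'(\mathcal{O}_{C_n})\cong\mathcal{O}_{C_n}$ then forces $L\cong\mathcal{O}_{C_n}$. Hence $T\cong f^{\ast}$, as claimed.
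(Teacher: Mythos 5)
First, a point of order: the paper does not prove this proposition at all --- it is imported verbatim from Sibilla \cite{SibillaMappingClassGroupAction} (Lemma 3.3 and Remark 3.4), so there is no in-paper argument to compare against; you are effectively reproving Sibilla's lemma. Your first two paragraphs are a genuinely different and attractive route to the statement ``$T$ sends smooth skyscrapers to smooth skyscrapers in degree $0$'': the chain $T\circ T_{\Bbbk(x_i)}\cong T_{\Bbbk(x_i)}\circ T$, the Dehn-twist invariance of $\gamma_W$, and the topological fact that the only non-separating simple loops disjoint from all $\gamma^i_{\Bbbk(x)}$ are the $\gamma^i_{\Bbbk(x)}$ themselves are all correct and use this paper's machinery well. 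One step there is asserted rather than proved: that $\phi$ preserves components. Multi-degree is invisible to $\Hom^{\bullet}$ against $\mathcal{O}_{C_n}$ and smooth skyscrapers (all the relevant spaces are one-dimensional in degree $0$ regardless of component), so ``the induced permutation of components'' is not yet well defined; you would need, e.g., that conjugation by $T$ preserves $\ker\Upupsilon$, hence sends $-\otimes\mathcal{L}(y-y')$ with $y,y'$ on one component to a functor of the same kind. This is fixable, and in fact not strictly needed for the bare existence of $f$.

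The genuine gap is your third paragraph, which you yourself flag as the main obstacle and then do not close. Preserving the zero divisors of sections of $\mathcal{O}(kx_i)$ gives a projective-linear action on the linear system $|kx_i|$, i.e.\ on the \emph{base} of the $k$-to-$1$ map $y\mapsto Z(s)$; it does not immediately make $\phi$ itself algebraic, since a priori $\phi$ could permute points within the fibres of that map inconsistently, and the group-homomorphism property of $\phi$ on $\Bbbk^{\times}\subset\mathbb{P}^1_i$ that one extracts from linear equivalence admits wild non-algebraic solutions. The Fourier--Mukai route is available in principle (Lunts--Orlov applies since $C_n$ is reduced projective), but ``reading algebraicity off the support of the kernel'' is exactly the content that has to be proved, and the standard smooth-variety argument that the kernel lives on a graph breaks down at the nodes, where your controlled objects (skyscrapers of \emph{smooth} points) see nothing; the same caveat applies to the final rigidity step $T'\cong\operatorname{Id}$. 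This missing reconstruction step is precisely what Sibilla's Lemma 3.3 supplies. Finally, your parenthetical claim that $f$ may be taken to be the identity is false for $n\leq 2$ and contradicts Remark \ref{RemarkAutomorphismsCycle}: for $n\leq 2$ a component-preserving automorphism fixing $x_i$ need not fix the nodes of $\mathbb{P}^1_i$ individually (for $n=1$ the involution $x\mapsto x^{-1}$ fixes $x_0=1$ and the unique node while acting non-trivially), and this $\mathbb{Z}_2$ is exactly the factor $\mathcal{N}$ appearing in Proposition \ref{PropositionKernel}.
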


\begin{rem}\label{RemarkAutomorphismsCycle}
\noindent Suppose that $f: C_n \rightarrow C_n$ is automorphism such that $f^{\ast} \in \Aut(\mathcal{D}^b(C_n))$ satisfies the assumptions in Proposition \ref{PropositionSibillaRigidityOfCycles}. Then, as pointed out in \cite{SibillaMappingClassGroupAction},  $f=\operatorname{Id}$ whenever $n \geq 3$. If $n \leq 2$, then $f^2=\operatorname{Id}$ and $f$ is induced from a permutation of the preimages of the singular points in a normalization of $C_n$. Assuming that $x_0$ and $x_1$ correspond to $1 \in \mathbb{P}^1$ and the singular points to $0$ and $\infty$,  $f$ acts on all irreducible components either as the identity or as the involution $x \mapsto x^{-1}$, where $0^{-1} \coloneqq \infty$. In particular, the group of all such $f$ is isomorphic to $\mathbb{Z}_2$. If $n=1$, then $f \in \ker \Upupsilon$. If $n=2$ and $f \neq \text{Id}$, then $f^{\ast}$ permutes the skyscrapers of singular points and hence $f \not \in \ker \Upupsilon$.
\end{rem}

\begin{prp}\label{PropositionKernel}
There exists an isomorphism 

$$
    \ker \Upupsilon \cong \left(\mathcal{N} \ltimes \left(\Bbbk^{\times}\right)^{n}\right) \times \mathbb{Z} \times \Pic^{\mathbb{0}}(C_n),
$$
\noindent where 
$$  \mathcal{N} \cong \begin{cases} \mathbb{Z}_2, & \text{if }n=1; \\ \mathbf{1}, & \text{otherwise.} \end{cases}
$$

\noindent If $n=1$, the structure map of the semi-direct product maps $\overline{1}$ to the automorphism $\lambda \mapsto \lambda^{-1}$.

\end{prp}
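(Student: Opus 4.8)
The plan is to prove the two inclusions $G\le\ker\Upupsilon$ and $\ker\Upupsilon\le G$ for the subgroup $G$ generated by the obvious candidates, and then to identify $G$ with the stated (semi)direct product. First I would exhibit explicit elements of $\ker\Upupsilon$. The shift $[1]$ fixes the curve of every object, so $\Upupsilon([1])=\operatorname{Id}$ by Corollary \ref{CorollaryDefinitonUpupsilon}. Tensoring by $\mathcal{L}\in\Pic^{\mathbb{0}}(C_n)$ only alters local systems and fixes every skyscraper of a singular point (whose curve is the arc $\delta$ of Corollary \ref{CorollaryCurvesOfSingularPoints}), hence fixes all homotopy classes of simple arcs and lies in $\ker\Upupsilon$. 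The torus $(\Bbbk^{\times})^{n+1}\times\mathbb{Z}=\ker\Psi$ of Proposition \ref{PropositionExactSequenceAutoequivalences} descends along $\Aut(\mathcal{D}^b(\Lambda_n))\to\Aut(\mathcal{D}^b(C_n))$ into $\ker\Upupsilon$ by the commutative square at the end of the proof of Proposition \ref{PropositionExistenceGroupHom}; concretely, rescaling the arrow $c_i$ sends $\Bbbk(i,\lambda)\mapsto\Bbbk(i,\lambda\kappa_i)$ and fixes $\mathcal{O}(\lambda)$, while rescaling $d_0$ realises tensoring by a degree-$0$ line bundle. Finally, for $n=1$ the involution $x\mapsto x^{-1}$ of $C_1$ lies in $\ker\Upupsilon$ by Remark \ref{RemarkAutomorphismsCycle}.

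For the reverse inclusion, let $T\in\ker\Upupsilon$. By Corollary \ref{CorollaryDefinitonUpupsilon} $T$ fixes the homotopy class of every simple arc, and I would first upgrade this to loops. For a loop $\gamma_X$ and an arc $\gamma_W$ all intersections are interior, so by Lemma \ref{LemmaInteriorMorphismsSpanningSet} and Theorem \ref{TheoremGeometricDescriptionMorphismPuncturedCase} the geometric intersection number of $\gamma_X$ and $\gamma_W$ equals $\dim\Hom^{\bullet}_{\Int}(X,W)$. Since $T$ preserves interior morphisms (Lemma \ref{LemmaInteriorMorphismStable}) and fixes $\gamma_W$, the class $\gamma_{T(X)}$ has the same intersection number with every arc as $\gamma_X$; as a homotopy class of loop is determined by these numbers, $\gamma_{T(X)}\simeq\gamma_X$. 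In particular $T$ fixes the loops $\gamma_{\Pic}$ and $\gamma^i_{\Bbbk(x)}$, so by Example \ref{ExampleLoopsOnTorus} and Theorem \ref{TheoremImages} one has $T(\mathcal{O}_{C_n})\cong\mathcal{O}(\mu)[s]$ and $T(\Bbbk(x_i))\cong\Bbbk(i,\mu_i)[s_i]$.

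Now I would normalise $T$ by the explicit kernel elements. Composing with a shift and with tensoring by $\mathcal{O}(\mu)^{-1}$ arranges $T(\mathcal{O}_{C_n})\cong\mathcal{O}_{C_n}$ in degree $0$. Since a nonzero map $\mathcal{O}_{C_n}\to\Bbbk(y)$ exists in degree $0$ and none in negative degree, comparing the lowest nonvanishing degree of $\Hom^{\bullet}(\mathcal{O}_{C_n},-)$ before and after $T$ forces $s_i=0$, so $T(\Bbbk(x_i))\cong\Bbbk(i,\mu_i)$ is a genuine smooth skyscraper. Composing with the $c_i$-rescalings, which fix $\mathcal{O}(\lambda)$ and act independently on each component, brings $\mu_i$ to the parameter of $x_i$, so the resulting equivalence fixes $\mathcal{O}_{C_n}$ and every $\Bbbk(x_i)$. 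Proposition \ref{PropositionSibillaRigidityOfCycles} then gives $T\cong f^{\ast}$ for an automorphism $f$ of $C_n$, and Remark \ref{RemarkAutomorphismsCycle} pins $f$ down: for $n\ge 3$ one has $f=\operatorname{Id}$; for $n=2$ a nontrivial $f$ would leave $\ker\Upupsilon$, forcing $f=\operatorname{Id}$; for $n=1$ one has $f\in\{\operatorname{Id},\,x\mapsto x^{-1}\}$. Undoing the normalisations exhibits the original $T$ as a product of the explicit generators (together with $f^{\ast}$ when $n=1$), proving $\ker\Upupsilon=G$. To identify $G$, note that the shift is central and that the rescaling, Picard and (for $n=1$) involutive factors are detected independently by their action on degrees and on the parameters $\lambda$ of $\mathcal{O}(\lambda)$ and $\Bbbk(i,\lambda)$, giving injectivity of the natural map from the abstract group; for $n\ge 2$ these commute, yielding $(\Bbbk^{\times})^{n}\times\mathbb{Z}\times\Pic^{\mathbb{0}}(C_n)$. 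For $n=1$ I would compute the conjugation of $f^{\ast}$: since $f$ sends the smooth point with parameter $\lambda$ to the one with parameter $\lambda^{-1}$, conjugating the $c_0$-rescaling $R_{\kappa}\colon\Bbbk(\lambda)\mapsto\Bbbk(\lambda\kappa)$ by $f^{\ast}$ yields $R_{\kappa^{-1}}$, which is exactly the structure map $\overline{1}\mapsto(\lambda\mapsto\lambda^{-1})$ of the semidirect product.

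The main obstacle is this reduction step, i.e.\ converting the purely topological input $\Upupsilon(T)=\operatorname{Id}$ into the categorical normalisation required by Proposition \ref{PropositionSibillaRigidityOfCycles}: passing from arcs to loops via interior intersection numbers, trapping the shifts through the lowest-degree morphism out of $\mathcal{O}_{C_n}$, and checking that the $c_i$-rescalings correct the skyscraper parameters without disturbing $\mathcal{O}_{C_n}$. After this, Sibilla's rigidity and Remark \ref{RemarkAutomorphismsCycle} finish the surjectivity, and the only remaining work is the group-theoretic bookkeeping of the factors — in particular determining the full conjugation action of $f^{\ast}$ on the rescaling and Picard parts in the case $n=1$.
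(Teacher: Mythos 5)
Your proposal is correct and follows essentially the same strategy as the paper: exhibit the shift, the rescalings and $\Pic^{\mathbb{0}}(C_n)$ as explicit kernel elements, then normalise an arbitrary $T\in\ker\Upupsilon$ until it satisfies the hypotheses of Proposition \ref{PropositionSibillaRigidityOfCycles}, and finish with Remark \ref{RemarkAutomorphismsCycle}. The only divergence is how you pin down $T(\mathcal{O}_{C_n})$ and $T(\Bbbk(x_i))$: the paper characterises these objects categorically via their $\Hom$-spaces to the arcs of the triangulation of Figure \ref{FigureSpecialTriangulation}, whereas you show directly that $T$ fixes the representing loops using interior intersection numbers with all arcs — relying on the standard (but here unproved) fact that a simple closed curve is determined by these numbers; this is the same topological input in different packaging, and your extra detail on the conjugation action for $n=1$ is a welcome addition the paper leaves implicit.
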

\begin{proof} First of all, $[1] \in \ker \Upupsilon$.
By changing coordinates we may assume that $0, \infty \in \mathbb{P}^1$ are preimages of singular points in $\mathbb{P}^1_i$.
Rescaling every irreducible component $\mathbb{P}^1_i$ of $C_n$ by $\lambda_i \in \Bbbk^{\times}$ defines a faithful action of $\left(\Bbbk^{\times}\right)^n$ on $\mathcal{D}^b(C_n)$. Let $\Delta$ denote the triangulation from Figure \ref{FigureSpecialTriangulation}. The shifts of the objects $\Bbbk(x_i)$ are characterized up to isomorphism as those objects $Z \in \Perf(C_n)$ with the following three properties:
\begin{itemize}
    \item $Z$ is spherical; 
    \item There exists a vertical arc $\gamma_1$ and a diagonal arc $\gamma_2$ in $\Delta$ such that 
    \[\dim \Hom^{\bullet}(Z,X_{\gamma_1})=1=\dim \Hom^{\bullet}(Z,X_{\gamma_2});\]

    \item  If  $\gamma \in \Delta \setminus \{\gamma_1, \gamma_2\}$, then $\Hom^{\bullet}(Z,X_{\gamma})=0$.
\end{itemize}
\noindent Line bundles in $\Pic^{\mathbb{0}}(C_n)$ such as the structure sheaf are characterized via all vertical arcs in a similar way. Let $f^{\ast} \in \Aut(\mathcal{D}^b(C_n))$ be an equivalence which is induced by scaling or the tensor product with a line bundle $\mathcal{L} \in \Pic^{\mathbb{0}}(C_n)$. Then, $f^{\ast}$ leaves all singular skyscraper sheaves invariant and maps all smooth skyscrapers to such with support in the same component. By Corollary \ref{CorollaryDefinitonUpupsilon} and Corollary \ref{CorollaryCurvesOfSingularPoints}, it follows that $\Upupsilon(f^{\ast})$  acts trivially on the homotopy classes of the arcs of $\Delta$ and hence $\Upupsilon(f^{\ast})$ is trivial. 
Next, suppose  $\Upupsilon(T)$ is trivial for some equivalence $T \in \Aut(\mathcal{D}^b(C_n))$. Then again, from Corollary \ref{CorollaryDefinitonUpupsilon} and the characterization of $\Bbbk(x_i)$ and the Picard group above we conclude that $T(\Bbbk(x_i)) \cong \Bbbk(y_i)[m_i]$ for some $y_i \in \mathbb{P}^1_i$ and $m_i \in \mathbb{Z}$ for each $i \in \mathbb{Z}_n$ and $T(\mathcal{O}_{C_n})[d] \in \Pic^{\mathbb{0}}(C_n)$ for some $d \in \mathbb{Z}$. As $\Hom^{\bullet}(\mathcal{O}_{C_n}, \Bbbk(x_i))$ is concentrated in degree $0$, we have $m_i=d$ for $i \in \mathbb{Z}_n$.  Hence, after composition with $[-d]$, rescaling and composition with  ${-} \otimes^{\mathbb{L}}T(\mathcal{O}_{C_n})^{\vee}$ we obtain an auto-equivalence $T'$ which satisfies the conditions of Lemma \ref{PropositionSibillaRigidityOfCycles}. Thus, $T' \simeq f^{\ast}$ for some automorphism of $C_n$ and the assertion follows from Remark \ref{RemarkAutomorphismsCycle}.
\end{proof}

\noindent Proposition \ref{PropositionExistenceGroupHom} and Proposition \ref{PropositionKernel} imply the following (Theorem \ref{IntroTheoremAutoGroup} in the introduction).

\begin{cor}\label{CorollaryAutGroup}
	Let $n \geq 1$. There exists a short exact sequence
	
\begin{displaymath}
\begin{tikzcd} \mathbf{1} \arrow{r} & \left(\mathcal{N} \ltimes \left(\Bbbk^{\times}\right)^{n}\right) \times \mathbb{Z} \times \Pic^{\mathbb{0}}(C_n) \arrow{r} & \Aut(\mathcal{D}^b(C_n)) \arrow{r}{\Upupsilon} & G \arrow{r} & \mathbf{1}.  \end{tikzcd}
\end{displaymath}
\noindent where $G=\MCG(\mathscr{T}^n)$ and $\mathcal{N}$ is trivial, if $n > 1$, and $\mathcal{N}=\mathbb{Z}_2$ and $G=\PSL_2(\mathbb{Z})$, if $n=1$. 
	\end{cor}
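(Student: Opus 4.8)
The plan is to obtain the short exact sequence by assembling the surjectivity statement of Proposition \ref{PropositionExistenceGroupHom} with the kernel computation of Proposition \ref{PropositionKernel}, since between them these two results already contain all the content of the corollary. Concretely, for any $n \geq 1$ I would start from the tautological sequence
\[
\begin{tikzcd}[ampersand replacement=\&] \mathbf{1} \arrow{r} \& \ker \Upupsilon \arrow{r} \& \Aut(\mathcal{D}^b(C_n)) \arrow{r}{\Upupsilon} \& \Img \Upupsilon \arrow{r} \& \mathbf{1} \end{tikzcd}
\]
attached to the group homomorphism $\Upupsilon$, and then identify the two outer terms using the propositions. Exactness in the middle and on the right is the definition of kernel and image, and exactness on the left is the injectivity of the canonical inclusion, which is automatic.

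For the right-hand term I would invoke Proposition \ref{PropositionExistenceGroupHom}: when $n > 1$ the map $\Upupsilon$ is surjective onto $\MCG(\mathscr{T}^n)$, so $\Img \Upupsilon = G = \MCG(\mathscr{T}^n)$; when $n = 1$ the same proposition furnishes a surjection onto $\PSL_2(\mathbb{Z})$, so $G = \PSL_2(\mathbb{Z})$. Here I would be careful to recall that in the $n=1$ case the target of $\Upupsilon$ is pinned down through the isomorphism $\PGL_2(\mathbb{Z}) \cong \Aut(\mathcal{A}(\mathscr{T}^1))$ of Theorem \ref{TheoremIrmakMcCarthy} together with the orientation-preservation argument at the end of the proof of Proposition \ref{PropositionExistenceGroupHom}, so that it is genuinely $\PSL_2(\mathbb{Z})$ and not $\PGL_2(\mathbb{Z})$ that occurs as $G$.

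For the left-hand term I would quote Proposition \ref{PropositionKernel}, which gives
\[
\ker \Upupsilon \cong \left(\mathcal{N} \ltimes \left(\Bbbk^{\times}\right)^{n}\right) \times \mathbb{Z} \times \Pic^{\mathbb{0}}(C_n),
\]
with $\mathcal{N}$ trivial for $n > 1$ and $\mathcal{N} \cong \mathbb{Z}_2$ for $n = 1$, the structure map sending $\overline{1}$ to $\lambda \mapsto \lambda^{-1}$. Substituting this identification into the tautological sequence yields precisely the asserted sequence, with the two cases of $\mathcal{N}$ and of $G$ recorded simultaneously.

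Since all of the substantial work -- constructing $\Upupsilon$, proving its surjectivity, and computing its kernel via the rigidity input of Proposition \ref{PropositionSibillaRigidityOfCycles} and the characterizations of skyscrapers and line bundles through the triangulation of Figure \ref{FigureSpecialTriangulation} -- has already been done in the two cited propositions, I do not expect a genuine obstacle here; the corollary is a formal consequence. The only point requiring care is the bookkeeping across the dichotomy $n=1$ versus $n>1$, namely matching $\mathcal{N} = \mathbb{Z}_2$ with $G = \PSL_2(\mathbb{Z})$ in the exceptional case and $\mathcal{N} = \mathbf{1}$ with $G = \MCG(\mathscr{T}^n)$ otherwise, so that the single displayed sequence in the statement faithfully covers both situations.
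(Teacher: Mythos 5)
Your proposal is correct and is exactly the paper's argument: the corollary is stated as an immediate consequence of Proposition \ref{PropositionExistenceGroupHom} (surjectivity, with the $\PSL_2(\mathbb{Z})$ target in the $n=1$ case) and Proposition \ref{PropositionKernel} (identification of $\ker \Upupsilon$), assembled into the tautological exact sequence for $\Upupsilon$. Nothing further is needed.
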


\begin{rem}\label{RemarkBK}	Our results for the case $n=1$ are in line with \cite[Corollary 5.8]{BurbanKreusslerGenusOne} which states that there exists a short exact sequence
	\begin{displaymath}
	\begin{tikzcd} \mathbf{1} \arrow{r} & \mathcal{K}  \arrow{r} & \Aut\left(\mathcal{D}^b(C_1)\right) \arrow{r}{\varphi} & \operatorname{SL}_2(\mathbb{Z}) \arrow{r} & \mathbf{1},\end{tikzcd}
	\end{displaymath}
	
	\noindent where $\mathcal{K}$ is generated by $[2]$, $\Aut(C_1)$ and tensor products with line bundles of degree $0$.
	The map $\varphi$ associates to an auto-equivalence $T$ its induced map on the Grothendieck group $\mathcal{K}_0(\mathcal{D}^b(C_1)) \cong \mathbb{Z}^2$. The isomorphism is given by the map $X \mapsto (\deg X, \rk X)$.  Note that $\Aut(C_1)\cong \mathbb{Z}_2 \ltimes \Bbbk^{\times}$, where the first factor corresponds to the involution $(x \mapsto x^{-1})$ and the second factor to the scaling automorphisms. Let $p: \operatorname{SL}_2(\mathbb{Z}) \rightarrow \operatorname{PSL}_2(\mathbb{Z})$ denote the projection. Using the explicit isomorphism $\Aut(\mathcal{A}(\mathscr{T}^1)) \cong \operatorname{PGL}_2(\mathbb{Z})$ described in the proof of \cite[Theorem 2.1]{IrmakMcCarthy}, one sees that $p \circ \varphi=\Upupsilon$.
\end{rem}

\begin{rem}\label{RemarkGeneralizationsEquivalences}
We expect Corollary \ref{CorollaryAutGroup} to generalize to Verdier quotients $\mathcal{D}$ of other gentle algebras $\Lambda$ as explained in Remark \ref{RemarkGeneralizations}. In general, $\MCG(\mathscr{T}^n)$ should be replaced by the stabilizer of a line field in the mapping class group of $\Sigma_C$ (in the notation of Remark \ref{RemarkGeneralizations})  and the existence of a map $\Upupsilon$ from  $\Aut(\mathcal{D})$ into the stabilizer seems to follow in a similar way except when $\Sigma_C$ is one of finitely many surfaces, where the automorphism group of the arc complex and the mapping class group of $\Sigma_C$ do not agree. We expect the kernel of $\Upupsilon$ to be isomorphic to $\ker \Psi$.
\end{rem}

\subsection{Faithfulness of group actions}\label{SectionFaithfulness}

\noindent For each collection of smooth points $(x_i)_{i \in \mathbb{Z}_n}$ in $C_n$ such that $x_i \in \mathbb{P}^1_i$, Sibilla \cite{SibillaMappingClassGroupAction} defined a group action of a central extension $\PMCG_{\operatorname{gr}}(\mathscr{T}^n)$ of $\PMCG(\mathscr{T}^n)$ on $\mathcal{D}^b(C_n)$. The group $\PMCG_{\operatorname{gr}}(\mathscr{T}^n)$ fits into a short exact sequence

$$
\begin{tikzcd}
    \mathbf{1} \arrow{r} & \mathbb{Z} \arrow{r}{\iota} & \PMCG_{\operatorname{gr}}(\mathscr{T}^n) \arrow{r} & \PMCG(\mathscr{T}^n) \arrow{r} & \mathbf{1}.
\end{tikzcd}$$
and is generated by the Dehn twists $\{D_{\gamma_{\Pic}}\} \cup \{ D_{\gamma_{\Bbbk(x)}^i} \, | \, i \in \mathbb{Z}_n\}$ and a central element $t\coloneqq \iota(1)$. The projection in the short exact sequence above maps the Dehn twists to themselves and $t$ to the identity. The structure homomorphism $\mathfrak{X}:\PMCG_{\operatorname{gr}}(\mathscr{T}^n) \rightarrow \Aut\left(\mathcal{D}^b(C_n)\right)$ of the group action maps $D_{\gamma_{\Pic}}$ to $T_{\mathcal{O}_{C_n}}$, $D_{\gamma_{\Bbbk(x)}^i}$ to $T_{\Bbbk(x_i)}$ and $t$ to the shift functor.\medskip

\noindent Sibilla conjectured that the action is faithful. We confirm this in the following theorem.

\begin{thm}\label{PropositionActionSplits} The homomorphism $\mathfrak{X}$ is injective. Thus, the group action is faithful.
\end{thm}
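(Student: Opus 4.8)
The strategy is to compare $\mathfrak{X}$ with the projection $\varpi:\PMCG_{\operatorname{gr}}(\mathscr{T}^n)\twoheadrightarrow\PMCG(\mathscr{T}^n)$, whose kernel is the central copy of $\mathbb{Z}$ generated by $t$, and to exploit the homomorphism $\Upupsilon$ of Proposition \ref{PropositionExistenceGroupHom}. The plan is to show that $\Upupsilon\circ\mathfrak{X}$ factors through $\varpi$ as the natural inclusion $j:\PMCG(\mathscr{T}^n)\hookrightarrow\MCG(\mathscr{T}^n)$, and then to run a two-step argument: if $g\in\ker\mathfrak{X}$, then $j(\varpi(g))=\Upupsilon(\mathfrak{X}(g))=\operatorname{Id}$, so $\varpi(g)=\operatorname{Id}$ by injectivity of $j$ and hence $g=t^k\in\ker\varpi$; finally $\mathfrak{X}(t^k)=[k]$, and $[k]=\operatorname{Id}$ forces $k=0$ because $\mathcal{O}_{C_n}$ and $\mathcal{O}_{C_n}[k]$ are concentrated in different cohomological degrees unless $k=0$.

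The factorization rests on identifying the images of the generators under $\Upupsilon\circ\mathfrak{X}$. First, $\mathfrak{X}(t)=[1]$ and $[1]\in\ker\Upupsilon$ (as recorded in the proof of Proposition \ref{PropositionKernel}), so $t\in\ker(\Upupsilon\circ\mathfrak{X})$; this is what permits the descent through $\varpi$. For the Dehn-twist generators I would prove $\Upupsilon(T_{\mathcal{O}_{C_n}})=D_{\gamma_{\Pic}}$ and $\Upupsilon(T_{\Bbbk(x_i)})=D_{\gamma_{\Bbbk(x)}^i}$ in $\MCG(\mathscr{T}^n)$. This follows from the commutative square at the end of the proof of Proposition \ref{PropositionExistenceGroupHom}: the spherical twists $T_{\mathcal{O}(\lambda)}$ and $T_{\Bbbk(i,\lambda)}$ on $\mathcal{D}^b(\Lambda_n)$ act on the arc complex $\mathcal{A}(\mathbb{T}_n)$ as the Dehn twists $D_{\gamma_{\Pic}}$ and $D_{\gamma_{\Bbbk(x)}^i}$ by Theorem \ref{TheoremEquationForPsi} and Proposition \ref{PropositionSphericalTwistsBecomeDehnTwists}, they descend through the localization to $T_{\mathcal{O}_{C_n}}$ and $T_{\Bbbk(x_i)}$ on $\mathcal{D}^b(C_n)$ by Corollary \ref{CorollaryTwistFunctorsUnderEmbeddings}, and the radial-extension map $\alpha$ sends these Dehn twists on $\mathbb{T}_n$ to the corresponding Dehn twists on $\mathscr{T}^n$; injectivity of $\Phi$ (Theorem \ref{TheoremIrmakMcCarthy}) together with Corollary \ref{CorollaryDefinitonUpupsilon} then yields the stated identities. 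Since $\PMCG(\mathscr{T}^n)$ is generated by the $\varpi$-images of these Dehn twists, $\Upupsilon\circ\mathfrak{X}$ and $j\circ\varpi$ agree on a generating set and hence coincide. This finishes the proof for $n\geq 2$.

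The case $n=1$ requires an extra argument, because $j$ is replaced by the non-injective map $\PMCG(\mathscr{T}^1)\cong\SL_2(\mathbb{Z})\to\PSL_2(\mathbb{Z})$ induced by $\Phi^{\mathscr{T}^1}$ (the image of $\MCG(\mathscr{T}^1)$ inside $\Aut(\mathcal{A}(\mathscr{T}^1))\cong\PGL_2(\mathbb{Z})$), whose kernel is generated by the hyperelliptic involution $-I$. The step above then only gives $\varpi(g)\in\{\pm I\}$. To rule out $\varpi(g)=-I$ I would use the finer invariant $\varphi:\Aut(\mathcal{D}^b(C_1))\to\SL_2(\mathbb{Z})$ of Remark \ref{RemarkBK} (the action on $\mathcal{K}_0(\mathcal{D}^b(C_1))\cong\mathbb{Z}^2$), which refines $\Upupsilon$ in the sense that $\Upupsilon=\bar p\circ\varphi$ for the projection $\bar p:\SL_2(\mathbb{Z})\to\PSL_2(\mathbb{Z})$, and on which the shift acts by $\varphi([1])=-I$. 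Writing a putative kernel element with $\varpi(g)=-I$ as $g=\xi\,t^m$, where $\xi=(D_{\gamma_{\Pic}}D_{\gamma_{\Bbbk(x)}^0})^3$ is the standard lift of $-I$, the identity $\mathfrak{X}(g)=\operatorname{Id}$ would force $(T_{\mathcal{O}_{C_1}}T_{\Bbbk(x_0)})^3\cong[-m]$ with $-m$ odd. One then shows this is impossible: the functor $(T_{\mathcal{O}_{C_1}}T_{\Bbbk(x_0)})^3$ lies in $\ker\Upupsilon$ but has nontrivial component in the factor $\mathcal{N}\cong\mathbb{Z}_2$ of Proposition \ref{PropositionKernel} (it is $[1]$ composed with the inversion automorphism $x\mapsto x^{-1}$, i.e.\ the image of the hyperelliptic involution under $\mathfrak{X}$), hence is never a pure shift. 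This contradiction gives $\varpi(g)=\operatorname{Id}$, and the argument concludes as before.

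The main obstacle is the computation underlying $\Upupsilon(T_{\mathcal{O}_{C_n}})=D_{\gamma_{\Pic}}$ and its point-twist analogue, namely tracking the spherical twists faithfully through the Verdier localization $\mathcal{D}^b(\Lambda_n)\to\mathcal{D}^b(C_n)$ and the map $\alpha$, together with the $n=1$ subtlety of separating the shift $[1]$ from the lift of the hyperelliptic involution via the $\mathcal{K}_0$-action. Once these compatibilities are in place, the remaining steps (descent through $\varpi$ and injectivity of $\mathfrak{X}$ on $\langle t\rangle$) are formal.
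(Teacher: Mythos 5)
Your argument for $n\ge 2$ is essentially the paper's: the paper also compares the two short exact sequences $\mathbb{Z}\to\PMCG_{\operatorname{gr}}(\mathscr{T}^n)\to\PMCG(\mathscr{T}^n)$ and $\ker\Upupsilon\to\Aut(\mathcal{D}^b(C_n))\to\MCG(\mathscr{T}^n)$, checks commutativity on the generators via Proposition \ref{PropositionSphericalTwistsBecomeDehnTwists}, and concludes injectivity of $\mathfrak{X}$ from injectivity of the two outer vertical maps --- which is exactly your two-step descent through $\varpi$ followed by $\mathfrak{X}(t^k)=[k]\neq\operatorname{Id}$ for $k\neq 0$. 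Where you genuinely diverge is the case $n=1$, and this is a worthwhile addition: for $n=1$ the target of $\Upupsilon$ is $\PSL_2(\mathbb{Z})$ rather than $\MCG(\mathscr{T}^1)\cong\operatorname{SL}_2(\mathbb{Z})$, so the right-hand vertical map $\PMCG(\mathscr{T}^1)\to\PSL_2(\mathbb{Z})$ has kernel $\{\pm I\}$ and the paper's one-line justification does not literally apply there. Your patch --- refining $\Upupsilon$ to the $K_0$-action $\varphi$ of Remark \ref{RemarkBK}, writing a putative kernel element over $-I$ as $\xi t^m$ with $\xi$ the lift of the hyperelliptic involution, and observing that $\mathfrak{X}(\xi)=(T_{\mathcal{O}_{C_1}}T_{\Bbbk(x_0)})^3$ has nontrivial component in the $\mathcal{N}\cong\mathbb{Z}_2$ factor of $\ker\Upupsilon$ and hence is not a pure shift --- is sound, though the identification of $(T_{\mathcal{O}_{C_1}}T_{\Bbbk(x_0)})^3$ with a shift of the pullback along $x\mapsto x^{-1}$ is the one substantive computation you should carry out rather than assert; it follows for $C_1$ from \cite{BurbanKreusslerGenusOne} (and on $K_0$ from the matrices of the two twists, whose product has order six in $\operatorname{SL}_2(\mathbb{Z})$). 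In short: same skeleton as the paper, with a more careful treatment of $n=1$ that closes a gap the paper elides.
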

\begin{proof}
Consider the diagram of short exact sequences

	$$\begin{tikzcd}[ampersand replacement=\&, column sep=2em]
            \mathbf{1} \arrow{r} \& \mathbb{Z} \arrow{r}  \arrow[hookrightarrow]{dd} \& \PMCG_{\operatorname{gr}}(\mathscr{T}^n) \arrow{r} \arrow{dd}{\mathfrak{X}} \& \PMCG(\mathscr{T}^n) \arrow[hookrightarrow]{dd} \arrow{r} \& \mathbf{1} \\ \\
          \mathbf{1} \arrow{r}\& \ker \Upupsilon \arrow{r} \& \Aut\left(\mathcal{D}^b(C_n)\right) \arrow{r}{\Upupsilon} \& \MCG(\mathscr{T}^n) \arrow{r} \& \mathbf{1},
         \end{tikzcd}$$
        \noindent where the left vertical map is the isomorphism onto the $\mathbb{Z}$-component of $\ker \Upupsilon$. The diagram commutes by Proposition \ref{PropositionSphericalTwistsBecomeDehnTwists}. As the outer vertical maps are injective, so is $\mathfrak{X}$.\end{proof}

\begin{rem}
With a little bit more effort and use of gradings on arcs, it is not difficult to see that $\Upupsilon$ lifts to a group homomorphism $\widetilde{\Upupsilon}: \Aut\left(\mathcal{D}^b(C_n)\right) \rightarrow \MCG_{\operatorname{gr}}(\mathscr{T}^n)$, where $\MCG_{\operatorname{gr}}(\mathscr{T}^n)$ is a central extension of $\MCG(\mathscr{T}^n)$ and one finds that $\widetilde{\Upupsilon} \circ \mathfrak{X}$ is the inclusion $\PMCG_{\operatorname{gr}}(\mathscr{T}^n) \hookrightarrow \MCG_{\operatorname{gr}}(\mathscr{T}^n)$.
\end{rem}

	\section{Curves of vector bundles and simple vector bundles}\label{SectionVectorBundles}
	\noindent In this section, we identify those simple loops on  the $n$-punctured torus which correspond to simple vector bundles on $C_n$ under the bijection in Theorem \ref{IntroTheoremClassificationSphericalObjects}. We further characterize the loops which represent vector bundles and provide an easy way to determine the multi-degree and rank of a vector bundle from its representing loop.
	
	\subsection{Simple vector bundles on cycles of projective lines}
	
	\noindent Simple vector bundles on cycles of projective lines were studied by Burban-Drozd-Greuel \cite{BurbanDrozdGreuel} and Bodnarchuk-Drozd-Greuel \cite{BodnarchukDrozdGreuel}. The latter showed that the rank, the multi-degree and the determinant form a complete set of invariants for the isomorphism class of a simple vector bundle.  We recall that the \textbf{multi-degree} of a vector bundle $\mathcal{E}$ over $C_n$ is the function $\mathbb{d} \in \mathbb{Z}^{\mathbb{Z}_n}$ whose value  $\mathbb{d}(i)$ is the degree of the restricted  pull back bundle $\left(\pi^{\ast}\mathcal{E}\right)|_{\pi^{-1}(\mathbb{P}^1_i)}$, where $\pi$ denotes a normalization map. For any $\mathbb{d} \in \mathbb{Z}^{\mathbb{Z}_n}$, we refer to the sum $\underline{\mathbb{d}}=\sum_{x \in \mathbb{Z}_n}\mathbb{d}(x)$ as the \textbf{total degree} of $\mathbb{d}$.
	 
	 	\begin{thm}[\cite{BodnarchukDrozdGreuel}]\label{TheoremBodnarchukDrozdGreuel}
	 	Let  $r \geq 1$ be a natural number, $\mathbb{d} \in \mathbb{Z}^{\mathbb{Z}^n}$ and $\mathcal{L} \in \Pic^{\mathbb{0}}(C_n) \cong \Bbbk^{\times}$. Then, there exists a simple vector bundle $\mathcal{E}$ on $C_n$ of rank $r$, multi-degree $\mathbb{d}$ and determinant $\mathcal{L}$ if and only if $r$ and $\underline{\mathbb{d}}$ are coprime. Moreover, in this case, the isomorphism class of $\mathcal{E}$ is uniquely determined by the triple $(r, \mathbb{d}, \mathcal{L})$.
	 \end{thm}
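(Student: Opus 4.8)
The plan is to deduce the theorem entirely from the curve model, turning it into a topological classification of loops. First I would reduce the statement to such a classification. By Example \ref{ExampleSphericalObjects} every simple vector bundle is spherical, so by Theorem \ref{TheoremClassificationSphericalObjects} and Corollary \ref{CorollarySphericlaCurvesAreNonSeparating} it is represented by a non-separating simple loop on $\mathscr{T}^n$ carrying a one-dimensional local system with parameter $\lambda \in \Bbbk^{\times}$. By Theorem \ref{TheoremImagesOfSimpleVectorBundles} such a loop represents a vector bundle precisely when it is homotopic to a smooth loop whose latitudinal derivative never vanishes, and among all shifts of the associated spherical object exactly one is an honest locally free sheaf concentrated in degree $0$. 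Thus the assignment $\mathcal E \mapsto (\gamma_{\mathcal E}, \lambda)$ identifies isomorphism classes of simple vector bundles with pairs consisting of a homotopy class of such a ``vector-bundle loop'' and a scalar $\lambda$, and the theorem becomes the assertion that these loops are classified by the coprime pairs $(r, \mathbb{d})$ while $\lambda$ records the determinant.

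Second I would read off rank and multi-degree from the loop. By Theorem \ref{TheoremImagesOfSimpleVectorBundles} the rank $r$ equals the number of full latitudinal turns of $\gamma_{\mathcal E}$, and the component $\mathbb{d}(i)$ of the multi-degree is the net meridional progress of $\gamma_{\mathcal E}$ across the part of the torus lying over $\mathbb{P}^1_i$; in particular the total degree $\underline{\mathbb{d}}=\sum_i \mathbb{d}(i)$ is the meridional winding number. Passing to the universal cover $\mathbb{R}^2$, a vector-bundle loop then lifts to a path that is monotone in the first coordinate and runs from $(0,y)$ to $(rn, y+\underline{\mathbb{d}})$, so its class in $H_1$ of the closed torus is $r\,\ell + \underline{\mathbb{d}}\,m$, where $\ell$ and $m$ are the latitudinal and meridional generators. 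A non-separating simple loop has primitive homology class (separating loops are excluded by Lemma \ref{LemmaHopf}), and a primitive class exists exactly when $\gcd(r,\underline{\mathbb{d}})=1$; this already yields the ``if and only if'' of the theorem. For the finer data I would exhibit an explicit staircase normal form: for any displacements $d_0,\dots,d_{n-1}$ with $\sum_i d_i=\underline{\mathbb{d}}$ and $\gcd(r,\underline{\mathbb{d}})=1$ one constructs a monotone embedded loop accumulating exactly $d_i$ over component $i$, and any two monotone embedded loops with the same displacement vector are homotopic through monotone loops. This identifies vector-bundle loops with coprime pairs $(r,\mathbb{d})$.

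Third I would match the scalar $\lambda$ with the determinant. For fixed $(r,\mathbb{d})$ the previous step attaches a single loop, so the simple vector bundles of rank $r$ and multi-degree $\mathbb{d}$ form the one-parameter family $\{\mathcal E_\lambda\}_{\lambda \in \Bbbk^{\times}}$. The group $\Pic^{\mathbb{0}}(C_n)\cong \Bbbk^{\times}$ acts by $\mathcal E_\lambda \mapsto \mathcal E_\lambda \otimes \mathcal L$; by Proposition \ref{PropositionKernel} this tensoring lies in $\ker \Upupsilon$, hence fixes the homotopy class of every loop and acts only on the local-system parameter. On determinants it acts by $\det \mathcal E_\lambda \mapsto \det \mathcal E_\lambda \otimes \mathcal L^{\otimes r}$, i.e. through the $r$-th power map on $\Bbbk^{\times}$, which is surjective over the algebraically closed field $\Bbbk$; this gives existence of a bundle with any prescribed determinant in $\Pic^{\mathbb{d}}$. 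To finish I would prove that $\lambda \mapsto \det \mathcal E_\lambda$ is injective, so that $(r,\mathbb{d},\mathcal L)$ is a complete invariant and the classification is a bijection.

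The hard part will be this last identification of the analytic local-system parameter $\lambda$ with the algebraic determinant: unlike rank and multi-degree it is not a homotopy invariant of the loop, and its dependence on $\lambda$ must be computed directly. I expect to control it by tracking how the $\Pic^{\mathbb{0}}$-twists and the scaling auto-equivalences of Proposition \ref{PropositionKernel} act on the monodromy of the local system, reducing injectivity to freeness of the induced $\Bbbk^{\times}$-action on the family $\{\mathcal E_\lambda\}$. The other point requiring care is the combinatorial bookkeeping in the staircase normal form of the second step, namely verifying that the per-component displacements constitute a complete and otherwise unconstrained invariant, subject only to the single global coprimality condition $\gcd(r,\underline{\mathbb{d}})=1$.
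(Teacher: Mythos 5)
Your overall architecture is the same as the paper's: reduce to a classification of simple non-separating loops via Theorems \ref{TheoremClassificationSphericalObjects} and \ref{TheoremImagesOfSimpleVectorBundles}, read off $(r,\mathbb{d})$ topologically, construct a staircase representative for existence (this is exactly Corollary \ref{CorollaryThetaIsSimple}, proved via Lemmas \ref{LemmaSequencesHaveIntersectionsProperties} and \ref{LemmaCombinatorialIntersections}), and prove uniqueness of the homotopy class. However, your uniqueness step has a real gap. The assertion that ``any two monotone embedded loops with the same displacement vector are homotopic'' is precisely Proposition \ref{PropositionNewProof} and is the heart of the matter, but the homological framework you set up cannot deliver it for $n\geq 2$: the displacement vector $(r,d_0,\dots,d_{n-1})$ only records the class in $H_1(\mathscr{T}^n)$, and homology does not determine the homotopy class of a simple closed curve on a multiply punctured torus (a Dehn twist about a separating, null-homologous curve encircling several punctures preserves $H_1$ but changes homotopy classes). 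Monotonicity must enter in an essential way, and you give no mechanism for exploiting it. The paper's route is an induction on $n$: a contraction operator $C^q$ merges two punctures, sends simple monotone loops to simple monotone loops, and the fibre of $C^q$ over a fixed simple loop is parametrized exactly by the extra multi-degree datum; this reduces everything to the classical fact that non-separating simple loops on the once-punctured torus are classified by coprime pairs. Your homology argument does correctly give the \emph{only if} direction of the coprimality criterion, which is a clean shortcut the paper only obtains at the base of its induction.

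Your third step also contains an error. The action of $\Pic^{\mathbb{0}}(C_n)\cong\Bbbk^{\times}$ on the family $\{\mathcal{E}_{\lambda}\}$ of simple bundles with fixed $(r,\mathbb{d})$ is \emph{not} free: tensoring with $\mathcal{L}_{\mu}$ multiplies the determinant by $\mathcal{L}_{\mu}^{\otimes r}$ and (in the gluing-matrix description) the parameter $\lambda$ by $\mu^{r}$, so the stabilizer is the group of $r$-th roots of unity. Worse, freeness is the wrong thing to want: if the action on $\{\mathcal{E}_{\lambda}\}$ were free while the action on determinants factors through the $r$-th power map, equivariance would force the determinant map to be $r$-to-one rather than injective. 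The correct reduction is that both the source and the target are transitive $\Bbbk^{\times}$-sets with the \emph{same} stabilizer $\mu_{r}(\Bbbk)$, so any equivariant map between them is a bijection; what has to be computed is that tensoring acts on $\lambda$ exactly by $\lambda\mapsto\mu^{r}\lambda$. (The paper itself is terse on this point, deferring entirely to the identification of $\lambda$ with the Jordan-block scalar of \cite{BurbanDrozdGreuel}, but your proposed argument as stated would prove the wrong multiplicity.)
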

	 
	\noindent We provide an alternative proof of Theorem \ref{TheoremBodnarchukDrozdGreuel} in Section \ref{SectionAlternativeProof} which is based on the topological model of $\mathcal{D}^b(C_n)$.  It is natural to ask how a simple vector bundle with a given set of invariants $(r, \mathbb{d}, \mathcal{L})$ can be constructed and a short report on the matter is given in the next section.\medskip
	
	\subsubsection{Vector bundles as glued line bundles and cyclic sequences} \ \medskip

	\noindent Throughout this section, we fix a normalization map $\pi:\widetilde{C}_n \rightarrow C_n$ and denote by $\widetilde{U}_i$ the preimage of $\mathbb{P}^1_i$ under $\pi$.\medskip

	\noindent Let $\mathcal{E}$ be a vector bundle over $C_n$ of rank $r$ and multi-degree $\mathbb{d}$. Then $\pi^{\ast}(\mathcal{E})$ decomposes into a direct sum of $n \cdot r$ line bundles over the components $\widetilde{U}_i \cong \mathbb{P}^1$. In other words, there exists a unique multi-set of integers $D_{\mathcal{E}}=\{m_i^j | \, i \in \mathbb{Z}_n, j \in [0,r)\}$  such that
	
	\begin{displaymath}
	\pi^*(\mathcal{E})\cong \bigoplus_{i \in \mathbb{Z}_n} \mathcal{E}_i,   \qquad \mathcal{E}_i=\bigoplus_{j=0}^{r-1} 
	{\mathcal{O}_{\widetilde{U}_i}\left(m_i^j\right)}.
	\end{displaymath}
	
	\noindent The vector bundle $\mathcal{E}$ then can be thought of as being glued from the vector bundles $\mathcal{E}_i$ by identifying their stalks along the preimages of singular points. The map which identifies the stalks of $\mathcal{E}_i$ and $\mathcal{E}_{i+1}$ over the corresponding singular point in $\mathbb{P}^1_i \cap \mathbb{P}^1_{i+1}$ encodes a matrix $M_i$ over $\Bbbk$. The resulting sequence $(M_i)_{i \in \mathbb{Z}_n}$ describes the vector bundle $\mathcal{E}$. 
	
	\begin{thm}[\cite{BurbanDrozdGreuel}]
	Let $\mathcal{E}$ be an indecomposable vector bundle. There exist $m \geq 1$ such that $m | r$ and splittings of $\mathcal{E}_0, \dots, \mathcal{E}_{n-1}$ for which the matrices $M_i$ are of the following shape. 
	\begin{enumerate}
		\item All $M_i$ are block matrices with blocks of size $m \times m$ and in each row and column of the block division of $M_i$ there exists a single non-zero block.
		\item All non-zero blocks contain the identity matrix except for a single block in $M_0$ which contains a Jordan block $J_m(\lambda)$ for some $\lambda \in \Bbbk^{\times}$.
	\end{enumerate} 
	\end{thm}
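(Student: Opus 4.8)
The plan is to deduce the canonical form from the surface model for $\mathcal{D}^b(\Lambda_n)$ together with the classical presentation of $\mathcal{E}$ as glued line bundles. First I would observe that, since $C_n$ is nodal and hence Gorenstein, every vector bundle is a perfect complex, so an indecomposable vector bundle $\mathcal{E}$ is an indecomposable object of $\Perf(C_n)$. By the corollary following Proposition \ref{PropositionClassificationCalabiYauObjects}, the object $\mathbb{F}(\mathcal{E})$ is then represented by a primitive loop $\gamma$ on $\mathbb{T}_n$ carrying an indecomposable local system $\mathcal{V}$, that is $\mathbb{F}(\mathcal{E}) \cong \P_{(\gamma,g)}(\mathcal{V})$. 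As recalled in Section \ref{SectionComplexOfACurve}, indecomposable local systems on a loop are classified by the polynomials $(X-\lambda)^m$ with $\lambda \in \Bbbk^{\times}$; after choosing a base point on $\gamma$, the monodromy of $\mathcal{V}$ is a \emph{single} Jordan block $J_m(\lambda)$, and the transport isomorphisms $u_i$ entering the differential of $\P_{(\gamma,g)}(\mathcal{V})$ may be taken to be identities across every segment except the one containing the base point, where the transport equals $J_m(\lambda)$. This already isolates the distinguished scalar $\lambda$ and the integer $m = \dim \mathcal{V}$ of the statement.

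Next I would pin down the block size and establish $m \mid r$. Applying the rank computation (Theorem \ref{IntroTheoremSimpleVectorBundles}) to the simple bundle attached to the primitive loop $\gamma$ with a one-dimensional local system, its rank equals the latitudinal winding number $r_0$ of $\gamma$, which is precisely the number of times $\gamma$ meets any vertical loop $\gamma_{\Bbbk(x)}^i$ of Example \ref{ExampleLoopsOnTorus}. Enlarging the local system to dimension $m$ multiplies the rank by $m$, so $r = m r_0$, giving $m \mid r$ with quotient $r/m = r_0$. Consequently each matrix $M_i$, which is an $r \times r$ gluing isomorphism at the node $\mathbb{P}^1_i \cap \mathbb{P}^1_{i+1}$, is naturally divided into $r_0 \times r_0$ blocks of size $m \times m$, one block for each latitudinal strand of $\gamma$ over the component.

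I would then read off the shape of the $M_i$ from the surface. The vertical loops $\gamma_{\Bbbk(x)}^i$ detect passage across the nodes (compare Corollary \ref{CorollaryCurvesOfSingularPoints}), and the $r$ line-bundle summands of $\pi^*(\mathcal{E})|_{\widetilde{U}_i} = \bigoplus_j \mathcal{O}_{\widetilde{U}_i}(m_i^j)$ correspond to the $r_0$ crossings of $\gamma$ with $\gamma_{\Bbbk(x)}^i$, each refined by the $m$-dimensional local system. Since $\gamma$ is a single connected primitive loop, it threads these strands cyclically: each strand over $\mathbb{P}^1_i$ is joined to exactly one strand over $\mathbb{P}^1_{i+1}$, which forces the block-permutation pattern of property (1), namely a unique non-zero $m \times m$ block in every row and column. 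Because all transports are identities except across the base point, the corresponding blocks are identity matrices, and placing the base point over the component $\mathbb{P}^1_0$ localizes the single non-identity block, equal to $J_m(\lambda)$, inside $M_0$, giving property (2). The required splittings of $\mathcal{E}_0, \dots, \mathcal{E}_{n-1}$ are exactly the trivializations of each $\mathcal{E}_i$ at the two node fibers induced by this strand description.

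The hard part will be the precise dictionary between the two descriptions. One must match the summands $\mathcal{O}_{\widetilde{U}_i}(m_i^j)$ and their fibers at the two preimages of nodes with the strands of $\gamma$ so that evaluation of the local-system transports at the node fibers reproduces the $M_i$ on the nose; in particular one has to verify that the identity transports become genuine identity blocks after trivializing, and that the freedom in splitting each $\mathcal{E}_i$ (and absorbing the automorphisms of a direct sum of line bundles on $\mathbb{P}^1$) can be used to remove all off-diagonal contributions without disturbing the single Jordan block. This amounts to checking that the degree bookkeeping, governed by the cyclic sequence of $\mathcal{E}$ of Corollary \ref{CorollarySimpleVCB}, is compatible with the block-permutation pattern, and it is here that the surface model must be compared carefully with the explicit gluing construction of \cite{BurbanDrozdGreuel}.
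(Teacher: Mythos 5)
The paper does not prove this statement: it is quoted from \cite{BurbanDrozdGreuel}, where it is established on the sheaf-theoretic side by reducing the gluing data $(M_i)$ modulo the automorphisms of the split bundles $\mathcal{E}_i$ via an associated matrix problem. There is therefore no in-paper argument to compare against, and your proposal has to stand as an independent proof. As such it has two genuine gaps.

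First, the essential content is deferred rather than supplied. Everything hinges on the ``precise dictionary'' you postpone to your last paragraph: matching the strands of $\gamma$ and the transport maps of $\mathcal{V}$ with the summands $\mathcal{O}_{\widetilde{U}_i}(m_i^j)$ and their fibres at the preimages of the nodes, so that the local-system monodromy literally becomes the matrices $M_i$ after trivialising. That identification requires computing $\mathbb{F}$ (equivalently $\mathbb{F}^{-1}$) explicitly on arbitrary vector bundles --- exactly the computation the paper declares it avoids, and which is only available from \cite{BurbanDrozdTilting} for $\Pic^{\mathbb{0}}(C_n)$ and skyscraper sheaves (Theorem \ref{TheoremImages}). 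The $M_i$ are defined by identifying stalks in the gluing construction, not by band-complex data; the theorem is precisely the assertion that, after a suitable change of splitting of each $\mathcal{E}_i$, the two normal forms coincide. Asserting that ``identity transports become genuine identity blocks'' and that the Jordan block can be localised in $M_0$ is the conclusion restated, not an argument for it.

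Second, the route is circular. You invoke Theorem \ref{IntroTheoremSimpleVectorBundles} to get $r=mr_0$ and Corollary \ref{CorollarySimpleVCB} to control the ``degree bookkeeping'', but Corollary \ref{CorollarySimpleVCB} rests on Theorem \ref{TheoremNecessarySufficientConditions}, whose very statement presupposes the cyclic sequence $\seq$ --- and $\seq$ is defined from the block-permutation normal form of the $M_i$ that you are trying to establish. Similarly, the identification of $\dim\mathcal{V}$ with the level in the homogeneous tube of $\mathcal{E}$ and of that level with the size of the Jordan block in the gluing data is itself part of what \cite{BurbanDrozdGreuel} proves. A self-contained argument along your lines would have to establish the strand/summand dictionary first, essentially redoing the matrix-problem reduction of \cite{BurbanDrozdGreuel} in surface language, and only then deduce the surface-model statements --- not the other way around.
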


	\noindent The scalar $\lambda$ describes the determinant of $\mathcal{E}$ whereas $m$ specifies the position of $\mathcal{E}$ in the corresponding homogeneous tube in the Auslander-Reiten quiver. In particular, $m=1$ if $\mathcal{E}$ is simple. Specializing to $m=1$, we see that the non-zero entries in the matrices $M_i$ determine a cyclic order on $D_{\mathcal{E}}$ by declaring $m_{i+1}^{j'}$ as the successor of $m_{i}^{j}$ if $\left(M_{i}\right)_{j j'} \neq 0$. Thus, every vector bundle of rank $r$ which sits at the mouth of a homogeneous tube defines a cyclic sequence $\seq \in \mathbb{Z}^{\mathbb{Z}_{n r}}$\footnote{In the same way, every vector bundle defines a cyclic sequence.}.\medskip
	
\noindent Burban, Drozd and Greuel found necessary and sufficient conditions for the simplicity of $\mathcal{E}$ in terms of the sequence $\seq$. They read as follows.

\begin{thm}{\cite[Theorem 5.3]{BurbanDrozdGreuel}}\label{TheoremNecessarySufficientConditions}
	Let $\mathcal{E}$ be an indecomposable vector bundle  over $C_n$ of rank $r$ which sits at the mouth of a homogeneous tube. Let $\seq$ denote the corresponding cyclic sequence. Then, $\mathcal{E}$ is simple if and only if all of the following conditions are satisfied.
	
	\begin{enumerate}
	\item  The rank and the total degree of $\mathcal{E}$ are coprime.
	
	\item \label{Condition2TheoremNecessarySufficientConditions} The difference between any two entries in $\seq$ is at most $1$. 
	
	\item \label{Condition3TheoremNecessarySufficientConditions}  Regarding $\seq$ as an infinite sequence $(m_i)_{i \in \mathbb{Z}}$ with period $n \cdot r$, for all $t \in \mathbb{Z}$ the sequence 
	
	$$(m_i - m_{i+t})_{i \in \mathbb{Z}},$$
	
	\noindent contains no subsequence of the form $1, 0, \dots, 0, 1$ or $-1, 0, \dots, 0, -1$.  
		\end{enumerate}
		
\end{thm}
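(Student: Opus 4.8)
The plan is to translate the algebraic condition ``$\mathcal{E}$ is simple'' into an embeddedness statement about the loop $\gamma_{\mathcal{E}}$ on the punctured torus, and then to read conditions \ref{TheoremNecessarySufficientConditions}(1)--(3) directly off the combinatorics of the cyclic sequence $\seq$. First I would reduce simplicity to a geometric property. Since $\mathcal{E}$ sits at the mouth of a homogeneous tube, its image $\mathbb{F}(\mathcal{E})$ is represented by $\P_{(\gamma,g)}(\mathcal{V})$ with $\gamma=\gamma_{\mathcal{E}}$ a loop and $\dim\mathcal{V}=1$. By Example \ref{ExampleSphericalObjects} and Lemma \ref{LemmaImageOfSphericalIsSpherical}, $\mathcal{E}$ is simple if and only if $\End(\mathcal{E})\cong\Bbbk$, i.e.\ if and only if $\mathbb{F}(\mathcal{E})$ is spherical. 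By Corollary \ref{CorollaryMorphisms}(\ref{Point2List}), $\gamma_{\mathcal{E}}$ has exactly $\tfrac12\operatorname{hom}^{\bullet}(\mathbb{F}(\mathcal{E}),\mathbb{F}(\mathcal{E}))-1$ self-intersections, so sphericity (which forces $\operatorname{hom}^{\bullet}=2$) is equivalent to $\gamma_{\mathcal{E}}$ being a simple loop; that it is non-separating is then automatic by Lemma \ref{LemmaHopf}, since a separating simple loop has nonzero winding number and represents no object. Thus the whole statement becomes the assertion that $\gamma_{\mathcal{E}}$ has no self-intersections on $\mathscr{T}^n$ if and only if \textup{(1)--(3)} hold.

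Next I would make the dictionary between $\seq$ and $\gamma_{\mathcal{E}}$ explicit. By Theorem \ref{IntroTheoremSimpleVectorBundles} the loop of a rank-$r$ bundle is homotopic to a curve whose latitudinal coordinate is strictly monotone, so I can parametrise $\gamma_{\mathcal{E}}$ by latitude and view it, over one latitudinal period, as $r$ ``strands'' successively crossing the $n$ meridians $\gamma^{0}_{\Bbbk(x)},\dots,\gamma^{n-1}_{\Bbbk(x)}$ of Example \ref{ExampleLoopsOnTorus}. The cyclic sequence $\seq=(m_i)_{i\in\mathbb{Z}_{nr}}$ records, in their cyclic gluing order, the data of the $nr$ points where $\gamma_{\mathcal{E}}$ meets the meridians, so that partial sums of the $m_i$ encode the vertical positions of the strands, the latitudinal winding number equals $r$, and the vertical winding number equals the total degree $\underline{\mathbb{d}}$. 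A self-intersection of $\gamma_{\mathcal{E}}$ occurs precisely when two strands exchange their height order between consecutive meridians, and the number of exchanges between strands $t$ steps apart is governed by the sign behaviour of the difference sequence $(m_i-m_{i+t})_{i}$.

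With this dictionary the three conditions fall out, and I would prove each equivalence in turn. A simple closed curve on the torus has primitive homology class, and that of $\gamma_{\mathcal{E}}$ is $(r,\underline{\mathbb{d}})$; hence embeddedness forces $\gcd(r,\underline{\mathbb{d}})=1$, which is condition (1), while non-coprimality makes $\seq$ genuinely periodic, so $\gamma_{\mathcal{E}}$ is a proper power and must self-intersect. Granting (1), I would show that $\gamma_{\mathcal{E}}$ is embedded if and only if no pair of strands ever crosses, which by the previous paragraph amounts to the statement that all entries of $\seq$ differ by at most one (condition (2)) and that each difference sequence $(m_i-m_{i+t})_i$ contains no block $1,0,\dots,0,1$ or $-1,0,\dots,0,-1$ (condition (3)). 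These are exactly the balance (Sturmian) conditions characterising height functions that separate into non-crossing strands, so together they are equivalent to the vanishing of the self-intersection number.

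The main obstacle I anticipate is the crossing count in the middle step: rigorously proving that the minimal-position self-intersection number of $\gamma_{\mathcal{E}}$ on the \emph{punctured} torus equals the number of strand order-exchanges encoded by the difference sequences, and that this number vanishes exactly under the balance conditions. This requires a careful normal form for the loop near each puncture, so that necessary weaving around a puncture is not confused with a removable crossing, together with the purely combinatorial fact that a cyclic integer sequence is balanced if and only if all of its consecutive-difference sequences avoid the two forbidden blocks. Once this bridge between the geometric self-intersection number and the Sturmian combinatorics of $\seq$ is in place, the equivalence with conditions (1)--(3) follows immediately.
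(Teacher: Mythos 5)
First, note that the paper does not prove this statement at all: it is imported verbatim as \cite[Theorem 5.3]{BurbanDrozdGreuel} and used as a black box (e.g.\ in Corollary \ref{CorollarySimpleVCB} and Proposition \ref{PropositionNewProof}), so there is no in-paper proof to compare against. Your first reduction is sound and matches the paper's own machinery: simplicity of $\mathcal{E}$ is equivalent to $\mathbb{F}(\mathcal{E})$ being spherical, hence by Corollary \ref{CorollaryMorphisms}\,(\ref{Point2List}) to $\gamma_{\mathcal{E}}$ being an embedded loop; and the combinatorial obstacle you flag at the end (counting self-intersections of the loop via forbidden patterns in difference sequences) is essentially resolved by the paper's ribbon-graph formalism, in particular Lemma \ref{LemmaCombinatorialIntersections} and the intersection dictionary on p.~\pageref{IntersectionCombinatorics}, which correctly handles the behaviour near punctures.

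The genuine gap is earlier, in your ``dictionary'' step. The cyclic sequence $\seq$ in the statement is the \emph{algebraic} one of Burban--Drozd--Greuel: its entries are the degrees $m_i^j$ of the line-bundle summands of $\pi^{\ast}\mathcal{E}$, cyclically ordered by the non-zero blocks of the gluing matrices $M_i$. You silently identify this with the \emph{combinatorial} sequence $\seq_{\gamma_{\mathcal{E}}}$ read off from the walk of the representing loop in $\Gamma$. The paper never establishes this identification directly: Theorem \ref{TheoremImagesOfSimpleVectorBundles}(2) only matches rank and multi-degree (i.e.\ column sums), and the cyclic order of the entries is exactly what Corollary \ref{CorollarySimpleVCB} determines --- by invoking the very theorem you are trying to prove, together with the uniqueness statement of Theorem \ref{TheoremBodnarchukDrozdGreuel}. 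So either you must compute the embedding $\mathbb{F}$ explicitly on arbitrary vector bundles to show that the band complex of $\mathbb{F}(\mathcal{E})$ realises the gluing data $(M_i)$ as its walk (precisely the computation the paper says it avoids), or your argument becomes circular. A secondary inaccuracy: your claim that $\gcd(r,\underline{\mathbb{d}})>1$ forces $\seq$ to be periodic (hence $\gamma_{\mathcal{E}}$ a proper power) is false --- e.g.\ $n=1$, $\seq=(2,0)$ is primitive with non-primitive homology class $(2,2)$ --- though this is not needed, since the homological argument already gives ``embedded $\Rightarrow$ coprime'' and the converse direction only requires (1)--(3) jointly to imply embeddedness.
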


\noindent The conditions in Theorem \ref{TheoremNecessarySufficientConditions} can be used to derive the multi-set $D_{\mathcal{E}}$ and hence the entries of $\seq$, see Remark \ref{RemarkMultiSet}. However, apart from the case $n=1$ (\cite{BurbanStableBundles}) the cyclic order on $D_{\mathcal{E}}$ and therefore the sequence $\seq$ seems to be unknown in general. We provide a closed formula for $\seq$ in Section \ref{SectionClosedFormulaCyclicSequence}.\medskip

\begin{rem}\label{RemarkMultiSet}
 Condition (2) in Theorem \ref{TheoremNecessarySufficientConditions} can be used to derive the multi-set $D_{\mathcal{E}}$. Suppose $\seq$ satisfies the conditions of Theorem \ref{TheoremNecessarySufficientConditions}. Let $i \in [0,n)$ and let $q$ denote the minimum in the sequence $\seq_i, \seq_{i+n}, \seq_{i+2n}, \dots, \seq_{i+(r-1) \cdot n}$. If $a$ denotes the number of times the number $q$ occurs in the preceding sequence, then the Condition (2) implies that

$$
a \cdot q + (r-a) \cdot (q+1)=\sum_{j=0}^{r-1}{\seq_{i+j \cdot n}}=\mathbb{d}(i).
$$
\noindent from which we deduce that $(r-a) \in [0, r)$ is the residue of $\mathbb{d}(i)$ modulo $r$ and  $q$ is the unique integer such that $r \cdot q + (r-a)= \mathbb{d}(i)$. 
\end{rem}

	\subsubsection{The cyclic sequence of a simple vector bundle}\label{SectionClosedFormulaCyclicSequence}
	\ 
	\medskip
	
	\noindent From now on, we write $\seq_x$ instead of $\seq(x)$ for all $\seq \in \mathbb{Z}^{\mathbb{Z}_{n r}}$ and all $x \in \mathbb{Z}_{n r}$. We say that two sequences $\seq, \seq' \in \mathbb{Z}^{\mathbb{Z}_{n r}}$ are \textbf{equivalent} and write $\seq \sim \seq'$ if they agree up to rotation, i.e.\ if there exists $t \in n\mathbb{Z}$ such that $\seq_x= \seq'_{x + t}$ for all $x \in \mathbb{Z}_{n r}$. 
	
	\begin{definition}\label{DefinitionSequencesSimpleVectorBundles}
		Let $\mathbb{d} \in \mathbb{Z}^{\mathbb{Z}_n}$ and let $r \geq 1$ be a natural number such that $\underline{\mathbb{d}}$ and $r$ are coprime. For $i \in \mathbb{Z}$, denote by $S_i=S_i(r,\mathbb{d})$ the sequence of integers defined by $S_0=0$ and the recursive formula $S_{i+1}=S_{i}+ \mathbb{d}(i)$. For $i \in \mathbb{N}$ with residue class $x=\overline{i} \in \mathbb{Z}_{n r}$, let  
		
		$$
		\seq(r, \mathbb{d})_x \coloneqq \begin{cases}\phantom{-} \big|r\mathbb{Z} \cap (S_i, S_{i+1}] \big|, & \text{if }S_{i+1} \geq S_i; \\ - \big|r\mathbb{Z} \cap (S_{i+1}, S_{i}]\big|, & \text{otherwise.}  \end{cases}
		$$ 
		\noindent Define $\seq(r, \mathbb{d}) \in \mathbb{Z}^{\mathbb{Z}_{n r}}$ as the cyclic sequence with entries $\seq(r, \mathbb{d})_x$ as defined above.
	\end{definition}
	\noindent We note that since $\underline{\mathbb{d}}$ and $r$ are coprime, $\seq(r, \mathbb{d})$ is well-defined. Also note that for each $i \in \mathbb{Z}$, 
	$$\sum_{j=0}^{r-1}{\seq(r, \mathbb{d})_{i+j \cdot n}}=\mathbb{d}(i).$$
	
	\noindent   Occasionally, we omit $r$ and $\mathbb{d}$ from the notation and just write $\seq$ instead if the two are apparent from the context.
	
	It is convenient to think of the cyclic sequence $\seq(r, \mathbb{d})$ as a matrix with $n$ columns and $r$ cyclically ordered rows such that we recover the cyclic order by reading the entries from left to right and top to bottom.
	
	\begin{exa}\label{ExampleMatrixSequence}
	Let $n=2=r$, $\mathbb{d}\left(0\right)=2$ and $\mathbb{d}\left(1\right)=-1$. Then, $\underline{\mathbb{d}}=1$, $(S_0, \dots, S_4)=(0, 2, 1, 3, 2)$ and $\seq=\seq(2, \mathbb{d})$ is given by $(\seq_{0}, \dots, \seq_{3})=(1, -1, 1, 0)$. The $2 \times 2$-matrix of $\seq$ is
	
				$$ \left( \begin{matrix}  1 &  -1 \\ 1 & 0  \end{matrix} \right).
					$$	
\end{exa}	 	
	\noindent The following is an immediate consequence of Definition \ref{DefinitionSequencesSimpleVectorBundles}.
	
	\begin{lem}\label{LemmaShiftsByMultiplesOfR}Let $r \geq 1$ and $\mathbb{d} \in \mathbb{Z}^{\mathbb{Z}_n}$ such that $\underline{\mathbb{d}}$ and $r$ are coprime. Let $l \in \mathbb{Z}$, $i \in \mathbb{Z}_n$ and define $\mathbb{e} \in \mathbb{Z}^{\mathbb{Z}_n}$ by 
		\[\mathbb{e}(x)= \begin{cases} \mathbb{d}(x), & \textrm{if $x \neq i$;} \\ \mathbb{d}(x)+l\cdot r, & \textrm{if $x= i$.} \end{cases}\]
		
		\noindent Then, the matrix of $\seq(r, \mathbb{e})$ is obtained from the matrix of $\seq(r, \mathbb{d})$ by adding the vector $ (l \cdot r, \dots,l \cdot r)$ to its $i$-th column.		
	\end{lem}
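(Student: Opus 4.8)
The plan is to unwind Definition \ref{DefinitionSequencesSimpleVectorBundles} and track how the auxiliary partial-sum sequence $S_i = S_i(r, \mathbb{d})$ changes when we pass from $\mathbb{d}$ to $\mathbb{e}$. First I would observe that $\mathbb{d}$ and $\mathbb{e}$ agree in every coordinate except the $i$-th, where $\mathbb{e}(i) = \mathbb{d}(i) + l \cdot r$. Writing $S_j^{\mathbb{d}}$ and $S_j^{\mathbb{e}}$ for the respective partial sums, the recursion $S_{j+1} = S_j + \mathbb{d}(j)$ (resp.\ with $\mathbb{e}$) shows that the two sequences coincide precisely up to index $i$, i.e.\ $S_j^{\mathbb{e}} = S_j^{\mathbb{d}}$ for $j \leq i$, while for $j > i$ we pick up a constant shift: $S_j^{\mathbb{e}} = S_j^{\mathbb{d}} + l \cdot r$ for all $j$ with $i < j \leq i + n$ (within one period), because exactly one increment by $l \cdot r$ has been inserted at step $i$ and no further change occurs before the column index cycles back.

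Next I would compute the effect of this on the entries $\seq(r, \mathbb{e})_x$. For a column index $x \equiv \overline{j} \pmod{n}$ with $j \neq i$, the relevant interval is $(S_j^{\mathbb{e}}, S_{j+1}^{\mathbb{e}}]$ (or its reverse), and the key point is that both endpoints have been shifted by the \emph{same} amount $l \cdot r$ whenever $j$ and $j+1$ lie on the same side of the insertion within the period. Since $r\mathbb{Z}$ is invariant under translation by $l \cdot r$, the cardinality $|r\mathbb{Z} \cap (S_j^{\mathbb{e}}, S_{j+1}^{\mathbb{e}}]|$ is unchanged, so $\seq(r,\mathbb{e})_x = \seq(r, \mathbb{d})_x$ for all entries outside the $i$-th column. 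For the $i$-th column, $S_{i+1}^{\mathbb{e}} = S_{i+1}^{\mathbb{d}} + l \cdot r$ while $S_i^{\mathbb{e}} = S_i^{\mathbb{d}}$, so the interval length grows by exactly $l \cdot r$, contributing $l \cdot r$ extra multiples of $r$; hence $\seq(r, \mathbb{e})_x = \seq(r, \mathbb{d})_x + l \cdot r$ for the entries in the $i$-th column. This is precisely the statement that the matrix of $\seq(r, \mathbb{e})$ is obtained from that of $\seq(r, \mathbb{d})$ by adding $(l \cdot r, \dots, l \cdot r)$ to the $i$-th column.

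I would carry this out by fixing a full period of indices, say $i \in [0, nr)$ lifted to $\mathbb{Z}$, and verifying the shift statement row by row: in each of the $r$ rows, the insertion of $l \cdot r$ happens once at the occurrence of column $i$ in that row, and within a single row the argument above applies verbatim. The one point requiring a little care — and the likeliest source of a bookkeeping slip rather than a genuine obstacle — is the periodicity: one must check that over a complete period of length $nr$ the accumulated shifts are consistent, i.e.\ that the total degree increases by $r \cdot (l \cdot r)$ in a way compatible with $\seq(r, \mathbb{e})$ being genuinely $nr$-periodic. Since $\underline{\mathbb{e}} = \underline{\mathbb{d}} + l \cdot r$ is again coprime to $r$ (coprimality modulo $r$ is unaffected by adding a multiple of $r$), $\seq(r, \mathbb{e})$ is well-defined, and the per-row analysis assembles into the claimed column operation. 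Because each step is a direct translation of the interval-counting definition under a translation of $r\mathbb{Z}$, the argument is essentially immediate once the shift structure of $S_j^{\mathbb{e}}$ versus $S_j^{\mathbb{d}}$ is recorded, which is why the lemma is stated as an immediate consequence of the definition.
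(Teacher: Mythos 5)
Your overall strategy --- tracking the partial sums $S_j^{\mathbb{e}}$ against $S_j^{\mathbb{d}}$ and using that $r\mathbb{Z}$ is invariant under translation by multiples of $r$ --- is exactly the direct unwinding of Definition \ref{DefinitionSequencesSimpleVectorBundles} that the omitted proof must consist of, and your handling of the off-column entries and of the accumulating shifts (which equal $c_j\cdot l r$ for $c_j$ the number of indices $k<j$ with $k\equiv i \bmod n$, all multiples of $r$, so translation-invariance still applies) is correct. But the final arithmetic step is wrong: if an interval is lengthened by $l\cdot r$ at one endpoint, it gains exactly $l$ additional multiples of $r$, not $l\cdot r$ of them. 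Concretely, writing $f(a,b)=\lfloor b/r\rfloor - \lfloor a/r\rfloor$, which equals $|r\mathbb{Z}\cap(a,b]|$ when $b\geq a$ and $-|r\mathbb{Z}\cap(b,a]|$ when $b<a$, one has $f(a,b+l r)=f(a,b)+l$ in all sign cases; so each entry of the $i$-th column changes by $l$, not by $l\cdot r$.

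The corrected count in fact shows that the Lemma as printed is itself off by a factor of $r$: the column sum must change from $\mathbb{d}(i)$ to $\mathbb{d}(i)+l r$, which forces the $r$ entries of the column to change by $l$ each (total $l r$), not by $l r$ each (total $l r^2$). This is confirmed by Example \ref{ExampleMatrixSequence}: taking $l=1$, $i=0$ there gives $\seq(2,\mathbb{e})=(2,-1,2,0)$, so the $0$-th column moves from $(1,1)$ to $(2,2)$, a shift by $(1,1)=(l,l)$ and not by $(2,2)=(l r,l r)$. The subsequent application of the Lemma (reducing to $\mathbb{d}(x)\in[0,r)$ so that all entries lie in $\{0,1\}$) also only works with the $(l,\dots,l)$ version. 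So you have reproduced the misprint in the statement rather than proving it; the single sentence ``contributing $l\cdot r$ extra multiples of $r$'' is the gap, and repairing it yields the correct form of the Lemma.
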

\begin{proof}
Omitted.
\end{proof}

	\noindent By the previous lemma, we may always assume that $\mathbb{d}(x) \in [0, r)$ for all $x \in \mathbb{Z}_n$ and hence $\seq(r, \mathbb{d})_y \in \{0, 1\}$ for all $y \in \mathbb{Z}_{n r}$.\medskip

	\noindent In the following lemma, we prove that the cyclic sequence $\seq(r,\mathbb{d})$ satisfies the conditions of Theorem \ref{TheoremNecessarySufficientConditions}. 
	
	\begin{prp}\label{LemmaSequencesHaveIntersectionsProperties}
		Let $\mathbb{d} \in \mathbb{Z}^{\mathbb{Z}_n}$ and $r \geq 1$ such that $\underline{\mathbb{d}}$ and $r$ are coprime. Then $\seq=\seq(r, \mathbb{d})$ satisfies the conditions of Theorem \ref{TheoremNecessarySufficientConditions}.
		
	\end{prp}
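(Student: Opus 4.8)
The plan is to dispatch the three conditions of Theorem~\ref{TheoremNecessarySufficientConditions} separately, condition~(1) being nothing but the standing coprimality hypothesis. First I would normalise using Lemma~\ref{LemmaShiftsByMultiplesOfR}: since adding $l\cdot r$ to a single value $\mathbb{d}(i)$ only shifts the $i$-th column of the matrix of $\seq(r,\mathbb{d})$ by a constant vector, I may assume $0\le \mathbb{d}(i)<r$ for every $i\in\mathbb{Z}_n$. With this normalisation each step $S_{i+1}-S_i=\mathbb{d}(i)$ lies in $[0,r)$, so the interval $(S_i,S_{i+1}]$ has length $<r$ and meets $r\mathbb{Z}$ in at most one point; hence $\seq_y\in\{0,1\}$ for all $y$, which is exactly condition~(2). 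The remaining task, and the substance of the proof, is condition~(3).

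For condition~(3) I would pass to the floor-function description of $\seq$. Writing $C_i:=\lfloor S_i/r\rfloor$, the definition gives $m_i=\seq_i=C_{i+1}-C_i=\bigl|r\mathbb{Z}\cap(S_i,S_{i+1}]\bigr|$, and telescoping yields $\sum_{k=i}^{i+L-1}m_k=C_{i+L}-C_i=\bigl|r\mathbb{Z}\cap(S_i,S_{i+L}]\bigr|$. The key reformulation is that, for a fixed $t$, the sequence $(m_i-m_{i+t})_i$ is the first difference of $D_i:=C_i-C_{i+t}$, whose increments $m_i-m_{i+t}$ lie in $\{-1,0,1\}$ because $m_\bullet\in\{0,1\}$. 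An inspection shows that a factor $1,0,\dots,0,1$ (resp.\ $-1,0,\dots,0,-1$) in this first difference occurs precisely when $D$ makes two consecutive ascents (resp.\ descents) with no intervening descent (resp.\ ascent). Since $D$ is periodic of period $nr$ (from $S_{i+nr}=S_i+r\underline{\mathbb{d}}$, whence $C_{i+nr}=C_i+\underline{\mathbb{d}}$, so that the net change of $D$ over a period is $0$), the absence of both forbidden factors for all $t$ is equivalent to the \emph{balance} condition
\[
\max_i\bigl(C_{i+t}-C_i\bigr)-\min_i\bigl(C_{i+t}-C_i\bigr)\le 1\qquad\text{for every }t\in\mathbb{Z}.
\]
Thus condition~(3) is exactly the assertion that the cutting sequence $(\seq_i)$ of the staircase $i\mapsto S_i$ against the lattice $r\mathbb{Z}$ is balanced in the sense of combinatorics on words.

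The heart of the argument is therefore to establish this balance, and this is where coprimality of $r$ and $\underline{\mathbb{d}}$ enters. For fixed $t$ I would track how the count $C_{i+t}-C_i=\bigl|r\mathbb{Z}\cap(S_i,S_{i+t}]\bigr|$ changes as $i$ increments: replacing $i$ by $i+1$ deletes the left step $(S_i,S_{i+1}]$ and appends the right step $(S_{i+t},S_{i+t+1}]$, so the count changes by $m_{i+t}-m_i\in\{-1,0,1\}$, and periodicity forces these increments to sum to $0$ over one period. It then suffices to rule out that their partial sums ever span a range $\ge 2$, i.e.\ that two ascents of $D$ accumulate before a compensating descent; I would derive a contradiction from the distribution of the residues $S_i\bmod r$ along one full period of length $nr$, where $\gcd(r,\underline{\mathbb{d}})=1$ is what prevents a multiple of $r$ from being ``over-counted''. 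The main obstacle is precisely this last step: in contrast to the classical Sturmian situation the slope $\underline{\mathbb{d}}/(nr)$ is realised by a \emph{non-uniform} staircase whose individual steps $\mathbb{d}(i)$ may differ wildly, so the balance cannot be quoted from a standard three-distance or Christoffel statement and must be extracted directly from the coprimality by a careful counting of how the residues $S_i\bmod r$ interlace the multiples of $r$. Packaging this counting cleanly, so that it simultaneously handles all shifts $t$, is the part I would spend the most effort on.
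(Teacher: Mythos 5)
Your treatment of conditions (1) and (2) is fine and matches the paper's (after the normalisation via Lemma \ref{LemmaShiftsByMultiplesOfR}, each interval $(S_i,S_{i+1}]$ has length $<r$, so the entries lie in $\{0,1\}$). The problem is condition (3): you set up the correct telescoping identity $\sum_{j=0}^{p-1}m_{i+j}=\bigl|r\mathbb{Z}\cap(S_i,S_{i+p}]\bigr|$ and a plausible ``balance'' reformulation, but you then declare the decisive step --- ruling out a net discrepancy of $2$ --- to be an open obstacle that you would attack by a delicate counting of the residues $S_i\bmod r$ driven by $\gcd(r,\underline{\mathbb{d}})=1$. That step is never carried out, so the proof is incomplete; and the route you sketch for it is a misdirection, because coprimality plays no role in conditions (2) and (3) at all (it is only needed for condition (1) and for $\seq(r,\mathbb{d})$ to be well defined as a cyclic sequence of period $nr$).

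The missing observation is elementary. Since $\mathbb{d}$ is $n$-periodic, $S_{i+p}-S_i=\sum_{j=i}^{i+p-1}\mathbb{d}(j)$ depends only on $i\bmod n$ and $p$; hence for a shift by a multiple of $n$ the two intervals $(S_i,S_{i+p}]$ and $(S_{i+tn},S_{i+tn+p}]$ have \emph{exactly the same length} $L$. A half-open integer interval of length $L$ contains either $\lfloor L/r\rfloor$ or $\lceil L/r\rceil$ multiples of $r$, so the two lattice-point counts can differ by at most $1$. A forbidden subsequence $1,0,\dots,0,1$ (resp.\ $-1,0,\dots,0,-1$) of length $p$ in $(m_i-m_{i+tn})_i$ would force the first count to exceed (resp.\ fall short of) the second by exactly $2$, a contradiction. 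This is the paper's entire argument for condition (3); no balance lemma, no Sturmian-type input, and no use of coprimality is required. Your floor-function bookkeeping is compatible with this, but without the equal-length observation the proof does not close.
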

	\begin{proof}By Lemma \ref{LemmaShiftsByMultiplesOfR}, we may assume that $\mathbb{d}(x) \in [0, r)$ for all $x \in \mathbb{Z}_{n}$.
		If $i, j \in [0, n r]$ such that $j-i \in n \mathbb{Z}$, then in the notation of Definition \ref{DefinitionSequencesSimpleVectorBundles}, 
		
		\[S_{i+1}-S_i=\mathbb{d}\left(i\right)=\mathbb{d}\left(j\right)=S_{j+1}-S_j.\] 
		
		\noindent Thus, $(S_i, S_{i+1}]$ and $(S_j, S_{j+1}]$ have the same length and the cardinality of $r \mathbb{Z} \cap (S_i, S_{i+1}]$ and $r \mathbb{Z} \cap (S_j, S_{j+1}]$ differ by at most one. This proves Condition \eqref{Condition2TheoremNecessarySufficientConditions}.

		Let $i, l\in \mathbb{Z}$ and let $p \geq 1$. The cardinality of $r\mathbb{Z} \cap (S_{i}, S_{i+p}]$ is $\sum_{j=0}^{p-1}{\seq_{i+j}}$. As before, we regard $\seq$ as an infinite sequence $(m_i)_{i \in \mathbb{Z}}$. Now, suppose the infinite sequence $(m_i - m_{i+l})_{i \in \mathbb{Z}}$ contains a subsequence of the form $ \pm {1}, {0}, \dots, {0}, \pm {1}$. It means that for some $i \in \mathbb{Z}$ and some $p \geq 1$, $r\mathbb{Z} \cap (S_{i}, S_{i+p}]$ contains two more (resp. two less) elements than $r\mathbb{Z} \cap (S_{i+l n}, S_{i+l n +p}]$. But the intervals $(S_{i}, S_{i+p}]$ and $(S_{i+l n}, S_{i+l n +p}]$ have the same cardinality which yields a contradiction. This completes the proof of  Condition \eqref{Condition3TheoremNecessarySufficientConditions}. \end{proof}
		
	\begin{cor}[Theorem \ref{IntroThmPullback}]\label{CorollarySimpleVCB}
		Let $\mathcal{E}$ be a simple vector bundle on $C_n$ of multi-degree $\mathbb{d}$ and rank $r$. If $\seq \in \mathbb{Z}^{\mathbb{Z}_{n r}}$ satisfies the conditions in Theorem \ref{TheoremNecessarySufficientConditions}, then $\seq \sim \seq(r, \mathbb{d})$.
	\end{cor}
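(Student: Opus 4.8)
The plan is to realize the combinatorially defined sequence $\seq(r,\mathbb{d})$ as the cyclic sequence of an \emph{explicit} simple vector bundle and then to invoke the uniqueness part of Theorem \ref{TheoremBodnarchukDrozdGreuel}. First I would record that $\seq(r,\mathbb{d})$ is a bona fide cyclic sequence of a rank-$r$ bundle: it has length $nr$, and by the identity $\sum_{j=0}^{r-1}\seq(r,\mathbb{d})_{i+jn}=\mathbb{d}(i)$ noted after Definition \ref{DefinitionSequencesSimpleVectorBundles}, its $i$-th column sums to $\mathbb{d}(i)$, so any vector bundle assembled from it via the gluing construction recalled in this section has rank $r$ and multi-degree $\mathbb{d}$. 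Fixing the scalar $\lambda \in \Bbbk^{\times}$ to be the one corresponding to $\det \mathcal{E}$, this construction produces an indecomposable bundle $\mathcal{E}'$ at the mouth of a homogeneous tube whose associated cyclic sequence is $\seq(r,\mathbb{d})$ and whose determinant equals $\det\mathcal{E}$.

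Next, Proposition \ref{LemmaSequencesHaveIntersectionsProperties} shows that $\seq(r,\mathbb{d})$ satisfies the three conditions of Theorem \ref{TheoremNecessarySufficientConditions}; applied to $\mathcal{E}'$, the sufficiency direction of that theorem yields that $\mathcal{E}'$ is simple. Thus $\mathcal{E}$ and $\mathcal{E}'$ are simple vector bundles of the same rank $r$, the same multi-degree $\mathbb{d}$ and the same determinant, so by the uniqueness assertion of Theorem \ref{TheoremBodnarchukDrozdGreuel} they are isomorphic. Consequently their cyclic sequences agree up to the ambiguity inherent in reading off such a sequence from a fixed bundle, namely rotation by a multiple of $n$; since $\seq$ is the cyclic sequence of $\mathcal{E}$ (which satisfies the conditions by the necessity direction of Theorem \ref{TheoremNecessarySufficientConditions}), this gives $\seq \sim \seq(r,\mathbb{d})$. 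As I only cite Bodnarchuk--Drozd--Greuel as a published input, there is no circularity with the independent proof of Theorem \ref{TheoremBodnarchukDrozdGreuel} given later in Section \ref{SectionAlternativeProof}.

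Should one prefer a self-contained argument avoiding Theorem \ref{TheoremBodnarchukDrozdGreuel}, I would proceed purely combinatorially. Using condition (2) (which forces all entries of $\seq$ into two consecutive values) together with Lemma \ref{LemmaShiftsByMultiplesOfR}, I would subtract a suitable constant so that both $\seq$ and $\seq(r,\mathbb{d})$ become $0/1$-sequences with the same number $\underline{\mathbb{d}}$ of ones per period of length $nr$. The point is then that conditions (2)--(3) say exactly that such a sequence is \emph{balanced} in the sense of combinatorics on words: for each $t$ the difference sequence $(m_i-m_{i+t})$ has strictly alternating nonzero signs. Balanced binary cyclic sequences with a prescribed number of ones per period are unique up to rotation, which identifies $\seq$ and $\seq(r,\mathbb{d})$ up to \emph{some} rotation.

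The genuinely nontrivial step in this alternative route — and the one I expect to be the main obstacle — is to upgrade "unique up to an arbitrary rotation" to "unique up to $\sim$", i.e.\ to rotation by a multiple of $n$. Here one must show that any rotation carrying $\seq$ to $\seq(r,\mathbb{d})$ preserves the partition of positions into the $n$ residue classes modulo $n$; I would deduce this from the fact that the $n$ column sums $\mathbb{d}(i)$ are prescribed and that, by the rigidity of balanced sequences, two residue classes carrying distinct column sums cannot be interchanged by an admissible rotation. In the primary route above this subtlety never arises, because the equivalence $\sim$ is built into the definition of the cyclic sequence of a fixed bundle; this is why I would present the citation-based argument as the main proof.
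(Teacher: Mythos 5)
Your primary argument is exactly the paper's proof: the paper's own justification is the one-line citation of Theorem \ref{TheoremBodnarchukDrozdGreuel}, Theorem \ref{TheoremNecessarySufficientConditions} and Proposition \ref{LemmaSequencesHaveIntersectionsProperties}, and you have correctly unpacked how those three results combine (construct $\mathcal{E}'$ from $\seq(r,\mathbb{d})$, verify its simplicity via the sufficiency direction, match rank, multi-degree and determinant, and invoke the uniqueness assertion to identify the cyclic sequences up to $\sim$). The alternative combinatorial route via balanced words is not needed and, as you note yourself, leaves the rotation-by-a-multiple-of-$n$ issue unresolved, so the citation-based argument should indeed be the one you present.
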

	\begin{proof}
		This follows from Theorem \ref{TheoremBodnarchukDrozdGreuel}, Theorem \ref{TheoremNecessarySufficientConditions} and Proposition \ref{LemmaSequencesHaveIntersectionsProperties}.
	\end{proof}

	\subsection{Loops and their intersections via ribbon graphs} \ \medskip
	
\noindent Our goal is to translate certain homotopy classes of  loops into a cyclic integer sequence so that for every loop which represents a simple vector bundle of rank $r$ and multi-degree $\mathbb{d}$ this sequence is precisely the sequence $\seq(r, \mathbb{d})$ from Corollary \ref{CorollarySimpleVCB}. This is achieved by interpreting loops on $\mathscr{T}^n$ as walks in a \emph{ribbon graph} which we define in terms of an embedded quiver $\Gamma \subseteq \mathscr{T}^n$.\medskip
	
\noindent Consider the following collection of oriented arcs on $\mathscr{T}^n$:
\begin{figure}[H]
			\begin{displaymath}
			\begin{tikzpicture}[scale=2.5]
			\draw[dashed] (0,0)--(4,0);
			\draw[dashed] (0,0)--(0,1);
			\draw[dashed] (0,1)--(4,1);
			\draw[dashed] (4,1)--(4,0);

			
			\filldraw[white] (2+0.5,0.5) circle (0.275);
		\draw[dotted, thick] (2+0.5-0.125,0.5)--(2+0.5+0.125,0.5);	
				
		
		\begin{scope}[decoration={
    markings,
    mark=at position 0.5 with {\arrow{<}}}
    ] 
    	\draw[ black, postaction={decorate}] (0,0.5)--(0.5,0.5);
		\draw[ black, postaction={decorate}] (0.5,0.5)--(1.5,0.5);
		\draw[ black, postaction={decorate}] (1.5,0.5)--(2.3,0.5);
		\draw[ black, postaction={decorate}] (2.7,0.5)--(3.5,0.5);
		\draw[ black, postaction={decorate}] (3.5,0.5)--(4,0.5);
    \end{scope}
    
         \draw (0.25,0.5) node[above]{$\kappa_0$};
     \draw (1,0.5) node[above]{$\kappa_1$};
	     \draw (1.85,0.5) node[above]{$\kappa_2$};
	          \draw (3.1,0.5) node[above]{$\kappa_{n-1}$};

				\foreach \u in {0,1,3}
			{
			
					\draw[ black, pos=0.75, ->] (\u+0.5,0)--(\u+0.5, 0.75);
			\draw[ black] (\u+0.5,1)--(\u+0.5,0.75);
				\filldraw[white] (\u+0.5,0.5) circle (1pt);
				\draw[black] (\u+0.5,0.5) circle (1pt);

			}
		
		     \draw (0.5,0.25) node[right]{$\varepsilon_0$};
		     		     \draw (1.5,0.25) node[right]{$\varepsilon_1$};
		     		     		     \draw (3.5,0.25) node[right]{$\varepsilon_{n-1}$};
			\end{tikzpicture}
			\end{displaymath}
				\caption{}
 			\label{FigureEpsilonsKappas}
		
 			\end{figure}
			
\noindent We label the horizontal arcs in Figure \ref{FigureEpsilonsKappas} by $\kappa_i$ ($i \in \mathbb{Z}_n$) and denote the vertical arc which intersects $\kappa_i$ and $\kappa_{i+1}$ by $\varepsilon_{i}$. The collection of these arcs cut $\mathscr{T}^n$ into discs. We denote by $\Gamma$ the dual quiver of this dissection: the set of vertices in $\Gamma$ is in bijection with the discs, $\Gamma_1=\{\varepsilon_i, \kappa_i \, | \, i \in \mathbb{Z}_n\}$ and an arrow $\alpha \in \Gamma_1$, which corresponds to an arc $\gamma$, starts (resp. ends) at the vertex which corresponds to the disc which lies on the left (resp. right) hand side of $\gamma$.
	
	\begin{figure}[H]
		\begin{displaymath}
		\begin{tikzpicture}[scale=1]
		 \pgfmathsetmacro{\sz}{2.5};

         \foreach \v in {0, 1} 
		  {
		  \draw[thick, latex-] ($(-173:{\sz/8} and {0.75*\sz})+(\v*\sz,0)-(-\sz/8,0)$) arc (-173:175:{\sz/8} and {0.75*\sz}) node[pos=0.95, xshift=-3pt, left]{$\kappa_{\v}$};
        }   
         \foreach \v in {3} 
		  {
		  \draw[thick, latex-] ($(-173:{\sz/8} and {0.75*\sz})+(\v*\sz,0)-(-\sz/8,0)$) arc (-173:175:{\sz/8} and {0.75*\sz}) node[pos=0.95, xshift=-3pt, left]{$\kappa_{n-1}$};
        } 
        \fill[white] (0,-0.15) rectangle (4*\sz, 0.3);
		    
	        \draw (0*\sz,0) circle (3pt);
	        \filldraw (1*\sz,0) circle (3pt);
	        \draw (2*\sz,0) node {$\cdots$};
	        \filldraw (3*\sz,0) circle (3pt);
		    \draw (4*\sz,0) circle (3pt);
		    
		  \foreach \v in {0, 1} 
		  {
		  \draw[-Latex] ({\sz*\v+0.25},0)--({\sz*\v+\sz-0.25},0) node[midway, below]{$\varepsilon_{\v}$};
        }		
        		  \foreach \v in {2, 3} 
		  {
		    \pgfmathsetmacro{\u}{int(\v-4)};
		  \draw[-Latex] ({\sz*\v+0.25},0)--({\sz*\v+\sz-0.25},0) node[midway, below]{$\varepsilon_{n\u}$};
        }

		\end{tikzpicture}
		\end{displaymath}
		\caption{The quiver $\Gamma$. The two copies of the vertex $\circ$ are identified.} 
		\label{FigureQuiverDissection}
	\end{figure}
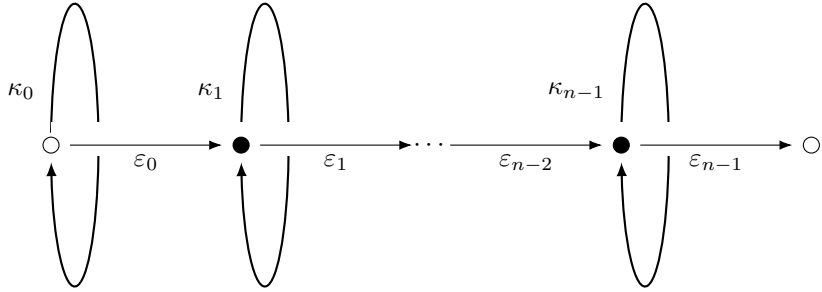

\noindent By construction, $\Gamma$ can be embedded into $\mathscr{T}^n$ by mapping a  vertex to a point inside of its corresponding disc and the arrow associated with an arc $\gamma$ to a simple path $p_{\gamma}$ which crosses $\gamma$ exactly once and no other arc of $\{\varepsilon_i, \kappa_i\}$. The orientation of $\mathscr{T}^n$  defines cyclic orders on the set of half-edges emanating from any vertex of $\Gamma$ and hence turns $\Gamma$ into a \textit{ribbon graph}. By a \textbf{half-edge} we mean any of the two segments of $p_{\gamma}$ between its crossing  with $\gamma$ and one of its end points.

 Let $\Gamma_1^{-1}\coloneqq \{\gamma^{-1} \, | \, \gamma \in \Gamma_1\}$ denote the set of formal inverses of $\Gamma_1$. As usual, $s(\gamma^{-1})\coloneqq t(\gamma)$ and $t(\gamma^{-1})\coloneqq s(\gamma)$. One defines a bijection between the half-edges and elements in $\Gamma_1 \sqcup \Gamma_1^{-1}$ by identifying a segment $\delta$ of $p_{\gamma}$, i.e.\ a half-edge, with $\gamma \in \Gamma_1$, if $\delta$ crosses $\gamma$ from the left, and with $\gamma^{-1}$ otherwise. Regarding the underlying graph of $\Gamma$ as a topological space in the usual way, the embedding $\Gamma \hookrightarrow \mathscr{T}^n$ is a deformation retract and induces an isomorphism $\pi_1(\Gamma) \cong \pi_1(\mathscr{T}^n)$ of fundamental groups.\medskip

\noindent This has a few useful and well-known consequences. First of all, the isomorphism of fundamental groups allows us to regard loops on $\mathscr{T}^n$ as certain words in the alphabet $\Gamma_1 \sqcup \Gamma_1^{-1}$ which we call \textit{admissible walks}. Second, every intersection between loops can be expressed as a common subword of their admissible walks.

\begin{definition}
 An  \textbf{admissible walk} of length $l$ in $\Gamma$ is a function $W:\mathbb{Z}_l \rightarrow \Gamma_1 \sqcup \Gamma_1^{-1}$ such that for all $i \in \mathbb{Z}_l$, $W(i) \neq W(i+1)^{-1}$ and $t(W(i))=s(W(i+1))$, and $W$ is primitive, i.e.\ it is not of the form $\mathbb{Z}_l \xrightarrow{\varphi} \mathbb{Z}_m \xrightarrow{W'} \Gamma_1 \sqcup \Gamma_1^{-1}$, where $l$ is a proper multiple of $m$ and $\varphi$ is the canonical surjection.
\end{definition}

\noindent By definition, we consider two walks $W, W'$ of  length $l$ as equivalent if $W$ and $W'$ agree up to rotation and inversion, i.e.\ there exists $\sigma \in \{\pm 1\}$ and $j \in \mathbb{Z}$ such that $W(i)=W'(\sigma \cdot i + j)$ for all $i \in \mathbb{Z}_l$. Usually, we describe a walk $W$ through a sequence $W(i) W(i+1) \cdots W(i-1)$, where $i \in \mathbb{Z}_l$. \medskip

\noindent The relationship between loops and walks can now be phrased as follows.

\noindent 
	
	\begin{prp}\label{PropositionBijectionCurvesWalks}
	There exists a bijection between homotopy classes of unoriented loops on $\mathscr{T}^n$ and equivalence classes of admissible walks in $\Gamma$.
	\end{prp}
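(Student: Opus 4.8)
The plan is to establish the bijection of Proposition \ref{PropositionBijectionCurvesWalks} by exploiting the fact that the embedded ribbon graph $\Gamma \hookrightarrow \mathscr{T}^n$ is a deformation retract, so that $\pi_1(\Gamma) \cong \pi_1(\mathscr{T}^n)$. First I would recall the standard dictionary between free homotopy classes of loops on a surface $\Sigma$ and conjugacy classes in $\pi_1(\Sigma)$: an unoriented loop corresponds to a conjugacy class together with its inverse (since reversing orientation inverts the class). Under the retraction, every loop on $\mathscr{T}^n$ is freely homotopic to a loop supported on $\Gamma$, i.e.\ to a closed edge-path in the graph. Since the underlying graph of $\Gamma$ has the alphabet $\Gamma_1 \sqcup \Gamma_1^{-1}$ labelling its half-edges, such a closed edge-path is exactly encoded by a cyclic word $W$ satisfying $t(W(i)) = s(W(i+1))$.

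Next I would address reduction. The two defining conditions of an admissible walk -- namely $W(i) \neq W(i+1)^{-1}$ (no immediate backtracking) and primitivity -- are precisely the conditions that single out a canonical representative in each relevant equivalence class. The condition $W(i) \neq W(i+1)^{-1}$ corresponds to cyclic reduction of the word: in a free group (or more precisely in the graph, where minimal closed edge-paths in a homotopy class are the cyclically reduced ones) every nontrivial conjugacy class contains a unique cyclically reduced cyclic word up to rotation. The primitivity condition matches the convention adopted earlier in the paper (see the Convention in Section \ref{SectionComplexOfLoops}) that loops are primitive, i.e.\ do not factor through a nontrivial covering $S^1 \to S^1$; a non-primitive loop corresponds exactly to a cyclic word that is a proper power, which is excluded on both sides. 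Passing to equivalence classes, rotation of $W$ accounts for the choice of basepoint on $S^1$ (conjugacy), and inversion $\sigma = -1$ accounts for forgetting the orientation of the loop.

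The argument then assembles into a chain of bijections: unoriented primitive homotopy classes of loops on $\mathscr{T}^n$ $\leftrightarrow$ unoriented primitive conjugacy classes in $\pi_1(\mathscr{T}^n)$ $\leftrightarrow$ unoriented primitive conjugacy classes in $\pi_1(\Gamma)$ $\leftrightarrow$ cyclically reduced primitive cyclic words in the alphabet $\Gamma_1 \sqcup \Gamma_1^{-1}$ up to rotation and inversion $=$ equivalence classes of admissible walks. I would verify surjectivity by noting that any admissible walk, read as an edge-path in $\Gamma$ and then pushed into $\mathscr{T}^n$, gives a loop; and injectivity by the uniqueness of the cyclically reduced representative. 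One subtlety to check is that $\pi_1(\Gamma)$ is genuinely free and that the half-edge-to-alphabet identification is a bijection, which follows from the construction of $\Gamma$ as the dual quiver of the dissection of $\mathscr{T}^n$ by the arcs $\{\varepsilon_i, \kappa_i\}$ into discs.

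The main obstacle I anticipate is handling the interaction between \emph{free} homotopy (needed for the loop-to-conjugacy-class correspondence, which is basepoint-free) and the combinatorics of cyclic reduction, since the clean statement ``every conjugacy class has a unique cyclically reduced cyclic word up to rotation'' is a fact about free groups that must be transported carefully to the graph setting where the retraction is used. In particular I must make sure that minimal-position representatives of loops (which the paper already assumes, via the Convention on minimal intersection with laminates) correspond exactly to cyclically reduced walks, so that no spurious backtracking survives; this is where the deformation-retract structure and the requirement that immersed loops meet the dual arcs transversally and minimally do the real work. The remaining verifications -- that rotation and inversion exhaust precisely the ambiguities of choosing a basepoint and an orientation -- are routine once this correspondence is set up.
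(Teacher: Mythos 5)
Your proposal is correct and follows exactly the route the paper intends: the paper states no formal proof but derives the bijection as a "well-known consequence" of the deformation retract $\Gamma \hookrightarrow \mathscr{T}^n$ and the resulting isomorphism $\pi_1(\Gamma) \cong \pi_1(\mathscr{T}^n)$, with the no-backtracking and primitivity conditions on walks matching cyclic reduction and primitivity of conjugacy classes, and rotation/inversion accounting for basepoint and orientation. Your write-up simply supplies the standard details (free homotopy classes as conjugacy classes, uniqueness of cyclically reduced cyclic words in a free group) that the paper leaves implicit.
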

	\noindent 	  Given an admissible walk $W$ we obtain its associated loop simply by regarding $W$ as a closed path in the quiver $\Gamma$: if $W= \alpha_1 \dots \alpha_l$ with $\alpha_i \in \Gamma_1 \sqcup \Gamma_1^{-1}$, we start at the vertex $s(\alpha_1)$ and then walk along the arrows $\alpha_1, \dots, \alpha_l$ of $\Gamma$ by which we mean that if $\alpha_i \in \Gamma_1^{-1}$, then we walk $\alpha_i^{-1} \in \Gamma_1$ backwards from its target to its source. Via the embedding $\Gamma \hookrightarrow \mathscr{T}^n$, we obtain a loop in $\mathscr{T}^n$.\medskip
	
		\noindent If $\gamma$ is a loop, we denote by $W_{\gamma}$ the equivalence class of walks associated with $\gamma$ or, by abuse of notation, a representative thereof.

    \noindent Intersections between loops can be described by maximal common subsequences of their walks. To be precise, suppose $\gamma, \gamma' \subset \mathscr{T}^n$ are loops in minimal position and regard $W=W_\gamma$ and $W'=W_{\gamma'}$ as infinite cyclic sequences with period given by their lengths. Then, every intersection $p \in \gamma \overrightarrow{\cap} \gamma'$ corresponds uniquely to one of the following two situations:\smallskip
		
		\begin{itemize}\label{IntersectionCombinatorics}
			\setlength\itemsep{0.5em}
			\item [a)] After interchanging the roles of $W$ and $W'$ if necessary, there exist decompositions $W=w_1 u c_1 \dots c_l v w_2$ and $W'=w_1' u' c_1 \dots c_l v' w_2'$, where $l \geq 1$ and $c_1, \dots, c_l, u, u', v, v' \in \Gamma_1 \sqcup \Gamma_1^{-1}$, such that $t(u)=t(u')$ but $u \neq u'$ and  $s(v)=s(v')$ but $v \neq v'$, and in the cyclic order of half-edges around $t(u)=t(u')$, $c_1$ lies in between $u'$ and $u$ and in the the counter-clockwise cyclic order of half-edges around $s(v)=s(v')$, $c_l$ lies between $v'$ and $v$.
			
			\begin{figure}[H]
				\begin{tikzcd}
				 \text{} \arrow[dash]{dr}[black]{u} & & \text{} & \text{} & \text{} \\
				 & \bullet \arrow[dash]{r}{c_1} \arrow[dash, orange, very thick]{dl}[black]{u'} & \cdots  \arrow[dash]{r}{c_l} & \bullet \arrow[dash, swap]{dr}{v} \arrow[dash, orange, very thick]{ur}[black]{v^\prime} \\
				 \text{} & & & & \text{} 
				\end{tikzcd}
			\end{figure}
			
			\item[b)] After interchanging the roles of $W$ and $W'$ if necessary, there exist decompositions $W= w_1 u v w_2$ and $W'=w_1 u' v' w_2'$, where $u,u', v, v' \in \Gamma_1 \sqcup \Gamma_1^{-1}$ are pairwise distinct, such that $x\coloneqq t(u)=t(u')=s(v)=s(v')$ and in the counter-clockwise cyclic order of half-edges around $x$ the order is $u, u', v, v'$. 
	
						\begin{figure}[H]
				\begin{tikzcd}
				\text{} \arrow[dash]{dr}[black]{u} &  & \text{} \\
				& \bullet \arrow[dash, orange, very thick]{dl}[black]{u'} \arrow[dash, swap]{dr}{v} \arrow[dash, orange, very thick]{ur}[black]{v'} \\
				\text{} & & \text{} 
				\end{tikzcd}
			\end{figure}
			
		\end{itemize}

	\begin{exa}
		The loop $\gamma^i_{\Bbbk(x)}$ corresponds to the walk $\kappa_i$ and $\gamma_{\Pic}$ corresponds to $\varepsilon_{0} \varepsilon_{1} \dots \varepsilon_{n-1}$. The unique intersection of $\gamma^i_{\Bbbk(x)}$ and $\gamma_{\Pic}$ is an example for the case b) and corresponds to the decompositions $ \dots \kappa_i \kappa_i \dots$ and $ \dots \varepsilon_i \varepsilon_{i+1} \dots $, i.e. $(u,u',v, v')=(\kappa_{i}, \varepsilon_i, \kappa_i^{-1}, \varepsilon_{i-1}^{-1})$.
	\end{exa}
	
	\subsection{The class of loops which represent vector bundles}
	\ \medskip
	
	 \noindent In this section, we describe the set of loops in $\mathscr{T}^n$ which represent vector bundles over $C_n$. \medskip
	 
	 \noindent The following definitions will play a central role in this section.
	 
	 \begin{definition}
	  Let  $\CVb{n}$ denote the set of all homotopy classes of primitive loops on $\mathscr{T}^n$ which are represented by a walk in $\Gamma$ without any letters of the form $\varepsilon_i^{-1}$ ($i \in \mathbb{Z}_n$). 
	 \end{definition}
 
 \noindent If $\gamma$ is a loop we write $\gamma \in \CVb{n}$ to indicate that the homotopy class of $\gamma$ lies in $\CVb{n}$.
 \begin{definition}
  Let $\Vect^n$ denote the additive closure of all indecomposable objects $X \in \Perf(C_n)$ such that $\gamma_X \in \CVb{n}$.	
 \end{definition}
 
	\noindent Elements of $\CVb{n}$ are in bijection with equivalence classes of primitive sequences $\seq \in \mathbb{Z}^{\mathbb{Z}_{n r}}$, where $r \geq 1$. Indeed, for any  sequence $\seq \in \mathbb{Z}^{\mathbb{Z}_{n r}}$ the corresponding homotopy class $\gamma(\mathbbm{m})$ belongs to the walk
	\[\cdots \varepsilon_0 \kappa_0^{\seq_0} \varepsilon_1 \kappa_1^{\seq_1} \cdots \varepsilon_{nr-1} \kappa_{nr -1}^{\seq_{nr-1}} \varepsilon_0 \cdots.\]

\noindent Note that $\gamma(\seq)$ is primitive if and only if $\seq$ is primitive. For a possibly non-primitive loop $\gamma$ which is a power of a loop in $\CVb{n}$ we shall denote by $\seq_{\gamma}$ its associated integer sequence.

 \begin{rem}
	The set $\CVb{n}$ admits a simple topological characterization: an immersed loop $\gamma$ is contained in $\CVb{n}$ if and only if $\gamma$ can be oriented in such a way that its derivative has everywhere positive latitudinal coordinate, c.f.\ Theorem \ref{IntroTheoremSimpleVectorBundles}. In other words, the curve $\gamma$ always travels towards the right in Figure \ref{FigureEpsilonsKappas}.
\end{rem}	
	
\begin{definition}
Let $r \geq 1$ and $\mathbb{d} \in \mathbb{Z}^{\mathbb{Z}_{n r}}$ such that $\underline{\mathbb{d}}$ and $r$ are coprime. Define $\gamma(r, \mathbb{d}) \in \CVb{n}$ as the homotopy class of $\seq(r, \mathbb{d})$.
\end{definition}
\noindent As reader probably guesses at this point $\gamma(r, \mathbb{d})$ contains simple loops and represents the simple vector bundles of rank $r$ and multi-degree $\mathbb{d}$. A proof is given in this section.
	
	\begin{rem}\label{RemarkDehnTwistActionOnSequences}
		With $\mathbb{e}$ and $\mathbb{d}$ as in Lemma \ref{LemmaShiftsByMultiplesOfR} the $l$-th power of the Dehn twist about $\gamma_{\Bbbk(x)}^i$ transforms the homotopy class $\gamma(r, \mathbb{d})$ into the homotopy class of $\gamma(r, \mathbb{e})$. An analogous statement is true for arbitrary loops $\gamma(\seq)$.
	\end{rem}

	\noindent 	For $\gamma$ a power of a loop in $\CVb{n}$, we denote by $\seq_{\gamma} \in \mathbb{Z}^{\mathbb{Z}_{nr}}$ the sequence associated with $\gamma$ and define the \textbf{rank} $\rk \gamma\coloneqq \rk \seq_{\gamma}$ and the \textbf{multi-degree} $\mathbb{d}(\gamma)\coloneqq \mathbb{d}(\seq_{\gamma})$. As usual, the \textbf{total degree} $\underline{\mathbb{d}}(\gamma)$ of $\gamma$ is the sum of all entries of $\mathbb{d}(\gamma)$. \medskip

\noindent The three invariants $\rk \gamma, \mathbb{d}(\gamma)$ and $\underline{\mathbb{d}}(\gamma)$ have a straightforward interpretation on the surface.

\begin{lem}
Let $\gamma \subseteq \mathscr{T}^n$ be a loop. Then, $\mathbb{d}(\gamma)(i)$ equals the number of intersections (counted with signs) of $\gamma$ with the arc $\kappa_i$ in Figure \ref{FigureEpsilonsKappas}.
\end{lem}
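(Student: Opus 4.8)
The plan is to compute the algebraic (signed) intersection number directly from the ribbon-graph walk representing $\gamma$, using that this number is a homotopy invariant so that any convenient representative may be substituted. Since $\mathbb{d}(\gamma)$ is only defined when $\gamma$ is a power of a loop in $\CVb{n}$, I read the statement under that hypothesis and set $\seq=\seq_\gamma\in\mathbb{Z}^{\mathbb{Z}_{nr}}$ with $r=\rk\gamma$. By Proposition \ref{PropositionBijectionCurvesWalks} together with the explicit description of the standard walk, $\gamma$ is then homotopic to the closed walk $\varepsilon_0\kappa_0^{\seq_0}\varepsilon_1\kappa_1^{\seq_1}\cdots\varepsilon_{nr-1}\kappa_{nr-1}^{\seq_{nr-1}}$, where the subscripts on the letters $\varepsilon$ and $\kappa$ are read modulo $n$ while the subscripts on the $\seq_j$ run over $\mathbb{Z}_{nr}$.

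First I would realize this walk geometrically as the concatenation of the embedded edge-paths $p_\alpha$ ($\alpha\in\Gamma_1\sqcup\Gamma_1^{-1}$) and record the two transversality facts built into the embedding $\Gamma\hookrightarrow\mathscr{T}^n$: each $p_{\kappa_j}$ meets the dissection $\{\kappa_0,\dots,\kappa_{n-1},\varepsilon_0,\dots,\varepsilon_{n-1}\}$ in a single transverse point, lying on $\kappa_{j\bmod n}$, and each $p_{\varepsilon_j}$ meets it only on $\varepsilon_{j\bmod n}$. Consequently this representative of $\gamma$ is disjoint from $\kappa_i$ except at the crossings produced by the letters $\kappa_j^{\pm1}$ with $j\equiv i\pmod n$, so that only the blocks $\kappa_j^{\seq_j}$ sitting in those columns can contribute; in particular no $\varepsilon$-letter ever meets $\kappa_i$.

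Next I would settle the signs. Because the arrow $\kappa_j$ is oriented from the region left of $\kappa_{j\bmod n}$ to the region on its right, a letter $\kappa_j$ crosses the arc from left to right and $\kappa_j^{-1}$ from right to left; hence the block $\kappa_j^{\seq_j}$ contributes the signed count $\seq_j$ to the intersection with $\kappa_{j\bmod n}$, independently of any cancellation one might see in minimal position (which is harmless here since we only want the algebraic count). Summing over the columns meeting $\kappa_i$, namely $j\equiv i\pmod n$, yields $\sum_{l=0}^{r-1}\seq_{i+ln}$, which is exactly $\mathbb{d}(\seq)(i)=\mathbb{d}(\gamma)(i)$ by the defining formula for the multi-degree of a sequence.

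The hard part will be the sign normalization: I must verify that a left-to-right crossing of $\kappa_i$ is counted as $+1$ rather than $-1$, i.e.\ that the direction conventions of Figure \ref{FigureEpsilonsKappas} are compatible with the sign convention for algebraic intersection numbers intended in the statement. I would pin this down on a single calibrating example whose answer is known independently, namely the line bundle $\mathcal{L}(x)=T_{\Bbbk(x)}(\mathcal{O}_{C_n})$ with $x\in\mathbb{P}^1_i$, whose multi-degree is the $i$-th standard basis vector. By Proposition \ref{PropositionSphericalTwistsBecomeDehnTwists} and Remark \ref{RemarkDehnTwistActionOnSequences} its loop is $D_{\gamma_{\Bbbk(x)}^i}(\gamma_{\Pic})=\gamma(1,\mathbb{e})$, whose walk is obtained from that of $\gamma_{\Pic}$ by inserting a single forward letter $\kappa_i$; comparing $\mathbb{d}(\gamma(1,\mathbb{e}))(i)=1$ with this one forward crossing forces a left-to-right crossing to count as $+1$ and fixes the global sign once and for all, completing the identification.
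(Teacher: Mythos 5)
Your proof is correct. The paper itself omits the proof of this lemma, so there is no argument to compare against; your direct computation via the walk $\varepsilon_0\kappa_0^{\seq_0}\cdots\varepsilon_{nr-1}\kappa_{nr-1}^{\seq_{nr-1}}$, using that each edge-path $p_{\kappa_j}$ crosses only $\kappa_{j\bmod n}$ and exactly once, is clearly the intended one, and your two supplementary observations are exactly the right ones to make explicit: the statement only makes sense for powers of loops in $\CVb{n}$ (since $\mathbb{d}(\gamma)$ is only defined there), and the sign convention left implicit in ``counted with signs'' needs to be pinned down, which your calibration against $T_{\Bbbk(x)}(\mathcal{O}_{C_n})=\mathcal{L}(x)$ does cleanly.
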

\begin{proof}
Omitted.
\end{proof}

\noindent In a similar way, rank and total degree of a loop $\gamma$ have a topological interpretation too. One embeds $\gamma$ into the closed torus $\mathbb{R}^2/ \mathbb{Z}^2 \cong S^1 \times S^1$. Then, composing $\gamma$ with the projection onto the first component of the product determines a self-map of $S^1$ and hence an integer $r_\gamma \in  \mathbb{Z} \cong \pi_1(S^1)$. The number $r_{\gamma}$ measures how many times $\gamma$ wraps around the latitudinal axis of the  torus. The projection onto the second component determines an integer $d_{\gamma}$ which counts the number of full positive turns of $\gamma$ around the longitudinal axis.\medskip

\noindent Comparing the previous construction with the definitions of rank and total degree, we observe the following:

\begin{lem}\label{LemmaGeometricInterpretationRankMultiDegree}
Let $\gamma \subseteq \mathscr{T}^n$ be a loop. Then, $r_{\gamma}$ agrees with $\rk \gamma$ and $d_{\gamma}$ equals $\underline{\mathbb{d}}(\gamma)$. 
\end{lem}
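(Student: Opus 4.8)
The plan is to compute both invariants by lifting $\gamma$ to the universal cover of the closed torus and reading the resulting translation vector directly off the admissible walk $W_\gamma$. Throughout I regard $\mathscr{T}^n$ as the closed torus $\mathbb{R}^2/\mathbb{Z}^2 \cong S^1 \times S^1$ with the punctures filled in, using the identification $(r,s) \mapsto (r/n, s)$ so that the first (``latitudinal'') coordinate agrees with the convention fixed in the introduction. The covering $\mathbb{R}^2 \to \mathscr{T}^n$ has deck group generated by $(r,s)\mapsto(r+n,s)$ and $(r,s)\mapsto(r,s+1)$, so the translation carrying the start of a lift of a loop to its end defines a homomorphism $\delta: \pi_1(\mathscr{T}^n) \to \mathbb{Z}^2$, $\gamma \mapsto (an, b)$. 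By the definition of $r_\gamma$ and $d_\gamma$ as the degrees of the two coordinate projections, it then suffices to show $\delta(\gamma) = (n\,\rk\gamma,\ \underline{\mathbb{d}}(\gamma))$, since dividing the first entry by $n$ recovers $r_\gamma = \rk\gamma$ and the second entry is exactly $d_\gamma$.

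First I would record the displacement vectors of the two families of generators of $\Gamma$. Since the rotation $(r,s)\mapsto(r+1,s)$ is an order-$n$ self-homeomorphism of $\mathscr{T}^n$ sending $\varepsilon_i$ to $\varepsilon_{i+1}$ and $\kappa_i$ to $\kappa_{i+1}$, all the vectors $\delta(\varepsilon_i)$ coincide and all the vectors $\delta(\kappa_i)$ coincide. By Example \ref{ExampleLoopsOnTorus} the walk of $\gamma_{\Pic}$ is $\varepsilon_0\cdots\varepsilon_{n-1}$ and that of $\gamma_{\Bbbk(x)}^{i}$ is the single letter $\kappa_i$; reading off the explicit parametrizations $\gamma_{\Pic}(e^{2\pi i t}) = (nt, \tfrac14)$ and $\gamma_{\Bbbk(x)}^{i}(e^{2\pi i t}) = (i+1, t)$ gives $\delta(\gamma_{\Pic}) = (n,0)$ and $\delta(\kappa_i) = \delta(\gamma_{\Bbbk(x)}^{i}) = (0,1)$. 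Combining this with additivity of $\delta$ under concatenation yields $\delta(\varepsilon_i) = (1,0)$ for every $i$.

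Next I would substitute the explicit walk of an element of $\CVb{n}$. For $\gamma$ with associated sequence $\seq_\gamma \in \mathbb{Z}^{\mathbb{Z}_{n r}}$, one period of $W_\gamma$ is $\varepsilon_0\kappa_0^{\seq_0}\varepsilon_1\kappa_1^{\seq_1}\cdots\varepsilon_{nr-1}\kappa_{nr-1}^{\seq_{nr-1}}$, with the letter indices taken modulo $n$, so additivity of $\delta$ gives
\[
\delta(\gamma) = \sum_{j=0}^{nr-1}\bigl(\delta(\varepsilon_j) + \seq_{j}\,\delta(\kappa_j)\bigr) = \sum_{j=0}^{nr-1}\bigl((1,0) + \seq_{j}(0,1)\bigr) = \Bigl(nr,\ \textstyle\sum_{j=0}^{nr-1}\seq_{j}\Bigr).
\]
Since $\rk\gamma = r$ and $\underline{\mathbb{d}}(\gamma) = \sum_{j} \seq_{j}$ by definition, this is precisely $(n\,\rk\gamma, \underline{\mathbb{d}}(\gamma))$, and the two asserted identities follow from the first step. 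The same computation applies to powers of loops in $\CVb{n}$ after replacing the period by the appropriate multiple.

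I expect the only genuine subtlety to be the bookkeeping of orientations and signs: I must check that traversing $\kappa_i$ in the direction dictated by the walk contributes $+(0,1)$ rather than $-(0,1)$, so that the longitudinal count comes out as $+\underline{\mathbb{d}}(\gamma)$ and not its negative, and likewise that the latitudinal convention of the introduction corresponds to the rescaling $(r,s)\mapsto(r/n,s)$ rather than its inverse. Both points are pinned down once and for all by the explicit parametrizations in Example \ref{ExampleLoopsOnTorus}, so no essential difficulty remains; the argument is otherwise a direct evaluation of the monodromy homomorphism $\delta$ on the generators of $\Gamma$.
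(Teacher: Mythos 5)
Your argument is correct, and since the paper leaves the proof of this lemma as ``Omitted'', it supplies exactly the computation one would expect: evaluate the displacement homomorphism $\delta\colon \pi_1(\mathscr{T}^n)\to\mathbb{Z}^2$ on the generators of the ribbon graph $\Gamma$, calibrate the values against the explicit parametrizations of $\gamma_{\Pic}$ and $\gamma_{\Bbbk(x)}^i$ from Example \ref{ExampleLoopsOnTorus}, and read off $\delta(\gamma)=(nr,\sum_j\seq_j)$ from one period of the walk $\varepsilon_0\kappa_0^{\seq_0}\cdots\varepsilon_{nr-1}\kappa_{nr-1}^{\seq_{nr-1}}$. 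The one imprecise step is the claim that each individual $\delta(\varepsilon_i)$ equals $(1,0)$: since $\varepsilon_i$ is an edge between two distinct vertices of $\Gamma$ rather than a closed loop, its displacement is only defined after choosing lifts of the vertices, and no choice makes it simultaneously $(1,0)$ for all $i$ (the wrap-around edge necessarily absorbs the discrepancy). This does not affect the conclusion, because only the total over a closed walk is basepoint-independent, each $\varepsilon_i$ occurs exactly $r$ times in one period, and $\sum_{i\in\mathbb{Z}_n}\delta(\varepsilon_i)=\delta(\gamma_{\Pic})=(n,0)$, so the $\varepsilon$-contribution is $(nr,0)$ for any choice; you may want to phrase that step in terms of this sum rather than the individual edges.
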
 
\begin{proof}
Omitted.
\end{proof}

	\noindent The importance of $\CVb{n}$, $\Vect^n$ and $\gamma(r, \mathbb{d})$ becomes clear in the main result of this section:
	\begin{thm} The following is true. 
		\label{TheoremImagesOfSimpleVectorBundles}
		\begin{enumerate}
		\item A curve $\gamma \subseteq \mathscr{T}^n$ represents a vector bundle over $C_n$ if and only if $\gamma \in \CVb{n}$. Thus, $\Vect^n$ coincides with the additive closure of all vector bundles and their shifts. 
		\item Let $\mathcal{E}$ be a vector bundle of multi-degree $\mathbb{d}$ over $C_n$ which sits at the $k$-th level of its homogeneous tube in the Auslander-Reiten quiver. Then $\mathbb{F}(\mathcal{E}) \cong \P(\seq)(\mathcal{V})$, where  
		\begin{displaymath}
		\begin{array}{ccc}
		\dim \mathcal{V}=k & \text{and} & (\mathbb{d}(\seq), \rk \seq)=\big(\mathbb{d}, \frac{\rk \mathcal{E}}{k})\big).
		\end{array}
		\end{displaymath}
		\item Any loop in $\gamma(r, \mathbb{d})$ represents the simple vector bundles of rank $r$ and multi-degree $\mathbb{d}$.
		\end{enumerate}  
	\end{thm}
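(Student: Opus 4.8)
The plan is to prove the three statements in the order (1), (2), (3), using a homological characterisation of $\CVb{n}$ as the engine and the classification of vector bundles of Burban--Drozd--Greuel as the second pillar; in keeping with the aims of this section I avoid computing $\mathbb{F}$ directly. The heart of the ``only if'' direction of (1) is that being a vector bundle forces latitudinal monotonicity. Let $\mathcal{E}$ be an indecomposable vector bundle and $x\in\mathbb{P}^1_i$ a smooth point. Since $\mathcal{E}$ is locally free, $\Hom^\bullet(\mathcal{E},\Bbbk(x))\cong H^\bullet(\mathcal{E}^\vee\otimes\Bbbk(x))$ is concentrated in degree $0$ of dimension $\rk\mathcal{E}$, independently of the multi-degree. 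By Proposition \ref{PropositionMorpismIntersection} its homogeneous basis is in bijection with the intersections of $\gamma_\mathcal{E}$ and the vertical loop $\gamma_{\Bbbk(x)}^i$, and by Lemma \ref{LemmaDegreeOfIntersections} the degree of each such morphism is controlled by the local configuration: the two possible directions in which $\gamma_\mathcal{E}$ can cross $\gamma_{\Bbbk(x)}^i$ produce morphisms in two consecutive degrees. As $\Hom^\bullet(\mathcal{E},\Bbbk(x))$ lives in a single degree, all crossings must occur in the same (positive) direction; since this holds for every $i\in\mathbb{Z}_n$, the loop $\gamma_\mathcal{E}$ admits an orientation with everywhere positive latitudinal derivative, i.e.\ $\gamma_\mathcal{E}\in\CVb{n}$.

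Next I would match numerical invariants. The read-off lemmas of this section identify $\mathbb{d}(\gamma)(i)$ with the signed intersection number of $\gamma$ with $\kappa_i$ and, via Lemma \ref{LemmaGeometricInterpretationRankMultiDegree}, identify $\rk\gamma$ and $\underline{\mathbb{d}}(\gamma)$ with the latitudinal and longitudinal winding numbers. Since rank and total degree are additive on $K_0(\Perf(C_n))$ and hence factor through the homology class $[\gamma_X]\in H_1(\mathscr{T}^n)$, it suffices to evaluate them on the generators $\mathcal{O}$ (loop $\gamma_{\Pic}$, winding $(1,0)$) and $\Bbbk(x)$ (loop $\gamma_{\Bbbk(x)}^i$, winding $(0,1)$); this shows the combinatorial invariants of $\gamma_\mathcal{E}$ coincide with the genuine rank and multi-degree of $\mathcal{E}$, yielding the equality $(\mathbb{d}(\seq),\rk\seq)=(\mathbb{d},\rk\mathcal{E}/k)$ once the local-system dimension is pinned down. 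For the converse of (1): a loop $\gamma\in\CVb{n}$ is a loop, hence represents a $\tau$-invariant perfect object $X=\mathbb{F}(\mathcal{E}')$, and a grading adapted to the monotone structure exhibits its complex, up to shift, as quasi-isomorphic to a sheaf of positive rank $\rk\seq_\gamma\ge1$, hence torsion-free and locally free. Cleanest, though, is to finish (1) by exhaustion: the injection $\mathcal{E}\mapsto\gamma_\mathcal{E}$ into $\CVb{n}$ just constructed is onto, as one sees by comparing the Burban--Drozd--Greuel parametrisation of indecomposable vector bundles (sequence, tube level, scalar) with the parametrisation of $\CVb{n}$-loops by primitive sequences, local systems and scalars; thus $\Vect^n$ is exactly the additive closure of vector bundles and their shifts.

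For (3) I would first show that $\gamma(r,\mathbb{d})$ is a simple loop whenever $\gcd(r,\underline{\mathbb{d}})=1$. By Proposition \ref{LemmaSequencesHaveIntersectionsProperties} the sequence $\seq(r,\mathbb{d})$ satisfies conditions \eqref{Condition2TheoremNecessarySufficientConditions} and \eqref{Condition3TheoremNecessarySufficientConditions} of Theorem \ref{TheoremNecessarySufficientConditions}, and I would translate these, through the intersection dictionary for walks in $\Gamma$ (the two patterns described on page \pageref{IntersectionCombinatorics}), into the absence of self-crossings: a self-intersection of a $\CVb{n}$-loop is detected by a common subword and forces either two corresponding entries differing by at least $2$ or a forbidden block $1,0,\dots,0,1$, both excluded by \eqref{Condition2TheoremNecessarySufficientConditions}--\eqref{Condition3TheoremNecessarySufficientConditions}. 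A simple loop in $\CVb{n}$ with a one-dimensional local system is spherical, so by Corollary \ref{CorollarySphericlaCurvesAreNonSeparating} and part (1) it represents a vector bundle $\mathcal{E}$ with $\End\mathcal{E}=\Bbbk$, i.e.\ a simple one, of rank $r$ and multi-degree $\mathbb{d}$ by the invariant computation above. Since a simple vector bundle with these invariants exists and is unique for each scalar (Theorem \ref{TheoremBodnarchukDrozdGreuel}) and its cyclic sequence is forced to be $\seq(r,\mathbb{d})$ by Corollary \ref{CorollarySimpleVCB}, letting the one-dimensional local system vary over $\Bbbk^\times$ sweeps out precisely the simple vector bundles of rank $r$ and multi-degree $\mathbb{d}$; this also establishes (2) at tube level $k=1$.

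To obtain (2) in general I would propagate from the simple case via two compatible actions. Tensoring with $\mathcal{L}(x_i)$ is intertwined with the Dehn twist $D_{\gamma_{\Bbbk(x)}^i}$ by Remark \ref{RemarkTwistFunctorSmoothPointTensorProduct}, Proposition \ref{PropositionSphericalTwistsBecomeDehnTwists} and Corollary \ref{CorollaryTwistFunctorsUnderEmbeddings}, and its effect on sequences (Lemma \ref{LemmaShiftsByMultiplesOfR}, Remark \ref{RemarkDehnTwistActionOnSequences}) shifts the $i$-th column of the matrix of $\seq$ by multiples of $r$; being transitive on multi-degrees modulo $r$ and preserving both ``vector bundle'' and $\CVb{n}$, it reduces the determination of $\gamma_\mathcal{E}$ to a normalised multi-degree. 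The tube level is handled by replacing the one-dimensional local system at the mouth by an indecomposable $k$-dimensional one over the same primitive loop, which accounts for the factor $\rk\mathcal{E}/k$. The main obstacle I anticipate is exactly this last identification for non-simple bundles: here rank and multi-degree no longer determine the cyclic sequence, so matching the Burban--Drozd--Greuel gluing datum $\seq_\mathcal{E}$ with the loop sequence $\seq_{\gamma_\mathcal{E}}$ needs more than invariant counting --- either a specialisation argument from the (dense) locus of simple bundles, or a direct comparison of the extension structure of the band complex $\P(\seq)(\mathcal{V})$ with that of the glued bundle. Ensuring the $\Bbbk^\times$- and $\mathbb{Z}$-parametrisations (scalar and tube level) correspond on both sides is what upgrades the injection of part (1) to a bijection and closes the argument.
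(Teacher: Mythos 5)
Your treatment of (3), and your reduction of (2) to the simple case via twists and local systems, follow the paper's route; but both directions of (1) have genuine gaps, and (1) is what the rest hangs on. For the ``only if'' direction you test local freeness only against skyscrapers of \emph{smooth} points. The loops $\gamma^i_{\Bbbk(x)}$ sit at longitude $x=i+1$, strictly between the punctures, whereas membership in $\CVb{n}$ is the absence of letters $\varepsilon_j^{-1}$ in the walk, i.e.\ coherent orientation of the crossings with the vertical arcs \emph{through} the punctures at $x=j+\tfrac12$. These are not equivalent: a walk containing an essential backtrack of the form ``$\varepsilon_j$, some powers of $\kappa$, then an $\varepsilon^{-1}$'' describes an excursion around a puncture column that can be realised entirely inside one half of the adjacent strip, hence disjoint from (or coherently crossing) every $\gamma^i_{\Bbbk(x)}$, with the intersection count with the mid-lines unchanged. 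Your criterion cannot see such a loop, so the implication ``all crossings with the $\gamma^i_{\Bbbk(x)}$ in one degree $\Rightarrow$ $\gamma\in\CVb{n}$'' is false as stated. This is exactly why the paper invokes Ballard's characterisation, which requires $\Hom^\bullet(X,\Bbbk(z))\cong\Bbbk^m[d]$ at \emph{all} closed points $z$, and then tests against the \emph{singular} skyscrapers: by Corollary \ref{CorollaryCurvesOfSingularPoints} their arcs lie precisely on the puncture lines, so a letter $\varepsilon_j^{-1}$ produces two intersections with that arc whose morphisms live in different degrees (Lemma \ref{LemmaDegreeOfIntersections}). Without the singular points the argument does not close.

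For the ``if'' direction, the claim that a monotone grading exhibits $\P(\seq)$ as ``quasi-isomorphic to a sheaf of positive rank, hence torsion-free and locally free'' is not an argument: a perfect complex quasi-isomorphic to a sheaf of positive rank need be neither torsion-free nor locally free. Your fallback --- matching the Burban--Drozd--Greuel parametrisation against the parametrisation of $\CVb{n}$ by sequences --- is circular, since identifying the gluing sequence of $\mathcal{E}$ with $\seq_{\gamma_{\mathcal{E}}}$ is essentially the content of (2) and Corollary \ref{CorollarySimpleVCB}, which you defer; you concede this yourself for non-simple bundles. The paper's missing ingredient is Lemma \ref{LemmaMorphismsVectorBundles} together with Proposition \ref{PropositionExtensions}: resolving a suitable degree-$0$ intersection realises every $\P(\seq)$ of rank $r$ as an extension of a rank-one object $\P(\ell)$ (the image of a line bundle, by Theorem \ref{TheoremImages} and Remark \ref{RemarkTwistFunctorSmoothPointTensorProduct}) and an object $\P(\seq')$ of rank $r-1$ and complementary multi-degree. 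Induction then shows every object of $\Vect^n$ is an iterated extension of images of line bundles, hence a shifted vector bundle, and additivity of rank and multi-degree delivers (2) in one stroke --- no specialisation argument or separate bookkeeping for higher tube levels is needed.
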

	
\noindent	The proof of Theorem \ref{TheoremImagesOfSimpleVectorBundles} can be found at the end of this section and can be summarized as follows.\medskip 

\noindent \textbf{Outline of the proof:} \,  We show that any object in $\mathbb{F}(\Vect^n) \subseteq \mathcal{D}^n(\Lambda_n)$ is a shift of an iterated extension (of degree $1$) of objects which correspond to line bundles over $C_n$. As the category of vector bundles is closed under extensions this shows the ``if''-part in Theorem \ref{TheoremImagesOfSimpleVectorBundles} (1). Theorem \ref{TheoremImagesOfSimpleVectorBundles} (2) follows along the way due to the additivity of rank and multi-degree. The ``only-if''-direction in Theorem \ref{TheoremImagesOfSimpleVectorBundles} (1) is implied by a result of Ballard which characterizes shifts of vector bundles inside the derived category by means of their morphism spaces to skyscraper sheaves. At the heart of all our arguments lies the description of certain morphisms in $\mathbb{F}(\Vect^n)$ via intersections which we rephrase in terms of the associated integer sequences. This also enables us to  derive simplicity of $\gamma(r, \mathbb{d})$. \medskip
	
\noindent The first lemma describes intersections of loops in $\CVb{n}$ in a convenient combinatorial way which resembles Condition \eqref{Condition2TheoremNecessarySufficientConditions} and \eqref{Condition3TheoremNecessarySufficientConditions} in Theorem \ref{TheoremNecessarySufficientConditions}. As before, we regard a sequence $\seq \in \mathbb{Z}^{\mathbb{Z}_{n r}}$ as an infinite sequence with period $n \cdot r$. For such an infinite sequence $\seq=(m_i)_{i \in \mathbb{Z}}$ and $l \in \mathbb{Z}$, we write $\seq[l]$ for the infinite sequence $(m_{i+l})_{i \in \mathbb{Z}}$.

	\begin{lem}\label{LemmaCombinatorialIntersections}
	Suppose that $\gamma, \gamma' \in \CVb{n}$ are in minimal position with ranks $r=\rk \gamma$ and $r'=\rk \gamma'$. Let $\mathrm{lcm}(r,r')$ denote the least common multiple of $r$ and $r'$. Then, the set of intersections of $\gamma$ and $\gamma'$ is in bijection with the union of the following two sets. 
		\begin{enumerate}
			\item The set of subsequences of $\seq_{\gamma} - \seq_{\gamma'}[l]$ up to shift by a multiple of $\mathrm{lcm}(r, r')$, of the form 
			$$a, 0, \dots,0, b,$$ 
			
			\noindent where $a, b > 0 $ or $a, b < 0$ and $l \in \mathbb{Z}$.
			
			\item The set of triples $(x, y, q)$, where $x  \in \mathbb{Z}_{n r}$, $y \in  \mathbb{Z}_{ nr'}$ and $0 \leq q \leq |\left(\seq_{\gamma}\right)_x-\left(\seq_{\gamma'}\right)_y|-2$.
			
		\end{enumerate} 
	\end{lem}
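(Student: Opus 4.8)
The plan is to reduce the statement about geometric intersections of the loops $\gamma,\gamma'$ to the combinatorial data encoded in their walks $W_\gamma,W_{\gamma'}$ via Proposition \ref{PropositionBijectionCurvesWalks} and the intersection dictionary on page \pageref{IntersectionCombinatorics}. Since both loops lie in $\CVb{n}$, their walks contain no letters of the form $\varepsilon_i^{-1}$, so each is of the shape $\cdots \varepsilon_i \kappa_i^{m_i} \varepsilon_{i+1} \cdots$ with the exponents recording the entries of $\seq_\gamma$ and $\seq_{\gamma'}$. First I would fix representatives and read off, for each vertex $\circ$ (the $\varepsilon$-vertex) and each vertex $\bullet$ (the $\kappa$-vertex), the cyclic order of half-edges dictated by the ribbon structure of $\Gamma$ in Figure \ref{FigureQuiverDissection}. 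The key point is that every intersection comes from a maximal common subword, and because only the letters $\kappa_i$ can be repeated while the $\varepsilon_i$ cycle through consecutively, the common subwords are forced to be blocks of the form $\varepsilon_j \kappa_j^{t}\cdots$; comparing where the two walks diverge on either side of such a block is exactly what the two cases a) and b) encode.

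The main body of the argument is a careful case split matching the two itemized situations on page \pageref{IntersectionCombinatorics} to the two sets (1) and (2) in the statement. For case a), the common subword $c_1\cdots c_l$ is a segment along which the two walks agree but enter and leave through different half-edges; I would show that such a configuration, after alignment of $W_{\gamma'}$ by a shift $[l]$, corresponds precisely to a place where $\seq_\gamma-\seq_{\gamma'}[l]$ exhibits a pattern $a,0,\dots,0,b$. The signs of $a$ and $b$ are governed by whether the diverging letters $u\neq u'$ and $v\neq v'$ sit to the left or right of the shared block in the cyclic order around the $\bullet$ and $\circ$ vertices, and the condition that both divergences point ``the same way'' (forcing the loops to actually cross rather than run parallel) is exactly $a,b>0$ or $a,b<0$. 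The counting up to shift by $\operatorname{lcm}(r,r')$ reflects the common period of the two infinite sequences, i.e.\ the length of the universal cyclic window in which each genuine intersection is counted once.

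For case b), all four letters $u,u',v,v'$ meet at a single vertex $x$; since the loops lie in $\CVb{n}$, this is the situation of several strands wrapping around a common $\kappa$-vertex $\bullet$, and the number of distinct intersections produced by a single such local picture is governed by how many times the two strands wind past each other. I would translate this into the numerical bound $0\le q\le |(\seq_\gamma)_x-(\seq_{\gamma'})_y|-2$: the two exponents at the relevant positions measure the number of parallel $\kappa$-strands of each loop, and the difference (minus the two ``boundary'' strands that already account for case-a type crossings) counts the interior crossings indexed by $q$, with $(x,y)$ recording which positions in the two sequences meet. The triple $(x,y,q)$ thus enumerates these interior intersections without overcounting.

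The hard part will be bookkeeping the cyclic shifts and the $\operatorname{lcm}(r,r')$ normalization so that each geometric intersection is counted exactly once and the two sets (1), (2) are genuinely disjoint. Specifically, I must verify that the minimal-position hypothesis eliminates spurious bigons (so that every combinatorial pattern $a,0,\dots,0,b$ or triple genuinely corresponds to an honest crossing) and that a single intersection point is not simultaneously recorded in both cases; this is where the distinction between divergence at two separated vertices (case a) versus a single shared vertex (case b) must be shown to be mutually exclusive. I expect the ribbon-graph orientation conventions and the precise correspondence between ``half-edge to the left/right'' and the sign of the difference sequence to require the most care, but no genuinely new idea beyond the intersection dictionary already recalled; once the local pictures are matched, the global count follows from periodicity of the walks.
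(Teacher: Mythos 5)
Your proposal follows essentially the same route as the paper's proof, which describes the lemma as ``just a paraphrase'' of the intersection dictionary on page~\pageref{IntersectionCombinatorics}: one translates each maximal common subword of $W_\gamma$ and $W_{\gamma'}$ into a pattern in the difference sequence, exactly as you outline. One correction to your case split, though: the triples of set (2) do \emph{not} correspond only to dictionary case b). When $(\seq_{\gamma'})_y \neq 0$, the two walks share the nonempty subword $\kappa_x^{(\seq_{\gamma'})_y}$ inside a single block $\varepsilon_x\kappa_x^{(\seq_\gamma)_x}\varepsilon_{x+1}$, which is a case a) configuration with $c_1\cdots c_l=\kappa_x^{(\seq_{\gamma'})_y}$; only the subcase $(\seq_{\gamma'})_y=0$ is genuinely case b). The correct dichotomy between sets (1) and (2) is whether the maximal common subword contains a letter $\varepsilon_j$ (equivalently, whether the two divergence points lie in distinct $\kappa$-blocks), not case a) versus case b); with that realignment your bookkeeping goes through and matches the paper's count of $|(\seq_\gamma)_x-(\seq_{\gamma'})_y|-1$ intersections per pair of blocks.
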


	\begin{proof}
		This is just a paraphrase of the bijection between intersections and maximal common subsequences as described on page \pageref{IntersectionCombinatorics}. 
		
		Suppose $\seq_{\gamma} - \seq_{\gamma'}[l]$  contains a subsequence $a, 0, \dots, 0, b$ with $a, b > 0$ or $a,b < 0$. Then, $W=W_\gamma$ and $W'=W_{\gamma'}$ contain sequences $\kappa_x c \kappa_y$ and $\varepsilon_x c \varepsilon_{y}$, where $c= \kappa_x^u \varepsilon_{x+1} \cdots \varepsilon_{y-1} \kappa_y^v$ and $u, v \geq 0$. One verifies easily that the common subsequence $c$ corresponds to an intersection.
		
		Next, suppose $|\left(\seq_{\gamma}\right)_x-\left(\seq_{\gamma'}\right)_y| \geq 2$. Set $m \coloneqq \left(\seq_{\gamma}\right)_x$ and $m' \coloneqq \left(\seq_{\gamma'}\right)_y$.  Without loss of generality $m > m'$. The corresponding subsequence $\varepsilon_{x} {\kappa_{x}}^{m} \varepsilon_{x+1}$ of $W$ contains $m' - m - 1$ subsequences of the form $\kappa_x c \kappa_x$, where $c=\kappa_x^{m'}$ if $m' \neq 0$ and $c$ is empty otherwise. Since $W'$ contains a subsequence $\varepsilon_x c \varepsilon_{x+1}$, these subsequences give rise to $m' - m - 1$ different intersections.
		
		Finally, we observe that the previous cases cover all possible decompositions of $W$ and $W'$ which correspond to an intersection of $\gamma$ and $\gamma'$.
	\end{proof}

	\begin{cor}\label{CorollaryThetaIsSimple}
		Let $r \geq 1$ and $\mathbb{d} \in \mathbb{Z}^{\mathbb{Z}_n}$ such that $r$ and $\underline{\mathbb{d}}$ are coprime. Then, $\gamma(r, \mathbb{d})$ contains a simple loop.
	\end{cor}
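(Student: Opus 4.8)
The plan is to read the self-intersections of $\gamma(r,\mathbb{d})$ directly off its integer sequence $\seq(r,\mathbb{d})$ by means of the combinatorial dictionary of Lemma \ref{LemmaCombinatorialIntersections} applied to the pair $(\gamma,\gamma')=(\gamma(r,\mathbb{d}),\gamma(r,\mathbb{d}))$, and to show that both families of intersections listed there are empty. The input that makes this work is Proposition \ref{LemmaSequencesHaveIntersectionsProperties}, which guarantees that $\seq=\seq(r,\mathbb{d})$ satisfies Conditions \eqref{Condition2TheoremNecessarySufficientConditions} and \eqref{Condition3TheoremNecessarySufficientConditions} of Theorem \ref{TheoremNecessarySufficientConditions}; these are, as the text preceding Lemma \ref{LemmaCombinatorialIntersections} points out, precisely the negations of the two families of crossings appearing in that lemma.

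First I would reduce to the normalized case $\mathbb{d}(x)\in[0,r)$ for all $x\in\mathbb{Z}_n$, in which every entry of $\seq(r,\mathbb{d})$ lies in $\{0,1\}$. This reduction is harmless: by Remark \ref{RemarkDehnTwistActionOnSequences} a suitable product of Dehn twists about the loops $\gamma_{\Bbbk(x)}^i$ carries the homotopy class $\gamma(r,\mathbb{d})$ to $\gamma(r,\mathbb{e})$, where $\mathbb{e}$ is the normalized multidegree of Lemma \ref{LemmaShiftsByMultiplesOfR}, and since Dehn twists are homeomorphisms, one class contains a simple representative if and only if the other does. (Coprimality of $r$ and $\underline{\mathbb{d}}$ also ensures that $\seq(r,\mathbb{d})$ is primitive, so that $\gamma(r,\mathbb{d})$ is a genuine loop and not a proper power.) With the entries now in $\{0,1\}$, the second family of Lemma \ref{LemmaCombinatorialIntersections}, consisting of triples $(x,y,q)$ with $0\le q\le|\seq_x-\seq_y|-2$, is empty because $|\seq_x-\seq_y|\le 1<2$; and the first family is empty because the difference sequence $\seq-\seq[l]$ then takes values in $\{-1,0,1\}$, so any subsequence $a,0,\dots,0,b$ with $a,b$ of equal sign is of the form $1,0,\dots,0,1$ or $-1,0,\dots,0,-1$, which Condition \eqref{Condition3TheoremNecessarySufficientConditions} (with $t=l$) forbids. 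Hence $\gamma(r,\mathbb{d})$ has no self-intersections and therefore contains a simple representative, which is the assertion.

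The step I expect to require the most care is the legitimacy of feeding $\gamma=\gamma'$ into Lemma \ref{LemmaCombinatorialIntersections}, which was formulated for two loops and whose proof invokes the intersection combinatorics of page \pageref{IntersectionCombinatorics} for a pair of curves. One must check that, for a single loop, the maximal common subwords of $W_{\gamma}$ with its own cyclic shifts enumerate exactly the self-intersections counted by Corollary \ref{CorollaryMorphisms}\eqref{Point2List}, i.e.\ that the bookkeeping of cases a) and b) specializes correctly when the two walks coincide and that no spurious crossing coming from the trivial identification is introduced. Once this is granted, the vanishing of the two families is immediate from Conditions \eqref{Condition2TheoremNecessarySufficientConditions} and \eqref{Condition3TheoremNecessarySufficientConditions}, and no further computation is needed.
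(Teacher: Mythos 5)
Your proposal is correct and follows exactly the paper's route: the paper's proof of this corollary is a one-line citation of Proposition \ref{LemmaSequencesHaveIntersectionsProperties} and Lemma \ref{LemmaCombinatorialIntersections}, i.e.\ precisely the argument you spell out (including the normalization to entries in $\{0,1\}$ and the application of the intersection dictionary to $\gamma=\gamma'$). The extra care you flag about specializing Lemma \ref{LemmaCombinatorialIntersections} to self-intersections is a reasonable thing to note, but the paper uses it in the same implicit way.
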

	\begin{proof}
		This follows from Proposition \ref{LemmaSequencesHaveIntersectionsProperties} and Lemma \ref{LemmaCombinatorialIntersections}.
	\end{proof}

	\begin{exa}\label{ExampleLoopOfSimpleVectorBundle}
		As in Example \ref{ExampleMatrixSequence}, let $n=2=r$, $\mathbb{d}\left(0\right)=2$ and $\mathbb{d}\left(1\right)=-1$. Then, $\seq=\seq(2, \mathbb{d})$ is given by $(\seq_{0}, \dots, \seq_{3})=(1, -1, 1, 0)$ and a representative $\gamma$ of $\gamma(2, \mathbb{d})$ is depicted in Figure \ref{FigureLoopOfSimpleVectorBundle}. The intersections of $\gamma$ with the vertical lines in Figure \ref{FigureLoopOfSimpleVectorBundle} divide $\gamma$ into $n \cdot r=4$ segments. The entries of the sequence $\seq$ encode the signed intersection number of each segment with the arcs $\kappa_i$.
		
			\begin{figure}
			\begin{displaymath}
			\begin{tikzpicture}[scale=3.25]
			\draw (0,0)--(2,0);
			\draw (0,0)--(0,1);
			\draw (0,1)--(2,1);
			\draw (2,1)--(2,0);

		\draw (0,0.5)--(2,0.5);

			\draw (0.5, 0.875)--(0.5+0.125,1);
			\draw[->] (0.625, 0)--(1.35,0.875-0.15);
			\draw (1.35,0.875-0.15)--(1.5,0.875);
			\draw[->] (0.5+1,0.875)--(0.5+1+0.25, 0.875-0.1875);
			\draw (0.5+1+0.25, 0.875-0.1875)--(0.5+1+0.5, 0.875-0.375);
			\draw[->] (0, 0.875-0.375)--(0.25, 0.875-0.375-0.1875);
			\draw (0.25, 0.875-0.375-0.1875)--(0.5, 0.125);
			\draw[->] (0.5, 0.125)--(1.2, 1-0.175);
			\draw (1.2, 1-0.175)--(1.375, 1);
			\draw (1.375, 0)--(1.5, 0.125);
			\draw[->] (1.5, 0.125)--(1.75, 0.0625);
			\draw (1.75,0.0625)--(2,0);
			\draw[->] (0, 1)--(0.25, 1-0.0625);
			\draw (0.25,1-0.0625)--(0.5,0.875);
			
				\foreach \u in {0, 1}
			{
			
				\draw ({\u+0.5}, 0)--({\u+0.5}, 1);
				
				\filldraw[white] (\u+0.5,0.5) circle (0.75pt);
				\draw[black] (\u+0.5,0.5) circle (0.75pt);
				
				\filldraw[blue] (\u+0.5, 0.875) circle (0.75pt);
				\filldraw[blue] (\u+0.5, 0.125) circle (0.75pt);
			}
			
			\end{tikzpicture}
			\end{displaymath}
			\caption{The loop $\gamma=\gamma(2, \mathbb{d})$. The points $\color{blue}\bullet$ divide $\gamma$ into $4$ segments.}
			\label{FigureLoopOfSimpleVectorBundle}
		\end{figure}
	\end{exa}
	
	\begin{rem}[Construction of simple representatives] For a general homotopy class of a loops on a surface it is not obvious how to construct a representative in minimal position right away. A simple representative of $\gamma(r, \mathbb{d})$ can be constructed from the sequence $S_i=S_i(r, \mathbb{d})$ (see Definition \ref{DefinitionSequencesSimpleVectorBundles}) in the following way.
     \begin{enumerate}
         \item Draw $r$ distinct points in the interior of every arc $\varepsilon_i$ which we label by $z_i^0, \dots, z_i^{r-1}$ while following the orientation of $\varepsilon_i$. If $s \in \mathbb{Z}$, set $z_x^s \coloneqq z_x^t$, where $t \in [0,r)$ such that $s \equiv t \mod r$.
         
         \item  For each pair $(i, j) \in \mathbb{Z}_n \times [0, n r]$,  connect $z_i^{S_j}$ and $z_{i+1}^{S_{j+1}}$ by the projection of a straight line in $\mathbb{R}^2$.
     \end{enumerate}
	The resulting loop is a simple representative of $\gamma(r, \mathbb{d})$. The loop in Figure \ref{FigureLoopOfSimpleVectorBundle} was constructed by means of this procedure.
	\end{rem}
	
	\begin{lem}\label{LemmaQuivers}
		Let $\mathbbm{m}  \in \mathbb{Z}^{\mathbb{Z}_{n r}}$ and let $\mathcal{V}$ be an indecomposable local system on $\gamma(\seq)$. Then, $\P_{\gamma(\mathbbm{m})}(\mathcal{V})$ is concentrated in two or three cohomological degrees.
	\end{lem}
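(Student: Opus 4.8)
The cohomological degrees in which $\P_{\gamma(\seq)}(\mathcal{V})$ is supported are exactly the integers $-g(q)$ occurring in the formula $\P_{(\gamma,g)}(\mathcal{V})=\bigoplus_i (P_i\otimes \mathcal{V}_i)[-g(q_i)]$ of Section~\ref{SectionComplexOfACurve}, where $q_0,\dots,q_m$ enumerates $\gamma(\seq)\cap L$ (with $\gamma(\seq)$ regarded as a loop on $\mathbb{T}_n$). Hence the number of cohomological degrees equals the number of distinct values of the grading function $g$, and the assertion is equivalent to $\max g-\min g\le 2$. Since $g$ is well defined up to a global shift and changes by exactly $+1$ or $-1$ between neighbouring crossings according to whether the marked point of the traversed hexagon lies to the left or to the right of the loop (Definition of a grading), the plan is to describe the cyclic sequence of these $\pm1$ steps along $\gamma(\seq)$ and to bound the oscillation of the resulting height walk.

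First I would record the two basic cases. For $\gamma_{\Pic}$ (walk $\varepsilon_0\cdots\varepsilon_{n-1}$) and for each $\gamma^i_{\Bbbk(x)}$ (walk $\kappa_i$), Example~\ref{ExampleLoopsOnTorus} and Figure~\ref{FigurePicardGroupComplex} identify the complexes as $\mathcal{O}(\lambda)$ and $\Bbbk(i,\lambda)$, both concentrated in the two degrees $-1,0$; thus in each case $g$ merely alternates between two consecutive integers. The general loop $\gamma(\seq)$, with walk $\cdots\varepsilon_i\kappa_i^{\seq_i}\varepsilon_{i+1}\cdots$, interleaves the rightward steps across the arcs $\varepsilon_i$ with the vertical windings $\kappa_i^{\seq_i}$. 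I would translate this walk into its laminate-crossing sequence by using the local picture of the laminates and the two hexagon types around a single puncture, which is identical at every puncture (Figures~\ref{FigureLambdaLamination} and~\ref{FigurePolygons}), and then read off, letter by letter, the corresponding sequence of $\pm1$ steps of $g$.

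The decisive structural input is that $\gamma(\seq)\in\CVb{n}$, so its walk contains no letter $\varepsilon_i^{-1}$: geometrically $\gamma(\seq)$ is monotone in the latitudinal direction, always crossing the $\varepsilon$-arcs in the positive sense. With $g$ normalized so that the crossings on the ``rightward'' laminates alternate between two fixed adjacent levels (as for $\gamma_{\Pic}$), I would show that each maximal block $\kappa_i^{\seq_i}$ produces a local excursion of the height walk that rises and returns, or falls and returns, by a single unit, the sign of $\seq_i$ deciding whether the excursion visits the level just above or just below the two base levels. The key point is that such a block introduces \emph{no net drift}: the latitudinal monotonicity pins the winding of the tangent direction of $\gamma(\seq)$ relative to the (essentially horizontal) line field, so that the grading of a crossing is controlled by the laminate family on which it lies rather than by its position around the loop. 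Consequently the height function never leaves a window of three consecutive integers, giving two or three cohomological degrees.

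The main obstacle is the sign-sensitive local computation in the middle step: determining the precise $\pm1$ contribution of each walk letter from the laminate geometry, keeping track of the orientation conventions, the two hexagon types of Figure~\ref{FigurePolygons}, the two cases $\seq_i>0$ and $\seq_i<0$, and the transitions between consecutive blocks. A convenient bookkeeping device will be to track the pair consisting of the current height and the laminate family of the crossing, so that the global bound reduces to the purely local observation that two crossings adjacent along $\gamma(\seq)$ belong only to laminate families whose assigned levels differ by one. Establishing the absence of drift for arbitrarily large $|\seq_i|$ (already visible for $n=1$, where $\varepsilon_0\kappa_0^{s}$ must still yield a two- or three-term complex) is the part that genuinely uses the monotonicity hypothesis defining $\CVb{n}$.
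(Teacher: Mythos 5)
Your reduction is the right one and matches the paper's strategy: decompose the walk of $\gamma(\seq)$ into the blocks $\varepsilon_x\kappa_x^{\seq_x}\varepsilon_{x+1}$, compute the grading (equivalently, the subquiver of $Q(\gamma(\seq))$) contributed by each block, and bound the oscillation of the resulting height walk. The paper's proof consists precisely of exhibiting these subquivers explicitly, in \eqref{equationquiver1} and \eqref{equationquiver2}.

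The problem is the outcome you predict for that local computation. You assert that each block is an excursion from two fixed base levels, with the sign of $\seq_x$ deciding whether the excursion visits the level just \emph{above} or just \emph{below} that base. If that were the local picture, a sequence with entries of both signs would force the grading to take four consecutive values (one above and one below the two base levels), so your own conclusion $\max g-\min g\le 2$ would fail exactly for the mixed-sign sequences the lemma must cover --- and which do occur, e.g.\ $\seq=(1,-1,1,0)$ in Example \ref{ExampleMatrixSequence}. What the subquivers \eqref{equationquiver1} and \eqref{equationquiver2} actually show is asymmetric: every block starts and ends at the \emph{same} level $h$; blocks with $\seq_x\le 0$ oscillate entirely inside $\{h,h+1\}$; and blocks with $\seq_x>0$ reach $h+2$, so all excursions leave the base window on the same side. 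It is this common anchoring of the block endpoints, not a symmetric up/down excursion, that confines the concatenated height walk to three levels. (Your appeal to latitudinal monotonicity as a ``decisive input'' is also superfluous: every $\gamma(\seq)$ lies in $\CVb{n}$ by construction.) So the proposal sets up the correct reduction but asserts a false local picture, and carrying out that sign-sensitive computation correctly is the entire content of the paper's proof.
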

	\begin{proof}
		Each subsequence $\varepsilon_x \kappa_x^{\mathbbm{m}_x} \varepsilon_{x+1}$ corresponds to a subquiver of $Q(\gamma(\mathbbm{m}))$ of the form
		\begin{equation}\label{equationquiver1}
		\begin{tikzcd}
		\bullet \arrow{r}{b_i}  & \bullet \arrow{r}{a_i} & \bullet & \arrow[swap]{l}{c_i} \bullet \arrow{r}{a_i} & \cdots & \arrow[swap]{l}{c_i} \bullet & \arrow[swap]{l}{d_i} \bullet,  
		\end{tikzcd}
		\end{equation}

		\noindent if $\seq_x \geq 0$ and 
		
		\begin{equation}\label{equationquiver2}    
		\begin{tikzcd}
		\bullet \arrow{r}{b_ic_i} & \bullet & \bullet \arrow[swap]{l}{a_i} \arrow{r}{c_i} & \cdots \arrow{r}{c_i} & \bullet & \arrow[swap]{l}{d_ia_i} \bullet,  
		\end{tikzcd}
		\end{equation}
		
		\noindent if $\seq_x < 0$. 
	\end{proof}

	\begin{rem}\label{RemarkNumberOfArrows}
	The number of arrows with labels $a_i$ (resp.\ $c_i$) in (\ref{equationquiver1}) and (\ref{equationquiver2}) is $|\mathbbm{m}_x|$.
\end{rem}

	\noindent In what follows, for any (not necessarily) primitive sequence $\seq$, we write $\P(\seq)$ as short for any representative $\P_{\gamma(\seq)}(\mathcal{V})$ of $\gamma(\seq)$, where $\dim \mathcal{V}=1$, which is concentrated in degrees $-1$ and $0$, or, has entries in degrees $-1,0$ and $1$. It follows from the discussion preceeding Theorem \ref{TheoremResolutionCrossings} that if $\seq=\mathbb{e}^t$ for some $t > 1$ and a primitive sequence $\mathbb{e}$, then $\P(\seq) \cong \bigoplus_{i=1}^m \P_{\gamma(\mathbb{e})}(\mathcal{V}_i)$ for indecomposable local systems $\mathcal{V}_1, \dots, \mathcal{V}_m$ with $\sum_{i=1}^m \dim \mathcal{V}_i=t$.\medskip

	\noindent The next lemma is the key to showing that $\P(\seq)$ is an iterated extension of images of line bundles.
	\begin{lem}\label{LemmaMorphismsVectorBundles}
		Let $r > 1$, $\seq \in \mathbb{Z}^{\mathbb{Z}_{ n r}}$ and $\ell \in \mathbb{Z}^{\mathbb{Z}_n}$. Then, the following is true.
		\begin{enumerate}

			\item \label{list1} A morphism $f \in \Hom^{\bullet}\big(\P(\ell), \P(\seq)\big)$  which corresponds to a subsequence $a, 0, \dots, 0, b$, $a, b, > 0$,  of $\ell - \seq[p]$ for some $p \in \mathbb{Z}$ is of degree $0$.
			\item If the above sequence has length $n+1$, then $C_f$ is isomorphic to a complex $\P(\seq')$ for a sequence $\seq'$ of rank $r-1$ and multi-degree $\mathbb{d}(\seq)-\mathbb{d}(\ell)$. In other words, $\P(\seq)$ is an extension of $\P(\ell)$ and $\P(\seq')$.
		\end{enumerate} 
	\end{lem}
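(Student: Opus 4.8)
The plan is to handle the two parts in turn, using the walk combinatorics of Lemma~\ref{LemmaCombinatorialIntersections} together with the resolution of crossings in Theorem~\ref{TheoremResolutionCrossings}.

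For part (1), recall from the proof of Lemma~\ref{LemmaCombinatorialIntersections}(1) that a subsequence $a,0,\dots,0,b$ of $\ell-\seq[p]$ with $a,b>0$ is realised by a common subsequence $c=\kappa_x^{u}\varepsilon_{x+1}\cdots\varepsilon_{y-1}\kappa_y^{v}$ of the walks $W_{\gamma(\ell)}$ and $W_{\gamma(\seq)}$, hence by a single interior intersection and a corresponding basis element $f\in\Hom^{\bullet}(\P(\ell),\P(\seq))$. To compute $\deg f$ I would fix compatible gradings on $\gamma(\ell)$ and $\gamma(\seq)$ (the shape of the quivers in Lemma~\ref{LemmaQuivers} makes this explicit) and apply Lemma~\ref{LemmaDegreeOfIntersections} at the hexagon containing the intersection. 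The intersection is of overlap type, so $f$ is a graph map in the sense used in the proof of Lemma~\ref{LemmaMappingConesAtTwoEnds}; concretely, because both curves traverse the hexagon in the same rightward direction, the two laminate crossings $q,q'$ singled out by Lemma~\ref{LemmaDegreeOfIntersections} carry the same grading value, whence $\deg f=g'(q')-g(q)=0$.

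For part (2), since $f$ has degree $0$ and is supported at a single interior intersection, Theorem~\ref{TheoremResolutionCrossings} identifies $C_f$ with the loop $\gamma_p$ obtained by resolving that crossing. Resolving one transverse crossing of the two distinct loops $\gamma(\ell)$ and $\gamma(\seq)$ yields a single loop, which on homology realises the class $[\gamma(\seq)]-[\gamma(\ell)]$. By Lemma~\ref{LemmaGeometricInterpretationRankMultiDegree} this already forces $\rk C_f=r-1\ge 1$ and $\mathbb{d}(C_f)=\mathbb{d}(\seq)-\ell$, in agreement with the additivity of rank and multi-degree along the triangle $\P(\ell)\to\P(\seq)\to C_f\to\P(\ell)[1]$. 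It remains to show that $\gamma_p$ actually lies in $\CVb{n}$, for then $C_f\cong\P(\seq')$ with $\seq'=\seq_{\gamma_p}\in\mathbb{Z}^{\mathbb{Z}_{n(r-1)}}$ and the triangle exhibits $\P(\seq)$ as an extension of $\P(\ell)$ and $\P(\seq')$.

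This last point is where the length hypothesis enters and is the step I expect to be the main obstacle. As a concatenated walk, $\gamma_p$ is $W_{\gamma(\seq)}$ followed by the reversed walk $W_{\gamma(\ell)}^{-1}$, spliced along the common subsequence $c$; the reversed factor contributes letters $\varepsilon_i^{-1}$, so a priori $\gamma_p\notin\CVb{n}$ and these must cancel. Here the assumption that the subsequence $a,0,\dots,0,b$ has length exactly $n+1$ is essential: it forces $a$ and $b$ to sit in the same column $x\in\mathbb{Z}_n$ (their positions differ by $n$), so that $c$ wraps exactly once around the latitudinal direction and the block $c\,c^{-1}$ appearing at the splice cancels completely under reduction of admissible walks. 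The careful part is verifying that this reduction removes precisely one latitudinal period and leaves an admissible walk with only positive $\varepsilon$-letters; once this is checked, the reduced walk is a representative of $\gamma(\seq')$, reading off $\seq'$ reproves $\rk\seq'=r-1$ and $\mathbb{d}(\seq')=\mathbb{d}(\seq)-\ell$, and part (2) follows.
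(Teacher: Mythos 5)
Your proposal is correct and follows essentially the same route as the paper: part (1) is the observation that $f$ is a degree-zero graph map supported at the overlap intersection, and part (2) resolves that crossing via Theorem \ref{TheoremResolutionCrossings} and reduces the spliced walk, where the inserted reversed $\ell$-walk cancels against the continuation of the $\seq$-walk precisely because the middle entries of $a,0,\dots,0,b$ vanish and the length-$(n+1)$ hypothesis makes the overlap cover all $n$ letters $\varepsilon_j$ of $W_{\gamma(\ell)}$. The "careful part" you flag is exactly the telescoping cancellation the paper writes out explicitly, yielding the reduced walk $\cdots\varepsilon_i\kappa_i^{\seq_i-\ell_i+\seq_{i+n}}\varepsilon_{i+1}\cdots$ of rank $r-1$ and multi-degree $\mathbb{d}(\seq)-\mathbb{d}(\ell)$.
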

	\begin{proof}We may assume that $p=0$ and that all values of $\ell, \seq$ are non-negative. Let $a, 0, \dots, 0, b$ be a subsequence of $\ell - \seq$ with $a, b > 0$. Then, following the proof of Lemma \ref{LemmaQuivers} (see also Remark \ref{RemarkNumberOfArrows}) and the definition of $f$ from \cite{OpperPlamondonSchroll}, $f$ is a  graph map as described in the proof of Lemma \ref{LemmaMappingConesAtTwoEnds}. Its  diagram has the following shape:
		
		\begin{displaymath}
		\begin{tikzcd}
		\bullet   & \bullet \arrow[equal]{d} \arrow[swap]{l}{c_i} \arrow{r}{a_i} & \bullet \arrow[equal]{d} & \arrow[swap]{l}{c_i} \bullet \arrow[equal]{d} \arrow{r}{a_i} & \cdots & \arrow[swap]{l}{c_i} \bullet \arrow[equal]{d} \arrow{r}{a_i} &  \bullet
		\\
		\bullet \arrow{r}{b_i}  & \bullet \arrow{r}{a_i} & \bullet & \arrow[swap]{l}{c_i} \bullet \arrow{r}{a_i} & \cdots & \arrow[swap]{l}{c_i} \bullet & \arrow[swap]{l}{d_i} \bullet.  
		\end{tikzcd}
		\end{displaymath}
		\noindent The top line corresponds to the subquiver of $Q(\gamma(\ell))$ associated with the subsequence \[\kappa_i^{\mathbbm{m}_i+1} \varepsilon_{i+1} \cdots \varepsilon_{i} \kappa_i^{\mathbbm{m}_{i+n}+1}\]
		\noindent of $Q(\gamma(\ell))$; the lower line corresponds to  the subquiver of $Q(\gamma(\mathbbm{m}))$ which is associated with 
		
		\[\varepsilon_i\kappa_i^{\mathbbm{m}_i} \varepsilon_{i+1} \cdots \varepsilon_{i} \kappa_i^{\mathbbm{m}_{i+n}} \varepsilon_{i+1}.\]
		
		\noindent  By Theorem \ref{TheoremResolutionCrossings} we obtain a loop which represents the mapping cone of $f$ by resolving the corresponding intersection. The reduced walk of the resolved curve is the reduction of
 $$
 \cdots \varepsilon_i \kappa_i^{ \seq_i} \left( \kappa_i^{-\ell_i} \varepsilon_i^{-1} \kappa_{i-1}^{-\ell_{i-1}} \varepsilon_{i-1}^{-1} \cdots \kappa_{i+1}^{-\ell_{i+1}} \varepsilon_{i+1}^{-1} \right) \varepsilon_{i+1} \kappa_{i+1}^{\seq_{i+1}} \cdots \varepsilon_{i} \kappa_i^{\seq_{i+n}} \cdots.
$$
\noindent Since $\ell_{i+s}= \seq_{i+s}$ for all $s \in (1, n)$, the reduction is
 $$
\cdots \varepsilon_{i-1}\kappa_{i-1}^{\seq_{i-1}} \varepsilon_i \kappa_i^{ \seq_i-\ell_i+\seq_{i+n}} \varepsilon_{i+1} \kappa_i^{\seq_{i+n+1}} \cdots.
$$

\noindent  Thus, the resolved curve is a power of a loop in $\CVb{n}$, has rank $r-1$ and multi-degree $\mathbb{d}(\mathbbm{m})- \mathbb{d}(\ell)$. 
	\end{proof}

	\begin{prp}\label{PropositionExtensions}
	Every object in $\Vect^n$ is a direct sum of shifted vector bundles. Moreover, the rank (resp.\ multi-degree) of a vector bundle coincides with the rank (resp.\ multi-degree) of its corresponding homotopy class in $\CVb{n}$.
	\end{prp}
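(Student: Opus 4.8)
The plan is to deduce the statement from Lemma~\ref{LemmaMorphismsVectorBundles} by an induction on the rank, using that locally free sheaves are closed under extensions and that rank and multi-degree are additive on short exact sequences. Since $\Vect^n$ is an additive closure, it suffices to treat an indecomposable $X \in \Vect^n$, that is $X \cong \P_{\gamma}(\mathcal{V})$ with $\gamma \in \CVb{n}$ a primitive loop and $\mathcal{V}$ indecomposable. I first treat the case $\dim\mathcal{V} = 1$, where $X \cong \P(\seq)$ for $\seq = \seq_{\gamma}$, and afterwards bootstrap to arbitrary $\dim\mathcal{V}$ through the homogeneous tube containing $X$.

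As the base case of the induction on $r = \rk\seq$, let $r = 1$, so $\seq = \ell \in \mathbb{Z}^{\mathbb{Z}_n}$. By Remark~\ref{RemarkDehnTwistActionOnSequences}, suitable powers of the Dehn twists about the loops $\gamma_{\Bbbk(x)}^i$ transform $\ell$ into the constant sequence $\mathbb{0}$, i.e.\ into $\gamma_{\Pic}$. By Proposition~\ref{PropositionSphericalTwistsBecomeDehnTwists} and Remark~\ref{RemarkTwistFunctorSmoothPointTensorProduct} these Dehn twists are realised by the auto-equivalences $-\otimes^{\mathbb{L}}\mathcal{L}(x_i)$, which preserve the class of line bundles and shift the multi-degree by the standard basis vectors. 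As $\gamma_{\Pic}$ represents the line bundle $\mathcal{O}(\lambda)$ of multi-degree $\mathbb{0}$, it follows that $\P(\ell)$ is an (unshifted) line bundle with $\rk\P(\ell) = 1 = \rk\gamma$ and $\mathbb{d}(\P(\ell)) = \ell = \mathbb{d}(\gamma)$.

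For the inductive step fix $\seq$ of rank $r > 1$ and build an auxiliary rank-$1$ sequence $\ell$ to which Lemma~\ref{LemmaMorphismsVectorBundles} applies. Viewing $\ell$ as $n$-periodic, I pick a position $i$, declare $\ell_{i+s} = \seq_{i+s}$ for $s = 1, \dots, n-1$ and let $\ell_i = \ell_{i+n}$ be any integer exceeding $\max(\seq_i, \seq_{i+n})$; then $\ell - \seq$ contains the subsequence $a, 0, \dots, 0, b$ of length $n+1$ with $a = \ell_i - \seq_i > 0$ and $b = \ell_i - \seq_{i+n} > 0$. By Lemma~\ref{LemmaMorphismsVectorBundles} the associated morphism $f\colon \P(\ell) \to \P(\seq)$ has degree $0$ and sits in a distinguished triangle $\P(\ell) \xrightarrow{f} \P(\seq) \to \P(\seq') \to \P(\ell)[1]$ with $\rk\seq' = r-1$ and $\mathbb{d}(\seq') = \mathbb{d}(\seq) - \mathbb{d}(\ell)$. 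All three objects are loops, hence lie in the essential image $\Dinv$ of $\mathbb{F}$ (Proposition~\ref{PropositionCharaterizationDpartDinv}); transporting the triangle to $\Perf(C_n)$ via the quasi-inverse of $\mathbb{F}$ and invoking the base case together with the induction hypothesis yields sheaves $\mathcal{L} = \mathbb{F}^{-1}\P(\ell)$ and $\mathcal{E}' = \mathbb{F}^{-1}\P(\seq')$. Since $f$ has degree $0$, the long exact cohomology sequence forces $\mathbb{F}^{-1}\P(\seq)$ to be concentrated in degree $0$ and to sit in a short exact sequence $0 \to \mathcal{L} \to \mathbb{F}^{-1}\P(\seq) \to \mathcal{E}' \to 0$. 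Closure of locally free sheaves under extensions makes $\mathbb{F}^{-1}\P(\seq)$ a vector bundle, and additivity of rank and multi-degree gives the values $r$ and $\mathbb{d}(\seq)$.

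It remains to treat an indecomposable $X \cong \P_{\gamma}(\mathcal{V})$ with $\dim\mathcal{V} = k > 1$, which sits at the $k$-th level of its homogeneous tube. Here I argue by a second induction, on $k$: the tube structure presents $X$ as an extension of the level-$(k-1)$ object by the level-$1$ object $\P(\seq_{\gamma})$, both of which are vector bundles by the previous step and the inductive hypothesis, so $X$ is a vector bundle of rank $k\cdot\rk\gamma$ and multi-degree $k\cdot\mathbb{d}(\gamma)$ --- exactly the invariants of the non-primitive loop $\gamma^{k}$ representing $X$. The main point demanding care throughout is the passage from distinguished triangles to honest short exact sequences of \emph{unshifted} sheaves: this rests on the degree-$0$ statement in Lemma~\ref{LemmaMorphismsVectorBundles}(1) and on maintaining ``unshifted vector bundle'' as the inductive hypothesis, after which additivity of rank and multi-degree and closure under extensions conclude. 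This is the step I expect to be the principal obstacle, the combinatorial construction of $\ell$ and the tube reduction being comparatively routine.
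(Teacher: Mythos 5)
Your proposal is correct and follows essentially the same route as the paper: induction on the rank via the extension triangle from Lemma~\ref{LemmaMorphismsVectorBundles}, with the rank-one base case identified with twisted line bundles and the non-primitive case absorbed through the homogeneous tube. Your extra care about the degree-$0$ statement forcing an honest short exact sequence of unshifted sheaves, and the explicit second induction on the tube level, are just elaborations of steps the paper treats more briefly.
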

\begin{proof}
	
	Let $\seq \in \mathbb{Z}^{\mathbb{Z}_{n r}}$ be a primitive cyclic sequence of rank $r \geq 2$. Let $i \in \mathbb{Z}$. Define $\ell \in \mathbb{Z}^{\mathbb{Z}_n}$ by $\ell_i= \max\{\seq_i, \seq_{i+n}\} +1$ and $\ell_{i+j}=\seq_{i+j}$ for all $j \in (0,n)$. Then by Lemma \ref{LemmaMorphismsVectorBundles},  $\P(\seq)$ is an extension of $\P(\ell)$ and $\P(\seq')$ for some $\seq'$ of rank $r-1$ and multi-degree $\mathbb{d}(\seq)-\mathbb{d}(\ell)$.
By induction on $r$, it follows that $\P(\seq)$ is an iterated extension of objects of the form $\P(\mathbb{d})$ with $\mathbb{d}$ primitive and $\rk \mathbb{d}=1$. Here we used the fact that every object in the homogeneous tube of a vector bundle is again a vector bundle which in turn is a consequence of the fact that the class of vector bundles is closed under extensions. From Theorem \ref{TheoremImages} and Remark \ref{RemarkTwistFunctorSmoothPointTensorProduct} we conclude that $\P(\mathbb{d})$ is the image of a line bundle of multi-degree $\mathbb{d}$. The class of vector bundles is closed under extensions. Thus, the assertion follows from additivity of rank and multi-degree as well as the fact that $\mathbb{F}$ is a triangulated embedding.
\end{proof}
	
	\noindent We are finally prepared to  prove Theorem \ref{TheoremImagesOfSimpleVectorBundles} and hence Theorem \ref{IntroTheoremSimpleVectorBundles} from the introduction.
	
	\begin{proof}[Proof of Theorem \ref{TheoremImagesOfSimpleVectorBundles}]
		By Proposition \ref{PropositionExtensions} a loop $\gamma \in \CVb{n}$ represents vector bundles of rank $\rk \gamma$ and multi-degree $\mathbb{d}(\gamma)$.
		By Theorem \ref{IntroTheoremClassificationSphericalObjects}, simple loops in $\CVb{n}$ correspond to simple vector bundles. For every smooth point $x \in C_n$,  we consider the family of functors 
	
	\[\mathcal{F}_y \coloneqq - \otimes^{\mathbb{L}} \left(\mathcal{L}(x) \otimes \mathcal{L}(y)^{\vee}\right).\]
	
	\noindent  By Proposition \ref{PropositionSphericalTwistsBecomeDehnTwists}, we see that for all indecomposable $Z \in \Perf(C_n)$, $\mathbb{F}(\mathcal{F}_y(Z))$ and $\mathbb{F}(Z)$ are represented by the same loop.
		
		It remains to prove that every vector bundle is represented by a homotopy class of $\CVb{n}$. Following \cite[Lemma 6.11]{Ballard}, a perfect object $X \in \Perf(C_n)$ is isomorphic to the shift of a locally-free sheaf of rank $n$ if and only if there exists $m \in \mathbb{Z}$ such that $\Hom^{\bullet}(X, \Bbbk(z)) \cong \Bbbk^n[m]$ as graded vector spaces for all closed points $z \in C_n$. The proof of \cite[Lemma 6.11]{Ballard} relies on the assumption that $C_n$ is projective and connected but does not require irreducibility. Without loss of generality, we may assume that $X$ sits at the mouth of its homogeneous tube. Suppose now, that the walk $W_{\gamma}$ of a loop $\gamma=\gamma_{\mathcal{E}}$ for a vector bundle $\mathcal{E}$ contains a subsequence of the form $\varepsilon_i \kappa_i^l \varepsilon_{i+1}^{-1}$ or its inverse. This condition is equivalent to $\gamma \not \in \CVb{n}$. Using Lemma \ref{LemmaDegreeOfIntersections}, it is not difficult to see that under these conditions the arc of the skyscraper sheaf of a singular point $z \in \mathbb{P}^1_i \cap \mathbb{P}_1^{i+1}$ has two intersections with $\gamma$ which correspond to morphisms in $\Hom^{\bullet}(X, \Bbbk(z))$ in different degrees. Hence such a loop cannot represent a vector bundle of $C_n$. 	This finishes the proof.\end{proof}
		
\subsection{An alternative proof for the classification of simple vector bundles}\label{SectionAlternativeProof}
\noindent We give an alternative proof of Theorem \ref{TheoremBodnarchukDrozdGreuel}. By Theorem  \ref{TheoremImagesOfSimpleVectorBundles}, it is sufficient to prove the following.

	\begin{prp}\label{PropositionNewProof}
		Let $\mathbbm{n} \in \mathbb{Z}^{\mathbb{Z}_{n r}}$. If $\gamma(\mathbbm{n})$ contains a simple loop, then $r$ and $\underline{\mathbbm{n}}$ are coprime and $\mathbbm{n} \sim \seq(r, \mathbb{d}(\mathbbm{n}))$.
	\end{prp}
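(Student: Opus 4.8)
The plan is to argue entirely on the combinatorial side, reading off simplicity of $\gamma(\mathbbm{n})$ from the description of self-intersections and then matching the resulting sequence against $\seq(r,\mathbb{d})$. Throughout write $\mathbb{d}=\mathbb{d}(\mathbbm{n})$ and $r=\rk\mathbbm{n}$, and recall that a loop is simple precisely when it has no self-intersections (Corollary \ref{CorollaryMorphisms}(\ref{Point2List})); in particular a simple loop is primitive. First I would translate this into conditions on $\mathbbm{n}$ by applying Lemma \ref{LemmaCombinatorialIntersections} to $\gamma(\mathbbm{n})$ and itself: the two families counting self-intersections vanish exactly when (i) any two entries of $\mathbbm{n}$ differ by at most $1$, for otherwise a triple of family (2) occurs, and (ii) no difference $\mathbbm{n}-\mathbbm{n}[l]$ contains a block $a,0,\dots,0,b$ with $a,b$ of equal sign, which is family (1). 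These are exactly conditions \eqref{Condition2TheoremNecessarySufficientConditions} and \eqref{Condition3TheoremNecessarySufficientConditions} of Theorem \ref{TheoremNecessarySufficientConditions}. Since a power of the Dehn twist about $\gamma_{\Bbbk(x)}^{i}$ is an auto-equivalence it preserves simplicity and sends $\gamma(\mathbbm{n})$ to $\gamma(\mathbbm{e})$ with $\mathbbm{e}$ as in Lemma \ref{LemmaShiftsByMultiplesOfR} (Remark \ref{RemarkDehnTwistActionOnSequences}); as the desired conclusion $\mathbbm{n}\sim\seq(r,\mathbb{d})$ is invariant under the very same operation by Lemma \ref{LemmaShiftsByMultiplesOfR}, I may normalize $\mathbb{d}(i)\in[0,r)$ for all $i$, so that condition \eqref{Condition2TheoremNecessarySufficientConditions} forces every entry of $\mathbbm{n}$ into $\{0,1\}$.

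For the coprimality of $r$ and $\underline{\mathbbm{n}}$ I would argue geometrically. By Lemma \ref{LemmaGeometricInterpretationRankMultiDegree} the integers $r$ and $\underline{\mathbbm{n}}$ are the two winding numbers of $\gamma(\mathbbm{n})$ once it is embedded into the closed torus $\mathbb{R}^2/\mathbb{Z}^2$, that is, the coordinates of its homology class. A simple representative of $\gamma(\mathbbm{n})$ is an embedded essential closed curve on the torus, and such a curve necessarily represents a primitive class in $H_1$; hence $\gcd(r,\underline{\mathbbm{n}})=1$ and $\seq(r,\mathbb{d})$ is defined. The same conclusion can be extracted combinatorially: a simple loop meets each arc $\varepsilon_i$ in $r$ distinct points, whose heights on the vertical circle of circumference $r$ are the residues $\{\,k\,\underline{\mathbb{d}}\bmod r \mid 0\le k<r\,\}$, and these are pairwise distinct exactly when $r$ and $\underline{\mathbb{d}}$ are coprime.

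It remains to prove $\mathbbm{n}\sim\seq(r,\mathbb{d})$, which I expect to be the main obstacle. By Lemma \ref{LemmaSequencesHaveIntersectionsProperties} and Corollary \ref{CorollaryThetaIsSimple} the sequence $\seq(r,\mathbb{d})$ also satisfies \eqref{Condition2TheoremNecessarySufficientConditions} and \eqref{Condition3TheoremNecessarySufficientConditions} and has the same rank $r$ and multi-degree $\mathbb{d}$, so both loops are simple; via Proposition \ref{PropositionBijectionCurvesWalks} the claim reduces to a rigidity statement about $\{0,1\}$-valued sequences with prescribed column sums. The approach I would take is to exploit the ``parallel strand'' structure of a simple loop in $\CVb{n}$: inside each vertical strip between $\varepsilon_i$ and $\varepsilon_{i+1}$ the loop joins its $r$ crossing points on $\varepsilon_i$ to the $r$ crossing points on $\varepsilon_{i+1}$ by monotone arcs which, being pairwise non-crossing, must realize a single cyclic rotation of the height-circle; the amount of this rotation is dictated by $\mathbb{d}(i)$ and is therefore the same for $\mathbbm{n}$ and for $\seq(r,\mathbb{d})$. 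Coprimality guarantees that the system of strands closes up into one primitive loop and fixes the cyclic offsets between consecutive strips up to a single global shift, so that $\mathbbm{n}$ and $\seq(r,\mathbb{d})$ coincide after a rotation by a multiple of $n$, which is exactly the relation $\sim$. The hard part is making this rigidity precise, i.e.\ showing that conditions \eqref{Condition2TheoremNecessarySufficientConditions}, \eqref{Condition3TheoremNecessarySufficientConditions} together with coprimality determine the balanced (Christoffel-type) sequence $\seq(r,\mathbb{d})$ uniquely up to rotation; I would carry this out by induction on $r$, peeling off one strand at a time in the manner of Lemma \ref{LemmaMorphismsVectorBundles} and comparing the two resulting sequences of rank $r-1$.
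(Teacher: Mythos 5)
Your reduction of simplicity to conditions \eqref{Condition2TheoremNecessarySufficientConditions} and \eqref{Condition3TheoremNecessarySufficientConditions} via Lemma \ref{LemmaCombinatorialIntersections}, and your normalization by Dehn twists, are both fine, and your coprimality argument (embed $\gamma(\mathbbm{n})$ into the closed torus; an essential embedded closed curve there represents a primitive class $(r,\underline{\mathbbm{n}})\in H_1$) is correct and in fact cleaner than invoking anything category-theoretic --- it is essentially the base case of the paper's induction, applied directly for all $n$. The problem is the last step. The assertion that a simple loop in $\CVb{n}$ is determined up to homotopy by $(r,\mathbb{d})$ --- equivalently, that conditions \eqref{Condition2TheoremNecessarySufficientConditions}, \eqref{Condition3TheoremNecessarySufficientConditions} and coprimality pin down the balanced sequence $\seq(r,\mathbb{d})$ up to rotation --- \emph{is} the proposition; everything before it is bookkeeping. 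You explicitly defer this (``the hard part is making this rigidity precise'') and only gesture at an induction on $r$ by strand-peeling. As written, the peeling of Lemma \ref{LemmaMorphismsVectorBundles} extracts a summand $\P(\ell)$ whose degree vector $\ell$ is read off from the sequence being peeled; applied separately to $\mathbbm{n}$ and to $\seq(r,\mathbb{d})$, nothing yet guarantees that the two extracted $\ell$'s agree, so the two rank-$(r{-}1)$ residues need not have the same multi-degree and the inductive hypothesis does not apply. You would also need to check that simplicity and the coprimality of rank and total degree survive the peeling. None of this is impossible, but it is exactly the content you have not supplied, so the proof is incomplete.

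For comparison, the paper avoids the rank induction entirely and instead inducts on $n$: it defines a contraction $C^q$ that merges two adjacent columns of the matrix of $\seq$, which geometrically amounts to forgetting a puncture and viewing the loop on $\mathscr{T}^{n-1}$. Simplicity is preserved, and the fibre of $C^q$ over a fixed simple loop on $\mathscr{T}^{n-1}$ is parametrized by a choice of intersection point with $\gamma^q_{\Bbbk(x)}$ together with a power of $D_{\gamma_{\Bbbk(x)}^q}\circ D_{\gamma_{\Bbbk(x)}^{q+1}}^{-1}$; one then checks that the multi-degree separates the members of this fibre. This reduces everything to $n=1$, where non-separating simple loops on the once-punctured torus are classified by primitive classes in $H_1\cong\mathbb{Z}^2$ --- precisely the fact you use for coprimality. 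If you want to salvage your route, you must either prove the combinatorial rigidity of $\{0,1\}$-valued sequences satisfying \eqref{Condition2TheoremNecessarySufficientConditions} and \eqref{Condition3TheoremNecessarySufficientConditions} with prescribed column sums directly (a Christoffel-word uniqueness statement), or set up the strand-peeling so that the peeled summand is canonically determined by $(r,\mathbb{d})$ alone.
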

	\begin{proof}[Proof]
		Given a sequence $\seq \in \mathbb{Z}^{\mathbb{Z}_{n r}}$ and $q \in \mathbb{Z}_n$, we can define the contraction $C^q(\seq) \in \mathbb{Z}^{\mathbb{Z}_{(n-1)r}}$ of $\seq$. Regarding $\seq$ as element of $\Mat_{n \times r}(\mathbb{Z})$ with cyclic rows and  $C^q(-)$ as an operator $C^q: \Mat_{n \times r}(\mathbb{Z}) \rightarrow \Mat_{(n-1) \times r}(\mathbb{Z})$, $C^q$ stores the sum of the $q$-th and the $q+1$-th row of $\seq$ in the $q$-th row of $C^q(\seq)$ and leaves all the other rows untouched while preserving the cyclic order of the rows.  On $\mathscr{T}^n$ the matrix $C^q(\seq)$ corresponds to the loop $\gamma(\seq)$ regarded as a loop in $\mathscr{T}^{n-1}$. The operation $C^q$ induces a map $\CVb{n} \rightarrow \CVb{n-1}$ which preserves simplicity and hence preserves properties (1) and (2) in Lemma \ref{LemmaSequencesHaveIntersectionsProperties}.

		Conversely, given a simple loop $\gamma=\gamma(\seq) \subseteq \mathscr{T}^{n-1}$, we may assume that $\gamma$ and $\delta \coloneqq \gamma_{\Bbbk(x)}^q \subseteq \mathscr{T}^{n-1}$ are in minimal position. In particular, $\gamma \overrightarrow{\cap}\delta$ contains exactly $r= \rk \seq$ intersections and is cyclically ordered by the orientation of $\delta$. Choose an intersection $u \in \gamma \overrightarrow{\cap} \delta$ and denote by $u^+ \in \gamma \overrightarrow{\cap} \delta$ its successor. Let $p \in \delta \setminus \{u, u^+\}$ be a point on the segment between $u$ to $u^+$. Let $\varphi:\mathscr{T}^{n-1} \rightarrow \mathscr{T}^n$ be a homeomorphism which maps all punctures of $\mathscr{T}^{n-1}$ and $p$ to punctures in $\mathscr{T}^n$. Moreover, we require that it maps the set $\{\delta\} \cup \{\varepsilon_i\}_{i \in \mathbb{Z}_{n-1}}$ bijectively onto the set of curves $\{\varepsilon_j\}_{j \in \mathbb{Z}_n}$ in $\mathscr{T}^n$ and maps $\kappa_j$ to $\kappa_j$ for all $j \neq q$ as well as $\kappa_q$ to the concatenation of $\kappa_{q}$ and $\kappa_{q+1}$. The isotopy class of such a map $\varphi$ is not unique but appears as part of a $\mathbb{Z}$-family of isotopy classes of homeomorphisms. Namely, every two possible choices for $\varphi$ as above are related by composition with a homeomorphism 

		$$
			D_{\gamma_{\Bbbk(x)}^q}^m \circ D_{\gamma_{\Bbbk(x)}^{q+1}}^{-m} \in \PMCG(\mathscr{T}^n)
		$$
		 \noindent for some unique $m \in \mathbb{Z}$. For every  $\varphi$ as above let us denote by  $\gamma^{\varphi}_u \subseteq \mathscr{T}^n$ the image of $\gamma$ under $\varphi$. It follows from Remark \ref{RemarkDehnTwistActionOnSequences}, that we obtain all homotopy classes of simple loops $\gamma' \subseteq \mathscr{T}^n$ in $\CVb{n}$ such that $C^q(\seq(\gamma')) \simeq \seq$. Moreover, we observe that the multi-degrees of $\gamma^{\varphi}_u$ and $\gamma^{\varphi'}_{u'}$ agree if and only if $\varphi$ and $\varphi'$ are isotopic and $u=u'$. In particular, this is the case if the two loops are homotopic.
		
		By induction, our arguments reduce the uniqueness of sequences $\seq$ as in Lemma \ref{LemmaSequencesHaveIntersectionsProperties} to the uniqueness of such sequences in the case $n=1$. Here the result follows from the well-known fact that homotopy classes of non-separating simple loops on $\mathscr{T}^1$ are in bijection with pairs of coprime integers. The bijection projects a loop onto its associated class in $H_1(\mathscr{T}^1, \mathbb{Z})\cong \mathbb{Z}^2$. Choosing the classes of $\gamma_{\Pic}$ and $\gamma_{\Bbbk(x)}^0$ as the basis of $H_1(\mathscr{T}^1, \mathbb{Z})$, $\gamma=\gamma(\seq)$ is mapped to the pair $(\rk \seq, \mathbb{d}(\seq))$.	\end{proof}	
	
\appendix
	\section{Images of smooth skyscraper sheaves and the Picard group}\label{ChapterAppendix}
	\label{SectionImagesSkyscraperLineBundles}
	\noindent This section contains the proof of Theorem \ref{TheoremImages} which was explained to us by Igor Burban. Our main reference is \cite{BurbanDrozdTilting}. As before let $\Pic^{\mathbb{0}}(C_n)$ denote the set of line bundles of multi-degree $(0, \dots, 0)$ over $C_n$ and let $\mathbb{F}: \Perf(C_n) \hookrightarrow \mathcal{D}^b(\Lambda_n)$ denote the embedding.  Theorem \ref{TheoremImages} asserts the following. 
	
		\begin{thm}
		\ \begin{enumerate}
			\setlength\itemsep{1ex}

			\item The essential image of $\Pic^{\mathbb{0}}(C_n)$ under $\mathbb{F}$ consists of the isomorphism classes of the complexes $\mathcal{O}(\lambda)$, where $\lambda \in \Bbbk^{\times}$.
			
			\item  The essential image of the skyscraper sheaves of smooth points $x \in \mathbb{P}^1_i$ under $\mathbb{F}$ consists of the isomorphism classes of the complexes $\Bbbk(i, \lambda)$, where $\lambda \in \Bbbk^{\times}$.
		\end{enumerate}
		
	\end{thm}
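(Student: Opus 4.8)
The plan is to follow the strategy that Burban and Drozd carry out for $n=1$ (Propositions 7.2 and 7.4 of \cite{BurbanDrozdTilting}) and to exploit the local nature of the non-commutative resolution $\mathbb{X}_n$. Recall that $\mathbb{F}$ is the composite of the Burban--Drozd embedding $\mathbb{E}\colon\Perf(C_n)\hookrightarrow\mathcal{D}^b(\Coh\mathbb{X}_n)$ with the tilting equivalence $\mathbb{R}\Hom_{\mathbb{X}_n}(\mathcal{H},-)\colon\mathcal{D}^b(\Coh\mathbb{X}_n)\xrightarrow{\sim}\mathcal{D}^b(\Lambda_n)$, where $\mathcal{H}=\bigoplus_v\mathcal{H}_v$ is the tilting object whose indecomposable summands are indexed by the $3n$ vertices $v$ of $Q(n)$, normalized so that the spaces $\Hom_{\mathbb{X}_n}(\mathcal{H}_v,\mathcal{H}_w)$ recover the arrows of $Q(n)$ subject to $R$. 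Computing $\mathbb{F}(X)$ thus amounts to computing the cohomologies $\Ext^{\bullet}_{\mathbb{X}_n}(\mathcal{H}_v,\mathbb{E}(X))$ together with the differentials induced by the arrows $a_i,b_i,c_i,d_i$.

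I would treat the skyscraper sheaves first, since there the argument is purely local. For a smooth point $x\in\mathbb{P}^1_i$, the sheaf $\Bbbk(x)$ is supported on the smooth locus $\mathbb{P}^1_i\setminus\{\text{two nodes}\}\cong\Bbbk^{\times}$, away from every node; on this locus the order defining $\mathbb{X}_n$ degenerates to a smooth commutative curve and the summands $\mathcal{H}_v$ restrict exactly as in the single local chart appearing in the case $n=1$. Consequently $\Ext^{\bullet}_{\mathbb{X}_n}(\mathcal{H}_v,\mathbb{E}(\Bbbk(x)))$ is computed in a neighbourhood of $x$ and reproduces Proposition 7.2 of \cite{BurbanDrozdTilting} verbatim: only the two summands attached to the $i$-th block survive, the connecting differential is $a_i+\lambda c_i$, and the resulting object is $\Bbbk(i,\lambda)$. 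As $x$ ranges over $\mathbb{P}^1_i\setminus\{\text{two nodes}\}\cong\Bbbk^{\times}$, the parameter $\lambda$ ranges over all of $\Bbbk^{\times}$, which proves part (2).

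For the line bundles I would first compute $\mathbb{F}(\mathcal{O}_{C_n})$ by assembling the same local charts across the $n$ components: resolving $\mathbb{E}(\mathcal{O}_{C_n})$ against $\mathcal{H}$ produces the two-term complex on the left of Figure \ref{FigurePicardGroupComplex} with all gluing data trivial, i.e.\ $\mathcal{O}(1)$. To obtain the full family there are two routes. The first is to describe an arbitrary $\mathcal{L}\in\Pic^{\mathbb{0}}(C_n)$ by gluing data $(\mu_i)_{i\in\mathbb{Z}_n}\in(\Bbbk^{\times})^n$ on the normalization, recalling that $\Pic^{\mathbb{0}}(C_n)\cong\Bbbk^{\times}$ detects only the total monodromy $\lambda=\prod_i\mu_i$; after rescaling the trivializations on $n-1$ of the components one may assume $\mu_i=1$ for $i\neq 0$, so that the single surviving scalar enters the resolution precisely as the coefficient $\lambda$ on the arrow $d_0$ in Figure \ref{FigurePicardGroupComplex}. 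The second, more self-contained, route deduces the family from the base case together with part (2): writing $\mathcal{L}=\mathcal{L}(x)\otimes\mathcal{L}(y)^{\vee}$ for smooth points $x,y\in\mathbb{P}^1_0$ and using Remark \ref{RemarkTwistFunctorSmoothPointTensorProduct} and Corollary \ref{CorollaryTwistFunctorsUnderEmbeddings}, one has $\mathbb{F}(\mathcal{L})\cong T_{\Bbbk(0,\lambda_x)}\circ T_{\Bbbk(0,\lambda_y)}^{-1}\,\mathbb{F}(\mathcal{O}_{C_n})$, and an elementary computation of these twists on the explicit complexes in $\mathcal{D}^b(\Lambda_n)$ carries $\mathcal{O}(1)$ to $\mathcal{O}(\lambda)$; surjectivity of $(x,y)\mapsto\mathcal{L}(x-y)$ onto $\Pic^{\mathbb{0}}(C_n)\cong\Bbbk^{\times}$ then accounts for all values of $\lambda$.

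The main obstacle is the explicit local computation underlying both cases: one must pin down the restrictions of the tilting summands $\mathcal{H}_v$ to a chart around a node and around a smooth point of $\mathbb{P}^1_i$ and verify that the induced maps on $\Ext^{\bullet}$ are exactly the arrows of $Q(n)$ with the asserted coefficients. This is precisely the content that Burban and Drozd establish for $n=1$; the remaining work is the bookkeeping required to glue these local models along the cycle and to confirm that no monodromy beyond the single scalar $\lambda$ is introduced, which I expect to be routine once the local pictures are fixed.
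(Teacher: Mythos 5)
Your treatment of the skyscraper sheaves matches the paper's in substance: the appendix likewise reduces everything to the $i$-th component, presents $\mathbb{J}(\Bbbk(x))$ by the locally projective resolution $\mathcal{P}_i(-1)\xrightarrow{\mathbb{J}(\alpha_i)}\mathcal{P}_i$ with $\alpha_i=\pi_\ast(\mu z_0^i(-1)-\lambda z_\infty^i(-1))$, identifies the arrows $a_i,c_i$ with $z_0^i,z_\infty^i$ exactly as in the $n=1$ computation of Burban--Drozd, and then absorbs the extra twist in the definition of $\mathbb{F}$ via Lemma \ref{LemmaImageSphericalObjectsUnderItsTwist}. For the Picard group, however, the paper takes a route you do not consider and which neatly sidesteps your ``main obstacle'': for $\mathcal{G}=\mathbb{J}(\mathcal{L})$ with $\mathcal{L}$ of multi-degree $(1,\dots,1)$ it computes only the \emph{dimensions} of $\Ext^k(\mathcal{S}_j[-1],\mathcal{G})$, $\Ext^k(\mathcal{P}_j,\mathcal{G})$ and $\Ext^k(\mathcal{P}_j(-1),\mathcal{G})$, concludes that the image is an indecomposable module with the dimension vector of $\mathcal{O}(\lambda)$, and then invokes the classification of $\tau$-invariant indecomposable objects (band complexes over loops, Proposition \ref{PropositionBijectionObjectsCurves} and Section \ref{SectionFractional}) to force the isomorphism type. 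No differential and no gluing data are ever computed. Your first route for the line bundles asserts precisely the statement to be proved --- that the surviving gluing scalar ``enters the resolution as the coefficient on $d_0$'' --- without a mechanism for verifying it; this is the computation the paper deliberately avoids. Your second route via $T_{\Bbbk(x)}\circ T_{\Bbbk(y)}^{-1}$ is a genuine and workable alternative, consistent with Proposition \ref{PropositionSphericalTwistsBecomeDehnTwists}, but it still needs a correctly established base case.

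That base case is where there is a concrete error. You identify $\mathbb{F}$ with the composite of $\mathbb{E}$ and the tilting equivalence, i.e.\ with the functor the paper calls $\mathbb{B}$; in fact $\mathbb{F}=\mathbb{B}\circ\bigl({-}\otimes^{\mathbb{L}}\mathcal{L}(x_0+\cdots+x_{n-1})\bigr)$, and the discrepancy is not cosmetic. Since $\homom_{\mathscr{A}}(\mathcal{P},\mathbb{J}(\mathcal{L}))\cong\mathcal{I}\otimes\mathcal{L}$ restricts to degree $-2$ on each component when $\mathcal{L}=\mathcal{O}_{C_n}$, one gets $\Ext^1(\mathcal{P}_j,\mathbb{J}(\mathcal{O}_{C_n}))\neq 0$ while $\Hom(\mathcal{P}_j(-1),\mathbb{J}(\mathcal{O}_{C_n}))=0$; hence $\mathbb{B}(\mathcal{O}_{C_n})$ has a dimension vector different from that of $\mathcal{O}(1)$ and is not the claimed two-term complex. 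The ``normalization'' of the tilting object you invoke cannot repair this, because twisting all summands of $\mathscr{H}$ by a line bundle leaves the endomorphism algebra, and hence your normalization condition, unchanged. The computation only comes out as stated after the degree-$(1,\dots,1)$ twist is inserted, which is why the paper reduces part (1) to the images of multi-degree-$(1,\dots,1)$ line bundles under $\mathbb{B}$.
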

\noindent The definitions of the complexes $\mathcal{O}(\lambda)$ and $\Bbbk(i, \lambda)$ are found in Figure \ref{FigurePicardGroupComplex} on page \pageref{FigurePicardGroupComplex} and the paragraph succeeding it. The definition of $\mathbb{F}$ is recalled in Section \ref{AppendixResolutionOfCycles} below.\medskip

	\noindent For the remainder of this section fix $n \geq 1$ and write $\Lambda=\Lambda_n$, $C=C_n$ as well as $\mathbb{P}^1_i$, $i \in \mathbb{Z}_n$, for the irreducible components of $C_n$.  Let $\pi: \widetilde{C} \rightarrow C$ be a normalization map. Then, $\widetilde{C}=\sqcup_{i=0}^{n-1}{\widetilde{U}_i}$, where $\widetilde{U}_i=\pi^{-1}(\mathbb{P}^1_i) \cong \mathbb{P}^1$. By changing coordinates, we may assume that $0, \infty \in \mathbb{P}^1$ are the preimages of the singular points in $U_i$. Set $\widetilde{\mathcal{O}}\coloneqq \pi_{\ast}(\mathcal{O}_{\widetilde{C}})$. Then, $\mathcal{O}\coloneqq\mathcal{O}_C$ is a subsheaf of $\widetilde{\mathcal{O}}$ and $\widetilde{\mathcal{O}}$ decomposes as 
	\[\widetilde{\mathcal{O}} = \bigoplus_{i \in \mathbb{Z}_n}{\widetilde{\mathcal{O}}_i},\]
	
	\noindent where $\widetilde{\mathcal{O}}_i \coloneqq \pi_{\ast}(\mathcal{O}_{\widetilde{C}}|_{\widetilde{U_i}})$. We denote by $\mathcal{I}$ the ideal sheaf of the singular locus of $C$. 
	
	\subsection{Non-commutative resolutions of cycles}\label{AppendixResolutionOfCycles}
	In \cite{BurbanDrozdTilting}, Burban and Drozd introduced the so-called \textit{Auslander-sheaf} $\mathscr{A}$. It is the sheaf of $\mathcal{O}$-orders on $C$ given by
	\[\mathscr{A} =  \left(  \begin{array}{cc} \mathcal{O} & \widetilde{\mathcal{O}} \\ \mathcal{I} & \widetilde{\mathcal{O}} \end{array} \right).\]
	The diagonal embedding of $\mathcal{O}$ defines an action of $\mathcal{O}$ on $\mathscr{A}$ and endows $\mathscr{A}$ with the structure of an $\mathcal{O}$-algebra. \medskip
	
	\noindent Burban and Drozd proved that the bounded derived category $\mathcal{D}^b(\Coh \mathbb{X})$ of coherent sheaves over the non-commutative ringed space $\mathbb{X}=(C, \mathscr{A})$ admits a tilting object $\mathscr{H}$ defined as follows.\medskip
	
	\noindent Let $\mathcal{S}$ denote the torsion sheaf of $\mathscr{A}$-modules defined by the cokernel of the canonical inclusion
	\begin{equation}\label{EquationProjectiveResolution}\begin{tikzcd}[ampersand replacement=\&] \left( \begin{array}{c} \mathcal{I} \\ \mathcal{I} \end{array} \right) \arrow{r} \& \left(\begin{array}{c}\mathcal{O} \\ \mathcal{I} \end{array}\right). \end{tikzcd}\end{equation}
	
	\noindent  Furthermore, denote by
	\[\begin{array}{ccc}{\mathcal{F} \coloneqq \left( \begin{array}{c} \mathcal{O} \\ \mathcal{I} \end{array} \right)} & \text{and}& {\mathcal{P} \coloneqq \left( \begin{array}{c} \widetilde{\mathcal{O}} \\ \widetilde{\mathcal{O}} \end{array} \right),} \end{array}\]
	\noindent the $\mathscr{A}$-modules corresponding to the columns of $\mathscr{A}$.
	\noindent The tensor product $\mathcal{F} \otimes_{\mathcal{O}} -$ defines a fully faithful functor $\mathbb{J}: \Coh C \rightarrow \Coh \mathbb{X}$.

	By definition, we have $\mathcal{P}= \bigoplus_{i \in \mathbb{Z}_n}{\mathcal{P}_i}$, where $\mathcal{P}_i\coloneqq \mathbb{J}(\widetilde{\mathcal{O}}_i)$. Moreover, $\mathcal{S}= \bigoplus_{i \in \mathbb{Z}_n}{\mathcal{S}_i}$, where $\mathcal{S}_i$ is a skyscraper sheaf supported at the singular point corresponding to the image of $0 \in \widetilde{U}_i$ under the normalization map $\pi$. For any integer $m \in \mathbb{Z}$ and any line bundle $\mathcal{L}$ of multi-degree $(m, \dots, m)$, define 
	\[\mathcal{P}(m)\coloneqq \bigoplus_{i \in \mathbb{Z}_n}{\mathcal{P}_i \otimes_{\mathcal{O}} \mathcal{L}}.\]
	
	 \noindent It was shown in \cite[Lemma 4]{BurbanDrozdTilting} that the isomorphism class of $\mathcal{P}(m)$ is independent of the choice of $\mathcal{L}$. In particular, up to isomorphism, the object
	\[\mathscr{H}:= \mathcal{S}[-1] \oplus \mathcal{P}(-1) \oplus \mathcal{P}.\]
\noindent is well-defined. The object $\mathscr{H}$ is a tilting and $\Lambda$ is isomorphic to the opposite endomorphism algebra of $\mathscr{H}$.  The derived functor of $\mathbb{J}$ restricts to an embedding $\Perf(C) \hookrightarrow \mathcal{D}^b(\mathbb{X})$ and gives rise to an embedding $\mathbb{B}:\Perf(C) \hookrightarrow\mathcal{D}^b(\Lambda)$. For $i \in [0,n)$, let $x_i \in U_i$ be smooth. The desired embedding $\mathbb{F}: \Perf(C) \rightarrow \mathcal{D}^b(\Lambda)$, as cited in the main text of this paper, is given by
	\[\mathbb{F}=\mathbb{B} \circ \prod_{i \in \mathbb{Z}_n} T_{\Bbbk(x_i)} \cong \mathbb{B} \circ \otimes^{\mathbb{L}} \mathcal{L}\left(x_0 + \cdots + x_{n-1}\right).\] 
		
	\subsection{Images of skyscraper sheaves of smooth points}
	\noindent For any $i \in \mathbb{Z}_n$, let $x \in U_i$ be smooth corresponding to a point $(\lambda: \mu) \in \mathbb{P}^1$. By our assumption on the chosen coordinates, $\lambda, \mu \neq 0$. Let $\widetilde{x} \in \widetilde{U}_i$ denote the unique preimage of $x$ under $\pi$. Then, $\pi_{\ast}(\Bbbk(\widetilde{x}))\cong \Bbbk(x)$ and if $z_0^i, z_{\infty}^i: \mathcal{O}_{\mathbb{P}^1} \rightarrow \mathcal{O}_{\mathbb{P}^1}(1)$ correspond to the chosen coordinates on $\widetilde{X}_i$ vanishing at $0$ and $\infty$ respectively, then the $\Bbbk(x)$ is the cokernel of $\alpha_i: \widetilde{\mathcal{O}}_i(-1) \rightarrow \widetilde{\mathcal{O}}_i$, where  
	$$\alpha_i\coloneqq \pi_\ast \left(\mu z_0^i(-1) - \lambda z_{\infty}^i(-1)\right).$$ 
	
	\noindent	Keeping in mind that $\mathbb{J} \circ \pi_{\ast}(\mathcal{O}_{\widetilde{U_i}}(m))=\mathcal{P}_i(m)$, the isomorphism $\Lambda \cong \End(\mathscr{H})^{\operatorname{op}}$ identifies the arrows $a_i$ and $c_i$ in $\Lambda$ with the maps $\mathbb{J}(\pi_\ast \circ z_{0}^i), \mathbb{J}(\pi_\ast \circ z_{\infty}^i) \in \Hom(\mathcal{P}_i(-1), \mathcal{P}_i)$. 
    We deduce that there is a short exact sequence 
	\[\begin{tikzcd}0 \arrow{r} & \mathcal{P}_i(-1) \arrow{rr}{\mathbb{J}(\alpha_i)} & & \mathcal{P}_i \arrow{r} & \mathbb{J}(\Bbbk(x)) \arrow{r} & 0.\end{tikzcd}\] 
	The tilting functor associated with $\mathscr{H}$ sends $\mathcal{P}_i(-1)$ to the indecomposable projective module of $s(a_i)$ and $\mathcal{P}_i$ to the indecomposable projective module of $t(a_i)$  proving that $\mathbb{B}(\Bbbk(x))$ is isomorphic to $\Bbbk(i, \lambda^{-1}\mu)$. Since $\Hom^{\bullet}(\Bbbk(x), \Bbbk(y))=0$ for all $x \neq y$ and $\Bbbk(x)$ is $1$-spherical, it follows from Corollary \ref{CorollaryTwistFunctorsUnderEmbeddings} and Lemma \ref{LemmaImageSphericalObjectsUnderItsTwist} that $\mathbb{F}(\Bbbk(x)) \cong\Bbbk(i,\lambda^{-1}\mu)$. \medskip

	\subsection{Image of the Picard group} The image of $\Pic^{\mathbb{0}}(C)$ under $\mathbb{F}$ coincides with the image of the line bundles of degree $(1, \dots, 1)$ under $\mathbb{B}$.\medskip
	
	\noindent Let $\mathcal{L}$  be a line bundle on $C$ of multi-degree $(1, \dots, 1)$ and set $\mathcal{G}:=\mathbb{J}(\mathcal{L})=\mathcal{F} \otimes_{\mathcal{O}} \mathcal{L}$. We want to compute the dimension of the vector spaces $\Ext^i(\mathcal{S}, \mathcal{G})$, $\Ext^i(\mathcal{P}, \mathcal{G})$ and $\Ext^i(\mathcal{P}(-1), \mathcal{G})$ for $ i \geq 0$. Note that by  \cite[Theorem 2]{BurbanDrozdTilting}, the global dimension of $\Coh \mathbb{X}$ is $2$.

	Since $\mathcal{S}$ is a torsion sheaf and $\mathcal{G}$ is torsion-free, it follows that ${\homom}_{\mathscr{A}}(\mathcal{S}, \mathcal{G})\cong 0$. Thus, $\Hom_{\mathscr{A}}(\mathcal{S}, \mathcal{G})=0$. By \eqref{EquationProjectiveResolution}, $\mathcal{S}$ has a locally projective resolution of length one. Thus, $\SExt_{\mathscr{A}}^i(\mathcal{S}, \mathcal{G})=0$ for all $i \geq 2$ and one deduces from the local computations of $\SExt_{\mathscr{A}}^1(\mathcal{S}_j,\mathcal{G})$ and the local-to-global spectral sequence that $\Ext^1_{\mathscr{A}}(\mathcal{S}_j, \mathcal{G})\cong \Bbbk$ for all $j \in \mathbb{Z}_n$ and $\Ext_{\mathscr{A}}^i(\mathcal{S}, \mathcal{G})=0$ for all $i \geq 2$.\medskip
	
	\noindent By \cite[Corollary 4]{BurbanDrozdTilting}, there exists an isomorphism of $\mathcal{O}$-modules, 
	$$\mathcal{I} \otimes \mathcal{L} \cong {\homom}_{\mathscr{A}}(\mathcal{P}, \mathcal{G}),$$ 
	and by \cite[Corollary 3]{BurbanDrozdTilting}, $\Ext^i_{\mathscr{A}}(\mathcal{P}, \mathcal{G})=0$ for all $i \geq 0$. 
	We have a sequence of isomorphisms
	\[{\homom}_{\mathscr{A}}(\mathcal{P}(-1), \mathcal{G}) \cong {\homom}_{\mathscr{A}}(\mathcal{P}(-1) \otimes_{\mathcal{O}} \mathcal{L}^{\vee}, \mathcal{F}) \cong {\homom}_{\mathscr{A}}(\mathcal{P}(-2), \mathcal{F}),\]
	where $\mathcal{L}^{\vee}$ denotes the dual of $\mathcal{L}$.
	Using that $\mathcal{P}(-2) \cong \left( \begin{array}{c} \mathcal{I} \\ \mathcal{I} \end{array}\right)$ and that both $\mathcal{F}$ and $\mathcal{P}(-2)$ are torsion free, it follows from  \cite[Proposition 6]{BurbanDrozdTilting}, that 
	${\homom}_{\mathscr{A}}(\mathcal{P}(-1), \mathcal{G}) \cong \widetilde{\mathcal{O}}$ as $\mathcal{O}$-modules. Since $\Bbbk \cong \Gamma(\widetilde{U_i}, \mathcal{O}_{\widetilde{U_i}})\cong \Gamma(X, \widetilde{\mathcal{O}}_i)$, we deduce $\Hom_{\mathscr{A}}(\mathcal{P}_i(-1), \mathcal{G}) \cong \Bbbk$. Moreover, it follows from \cite[Corollary 3]{BurbanDrozdTilting}, that $\Ext^i(\mathcal{P}(-1), \mathcal{G})=0$ for all $i \geq 1$.\medskip
	
	\noindent We have shown that $\Hom^{\bullet}(\mathscr{H}, \mathcal{G})$, and hence the cohomology of $\mathbb{B}(\mathcal{G})$, is concentrated in degree $0$. In particular, $\mathbb{B}(\mathcal{G})$ is quasi-isomorphic to a $\Lambda$-module.
	Since $\mathbb{B}(\mathcal{L})$ is indecomposable, we conclude from the isomorphisms $\Hom_{\mathscr{A}}(\mathcal{P}_i(-1), \mathcal{G}) \cong \Bbbk \cong \Hom_{\mathscr{A}}(\mathcal{S}_i[-1], \mathcal{G})$ for all $i \in \mathbb{Z}_n$ and from the classification of $\tau$-invariant indecomposable objects  in $\mathcal{D}^b(\Lambda)$ from Section \ref{SectionComplexOfLoops} that $\mathbb{F}(\mathcal{L}) \cong \mathcal{O}(\lambda)$ for some $\lambda \in \Bbbk^{\times}$.
	
	\bibliography{Bibliography}{}

\begin{thebibliography}{RVdB02}

\bibitem[ALP16]{ArnesenLakingPauksztello}
K.~Arnesen, R.~Laking, and D.~Pauksztello.
\newblock Morphisms between indecomposable complexes in the bounded derived
  category of a gentle algebra.
\newblock {\em Journal of Algebra}, 467:1--46, 2016.

\bibitem[APS19]{AmiotPlamondonSchroll}
C.~Amiot, {P-G}. Plamondon, and S.~Schroll.
\newblock A complete derived invariant for gentle algebras via winding numbers
  and {A}rf invariants.
\newblock {\em arXiv:1904.02555 [math.RT]}, 2019.

\bibitem[AS87]{AssemSkowronski}
I.~Assem and A.~Skowro\'{n}ski.
\newblock Iterated tilted algebras of type {$\tilde{A}_n$}.
\newblock {\em Mathematische Zeitschrift}, 195:269--290, 1987.

\bibitem[Bal11]{Ballard}
M.~Ballard.
\newblock Derived categories of sheaves on singular schemes with an application
  to reconstruction.
\newblock {\em Advances in Mathematics}, 227(2):895 -- 919, 2011.

\bibitem[BB05]{BurbanBurban}
{I. I.} Burban and {I. M.} Burban.
\newblock Twist functors and {D}-branes.
\newblock {\em Ukrainian Mathematical Journal}, 57(1):18--34, 2005.

\bibitem[BD04]{BurbanDrozd2004}
I.~Burban and Y.~Drozd.
\newblock Derived categories of nodal algebras.
\newblock {\em Journal of Algebra}, 272(1):46--94, 2004.

\bibitem[BD11]{BurbanDrozdTilting}
I.~Burban and Y.~Drozd.
\newblock Tilting on non-commutative rational projective curves.
\newblock {\em Mathematische Annalen}, 351(3):665--709, 2011.

\bibitem[BDG01]{BurbanDrozdGreuel}
I.~Burban, Y.~Drozd, and {G.-M.} Greuel.
\newblock Vector bundles on singular projective curves.
\newblock In {\em Applications of Algebraic Geometry to Coding Theory, Physics
  and Computation}, pages 1--15. Springer Netherlands, Dordrecht, 2001.

\bibitem[BDG12]{BodnarchukDrozdGreuel}
L.~Bodnarchuk, Y.~Drozd, and {G.-M.} Greuel.
\newblock Simple vector bundles on plane degenerations of an elliptic curve.
\newblock {\em Transactions of the American Mathematical Society},
  364:137--174, 03 2012.

\bibitem[BK06]{BurbanKreusslerGenusOne}
I.~Burban and B.~Kreussler.
\newblock Derived categories of irreducible projective curves of arithmetic
  genus one.
\newblock {\em Compositio Mathematica}, 142(5):1231–1262, 2006.

\bibitem[BK12]{BurbanKreusslerDegenerations}
I.~Burban and B.~Kreussler.
\newblock {\em Vector bundles on degenerations of elliptic curves and
  {Y}ang-{B}axter equations}.
\newblock Number 1035 in Memoirs of the Amercian Mathematical Society. Amercian
  Mathematical Society, 2012.

\bibitem[BM03]{BekkertMerklen}
V.~Bekkert and H.~Merklen.
\newblock Indecomposables in derived categories of gentle algebras.
\newblock {\em Algebras and Representation Theory}, 6(3):285--302, 2003.

\bibitem[Bob11]{Bobinski}
G.~Bobiński.
\newblock The almost split triangles for perfect complexes over gentle
  algebras.
\newblock {\em Journal of Pure and Applied Algebra}, 215(4):642--654, 2011.

\bibitem[BPP17]{BroomheadPauksztelloPloog}
N.~Broomhead, D.~Pauksztello, and D.~Ploog.
\newblock Discrete derived categories {I}: homomorphisms, autoequivalences and
  t-structures.
\newblock {\em Mathematische Zeitschrift}, 285(1):39--89, 2017.

\bibitem[Bur03]{BurbanStableBundles}
I.~Burban.
\newblock Stable bundles on a rational curve with one simple double point.
\newblock {\em Ukrainian Mathematical Journal}, 55:1043–--1053, 2003.

\bibitem[{\c{C}}PS21]{CanakciPauksztelloSchrollCorrigendum}
{\.{I}}.~{\c{C}}anak\c{c}i, D.~Pauksztello, and S.~Schroll.
\newblock Corrigendum to “mapping cones for morphisms involving a band
  complex in the bounded derived category of a gentle algebra” [j. algebra
  530 (2019) 163–194].
\newblock {\em Journal of Algebra}, 569:856--874, 2021.

\bibitem[Dis15]{Disarlo}
V.~Disarlo.
\newblock Combinatorial rigidity of arc complexes.
\newblock {\em arXiv:1505.08080 [math.RT]}, 2015.

\bibitem[FM12]{FarbMargalit}
B.~Farb and D.~Margalit.
\newblock {\em A primer on mapping class groups}.
\newblock Princeton University Press, 2012.

\bibitem[HKK17]{HaidenKatzarkovKontsevich}
F.~Haiden, L.~Katzarkov, and M.~Kontsevich.
\newblock Flat surfaces and stability structures.
\newblock {\em Publications Mathematiques de IHES}, 126(1):247--318, 2017.

\bibitem[HKP16]{HocheneggerKalckPloog}
A.~Hochenegger, M.~Kalck, and D.~Ploog.
\newblock Spherical subcategories in algebraic geometry.
\newblock {\em Mathematische Nachrichten}, 280(11-12):1450--1465, 2016.

\bibitem[Hop83]{Hopf}
H.~Hopf.
\newblock {\em Differential geometry in the large}, volume 1000 of {\em Lecture
  Notes in Mathematics}.
\newblock Springer, 1983.

\bibitem[IM10]{IrmakMcCarthy}
E.~Irmak and J.~{McCarthy}.
\newblock Injective simplicial maps of the arc complex.
\newblock {\em Turkish Journal of Mathematics}, 34(3):339--354, 2010.

\bibitem[LP17]{LekiliPolishchukMirrorSymmetry}
Y.~Lekili and A.~Polishchuk.
\newblock Arithmetic mirror symmetry for genus $1$ curves with $n$ marked
  points.
\newblock {\em Selecta Mathematica}, 23(3):1851--1907, 2017.

\bibitem[LP18]{LekiliPolishchukAuslanderOrders}
Y.~Lekili and A.~Polishchuk.
\newblock {A}uslander orders over nodal stacky curves and partially wrapped
  {F}ukaya categories.
\newblock {\em Journal of Topology}, 11(3):615--644, 2018.

\bibitem[LP19]{LekiliPolishchuk2016}
Y.~Lekili and A.~Polishchuk.
\newblock Associative {Y}ang-{B}axter equation and {F}ukaya categories of
  square-tiled surfaces.
\newblock {\em Advances in Mathematics}, 343:273--315, 2019.

\bibitem[LP20]{LekiliPolishchukGentle}
Y.~Lekili and A.~Polishchuk.
\newblock Derived equivalences of gentle algebras via {F}ukaya categories.
\newblock {\em Mathematische Annalen}, 376:187--225, 2020.

\bibitem[LS16]{LuntsSchnuerer}
V.~Lunts and O.~Schn{\"u}rer.
\newblock New enhancements of derived categories of coherent sheaves and
  applications.
\newblock {\em Journal of Algebra}, 446:203 -- 274, 2016.

\bibitem[Miy91]{MiyachiLocalization}
J.~Miyachi.
\newblock Localization of triangulated categories and derived categories.
\newblock {\em Journal of Algebra}, 141(2):463 -- 483, 1991.

\bibitem[Nee01]{Neeman}
A.~Neeman.
\newblock {\em Triangulated categories}, volume 148 of {\em Annals of
  Mathematics Studies}.
\newblock Princeton University Press, Princeton, 2001.

\bibitem[Opp19]{OpperDerivedEquivalences}
S.~Opper.
\newblock On auto-equivalences and complete derived invariants of gentle
  algebras.
\newblock {\em arXiv:1904.04859 [math.RT]}, 2019.

\bibitem[OPS18]{OpperPlamondonSchroll}
S.~Opper, {P-G}. Plamondon, and S.~Schroll.
\newblock A geometric model for the derived category of gentle algebras.
\newblock {\em arXiv:1801.09659 [math.RT]}, 2018.

\bibitem[Pol02]{Polishchuk}
A.~Polishchuk.
\newblock Classical {Y}ang–{B}axter equation and the
  {$A_{\infty}$}-constraint.
\newblock {\em Advances in Mathematics}, 168(1):56--95, 2002.

\bibitem[RVdB02]{ReitenVandenBergh}
I.~Reiten and M.~Van~den Bergh.
\newblock Noetherian hereditary abelian categories satisfying {S}erre duality.
\newblock {\em Journal of the American Mathematical Society}, 15(2):295--366,
  2002.

\bibitem[Sib14]{SibillaMappingClassGroupAction}
N.~Sibilla.
\newblock A note on mapping class group actions on derived categories.
\newblock {\em Proceedings of the American Mathematical Society},
  142(6):1837--1848, 2014.

\bibitem[ST01]{SeidelThomas}
P.~Seidel and R.~Thomas.
\newblock Braid group actions on derived categories of coherent sheaves.
\newblock {\em Duke Mathematical Journal}, 108(1):37--108, 2001.

\end{thebibliography}
	\bibliographystyle{alpha}
	
	\ \medskip
	
\end{document}